\documentclass[a4paper,11pt]{amsart}
\usepackage[utf8x]{inputenc}
\usepackage[margin=1.25in]{geometry}
\usepackage[T1]{fontenc}
\usepackage{ae,aecompl}
\usepackage{tikz-cd}
\usepackage{mathrsfs} 
\usepackage{verbatim,amsmath,amsthm,amsfonts,amssymb,latexsym,graphicx,mathtools,extpfeil,color,mathabx}
\usepackage{tikz}
\usetikzlibrary{cd}
\usepackage[all]{xy}
\usepackage{graphicx}
\usepackage{booktabs}
\usepackage{caption}
\usepackage{tabularx}
\usepackage{subcaption}
\usepackage{float}
\usepackage{pdflscape}
\usepackage{cancel}
\usepackage{slashed}
\DeclareMathAlphabet{\mathpzc}{OT1}{pzc}{m}{it}

\usepackage[colorlinks,pagebackref,hypertexnames=false]{hyperref} \usepackage[alphabetic,backrefs,msc-links]{amsrefs}

\usepackage{aliascnt}
\numberwithin{equation}{section}













\renewcommand{\epsilon}{\varepsilon}

\def\({\mathopen{}\left(}
\def\){\right)\mathclose{}}
\def\<{\mathopen{}\left<}
\def\>{\right>\mathclose{}}


\usepackage{multicol, color}

\definecolor{gold}{rgb}{0.85,.66,0}
\definecolor{cherry}{rgb}{0.9,.1,.2}
\definecolor{burgundy}{rgb}{0.8,.2,.2}
\definecolor{orangered}{rgb}{0.85,.3,0}
\definecolor{orange}{rgb}{0.85,.4,0}
\definecolor{olive}{rgb}{.45,.4,0}
\definecolor{lime}{rgb}{.6,.9,0}
\definecolor{green}{rgb}{.2,.7,0}
\definecolor{grey}{rgb}{.4,.4,.2}
\definecolor{brown}{rgb}{.4,.3,.1}


\def\makeautorefname#1#2{\AtBeginDocument{\expandafter\def\csname#1autorefname\endcsname{#2}}}

\newcommand{\mynewtheorem}[2]{
  \newaliascnt{#1}{equation}          
  \newtheorem{#1}[#1]{#2}
  \aliascntresetthe{#1}
  \makeautorefname{#1}{#2}
}
\mynewtheorem{theorem}{Theorem}
\mynewtheorem{prop}{Proposition}
\mynewtheorem{cor}{Corollary}
\mynewtheorem{construction}{Construction}
\mynewtheorem{lemma}{Lemma}
\mynewtheorem{conjecture}{Conjecture}
\mynewtheorem{shitu}{Situation}
\mynewtheorem{conds}{Condition}

\numberwithin{substep}{step}
\makeautorefname{step}{Step}
\makeautorefname{substep}{Step}

\numberwithin{subcase}{case}
\makeautorefname{case}{Case}
\makeautorefname{subcase}{case}

\theoremstyle{remark}
\mynewtheorem{remark}{Remark}

\theoremstyle{definition}
\mynewtheorem{definition}{Definition}
\mynewtheorem{example}{Example}
\mynewtheorem{exercise}{Exercise}
\mynewtheorem{convention}{Convention}
\newtheorem*{convention*}{Convention}
\newtheorem*{conventions*}{Conventions}
\mynewtheorem{question}{Question}
        
\makeautorefname{chapter}{Chapter}
\makeautorefname{section}{Section}
\makeautorefname{subsection}{Section}
\makeautorefname{subsubsection}{Section}


\theoremstyle{introthm}
\newtheorem{introthm}{Theorem}

\newcommand\bbZ{{\mathbb Z}}
\newcommand\bbQ{{\mathbb Q}}
\newcommand\bbR{{\mathbb R}}
\newcommand\bbC{{\mathbb C}}

\title[Lagrangian Floer theory in divisor complement]{Monotone Lagrangian Floer theory in smooth divisor complements: III}
\author{Aliakbar Daemi, Kenji Fukaya}
\thanks{The work of the first author was supported by NSF Grant DMS-1812033. The work of the second author was supported by NSF Grant DMS-1406423 and the Simons Foundation through its Homological Mirror Symmetry Collaboration grant.}

\date{}

\begin{document}
\address{Department of Mathematics and Statistics, Washington University in St. Louis, MO 63130}\email{adaemi@wustl.edu}
\address{Simons Center for Geometry and Physics, State University of New York, Stony Brook, NY 11794-3636, USA} \email{kfukaya@scgp.stonytbrook.edu}

\begin{abstract}
This is the third paper in a series of papers studying intersection Floer theory of Lagrangians in the complement of a smooth divisor.
We complete the construction of Floer homology for such Lagrangians.
\end{abstract}
\maketitle
{
  \hypersetup{linkcolor=black}
  \tableofcontents
}

\section{Introduction}
Let $(X,\omega)$ be a compact symplectic manifold and $\mathcal D \subset X$ be a smooth divisor. That is to say, $\mathcal D$ is a codimension $2$  closed submanifold of $X$ such that the restriction of $\omega$ to $\mathcal D$ is non-degenerate.
Before stating our main theorem, which is a slightly stronger version of \cite[Theorem 1]{part1:top}, we recall the definition of some basic ingredients of the theorem.
\begin{definition}
	Let $L_0, L_1$ be compact subspaces of $X\setminus \mathcal D$.
	We say $L_0$ is {\it Hamiltonian isotopic to $L_1$ in $X\setminus \mathcal D$} if there exists a 
	compactly supported time dependent Hamiltonian 
	$H : (X \setminus \mathcal D) \times [0,1] \to \bbR$ such that the Hamiltonian 
	diffeomorphism $\psi_H : X \setminus \mathcal D \to X \setminus \mathcal D$
	maps $L_0$ to $L_1$.
	Here $\psi_H$ is defined as follows.
	Let $H_t(x) = H(x,t)$ and $X_{H_t}$ be the Hamiltonian vector field associated to $H_t$. 
	We define $\psi^H_t$ by 
	\[
	  \psi^H_0(x) = x, \qquad \frac{d}{dt}\psi^H_t = X_{H_t} \circ \psi^H_t.
	\]
	Then $\psi_H := \psi^H_1$. We say that $\psi_H$ is the {\it Hamiltonian diffeomorphism associated to the 
	(non-autonomous) Hamiltonian $H$}.
\end{definition}
\begin{definition}
	Let $\Lambda^{\bbQ}$ denote the {\it Novikov field} of all formal sums
	\[
	  \sum_{i=1}^{\infty} a_i T^{\lambda_i},
	\]
	where $a_i \in \bbQ$, $\lambda_i \in \bbR$, $\lambda_i < \lambda_{i+1}$ and
	$\lim_{i\to \infty} \lambda_i = +\infty.$ 
\end{definition}
Now we state:

\begin{introthm}\label{mainthm-part3}
	Let $L_0,L_1\subset X\setminus \mathcal D$ be compact, oriented and spin Lagrangian submanifolds 
	such that they are monotone in $X\setminus \mathcal D$.
	Suppose one of the following conditions holds:
	\begin{enumerate}
		\item[(a)] The minimal Maslov numbers of $L_0$ and of $L_1$ are both strictly greater than 2.
		\item[(b)] $L_1$  is Hamiltonian isotopic to $L_0$. 
	\end{enumerate}
	Then we can define Floer homology group $HF(L_1,L_0;X\setminus \mathcal D)$,
	which is a $\Lambda^{\bbQ}$-vector space,
	and satisfies the following properties.
\begin{enumerate}
	\item If $L_0$ is transversal to $L_1$ then we have
		\[
		  {\rm rank}_{\Lambda^{\bbQ}} HF(L_1,L_0;X\setminus D) \le \# (L_0 \cap L_1).
		\]
	\item If $L_i'$ is Hamiltonian isotopic to $L_i$ in $X \setminus \mathcal D$ for $i=0,1$, then
		\[
		  HF(L_1,L_0;X\setminus \mathcal D) \cong HF(L'_1,L'_0;X\setminus \mathcal D).
		\]
	\item If we assume either {\rm (b)} or $\pi_1(L_0) = \pi_1(L_1) = 0$, then we can replace $\Lambda^{\bbQ}$
	with $\bbQ$, the field of rational numbers.
	\item If $L_0 = L_1 =L$, then there exists a spectral sequence 
		whose $E^2$ page is the singular homology group $H_*(L;\bbQ)$ of $L$ and which converges to 
		$HF(L,L;X\setminus \mathcal D)$.
\end{enumerate}
\end{introthm}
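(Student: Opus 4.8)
The plan is to assemble the Floer complex from the moduli spaces of pseudo-holomorphic strips developed in the earlier parts of this series, and to read off the four properties from the standard mechanisms of monotone Lagrangian Floer theory, everything carried out relative to $\mathcal D$.

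\textbf{Step 1: the chain complex.} I would fix an $\omega$-compatible almost complex structure $J$ that makes $\mathcal D$ a $J$-holomorphic submanifold and is generic on the complement, and perturb $L_1$ by a compactly supported Hamiltonian isotopy in $X\setminus\mathcal D$ so that $L_0\pitchfork L_1$. Then $CF(L_1,L_0;X\setminus\mathcal D)$ is the free $\Lambda^{\bbQ}$-module on $L_0\cap L_1$, with $\mathfrak m_1$ defined by counting, with signs determined by the orientations and spin structures and weighted by $T^{\omega(u)}$, the rigid $J$-holomorphic strips $u\colon\bbR\times[0,1]\to X\setminus\mathcal D$ with boundary on $L_0\cup L_1$. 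Here I rely on the earlier parts: these moduli spaces, together with the larger moduli spaces of strips in $X$ having a prescribed intersection pattern along $\mathcal D$, compactify to manifolds with corners of the expected dimension, and the monotonicity of $L_i$ in $X\setminus\mathcal D$ together with positivity of intersection with $\mathcal D$ forces only finitely many strips in any bounded energy window, so $\mathfrak m_1$ converges over $\Lambda^{\bbQ}$.

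\textbf{Step 2: $\mathfrak m_1\circ\mathfrak m_1=0$.} I would analyze the boundary of the $1$-dimensional components of these moduli spaces. It splits into: (i) two-storey broken strips through an intermediate intersection point, contributing $\mathfrak m_1\circ\mathfrak m_1$; (ii) a strip with a holomorphic disk bubbling off on $L_0$ or on $L_1$; (iii) limit configurations meeting $\mathcal D$, which by the relative dimension formula live in codimension $\ge 2$ and so do not occur here. For (ii): Maslov-$<2$ disk bubbles are excluded by monotonicity plus genericity of $J$; under hypothesis (a) Maslov-$2$ bubbles are excluded outright, giving $\mathfrak m_1\circ\mathfrak m_1=0$; under hypothesis (b) a Maslov-$2$ bubble on $L_i$ contributes the scalar operator $\pm\,\mathfrak m_0(L_i)\cdot\id$, so $\mathfrak m_1\circ\mathfrak m_1=(\mathfrak m_0(L_1)-\mathfrak m_0(L_0))\cdot\id$, which vanishes once one knows the potential $\mathfrak m_0$ is invariant under Hamiltonian isotopy in $X\setminus\mathcal D$ — a disk-continuation argument, again relative to $\mathcal D$. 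Then $HF(L_1,L_0;X\setminus\mathcal D):=\ker\mathfrak m_1/\im\mathfrak m_1$.

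\textbf{Step 3: the four properties.} Property (1) is immediate, since $HF$ is a subquotient of a rank-$\#(L_0\cap L_1)$ free module. For (2), interpolating Hamiltonians and almost complex structures — all supported in and compatible with $X\setminus\mathcal D$ — furnish continuation chain maps and chain homotopies from parametrized moduli spaces whose compactness is controlled as in Steps 1--2; the usual composition argument makes these mutually inverse homotopy equivalences (in case (b) one also uses that $\mathfrak m_0$ is constant along the isotopy, established in Step 2). For (3): if $\pi_1(L_0)=\pi_1(L_1)=0$, then since areas of strips add under concatenation the energies of strips between a fixed ordered pair of intersection points lie in one coset of the discrete period group, so rescaling each generator by a suitable power of $T$ makes the $T$-exponent of $\mathfrak m_1$ constant on each matrix entry; dividing it out yields a $\bbZ/N$-graded $\bbQ$-complex whose homology base-changes to $HF$. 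In case (b) one instead transports the $\bbQ$-structure of $CF(L_0,L_0;X\setminus\mathcal D)$ — which carries the canonical zero-area diagonal generators — across the isomorphism of (2). For (4) with $L_0=L_1=L$: model $CF(L,L;X\setminus\mathcal D)$ on a pearl/Morse complex of $L$ so that the energy-zero part of $\mathfrak m_1$ is the classical Morse differential, and filter by the $T$-adic energy valuation; the associated spectral sequence then has $E^1$ the Morse complex of $L$, hence $E^2\cong H_*(L;\bbQ)$, with higher differentials the quantum corrections, and it converges to $HF(L,L;X\setminus\mathcal D)$ because the energy filtration is bounded below and complete in the Novikov topology, Gromov compactness bounding the number of contributions below each energy level.

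\textbf{Where the difficulty lies.} The technical heart is Steps 1--2: the compactness and transversality theory for strips and disks in the complement of $\mathcal D$ — showing that configurations touching $\mathcal D$ occur only in codimension $\ge 2$, and that all counts remain finite over $\Lambda^{\bbQ}$ despite the non-compactness of $X\setminus\mathcal D$ — which is exactly what the earlier parts are built to supply; the one genuinely new point needed here is the Hamiltonian-isotopy invariance of the potential $\mathfrak m_0$ inside $X\setminus\mathcal D$, which is what allows the differential to square to zero in case (b) and lets the construction be completed.
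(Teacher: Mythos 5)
Your outline reproduces the skeleton of monotone Lagrangian Floer theory, but it misstates what the earlier parts of the series actually supply, and in doing so it assumes away the two points that constitute the real content of this paper. First, you dispose of limit configurations meeting $\mathcal D$ by saying they ``live in codimension $\ge 2$ by the relative dimension formula and so do not occur.'' That is not available here: in the stable map compactification the strata touching $\mathcal D$ involve sphere components inside $\mathcal D$ and tangency/multiplicity conditions for which classical transversality (generic $J$) fails, so no dimension formula excludes them outright. The paper instead works with the RGW compactification, whose positive-level strata are genuinely part of the moduli space; one proves they form an \emph{even codimension stratification} of the Kuranishi structure (Proposition \ref{prop356}) and, crucially, that the codimension-one boundary is \emph{not} the fiber product of lower-energy moduli spaces but only the modified product $\hat\times$, which agrees with the product only outside codimension-two strata (Theorem \ref{theorem30}, Propositions \ref{lema362}--\ref{lem365}). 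The gluing-type count $\partial\circ\partial$ then only works because one constructs multisections that are non-vanishing on those codimension-two strata (Theorem \ref{prop61111} (3)), so that the zero sets see only the honest fiber-product region. None of this is addressed by your Step 2, and without it the identification of the boundary contributions with $\mathfrak m_1\circ\mathfrak m_1$ is unjustified.

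Second, for hypothesis (b) you invoke the standard statement that a Maslov-$2$ bubble on $L_i$ contributes $\pm\mathfrak m_0(L_i)\cdot\mathrm{id}$, and you locate the ``one genuinely new point'' in the Hamiltonian invariance of $\mathfrak m_0$. Invariance of the potential is indeed proved (Lemma \ref{lem49}), but it is not the main new difficulty: because transversality must be achieved by abstract multisections rather than by perturbing $J$, the fiber product of the strip moduli with one boundary marked point against $\mathcal M_1^{\rm RGW}(L_i;\alpha)$ does not automatically cancel against the configuration at the other intersection point; one needs the Kuranishi structures and the perturbations to be compatible with the forgetful map of boundary marked points (Section \ref{sec:forget}, Theorem \ref{comp-forg}, Proposition \ref{prop12866}), which is a nontrivial construction (disk-component-wise obstruction bundle data compatible with forgetting marked points). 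Your appeal to ``genericity of $J$'' to kill low-index bubbles and to achieve transversality is exactly what the paper explains is unavailable in this setting. As secondary remarks: for (3) the paper does not rescale generators but verifies the pair-monotonicity Condition \ref{cond420} (2) when $\pi_1(L_i)=0$ and defines the complex over $\bbQ$ directly; and for (4) it does not build a pearl complex (which would itself require substantial new moduli theory here) but perturbs by a $C^2$-small Morse Hamiltonian, identifies the low-energy strips with Morse trajectories, and applies the filtration-by-index Lemma \ref{lem452}. These alternative routes could in principle work, but only on top of the boundary-structure and forgetful-map foundations your proposal leaves unbuilt.
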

See Condition \ref{cond420} (2) for the definition of the monotonicity of Lagrangians in $X\setminus \mathcal D$.
\begin{remark}
	The above theorem can be strengthened in various directions. 
	In Theorem \ref{mainthm-part3}, we assumed $L_i$ is spin. We can relax this condition to the condition that 
	$L_i \subset X \setminus \mathcal D$ is relatively spin and $(L_1,L_0)$ forms a relatively spin pair in $X\setminus \mathcal D$.
	(See \cite[Definition 3.1.1]{fooobook} for the definition of relatively spin Lagrangians and 
	relatively spin pairs.)
		
	Let $L$ be a monotone Lagrangian in $X\setminus \mathcal D$. Associated to any such Lagrangian,
	there is an element $\frak{PO}_{L}(1)\in \Lambda^{\bbQ}$, which vanishes if minimal Maslov number of $L$ is not equal to $2$. 
	The assumption in Item (a) can be also weakened to the assumption that $\frak{PO}_{L_0}(1)=\frak{PO}_{L_1}(1)$.
	We can also replace the assumption in Item (3), with the weaker assumption stated in Condition \ref{cond420} (2).
\end{remark}

Using the construction of Kuranishi structures in \cite{part2:kura} 
on the moduli spaces 
we introduced in \cite{part1:top}, the proof of Theorem \ref{mainthm-part3}
is  in principle 
a modification of the arguments of the construction of Lagrangian Floer theory 
based on virtual fundamental chain techniques. (See, for example, \cite{fooobook2,fooonewbook}.)
It is also a variant of Oh's construction of Floer homology of monotone Lagrangian submanifolds \cite{oh}.
\par
There are, however, a few subtle technical points that need to be sorted out for the construction of our version of Floer homology. We highlight two of these points, which are more important than the other ones. The first one is related to the description of the boundary 
of the moduli spaces that are used in the definition of Lagrangian Floer homology. We will discuss this point in Sections \ref{sec:minimas3} and \ref{sub:systemconst}. In our situation, the boundaries of the moduli spaces involved are {\it not} given 
as fiber products of the moduli spaces with smaller energy, but they only accept such descriptions outside codimension two strata.  This is a new feature which does not seem to appear previously 
in the literature.

The second point is related to compatibility of our Kuranishi structures with certain forgetful maps. This point, which we will study in Section \ref{sec:forget}, needs to be addressed only when there are Maslov index 2 pseudo-holomorphic disks. Oh studied 
a similar problem in \cite{oh}, where he could use perturbation of almost complex structures to achieve transversality for the moduli spaces of pseudo-holomorphic curves. Therefore, compatibility with forgetful maps is immediate in his setup. Even though we are studying monotone Lagrangian submanifolds, we have to use virtual fundamental chain techniques and abstract perturbations.
For this reason, compatibility with forgetful maps becomes a much more nontrivial problem. 
\par
The proof of Theorem \ref{mainthm-part3} is completed in Section \ref{sec:mainthmproof}.
In Subsection \ref{subsec:welldef} the 
independence of Floer homology from auxiliary choices is proved. In Sections \ref{sec:minimas3}, \ref{sub:systemconst}, \ref{sec:forget}
we assume that not only $L_0$ and $L_1$ but also the pair $(L_0,L_1)$ satisfies 
a certain monotonicity condition (Condition \ref{cond420} (2)).
In Subsection \ref{subsec:novikv} we remove this condition
by using Novikov ring.
Finally a spectal sequence calculating the Floer homology in case $L_1=L_0$ is defied in Subsection \ref{thecaseofLandL}.
Since the arguments of Section \ref{sec:mainthmproof} are modifications of standard proofs, we sketch the proofs without going into details.

The final section of the paper is devoted to various possible extensions of our main result. In particular, we explain how we expect that one can associate an $A_\infty$-category to a family of Lagrangians in the complement of a smooth divisor, generalize the construction of this paper to the case of normal crossing divisors and define an equivariant version of Lagrangian Floer homology of Theorem \ref{mainthm-part3}. Perhaps the most interesting direction among the proposals in Section \ref{secgeneralization} is the extension of the present theory to the case of non-compact Lagrangians. In this part, we present some formal similarities between the proposed theory for non-compact Lagrangians and monopole Floer homology in \cite{km:monopole}.

\begin{remark}
We remark that the proofs of various claims of this paper require a study of orientations of moduli spaces and the resulting signs in the formulas in several places. However, there is nothing novel about signs happening in our case. The key point here is that we only need to orient the part of the moduli space that is obtained from the zero set of perturbed multi-sections. Even though we use the existence of Kuranishi structures on the strata with arbitrary high codimension in our construction, we may arrange that the zero set of perturbed multi-sections consists of pseudo-holomorphic maps that do not intersect divisor. This allows us to adapt standard arguments to our setup without any new issue. Because of this, we skip the discussion of orientations and signs, and refer the reader to \cite[Section 8]{fooobook2} for more details.
\end{remark}


\section{Floer homology when minimal Maslov number 
is greater than 2.}\label{sec:minimas3}

\subsection{The Definition of Floer Homology}
\label{boundaryope}

Let $L_0,L_1$ be Lagrangian submanifolds in $X \setminus \mathcal D$.
We assume they are monotone in $X \setminus \mathcal D$ and 
are relatively spin there.
We assume that $L_0$ is transversal to $L_1$.
In Sections \ref{sec:minimas3} and \ref{sub:systemconst}, we firstly work under the assumption of Condition \ref{cond420} below, where we can use $\bbQ$ as the coefficient ring rather than the Novikov ring.
We need to prepare some notations to state this condition.
\begin{definition}
	Let $\Omega(L_0,L_1)$ denote the space of 
        all continuous maps $\gamma : [0,1] \to X$
        with $\gamma(0) \in L_0$, $\gamma(1) \in L_1$. 
        For $o \in \pi_0(\Omega(L_0,L_1))$, we denote 
        the fundamental group of the corresponding 
        connected component by $\pi_1(\Omega(L_0,L_1);o)$.
        We also write $L_0 \cap_o L_1$ for the subset of $L_0 \cap L_1$ consisting 
        of $p$ such that the constant map $\gamma(t) \equiv p$ belongs to the 
        connected component $o$.
\end{definition}
\begin{definition}      
        An element of $\pi_1(\Omega(L_0,L_1);o)$ determines a relative second homology class
        $H_2(X,L_0\cup L_1;\bbZ)$. Since $(L_0 \cup L_1) \cap \mathcal D = \emptyset$, we obtain
        a map:
        $$
        \cap \mathcal D : \pi_1(\Omega(L_0,L_1);o) \to \bbZ.
        $$
	We may also integrate $\omega$ on homology classes represented by elements of $\pi_1(\Omega(L_0,L_1);o)$
	to obtain another homomorphism
	\[
	  \omega : \pi_1(\Omega(L_0,L_1);o) \to \bbR.
	\]
	Finally, we have the Maslov index homomorphism
	\[
	  \mu : \pi_1(\Omega(L_0,L_1);o) \to \bbZ.
	\]
	(See, for example, \cite[p50]{fooobook}, where it is denoted by $I_{\mu}$.) 
\end{definition}

\begin{conds}\label{cond420}
	Let $o \in \pi_0(\Omega(L_0,L_1))$ be given. We require the following two properties:
	\begin{enumerate}
		\item The minimum Maslov numbers of $L_0$ and $L_1$ are not smaller than 4.
		\item There is a constant $c$ such that for any 
			$\alpha \in \pi_1(\Omega(L_0,L_1),o)$ with $\alpha \cap \mathcal D = 0$, we have
			\begin{equation}
				 \omega(\alpha)=c\mu(\alpha).
			\end{equation}
	\end{enumerate}
\end{conds}
\begin{lemma} \label{index-energy-strip}
	We assume Condition \ref{cond420} {\rm (2)} for $o \in \pi_0(\Omega(L_0,L_1))$ holds. 
	Then for each $p,q \in L_0 \cap_o L_1$, there exists $c(p,q)$ such that: 
	\begin{equation}\label{monotonicityforpair}
		 \omega(\beta)=c\mu(\beta) - c(p,q),
	\end{equation}
	for any $\beta \in \Pi_2(X;L_0,L_1;p,q)$ with $\beta\cap \mathcal D=0$.
\end{lemma}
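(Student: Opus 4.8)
The plan is to deduce the energy--index relation for strips from Condition \ref{cond420}(2) by the standard ``difference of two classes'' device. Recall that an element of $\Pi_2(X;L_0,L_1;p,q)$ may be viewed as a homotopy class of paths in $\Omega(L_0,L_1)$ joining the constant path $\gamma\equiv p$ to the constant path $\gamma\equiv q$; since $p,q\in L_0\cap_o L_1$, both of these constant paths lie in the component $o$. The three homomorphisms $\cap\mathcal D$, $\omega$ and $\mu$ on $\pi_1(\Omega(L_0,L_1);o)$ defined in the Definitions preceding Condition \ref{cond420} extend to $\Pi_2(X;L_0,L_1;p,q)$ --- via the associated relative classes in $H_2(X,L_0\cup L_1;\bbZ)$, and, in the case of $\mu$, via a once-and-for-all choice of a path of Lagrangian subspaces from $T_pL_0$ to $T_pL_1$ and from $T_qL_0$ to $T_qL_1$. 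These extensions are additive under concatenation of paths and change sign under reversal; consequently, concatenating a strip class $\beta\in\Pi_2(X;L_0,L_1;p,q)$ with the reversal of another such class produces a loop in $\Omega(L_0,L_1)$ based at $\gamma\equiv p$, hence an element of $\pi_1(\Omega(L_0,L_1);o)$, on which $\cap\mathcal D$, $\omega$ and $\mu$ take the differences of the corresponding values.

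First I would dispose of the trivial case: if there is no $\beta\in\Pi_2(X;L_0,L_1;p,q)$ with $\beta\cap\mathcal D=0$, then \eqref{monotonicityforpair} holds vacuously, and we may take $c(p,q)=0$. Otherwise, fix one such class $\beta_0$ with $\beta_0\cap\mathcal D=0$ and set $c(p,q):=c\,\mu(\beta_0)-\omega(\beta_0)$. Now let $\beta\in\Pi_2(X;L_0,L_1;p,q)$ be arbitrary with $\beta\cap\mathcal D=0$, and let $\alpha\in\pi_1(\Omega(L_0,L_1);o)$ be the loop obtained by concatenating $\beta$ with the reversal of $\beta_0$. By additivity of $\cap\mathcal D$ we get $\alpha\cap\mathcal D=\beta\cap\mathcal D-\beta_0\cap\mathcal D=0$, so Condition \ref{cond420}(2) applies and yields $\omega(\alpha)=c\,\mu(\alpha)$; using additivity of $\omega$ and $\mu$ this reads $\omega(\beta)-\omega(\beta_0)=c\bigl(\mu(\beta)-\mu(\beta_0)\bigr)$, which rearranges to $\omega(\beta)=c\,\mu(\beta)-c(p,q)$, i.e.\ exactly \eqref{monotonicityforpair}.

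The only point that genuinely needs care --- and the one I would actually write out --- is the compatibility of the homomorphisms on $\Pi_2(X;L_0,L_1;p,q)$ with those on $\pi_1(\Omega(L_0,L_1);o)$ under the gluing operation: that gluing strips along their constant ends matches composition of paths in $\Omega(L_0,L_1)$, that the relative homology class behaves additively, and that the Maslov index $\mu$ is additive once the paths of Lagrangian subspaces at $p$ and $q$ have been fixed (so that the boundary contributions at $p$ and at $q$ cancel in $\beta\#\overline{\beta_0}$). All of this is standard (see, e.g., \cite[Chapter 2]{fooobook}), so no real difficulty arises; with it in hand the lemma follows immediately as above.
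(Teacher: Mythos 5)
Your proof is correct and follows essentially the same route as the paper's: concatenate $\beta$ with a reversed reference class $\beta_0$ to obtain a loop in $\pi_1(\Omega(L_0,L_1);o)$ with vanishing intersection with $\mathcal D$, apply Condition \ref{cond420}(2), and use additivity of $\omega$ and $\mu$ to define $c(p,q)=c\,\mu(\beta_0)-\omega(\beta_0)$. The extra details you supply (the vacuous case and the explicit check that the glued loop has zero intersection with $\mathcal D$) are exactly the points the paper leaves implicit.
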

\begin{proof}
	Let $\beta_1,\beta_2 \in \Pi_2(X;L_0,L_1;p,q)$. Since $\beta_2$ is represented by a map $\bbR \times [0,1] \to X$,
	we can compose it with $(\tau,t) \mapsto (-\tau,t)$ and obtain $-\beta_2 \in \Pi_2(X;L_0,L_1;q,p)$.
	We concatenate $\beta_1$ and $-\beta_2$ in an obvious way 
	to obtain $\beta_1 \# -\beta_2 \in  \pi_1(\Omega(L_0,L_1);o)$.
	It follows easily from definition that
	$$
	\mu(\beta_1) - \mu(\beta_2) = \mu(\beta_1 \# -\beta_2),\quad\omega(\beta_1) - \omega(\beta_2) = 
	\omega(\beta_1 \# -	\beta_2).
	$$
	The lemma follows easily from these identities.
\end{proof} 

\begin{lemma}
	For $i=0$ or $1$, let $\pi_1(L_i)$ be trivial. 
	Then Condition \ref{cond420} {\rm (2)} holds for any $o$.
\end{lemma}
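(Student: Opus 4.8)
The plan is to reduce a loop of paths to a disk class and then quote the monotonicity of a \emph{single} Lagrangian. Assume first that $\pi_1(L_0)=0$; the case $\pi_1(L_1)=0$ will follow by running the same argument for the pair $(L_1,L_0)$ and transporting the conclusion back along the path-reversal homeomorphism $\Omega(L_0,L_1)\cong\Omega(L_1,L_0)$, which intertwines the homomorphisms $\omega$, $\mu$ and $\cap\mathcal D$ up to a common overall sign (it reverses the orientation of the underlying relative $2$-class). Fix $o\in\pi_0(\Omega(L_0,L_1))$ and let $c$ be the monotonicity constant of $L_1$ in $X\setminus\mathcal D$, so that $\omega(\beta)=c\,\mu(\beta)$ for every $\beta\in\pi_2(X,L_1)$ with $\beta\cap\mathcal D=0$ (this is precisely the single-Lagrangian version of Condition \ref{cond420} (2)). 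I claim this $c$ witnesses Condition \ref{cond420} (2) for $o$.

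Given $\alpha\in\pi_1(\Omega(L_0,L_1);o)$ with $\alpha\cap\mathcal D=0$, represent it by a map $w\colon S^1\times[0,1]\to X$ with $w(S^1\times\{0\})\subset L_0$ and $w(S^1\times\{1\})\subset L_1$. The loop $s\mapsto w(s,0)$ in $L_0$ bounds a disk $v_0\colon D^2\to L_0$ because $\pi_1(L_0)=0$; gluing $v_0$ to $w$ along $S^1\times\{0\}$ converts the annulus into a disk and yields a class $\beta\in\pi_2(X,L_1)$ whose boundary is the loop $s\mapsto w(s,1)$. I would then record three compatibilities: $\beta\cap\mathcal D=\alpha\cap\mathcal D=0$, since $v_0(D^2)\subset L_0\subset X\setminus\mathcal D$ is disjoint from $\mathcal D$ and the intersection number depends only on the class; $\omega(\beta)=\omega(\alpha)$, since $v_0^*\omega=0$ as $L_0$ is Lagrangian; and $\mu(\beta)=\mu(\alpha)$, by additivity of the Maslov index under gluing together with the vanishing of the Maslov index of a disk mapping entirely into a Lagrangian (on such a disk $v_0^*TX\cong(v_0^*TL_0)\otimes_{\mathbb R}\mathbb C$, so the totally real boundary condition $v_0^*TL_0$ extends over all of $D^2$). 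Feeding $\beta$ into the monotonicity of $L_1$ gives $\omega(\alpha)=\omega(\beta)=c\,\mu(\beta)=c\,\mu(\alpha)$, which is Condition \ref{cond420} (2) for $o$; as $o$ was arbitrary, the lemma follows.

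The area and intersection-number bookkeeping is routine. The one step requiring care — and the only real obstacle — is the identity $\mu(\beta)=\mu(\alpha)$: one must match the definition of the Maslov index homomorphism on $\pi_1(\Omega(L_0,L_1);o)$ (as in \cite[p.~50]{fooobook}) with the relative Maslov index of the capped disk, and verify the vanishing of the Maslov index of a disk contained in a Lagrangian; both are standard, and I would simply invoke \cite{fooobook}. One should also observe that the class $\beta$ depends on the auxiliary choice of $v_0$, but the resulting equation $\omega(\alpha)=c\,\mu(\alpha)$ does not — equivalently, two choices of $v_0$ differ by a $2$-sphere in $L_0$, on which both $\omega$ and $c_1(TX)$ vanish — so this ambiguity is harmless.
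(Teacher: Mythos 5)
Your proof is correct and follows essentially the same route as the paper: cap the $L_0$ boundary circle of the annulus with a disk in $L_0$ (using $\pi_1(L_0)=0$), use that $\omega$ and $\mu$ vanish on a disk contained in a Lagrangian, and apply monotonicity of the other Lagrangian to the resulting disk class. Your additional bookkeeping (the $\beta\cap\mathcal D=0$ check, independence of the choice of cap, and the symmetric case $\pi_1(L_1)=0$) is fine but does not change the argument, which is the one the paper gives.
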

\begin{proof}
        Let $\alpha \in \pi_1(\Omega(L_0,L_1);o)$.
        It is represented by a map $u : S^1 \times [0,1] \to X$
        with $u(S^1 \times\{i\}) \subset L_i$
        for $i=0,1$.
        We assume $\pi_1(L_0) = 0$. Then 
        there exists a map $v : D^2 \to L_0$
        such that $v\vert_{S^1} = u_{S^1 \times \{0\}}$.
        We glue $v$ and $u$ to obtain $v'$. We have:
        $$
        \mu([v]) + \mu(\alpha) = \mu([v']) \quad
        \omega([v]) + \omega(\alpha) = \omega([v']).
        $$ 
	Since $ \mu([v])=\omega([v])=0$, Condition \ref{cond420} (2) follows from the monotonicity of $L_i$.
\end{proof}

To prove Theorem \ref{mainthm-part3}, we use moduli spaces $\mathcal M^{\rm RGW}_{k_1,k_0}(L_1,L_0;p,q;\beta)$ and $\mathcal M^{\rm RGW}_k(L;\alpha)$ introduced in \cite{part1:top} and the construction of Kuranishi structures on these spaces obtained in \cite{part2:kura}.  These moduli spaces are defined as the union of spaces which are parametrized by combinatorial objects called {\it SD-ribbon trees} and {\it DD-ribbon trees}. We may evaluate a pseudo-holomorphic curve representing an element of the moduli space $\mathcal M^{\rm RGW}_{k_1,k_0}(L_1,L_0;p,q;\beta)$ at the boundary marked points to obtain the evaluation maps
\[
  {\rm ev}^{\partial}_{i,j}:\mathcal M^{\rm RGW}_{k_1,k_0}(L_1,L_0;p,q;\beta) \to L_i,
\]
where $i\in \{0,1\}$ and $j\in \{1,2,\dots,k_i\}$. Similarly, we have evaluation maps 
\[
  {\rm ev}^{\partial}_j : \mathcal M^{\rm RGW}_k(L;\alpha) \to L,
\]
for $j\in \{0,1,2,\dots,k\}$. We assume familiarity with the definitions of these notions and refer the reader to \cite{part1:top,part2:kura} for more details. We also need the following result. 
\begin{theorem}\label{theorem30}
	For any $\beta \in \Pi_{2}(X;L_1,L_0;p,q)$,
	then there is a Kuranishi structure on $\mathcal M^{\rm RGW}_{k_1,k_2}(L_1,L_0;p,q;\beta)$ such that the normalized boundary\footnote{
	See 	\cite[Definition 8.4]{fooo:tech2-1}for the definition.} of this space
	is the union of the following three types of `fiber or direct products' (as spaces with Kuranishi structure).
	\begin{enumerate}
		\item
			$$\mathcal M^{\rm RGW}_{k'_1,k'_2}(L_1,L_0;p,r;\beta_1)\, \hat\times\, \mathcal M^{\rm RGW}_{k''_1,k''_2}(L_1,L_0;r,q;\beta_2).
			$$
			Here $r \in L_0\cap L_1$, $k'_1+k''_1 = k_1$,
			$k'_2+k''_2 = k_2$, $\beta_1 \in \Pi_2(X,L_1,L_0;p,r)$ and $\beta_2 \in \Pi_2(X,L_1,L_0;r,q)$
			such that $\beta_1 \# \beta_2 = \beta$ and $\beta_1 \cdot [\mathcal D] = \beta_2 \cdot [\mathcal D] = 0$.(
			The symbol $\hat\times$ will be discussed in Subsection \ref{c}.\footnote{Roughly speaking, it is the ordinary direct product outside 
			a union of codimension $2$ strata, and the obstruction bundle on the complement of these codimension $2$ strata is given by the 
			product of the pulled-back of obstruction bundles of product summands.})
		\item
			$$\mathcal M^{\rm RGW}_{k'_1,k_2}(L_1,L_0;p,q;\beta')\,\hat\times_{L_1}\, \mathcal M^{\rm RGW}_{k''_1+1}(L_1;\alpha).$$
			Here $k'_1+k''_1 = k_1+1$, $\beta' \in \Pi_2(X,L_1,L_0;p,q)$, $\alpha \in \Pi_2(X,L_1;\bbZ)$ form a pair  such that 
			$\beta' \# \alpha = \beta$ and $\beta' \cdot [\mathcal D] = \alpha \cdot [\mathcal D] = 0.$
			The  `fiber product' $\hat\times_{L_1}$ is defined  using ${\rm ev}^{\partial}_{1,i}$,
			for $i = 1,\dots,k'_1$, and ${\rm ev}^{\partial}_0$. 
			(Here $\hat\times_{L_1}$ is slightly different from the ordinary fiber product. See Subsection \ref{c}.)
		\item
			$$\mathcal M^{\rm RGW}_{k_1,k'_2}(L_1,L_0;p,q;\beta')\, \hat\times_{L_0}\, \mathcal M^{\rm RGW}_{k''_2}(L_0;\alpha).$$
			Here $k'_2+k''_2 = k_2+1$, $\beta' \in \Pi_2(X,L_1,L_0;p,q)$,
			$\alpha \in \Pi_2(X,L_0;\bbZ)$ form a pair such that $\beta' \# \alpha = \beta$ and $
			\beta' \cdot [\mathcal D] = \alpha \cdot [\mathcal D] = 0.$
			The `fiber product' $\hat\times_{L_0}$ is defined using ${\rm ev}^{\partial}_{0,i}$, for $i=1,\dots,k'_2$, and ${\rm ev}^{\partial}_0$.
	\end{enumerate}
	Moreover, these Kuranishi structures are orientable and 
	the above isomorphisms are orientation preserving if $L_0$ and $L_1$ are (relatively) spin.
\end{theorem}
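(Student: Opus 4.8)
The plan is to build the Kuranishi structure on $\mathcal M^{\rm RGW}_{k_1,k_2}(L_1,L_0;p,q;\beta)$ by combining the Kuranishi charts produced in \cite{part2:kura} on each stratum indexed by an SD-ribbon tree, and then to read off the codimension-one strata of the resulting compactification from the combinatorics of those trees. First I would recall that an element of this moduli space is a stable map whose domain is a chain of strips with disk or sphere bubbles attached, organized by an SD-ribbon tree; the virtual dimension is governed by the index formula, and under Condition \ref{cond420}(2), together with \autoref{index-energy-strip}, the energy is pinned down by the Maslov index, so the space is a countable disjoint union of compact pieces and the usual Gromov-type compactness applies. The Kuranishi charts near a given boundary configuration are obtained by gluing the charts of \cite{part2:kura} on the pieces, exactly as in \cite[Part X]{fooonewbook} or \cite[Chapter 7--8]{fooobook2}, the only twist being the RGW (relative Gromov--Witten / Tehrani--Fukaya) target geometry and the resulting $\hat\times$-structure rather than an honest fiber product on codimension-two strata.

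The core of the argument is the identification of the normalized boundary. I would argue that a one-parameter degeneration of configurations in the moduli space breaks a strip along either (i) an interior point of the $[0,1]$-interval, producing a broken strip and hence a configuration in $\mathcal M^{\rm RGW}_{k'_1,k'_2}(L_1,L_0;p,r;\beta_1)\times\mathcal M^{\rm RGW}_{k''_1,k''_2}(L_1,L_0;r,q;\beta_2)$ with $\beta_1\#\beta_2=\beta$; or (ii) it bubbles off a disk on the boundary component lying in $L_1$, producing a configuration in $\mathcal M^{\rm RGW}_{k'_1,k_2}(L_1,L_0;p,q;\beta')\hat\times_{L_1}\mathcal M^{\rm RGW}_{k''_1+1}(L_1;\alpha)$; or (iii) symmetrically it bubbles off a disk on the $L_0$-boundary. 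That these are the only codimension-one phenomena uses Condition \ref{cond420}(1): the minimal Maslov numbers being $\ge 4$ forbids Maslov-$2$ disk bubbling, so no disk bubble of virtual codimension $1$ survives perturbation, and sphere bubbling and divisor-tangency strata are codimension $\ge 2$ by the RGW index count — this last point is precisely where the `$\hat\times$ outside codimension two' language is needed, and where I would invoke the structure theory of \cite{part1:top,part2:kura} for the boundary decomposition outside codimension-two strata. The constraint $\beta_1\cdot[\mathcal D]=\beta_2\cdot[\mathcal D]=0$ (resp. $\beta'\cdot[\mathcal D]=\alpha\cdot[\mathcal D]=0$) is automatic because the whole configuration has zero intersection with $\mathcal D$ and each intersection number is non-negative by positivity of intersection in the RGW model.

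Once the set-theoretic boundary is identified, I would upgrade it to an isomorphism of spaces with Kuranishi structure: the glued chart near a boundary stratum restricts, on the boundary, to the product (fiber product) of the charts of the factors, by construction of the gluing; the modification encoded in $\hat\times$ and $\hat\times_{L_i}$ is exactly the statement that the obstruction bundle near the codimension-two locus is the external product of the pulled-back obstruction bundles of the factors, which is how the charts are assembled in the first place in \cite{part2:kura}. Finally, orientability and the sign statement follow from the orientation conventions for Kuranishi structures on moduli of bordered stable maps together with the (relative) spin hypothesis, exactly as in \cite[Section 8]{fooobook2}; as the paper's earlier remark notes, the zero set of the perturbed multisections consists of maps disjoint from $\mathcal D$, so no genuinely new orientation issue arises and I would simply cite that reference. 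The main obstacle is step (ii)--(iii): carefully matching the \emph{non-standard} fiber product $\hat\times_{L_i}$ — which differs from the ordinary fiber product precisely near the codimension-two divisor-breaking strata — with the glued Kuranishi chart, i.e. verifying that the obstruction-bundle bookkeeping in the RGW setting produces exactly this modified fiber product and not an honest one. This is the one place where the construction genuinely departs from the standard monotone Floer theory and must be done by hand using the chart descriptions of \cite{part1:top,part2:kura}; everything else is a routine adaptation.
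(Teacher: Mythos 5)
Your overall architecture (stratify by ribbon trees, use an index count to isolate the codimension-one strata, glue charts, quote FOOO for orientations) matches the paper's, but there are two genuine gaps. First, a misreading of the hypotheses: Theorem \ref{theorem30} assumes nothing like Condition \ref{cond420}, and the Maslov-index condition plays no role in identifying the boundary — the disk-bubble configurations are precisely items (2) and (3) of the statement and are \emph{not} to be excluded; monotonicity and the Maslov bound only enter later, in the proof of Theorem \ref{lem48}, to show those boundary contributions vanish after perturbation. The correct reason that only the three listed types occur in codimension one is the dimension computation of Proposition \ref{prop356} (formula \eqref{dimension-M0R}): strata whose trees carry $n>0$ positive levels sit in codimension $2n$ inside the corners, so they are even-codimension phenomena; Proposition \ref{lem365} then reads off the codimension-one list. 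You gesture at this ("codimension $\ge 2$ by the RGW index count") but do not carry out the count, and you attach it to the wrong hypothesis.

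Second, and more seriously, you assert that the glued chart "restricts, on the boundary, to the (fiber) product of the charts of the factors, by construction of the gluing" and that the $\hat\times$-modification is "how the charts are assembled in the first place in \cite{part2:kura}". Neither is true as stated: \cite{part2:kura} constructs a Kuranishi structure on each moduli space separately, with no compatibility imposed between the obstruction spaces of $\mathcal M^{\rm RGW}_{k_1,k_2}(L_1,L_0;p,q;\beta)$ and those of the factors appearing in its boundary. Arranging this compatibility is the actual content of Section \ref{sub:systemconst}: one must \emph{re-choose} the obstruction bundle data so that it is disk-component-wise (Definition \ref{defn687nnew}), prove that such data exists by an induction over $(k,\beta)$ using quasi-components and the choice maps $\mathscr F,\mathscr F^{\circ}$ (Proposition \ref{lem685}), and then deduce the boundary statement (Lemma \ref{lem684}, Theorem \ref{lema362rev}, and their strip analogues \ref{lema362}--\ref{lem365}). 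Your proposal contains no mechanism for this; the step you flag as "the main obstacle ... to be done by hand" is exactly this construction, so it is deferred rather than supplied. Relatedly, you mischaracterize $\hat\times$ as pure obstruction-bundle bookkeeping: topologically it is \emph{not} the fiber product — the inclusion \eqref{eq355} fails — but the union of the strata $\mathcal M^0(\mathcal R')$ with $\mathcal R'\le\mathcal R^{(1)}$, and the comparison map $\Pi$ to the honest product collapses $\bbC_*^h$-factors on the positive-level strata (cf.\ \eqref{firm37272} in the proof of Proposition \ref{lema362}); it is an isomorphism only outside the codimension-two locus. Without the dimension formula, the description of $\Pi$, and the disk-component-wise choice of obstruction data, the proof is not complete.
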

We have a similar compatibility of the Kuranishi structures 
at the boundary of $\mathcal M^{\rm RGW}_k(L;\alpha)$. See Theorem \ref{lema362rev}.

We use the next result to find appropriate system of perturbations 
to define the boundary operators.
Note that we do {\it not} assume Condition \ref{cond420} in Theorem \ref{prop61111}.
\begin{theorem}\label{prop61111}
	Let $L_0,L_1 \subset X \setminus \mathcal D$ be a  pair of compact Lagrangian submanifolds.
	We assume that $L_0$ is transversal to $L_1$. Let $E$ be a positive number.
	Then there exists a system of multi-valued perturbations $\{\frak s_{n}\}$ \footnote{ 
	Compared to \cite[Definition 6.12]{fooonewbook}, we slightly modify the definition, 
	that is, we require only $C^1$ or $C^0$ smoothness of multi-section. } on
	the moduli spaces $\mathcal M^{\rm RGW}_{k_1,k_0}(L_1,L_0;p,q;\beta)$
	of virtual dimension at most $1$ and $\omega(\beta) \le E$ such that the following holds.
\begin{enumerate}
	\item  The multi-section $\frak s_{n}$ is $C^0$ and is $C^1$ in a neighborhood of
		$\frak s_{n}^{-1}(0)$. The multi-sections $\frak s_{n}$ are transversal to $0$.
		The sequence of multi-sections $\frak s_n$ converges to the Kuranishi map in $C^0$. 
		Moreover, this convergence is in $C^1$ in a neighborhood of the zero locus of the Kuranishi map.
	\item The multi-valued perturbations $\{\frak s_{n}\}$ are compatible with the description of the 
		boundary given by Theorem \ref{theorem30}. 
	\item Suppose that the (virtual) dimension of 
                $\mathcal M^{\rm RGW}_{k_1,k_0}(L_1,L_0;p,q;\beta)$
                is not greater than $1$. Then the multisection 
                $\frak s_n$ does not vanish on the codimension $2$ stratum
                $\mathcal M^{\rm RGW}_{k_1,k_0}(L_1,L_0;p,q;\beta)^{(1)}$
                 described by Proposition \ref{prop356}.
\end{enumerate}
\end{theorem}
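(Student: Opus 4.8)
The plan is to construct the system $\{\frak s_n\}$ inductively on the energy $\omega(\beta)$ and, for fixed energy, on the virtual dimension, so that at each stage the perturbation is already determined on the normalized boundary by the compatibility requirement (2), and one extends it inward. Concretely, I would enumerate the finitely many moduli spaces $\mathcal M^{\rm RGW}_{k_1,k_0}(L_1,L_0;p,q;\beta)$ with $\omega(\beta)\le E$ and virtual dimension $\le 1$ according to the partial order given by energy (and, within one energy level, by inclusion of boundary strata coming from Theorem \ref{theorem30}). At the base of the induction — the spaces of lowest energy, whose boundary pieces of types (1)–(3) in Theorem \ref{theorem30} involve only lower-energy moduli spaces or are empty — one simply chooses a transversal $C^0$ multisection that is $C^1$ near its zero set, using the general existence theory for multisections on spaces with Kuranishi structure (cf.\ \cite[Chapter 6]{fooonewbook}), adapted to the weakened smoothness hypothesis noted in the footnote. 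For the inductive step, the boundary description of Theorem \ref{theorem30} gives the normalized boundary of $\mathcal M^{\rm RGW}_{k_1,k_0}(L_1,L_0;p,q;\beta)$ as a union of $\hat\times$-, $\hat\times_{L_1}$- and $\hat\times_{L_0}$-products of moduli spaces already treated; the perturbations there are already fixed, and pushing them forward via the product/fiber-product construction yields a perturbation on the boundary which one then extends to a collar and then to the interior, keeping transversality and the $C^0/C^1$ convergence to the Kuranishi map. Running the same inductive construction with $\frak s_n$ replaced by a sequence shrinking to $0$ gives the convergence statements in (1).

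The two genuinely new points over the standard construction are (2) and (3). For (2), the subtlety — flagged already in the introduction and in the footnotes to Theorem \ref{theorem30} — is that $\hat\times$ and $\hat\times_{L_i}$ are \emph{not} the naive fibre products: they agree with the naive products only away from a union of codimension-$2$ strata, and there the obstruction bundle is the product of pulled-back obstruction bundles. So "compatibility with the boundary" has to be interpreted as: on the open dense part of each boundary face the restriction of $\frak s_n$ is the product of the chosen perturbations on the factors, and this prescription extends (not necessarily as a product) over the codimension-$2$ locus. This is possible precisely because one has Kuranishi structures available on \emph{all} strata, including the high-codimension ones, by \cite{part2:kura}; one uses them to interpolate across the codimension-$2$ discrepancy. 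I would treat this by first defining $\frak s_n$ as a product perturbation on the complement of the codimension-$2$ strata in each boundary face, observing that the two product descriptions coming from two faces sharing a corner agree there by associativity of the construction (this uses that $\beta_1\#\beta_2=\beta$ decompositions compose consistently), and then extending over the codimension-$2$ strata by a relative transversality argument, which is harmless because those strata have codimension $2$ and we only care about the zero set in virtual dimension $\le 1$.

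For (3) one exploits exactly that last point: $\mathcal M^{\rm RGW}_{k_1,k_0}(L_1,L_0;p,q;\beta)^{(1)}$ is a stratum of (real) codimension $2$, described by Proposition \ref{prop356}, whose own virtual dimension is therefore $\le -1$ when the ambient virtual dimension is $\le 1$. Hence a transversal multisection on a Kuranishi neighborhood of that stratum has empty zero set there automatically; one only needs to arrange, during the extension step of the induction, that the perturbation chosen near $\mathcal M^{(1)}$ is in fact transversal as a multisection on that stratum (with its own Kuranishi structure), which can always be done and is compatible with the boundary prescription since $\mathcal M^{(1)}$ meets the boundary faces in their own codimension-$2$ loci. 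So (3) is really a corollary of doing the inductive extension carefully rather than an independent construction.

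The main obstacle is the compatibility in (2) across the codimension-$2$ strata: one must check that the "product perturbation on the generic part, arbitrary transversal extension over the bad part" recipe can be carried out coherently over \emph{all} boundary faces and corners simultaneously — i.e.\ that the choices made for different decompositions $\beta=\beta_1\#\beta_2$ fit together at triple corners and that the extension over the codimension-$2$ locus does not destroy transversality already achieved on adjacent faces. This is the technical heart; once it is set up, the energy/dimension induction and the convergence statements (1) follow by the now-routine multisection machinery of \cite{fooonewbook,fooobook2}, and (3) follows from the dimension count above.
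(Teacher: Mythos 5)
Your overall skeleton (induction on energy, perturbations on the boundary induced from products of lower pieces, extension inward, and a dimension count on the stratum $\mathcal M^{\rm RGW}_{k_1,k_0}(L_1,L_0;p,q;\beta)^{(1)}$ for item (3)) is the same as the paper's, but two essential pieces are missing. First, your induction runs only over strip moduli spaces of virtual dimension at most $1$, so the boundary faces of types (2) and (3) in Theorem \ref{theorem30}, which are fiber products with the disk moduli spaces $\mathcal M^{\rm RGW}_{k''+1}(L_i;\alpha)$, are \emph{not} ``products of moduli spaces already treated'': no perturbation has been chosen on the disk factors (and they may have dimension $>1$). The paper handles this by using the trivial perturbation on the disk factors (only their Kuranishi structures are needed) and then showing that on such faces the strip factor has negative virtual dimension, so the induced perturbation never vanishes there; without some such prescription your item (2) is not even defined on those faces. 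Second, and more seriously, you do not carry item (3) along as an induction hypothesis and you never analyze the zero locus of the boundary perturbation before extending. The perturbation on a face of type \eqref{form360} is the pull-back under the map $\Pi$ of Proposition \ref{lema362}, which collapses $\bbC_*^{h}$-directions over the codimension-$2$ locus; hence it is automatically degenerate there and one cannot simply ``extend to a collar and then to the interior, keeping transversality.'' The paper's extension step works only because the inductive item (3), applied to the $0$-dimensional factors in \eqref{form6197}, guarantees that the zeros of the boundary perturbation are finitely many, lie where $\Pi$ is an isomorphism, and avoid $\mathcal M^{(1)}$; one then fixes Kuranishi charts at these finitely many points, extends transversally there, and elsewhere extends only in the $C^0$, never-vanishing sense. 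Your proposal skips exactly this vanishing analysis, which is the step the whole construction hinges on.

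There is also a discrepancy in how you interpret compatibility (2). You propose ``product perturbation on the complement of the codimension-$2$ strata, arbitrary transversal extension over the bad locus.'' The paper instead defines the boundary perturbation globally on the $\hat\times$-face as the $\Pi$-pull-back of the product perturbation; this is well defined everywhere (including over the codimension-$2$ strata) precisely because the Kuranishi structures are constructed so that the obstruction bundle on the face is the pull-back of the product obstruction bundle and $F_{\frak p}$ is a submersion (Proposition \ref{lema362}(2), Proposition \ref{lem364}, and the disk-component-wise obstruction bundle data of Section \ref{sub:systemconst}). That construction is also what makes the choices coherent at corners of arbitrary codimension — the very point you flag as ``the technical heart'' and leave unresolved. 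So the coherence you would need to check by hand for your patching recipe is, in the paper, supplied by the Section \ref{sub:systemconst} construction, and your argument is incomplete without a substitute for it.
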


The proof of Theorem \ref{prop61111} will be given in Subsection \ref{subsub:constmulti}. 
We  observe that the next statement follows immediately.
\begin{cor}\label{Cor27}
Let $L_i$  and $\{\frak s_{n}\}$ be as in Theorem \ref{prop61111}.
For given $E$ and sufficiently large $n$ (depending on $E$) the following 
holds.
\begin{enumerate}
	\item If the (virtual) dimension of 
                $\mathcal M^{\rm RGW}_{k_1,k_0}(L_1,L_0;p,q;\beta)$
                is negative then the multi-valued perturbation $\frak s_{n}$ has no zero on it.
	\item If the (virtual) dimension of 
                $\mathcal M^{\rm RGW}_{k_1,k_0}(L_1,L_0;p,q;\beta)$ 
                is $0$, then  the multi-valued perturbation $\frak s_{n}$ has only a finite number of zeros.
	\item If the (virtual) dimension of 
                $\mathcal M^{\rm RGW}_{k_1,k_0}(L_1,L_0;p,q;\beta)$ 
                is $1$, then the set of zeros of multi-valued perturbation $\frak s_{n}$ can be compactified
                to an orientated 1-dimensional chain. Its boundary is the 
                zero set of the multi-valued perturbation on the space described by 
                Theorem \ref{theorem30}
                and Item (2).
\end{enumerate}
\end{cor}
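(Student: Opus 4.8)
The plan is to read off the three items directly from Theorem \ref{prop61111}, by combining transversality of the perturbations $\mathfrak s_n$ with the compactness of the moduli spaces $\mathcal M^{\rm RGW}_{k_1,k_0}(L_1,L_0;p,q;\beta)$ (as spaces with Kuranishi structure) and the boundary description of Theorem \ref{theorem30}. First I would fix $E$ and note that, by monotonicity together with the compactness results of \cite{part1:top,part2:kura}, there are only finitely many triples $(\beta,k_1,k_0)$ with $\omega(\beta)\le E$ for which $\mathcal M^{\rm RGW}_{k_1,k_0}(L_1,L_0;p,q;\beta)$ has virtual dimension $\le 1$; so one may choose $n$ large enough that the conclusions of Theorem \ref{prop61111} are in force on all of these simultaneously, and work with such an $n$ from now on.

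For Items (1) and (2) I would argue locally in Kuranishi charts. By Theorem \ref{prop61111}(1) each $\mathfrak s_n$ is transversal to the zero section and is $C^1$ in a neighbourhood of $\mathfrak s_n^{-1}(0)$; hence in every chart $\mathfrak s_n^{-1}(0)$ is cut out transversally, and $\mathfrak s_n^{-1}(0)$ acquires the structure of a (weighted, branched) smooth manifold of dimension equal to the virtual dimension of the moduli space. When that dimension is negative, transversality forces $\mathfrak s_n^{-1}(0)=\emptyset$, which is Item (1). When it is $0$, $\mathfrak s_n^{-1}(0)$ is a compact $0$–dimensional manifold of this kind — the compactness being inherited from that of $\mathcal M^{\rm RGW}_{k_1,k_0}(L_1,L_0;p,q;\beta)$ — hence a finite set of points, each carrying a rational weight, which is Item (2).

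For Item (3) the same reasoning produces a $1$–dimensional weighted branched manifold $\mathfrak s_n^{-1}(0)$, and compactness of the ambient Kuranishi space lets me compactify it to such a manifold with boundary, i.e. to an oriented $1$–chain, the orientation coming from the orientability of the Kuranishi structures asserted in Theorem \ref{theorem30} (and the standard sign bookkeeping). Its boundary lies in the normalized boundary of $\mathcal M^{\rm RGW}_{k_1,k_0}(L_1,L_0;p,q;\beta)$, which by Theorem \ref{theorem30} is the union of the three types of fiber/direct products \emph{outside} the codimension $2$ stratum $\mathcal M^{\rm RGW}_{k_1,k_0}(L_1,L_0;p,q;\beta)^{(1)}$ of Proposition \ref{prop356}. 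Here Theorem \ref{prop61111}(3) is decisive: since $\mathfrak s_n$ does not vanish on that codimension $2$ stratum, it contributes nothing to $\partial\mathfrak s_n^{-1}(0)$; and by Theorem \ref{prop61111}(2) the perturbations match the fiber–product description near the remaining codimension $1$ boundary strata, so there $\mathfrak s_n^{-1}(0)$ is identified with the corresponding fiber products of the zero loci of lower–energy perturbations. Consequently $\partial\mathfrak s_n^{-1}(0)$ is exactly the finite $0$–dimensional zero set of Item (2) sitting on the strata listed in Theorem \ref{theorem30}, with matching orientations by the last sentence of that theorem.

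The one step I expect to be not purely formal is the handling of the codimension $2$ stratum $\mathcal M^{(1)}$. The new feature of these moduli spaces, emphasised already in the introduction, is that their normalized boundary does \emph{not} admit a global fiber–product description, and that description fails precisely along $\mathcal M^{(1)}$; so one cannot merely invoke the usual ``boundary equals fiber products'' principle, and one must know that the perturbed zero set stays away from $\mathcal M^{(1)}$. This is exactly what Theorem \ref{prop61111}(3) is engineered to provide, so the difficulty has been pushed into that theorem. Everything else is the routine translation, via transversal multisections on Kuranishi structures, of compactness and boundary statements into statements about zero sets, with the $C^1$–regularity of $\mathfrak s_n$ near $\mathfrak s_n^{-1}(0)$ from Theorem \ref{prop61111}(1) as the minor technical input that legitimises regarding the zero set as a chain.
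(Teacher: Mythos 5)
Your proposal is correct and follows essentially the same route as the paper, which offers no separate proof and simply declares the corollary to "follow immediately" from Theorem \ref{prop61111} together with compactness and the boundary description of Theorem \ref{theorem30}. You have merely made explicit the standard dictionary between transversal multisections and virtual chains, including the one genuinely non-formal point — that Theorem \ref{prop61111}(3) keeps the perturbed zero set away from the codimension~$2$ stratum so that the boundary of the $1$-dimensional zero locus is accounted for by the fiber-product strata alone — which is exactly the role that item is designed to play.
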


\begin{definition}
Let $o \in \pi_0(\Omega(L_0,L_1))$.
We define
$$
CF(L_1,L_0;\bbQ;o) = \bigcup_{p \in L_0 \cap_o L_1} \bbQ [p].
$$
We also define
$$
CF(L_1,L_0;\bbQ) = \bigoplus_{o \in \pi_0(\Omega(L_0,L_1))}  CF(L_1,L_0;\bbQ;o).
$$
\end{definition}
The following lemma can be easily verified using the dimension formulas.
\begin{lemma}\label{lem460}
        If Condition \ref{cond420} is satisfied for $o \in \pi_0(\Omega(L_0,L_1))$, then there exists $E$ such that if the moduli space $\mathcal M_{k+1}^{\rm RGW}(L_i;\beta)$ or 
        $\mathcal M^{\rm RGW}_{k_1,k_0}(L_1,L_0;p,q;\beta)$ ($p,q \in L_0 \cap_o L_1$)
        has virtual dimension not greater than $1$, then we have
        \[
          \omega(\beta) \le E.
        \]
\end{lemma}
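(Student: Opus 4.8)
\medskip

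The plan is to extract from Condition \ref{cond420} a bound relating $\omega(\beta)$ to the Maslov index, then bound the Maslov index in terms of the virtual dimension using the dimension formulas for the moduli spaces $\mathcal M^{\rm RGW}_k(L_i;\beta)$ and $\mathcal M^{\rm RGW}_{k_1,k_0}(L_1,L_0;p,q;\beta)$. First I would recall that an element of $\mathcal M^{\rm RGW}_{k_1,k_0}(L_1,L_0;p,q;\beta)$ which contributes (i.e., on which perturbed sections can vanish, cf. Theorem \ref{prop61111} (3) and Corollary \ref{Cor27}) is represented by a stable map not meeting $\mathcal D$, so the relevant classes $\beta$ satisfy $\beta \cdot [\mathcal D] = 0$; hence Lemma \ref{index-energy-strip} applies and gives $\omega(\beta) = c\,\mu(\beta) - c(p,q)$. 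Similarly, for disk classes $\alpha \in \Pi_2(X,L_i;\bbZ)$ with $\alpha \cdot [\mathcal D] = 0$, monotonicity of $L_i$ in $X \setminus \mathcal D$ gives $\omega(\alpha) = c\,\mu(\alpha)$ (possibly after absorbing a sign, since $c$ can be taken the same for the pair by Lemma \ref{index-energy-strip}). Strictly speaking one must also handle classes $\beta$ with $\beta\cdot[\mathcal D]>0$: for these the energy is at least the minimal area of a class meeting $\mathcal D$, and the virtual dimension constraint together with the positivity of the divisor contribution forces only finitely many such classes with bounded dimension to be relevant; I would treat this case separately and note it produces only finitely many $\omega(\beta)$ values, whose maximum can be absorbed into $E$.

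\medskip

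Next I would invoke the dimension formula. For $\mathcal M^{\rm RGW}_k(L_i;\alpha)$ the virtual dimension is $n + \mu(\alpha) + k - 2$ (with $n = \dim L_i$), and for $\mathcal M^{\rm RGW}_{k_1,k_0}(L_1,L_0;p,q;\beta)$ it is $\mu(\beta) - 1 + k_1 + k_0$ up to the index contributions of $p,q$; these are exactly the formulas established in \cite{part1:top,part2:kura} for the RGW moduli spaces. In either case, the hypothesis that the virtual dimension is at most $1$ together with $k, k_i \ge 0$ yields an upper bound on $\mu(\alpha)$ respectively $\mu(\beta)$: explicitly $\mu(\alpha) \le 3 - n - k \le 3 - n$ and $\mu(\beta) \le 2 - k_1 - k_0 \le 2$, with $p,q$ ranging over the finite set $L_0 \cap_o L_1$. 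Since there are only finitely many intersection points, the constants $c(p,q)$ attain finitely many values, so there is a uniform $C_0 = \max_{p,q} c(p,q)$.

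\medskip

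Finally I would assemble the bound. Using $\omega(\beta) = c\,\mu(\beta) - c(p,q)$ and $\mu(\beta) \le 2$ we get $\omega(\beta) \le 2|c| + C_0$ when $c \ge 0$, and more generally $\omega(\beta) \le c\cdot(\text{upper or lower bound on }\mu) - \min_{p,q}c(p,q)$; in all cases the right-hand side is a fixed constant depending only on $L_0,L_1,o$ and the monotonicity constant $c$. Likewise for the disk moduli spaces $\omega(\alpha) = c\,\mu(\alpha)$ with $\mu(\alpha)$ bounded above gives $\omega(\alpha)$ bounded above. Taking $E$ to be the maximum of these finitely many bounds (including the separate contribution from classes with $\beta\cdot[\mathcal D]>0$ discussed above) completes the proof. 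The one point requiring genuine care — rather than being purely formulaic — is the bookkeeping for classes that do meet $\mathcal D$ and hence escape the clean monotonicity relation of Lemma \ref{index-energy-strip}; the rest is a direct substitution into the dimension formulas. $\qed$
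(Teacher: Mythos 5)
Your argument is essentially the paper's own: the paper gives no proof beyond "easily verified using the dimension formulas," and your combination of the dimension formulas (virtual dimension $\le 1$ forces an upper bound on $\mu(\beta)$, namely $\mu(\alpha)\le 3-n-k$ for disks and $\mu(\beta)\le 2-k_1-k_0$ for strips), Lemma \ref{index-energy-strip} together with monotonicity (converting this into an upper bound on $\omega$, the monotonicity constant being positive), and finiteness of $L_0\cap_o L_1$ (so the constants $c(p,q)$ are uniformly bounded) is exactly that verification. The only blemish is the detour about classes with $\beta\cdot[\mathcal D]>0$: it is vacuous, since the RGW moduli spaces compactify strips and disks lying in $X\setminus\mathcal D$ and hence are only considered for classes with $\beta\cap[\mathcal D]=0$, and your proposed justification via Theorem \ref{prop61111} (3) and Corollary \ref{Cor27} would in any case be circular, because those results are invoked with the energy bound $E$ that this very lemma is meant to supply.
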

We take $E$ as in Lemma \ref{lem460} and apply 
Corollary \ref{Cor27} to  obtain a 
system of multi-valued perturbations $\{\widehat{\frak s}^{n}\}$.
If $\dim \mathcal M^{\rm RGW}_{0,0}(L_1,L_0;p,q;\beta) = 0$,
then by Item (2) of Corollary \ref{Cor27}, we have a rational number
$$
[\mathcal M^{\rm RGW}_{0,0}(L_1,L_0;p,q;\beta),\widehat{\frak s}^n] \in \bbQ
$$
for sufficiently large values of  $n$. We can use compactness of stable map compactification to show
that there exists only a finite number of $\beta$ with 
$\omega(\beta) \le E$ such that the moduli space $\mathcal M^{\rm RGW}_{0,0}(L_1,L_0;p,q;\beta)$ is non-empty.

\begin{definition}\label{defn2.10}
	Assume Item(2) of Condition \ref{cond420} holds. We define
	\begin{equation}\label{differential-Floer}
		\langle \partial p,q\rangle= \sum_{\beta}[\mathcal M^{\rm RGW}_{0,0}(L_1,L_0;p,q;\beta),\widehat{\frak s}^n]  \in \bbQ.
	\end{equation}
	Here the sum is taken over all $\beta$ that $\dim (\mathcal M^{\rm RGW}_{0,0}(L_1,L_0;p,q;\beta)) = 0$.
	We  define the Floer's boundary operator $\partial : CF(L_1,L_0;\bbQ) \to CF(L_1,L_0;\bbQ)$ by
	\begin{equation}\label{form43}
		\partial [p] = \sum_q \langle \partial p,q\rangle [q].
	\end{equation}
\end{definition}
Note that \eqref{differential-Floer} depends on $n$.  However, we shall show in Theorem \ref{lem48} that $\partial$ is a differential, and then in Subsection \ref{subsec:welldef} we verify that the homology of this differential is independent of $n$.
\begin{theorem}\label{lem48}
If Condition \ref{cond420} is satisfied, then
$$
\partial \circ \partial = 0.
$$
\end{theorem}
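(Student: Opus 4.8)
The plan is to run the standard "$\partial^2 = 0$" argument for Lagrangian Floer homology, using Corollary \ref{Cor27}(3) as the replacement for the usual compactified $1$-dimensional moduli space. Fix $p, q \in L_0 \cap_o L_1$ and a class $\beta \in \Pi_2(X;L_1,L_0;p,q)$ with $\beta \cdot [\mathcal D] = 0$ and $\dim \mathcal M^{\rm RGW}_{0,0}(L_1,L_0;p,q;\beta) = 1$. By Lemma \ref{lem460} we have $\omega(\beta) \le E$, so Theorem \ref{prop61111} and Corollary \ref{Cor27} apply. First I would invoke Corollary \ref{Cor27}(3): the zero set of $\widehat{\mathfrak s}^n$ on this $1$-dimensional moduli space compactifies to an oriented $1$-chain whose boundary is the zero set of the induced perturbation on the normalized boundary described in Theorem \ref{theorem30}. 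Since the left-hand side is a closed $1$-chain, the signed count of its boundary is zero.

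Next I would identify which pieces of the Theorem \ref{theorem30} boundary actually contribute. The boundary has three types of strata. Type (1) is the fiber-or-direct product $\mathcal M^{\rm RGW}_{0,0}(L_1,L_0;p,r;\beta_1)\,\hat\times\,\mathcal M^{\rm RGW}_{0,0}(L_1,L_0;r,q;\beta_2)$ with $\beta_1 \# \beta_2 = \beta$, $\beta_i \cdot [\mathcal D] = 0$; for a contribution to the boundary $0$-dimensional count both factors must be $0$-dimensional, and because the relevant marked-point numbers are $k_i = 0$, the $\hat\times$ reduces to an honest direct product of finite sets of points. Summing over $r$ and over splittings $\beta = \beta_1 \# \beta_2$, this contributes precisely $\sum_r \langle \partial r, q\rangle \langle \partial p, r\rangle$, i.e. the matrix coefficient of $\partial \circ \partial$. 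Types (2) and (3) involve a disk bubble $\mathcal M^{\rm RGW}_{k''_1 + 1}(L_1;\alpha)$ or $\mathcal M^{\rm RGW}_{k''_2}(L_0;\alpha)$ with $\alpha \cdot [\mathcal D] = 0$; here I would use Condition \ref{cond420}(1) — the minimal Maslov numbers of $L_0$ and $L_1$ are at least $4$ — together with the dimension/monotonicity formulas: a nonconstant disk contributes Maslov index $\ge 4$, forcing the strip factor to have negative virtual dimension, on which $\widehat{\mathfrak s}^n$ has no zero by Corollary \ref{Cor27}(1). Hence Types (2) and (3) contribute nothing, and the vanishing of the total boundary count gives $\langle \partial(\partial p), q\rangle = 0$ for all $p, q$, i.e. $\partial \circ \partial = 0$.

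There are two points needing care. The first, and the one I expect to be the main obstacle, is the compatibility of the chosen perturbation $\widehat{\mathfrak s}^n$ across the codimension-one boundary strata and, crucially, its behavior on the codimension-two stratum $\mathcal M^{\rm RGW}_{k_1,k_0}(L_1,L_0;p,q;\beta)^{(1)}$ of Proposition \ref{prop356}: in the present setup the boundary of a moduli space is a fiber product only \emph{outside} a codimension-two stratum, so the usual inductive construction of compatible perturbations must be supplemented by Theorem \ref{prop61111}(3), which guarantees $\mathfrak s_n$ is nonvanishing there. This is exactly what makes the compactified $1$-chain of Corollary \ref{Cor27}(3) well-defined with the stated boundary, so the heart of the matter is packaged into that corollary; I would cite it rather than reprove it. The second point is orientations and signs: as the remark in the introduction explains, since the zero set of $\widehat{\mathfrak s}^n$ consists of pseudo-holomorphic maps disjoint from $\mathcal D$, the sign bookkeeping is identical to the standard case, so I would follow \cite[Section 8]{fooobook2} and \cite[Chapter 8]{fooonewbook} and suppress the details. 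Finally, I would note that the identity holds for each fixed large $n$ (the value of $n$ enters \eqref{differential-Floer}), which is all that is claimed here; independence of the homology from $n$ is deferred to Subsection \ref{subsec:welldef}.
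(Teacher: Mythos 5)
Your proposal is correct and takes essentially the same route as the paper: apply Corollary \ref{Cor27} (3) to each $1$-dimensional moduli space $\mathcal M^{\rm RGW}_{0,0}(L_1,L_0;p,q;\beta)$, identify the type (1) boundary count with the matrix coefficient of $\partial\circ\partial$, and kill the type (2)--(3) contributions because minimal Maslov number $\ge 4$ forces the strip factor to have negative virtual dimension, where $\widehat{\frak s}^n$ has no zeros. One small correction: the reason the count on the $\hat\times$ stratum equals the product of the two $0$-dimensional counts is not that $k_i=0$ (the discrepancy between $\hat\times$ and $\times$ lives on the positive-level codimension-two strata, not at marked points), but exactly the point you raise afterwards --- Theorem \ref{prop61111} (3) keeps the zero set of $\widehat{\frak s}^n$ away from $\mathcal M^{(1)}$, so the map $\Pi$ of Proposition \ref{lema362} is an isomorphism of Kuranishi structures near the zero set and the count factorizes, which is precisely how the paper argues.
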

We need a more detailed description of the boundary to prove this theorem. We provide this description in Subsection \ref{c} and Section \ref{sub:systemconst}. Using this description, we will prove Theorem \ref{lem48} in Subsection \ref{subsub:Floer2}.

\subsection{Stratifications and Description of the Boundary}
\label{c}

A subtle feature of the RGW compactification and its 
boundary structure is that the inclusion
\begin{equation}\label{eq355}
\aligned
&\mathcal M^{\rm RGW}_{k'_1,k'_0}(L_1,L_0;p,r;\beta_1)
\times
\mathcal M^{\rm RGW}_{k''_1,k''_0}(L_1,L_0;r,q;\beta_2)
\\
&\subset 
\partial \mathcal M^{\rm RGW}_{k_1,k_0}(L_1,L_0;p,q;\beta)
\endaligned
\end{equation}
does {\it not} hold. 
However, this does not cause any problem for our construction of Floer homology, because \eqref{eq355} holds outside a union of codimension 2 strata. This subtlety is also responsible for the notation $\hat\times$ appearing in 
Theorem \ref{theorem30}. The purpose of this subsection is to clarify this point.
We firstly introduce the notion of an even codimension stratification of a Kuranishi structure. We follow the terminology in \cite{fooonewbook} for the definition of Kuranishi structures.

\begin{definition}
	Let $X$ be a space with a Kuranishi structure 
	$\widehat{\mathcal U} = (\{{\mathcal U}_p\}$, $\{\Phi_{pq}\})$, where 
	${\mathcal U}_p = (U_p,E_p,s_p,\psi_p)$ and $\Phi_{pq} = (U_{pq},\hat{\varphi}_{pq},\varphi_{pq})$
	are respectively Kuranishi charts and coordinate changes.
	By shrinking $U_p$, we may assume $U_p = V_p/\Gamma_p$ where $V_p$ is a 
	manifold and $\Gamma_p$ is a finite group.
	
	An {\it even codimension stratification} of $\widehat{\mathcal U}$
	is a choice of a close subset $X^{(n)} \subset X$  for each $n \in \bbZ_{+}$ and $V_p^{(n)}
	\subset V_p$ 
	for each $p \in X^{(n)}$ with the following properties:
	\begin{enumerate}
	\item
	$\overset{\circ}{V}_p^{(n)}: = V_p^{(n)} \setminus V_p^{(n+1)}$ is a codimension $2n$ embedded submanifold of 
	$V_p$. $V_p^{(n)}$ is $\Gamma_p$-invariant. We define $U_p^{(n)} = V_p^{(n)}/\Gamma_p$, 
	$\overset{\circ}U_p^{(n)} = 	\overset{\circ}V_p^{(n)}/\Gamma_p$.
	\item
	$\psi_p(s_p^{-1}(0) \cap \overset{\circ}U_p^{(n)}) \subset X^{(n)}\setminus X^{(n+1)}$.
	\item
	$\varphi_{pq}(U_{pq} \cap U_q^{(n)}) \subset U_p^{(n)}$
	for any $p \in X^{(n)}$, $q \in \psi_p(s_p^{-1}(0) \cap U_p^{(n)})$.
	\item
	$\widehat{\mathcal U}^{(n)}_p = (\overset{\circ}U^{(n)}_p,
	E_p\vert_{\overset{\circ}U^{(n)}_p},s_p\vert_{\overset{\circ}U^{(n)}_p},
	\psi_p\vert_{\overset{\circ}U^{(n)}_p\cap s_p^{-1}(0)})$
	and a similar restriction of coordinate changes $\Phi_{pq}$ define a Kuranishi structure 
	for $X^{(n)} \setminus X^{(n+1)}$.
	In particular, the dimension of $X^{(n)} \setminus X^{(n+1)}$ with respect to this Kuranishi structure
	is $\dim (X,\widehat{\mathcal U}) - 2n$.
	\item
	If $(X,\widehat{\mathcal U})$ has boundary and corner (that is, 
	$U_p$ has boundary and corner) we require that the codimension $k$ corner $S_kU^{(n)}_p$ coincides with 
	$U^{(n)}_p \cap S_kU_p$ where $S_kU_p$ is the codimension $k$ corner of $U_p$.
\end{enumerate}
We call $\{X^{(n)}\}$ the {\it underlying topological stratification}.
\end{definition}

The Kuranishi structure of the RGW compactification has an even codimension stratification in the above sense. For the following definition, recall that $\mathcal M^{\rm RGW}_{k_1,k_0}(L_1,L_0;p,q;\beta)$ is a union of the spaces ${\mathcal M}^{0}(\mathcal R)$ for all RD-ribbon trees $\mathcal R$ of type 
	$(p,q;\beta;k_0,k_1)$ (see \cite[Subsection 3.5]{part1:top}).
\begin{definition}
	We define 
\begin{equation}
\mathcal M^{\rm RGW}_{k_1,k_0}(L_1,L_0;p,q;\beta)^{(n)}
\subset \mathcal M^{\rm RGW}_{k_1,k_0}(L_1,L_0;p,q;\beta)
\end{equation}
as the disjoint union of ${\mathcal M}^{0}(\mathcal R)$ 
such that the total number of positive levels of $\mathcal R$
is not smaller than $n$. Similarly, we can define 
\begin{equation}
\mathcal M^{\rm RGW}_{k+1}(L;\beta)^{(n)}
\subset \mathcal M^{\rm RGW}_{k+1}(L;\beta),
\end{equation}
\begin{equation}
\mathcal M(\mathcal D\subset X;\alpha;{\bf m})^{(n)}
\subset \mathcal M(\mathcal D\subset X;\alpha;{\bf m}),
\end{equation}
where $\mathcal M(\mathcal D\subset X;\alpha;{\bf m})$ is defined in \cite[Subsection 3.3]{part1:top}.
\end{definition}
\begin{prop}\label{prop356}
The Kuranishi structure of 
$\mathcal M^{\rm RGW}_{k_1,k_0}(L_1,L_0;p,q;\beta)$
has an even codimension stratification whose underlying 
topological stratification is given by 
$$
\{\mathcal M^{\rm RGW}_{k_1,k_0}(L_1,L_0;p,q;\beta)^{(n)}
\}.
$$
The same holds for $\mathcal M^{\rm RGW}_{k+1}(L;\beta)$ and 
$\mathcal M(\mathcal D\subset X;\alpha;{\bf m})$.
\end{prop}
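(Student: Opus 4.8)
The plan is to reduce the claim to a local statement about each of the basic moduli building blocks appearing in the RGW compactification, and then propagate this along the tree-gluing construction used to define $\mathcal M^{\rm RGW}_{k_1,k_0}(L_1,L_0;p,q;\beta)$. Recall that this space is assembled as a union of pieces $\mathcal M^0(\mathcal R)$ indexed by RD-ribbon trees $\mathcal R$, where each $\mathcal R$ carries a level structure, and $\mathcal M^0(\mathcal R)$ is itself built as a fiber product of moduli spaces of pseudo-holomorphic maps into $X$ (the ``level $0$'' pieces) and into the rescaled/expanded targets obtained from the normal bundle of $\mathcal D$ (the ``positive level'' pieces $\mathcal M(\mathcal D\subset X;\alpha;\mathbf m)$ and their relatives). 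So the first step is to set up the candidate stratification by declaring $\mathcal M^{\rm RGW}_{k_1,k_0}(\dots)^{(n)}$ to be the union of those $\mathcal M^0(\mathcal R)$ with at least $n$ positive levels, exactly as in the Definition preceding the statement, and similarly for $\mathcal M^{\rm RGW}_{k+1}(L;\beta)$ and $\mathcal M(\mathcal D\subset X;\alpha;\mathbf m)$; then verify it is a closed subset, which follows from the fact that degenerating further (gaining more levels, i.e. bubbling into the divisor direction) is a closed condition in the RGW topology.

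The second step is to produce, in each Kuranishi chart $U_p = V_p/\Gamma_p$ of the RGW Kuranishi structure constructed in \cite{part2:kura}, the submanifolds $V_p^{(n)}\subset V_p$ and check properties (1)--(5) of the definition of an even codimension stratification. The key geometric input is that a chart near a point of $\mathcal M^0(\mathcal R)$ is modeled, via the gluing/deformation construction, on a product of charts for the level pieces together with smoothing (gluing) parameters — one gluing parameter in the normal-to-$\mathcal D$ direction for each edge of $\mathcal R$ across which two consecutive levels are glued. Setting all such gluing parameters for the edges separating level $j$ from level $j+1$ (for the top $n$ levels) equal to zero cuts out $V_p^{(n)}$, and because each such gluing parameter is a complex parameter (it lives in a fiber of the normal bundle of $\mathcal D$, which is a complex line bundle since $\mathcal D$ is a symplectic, hence almost-complex, divisor), each vanishing locus has real codimension $2$. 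This is precisely where the ``even codimension'' comes from, and it is what forces $\overset{\circ}V_p^{(n)}\setminus V_p^{(n+1)}$ to be a codimension-$2n$ submanifold, giving property (1). Property (2) is the statement that a stable map with exactly $n$ positive levels lies in the open stratum $X^{(n)}\setminus X^{(n+1)}$, which is immediate from the definitions. Properties (3) and (4) — compatibility with coordinate changes and the fact that the restricted data form a Kuranishi structure on the open stratum — follow because the coordinate changes of the RGW Kuranishi structure were built in \cite{part2:kura} respecting this level/gluing decomposition (the coordinate changes on level pieces are pulled back from the lower-dimensional RGW structures on $\mathcal M(\mathcal D\subset X;\alpha;\mathbf m)$), so the level-counting function is preserved and the obstruction bundles restrict correctly; the dimension drop by $2n$ is then just the count of the $n$ complex gluing parameters that have been set to zero. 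Property (5), compatibility with the boundary and corner structure, holds because the boundary/corner of a chart comes from degeneration of the domain curve or bubbling at the Lagrangian boundary, which is independent of and transverse to the divisor-direction smoothing parameters.

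The third step is to do the same analysis for the two other families, $\mathcal M^{\rm RGW}_{k+1}(L;\beta)$ and $\mathcal M(\mathcal D\subset X;\alpha;\mathbf m)$; for the latter the argument is essentially the base case of an induction, since it is itself the auxiliary space from which the higher levels of the RGW compactification are built, and its own level-counting stratification (by the height of the multi-level building in the expanded target of $\mathcal D$) is handled by the same reasoning, with the complex structure on the normal bundle again supplying the even codimension. One then runs the induction: the RGW Kuranishi structure on $\mathcal M^{\rm RGW}_{k_1,k_0}(\dots)$ is defined using, as input data, the Kuranishi structures on level pieces which include copies of $\mathcal M(\mathcal D\subset X;\alpha;\mathbf m)$ already known to carry even codimension stratifications, and the fiber-product-with-gluing construction adds $2$ to the codimension for each extra level, so the stratification is inherited. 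I expect the main obstacle to be the bookkeeping in the chart-level verification of properties (3) and (4): one must check that the coordinate changes $\Phi_{pq}$ of the RGW Kuranishi structure genuinely send $U_{pq}\cap U_q^{(n)}$ into $U_p^{(n)}$ and restrict to honest coordinate changes of the stratum-wise Kuranishi structures, and this requires tracking how the gluing parameters transform under the coordinate changes — in particular confirming that ``setting the top $n$ gluing parameters to zero'' is a coordinate-change-invariant condition. This is not hard in principle, because the whole RGW construction in \cite{part2:kura} was engineered to be compatible with the level filtration, but it is the place where one has to actually open up the definitions from \cite{part1:top,part2:kura} rather than argue abstractly, and it is where the ``new feature'' emphasized in the introduction — that boundary strata are only fiber products away from codimension-$2$ loci — is ultimately accounted for.
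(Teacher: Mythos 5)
Your overall framing (stratify each chart by the combinatorial type of the source curve, then check properties (1)--(5) of the definition) matches the paper's starting point, but the step that carries the actual content of Proposition \ref{prop356} --- that the stratum with $n$ positive levels has codimension exactly $2n$ --- is where your argument has a genuine gap. You assert that the chart near a deeper stratum carries ``one gluing parameter in the normal-to-$\mathcal D$ direction for each edge of $\mathcal R$ across which two consecutive levels are glued,'' and that setting these to zero cuts out a locus of codimension $2n$. Those two statements are incompatible as soon as more than one node joins two consecutive levels: counting one independent complex parameter per such edge would give codimension $2\sum_j e_j > 2n$, which is what happens in the ordinary stable map compactification, where these configurations sit in corners of higher codimension. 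The RGW-specific fact you need --- and which you cannot simply quote, since it is essentially the point being proved --- is that the smoothing parameters of all nodes between level $i$ and level $i+1$ are correlated, leaving one effective complex parameter per positive level (in the paper's formalism this appears as the parameters $\rho_i$, equivalently as the quotient $\widehat{\mathcal M}^0(\mathcal R)=\widetilde{\mathcal M}^0(\mathcal R)/\bbC_*^n$ by the level rescalings). This correlated smoothing is precisely the feature responsible for the $\hat\times$ phenomenon emphasized in the introduction, so it cannot be waved through as ``the construction in \cite{part2:kura} was engineered to be compatible with the level filtration.''

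Relatedly, you never verify the codimension quantitatively, whereas the paper's proof of Proposition \ref{prop356} is almost entirely that verification: it writes down the dimension of each factor attached to a vertex of the tree --- spheres in $\mathcal D$ carrying the section datum (the $+2$ term), spheres and disks/strips in $X$ with the tangency conditions $2(m(e)-1)$, etc., as in \eqref{form359}--\eqref{form362} --- sums them over $\check R$, subtracts the identifications coming from the level structure, and uses the combinatorial identities relating edges, multiplicities and Chern/Maslov classes to show that $\dim\widetilde{\mathcal M}^0(\mathcal R)=\mu(\beta)+k_0+k_1-\#\{v\mid c(v)={\rm d_0},{\rm d_1},{\rm str}\}$ independently of the number of levels; the codimension $2n$ then falls out after dividing by $\bbC_*^n$. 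Without either this dimension count or a genuine chart-level analysis establishing the ``one complex parameter per level'' structure, your Step 2 assumes the conclusion. The rest of your outline (closedness of the strata, properties (2), (3), (5), and the induction over $\mathcal M(\mathcal D\subset X;\alpha;{\bf m})$) is fine, but it rests on this unproved quantitative core.
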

\begin{proof}
	The Kuranishi structure on $\mathcal M^{\rm RGW}_{k_1,k_0}(L_1,L_0;p,q;\beta)$ in \cite{part2:kura} is constructed 
	 by relaxing the non-linear Cauchy-Riemann equation $\overline \partial u = 0$
	to $\overline \partial u \equiv 0 \mod E(u)$, where $E(u)$ is an appropriate finite dimensional subspace of $u^*TX$-valued 
	$(0,1)$-forms on the source curve of $u$. Namely, the Kuranishi neighborhood of $U_{\frak p}$
	is the set of the solutions of this relaxed equation.
	Our stratification is given by the combinatorial type 
	of the source curve. So we can stratify 
	$U_{\frak p}$ in the same way using the combinatorial type of the source curve. In order to show that this strafication
	has the required properties, we need to compute the dimension of different subspaces of 
	$\mathcal M^{\rm RGW}_{k_1,k_0}(L_1,L_0;p,q;\beta)$.


	Let $\mathcal R$ be an SD-ribbon tree of type $(p,q;\beta;k_0,k_1)$ with $n$ levels.
	Then the space $\widehat {\mathcal M}^0(\mathcal R) = \widetilde{\mathcal M}^0(\mathcal R)/\bbC_*^n$
	is contained in the codimension $\#\{ v \mid c(v) = {\rm d_0},{\rm d_1},{\rm str}\}-1$ 
	corner of $\mathcal M^{\rm RGW}_{k_1,k_0}(L_1,L_0;p,q;\beta)$. 
	We shall show that:
	\begin{equation}\label{dimension-M0R}
	  \dim(\widetilde{\mathcal M}^0(\mathcal R))=
	  \mu(\beta) + (k_1+k_0) - \#\{ v \mid c(v) = {\rm d_0},{\rm d_1}, {\rm str}\}.
	\end{equation}
	This implies that $\widehat{\mathcal M}^0(\mathcal R)$ has codimension $2n$ in the corner of codimension 
	$\#\{ v \mid c(v) = {\rm d_0},{\rm d_1}, {\rm str}\}-1$. 
	It is straightforward to use this observation to verify the proposition.
	
	If $v$ is a vertex of the detailed tree  $\hat R$ of $\mathcal R$ with color ${\rm D}$, then the 
	corresponding factor  $\widetilde {\mathcal M}^0(\mathcal R;v)$ is:
	\[
	  \widetilde {\mathcal M}^{0}(\mathcal D \subset X;\alpha(v);{\bf m}^{v}).
	\]
	Here ${\bf m}^{v} = (m(e_0(v)),\cdots,m(e_{\ell(v)}(v))$ with 
	$e_0(v),\dots,e_{\ell(v)}(v)$ being the edges incident to $v$.
	Using \cite[Remark 4.69]{part1:top}, we have
\begin{equation}\label{form359}
\aligned
&{\rm dim} \widetilde{\mathcal M}^{0}(\mathcal D \subset X;\alpha(v);{\bf m}^{v}) \\
&=
2(n-1) + 2 \alpha(v) \cdot c_1(\mathcal D) + 2(\ell(v)+1)- 6 + 2.
\endaligned
\end{equation}
	In fact, $2(n-1) + 2 \alpha(v) \cdot c_1(\mathcal D) - 6$ is the 
	dimension of the moduli space of holomorphic spheres in $\mathcal D$ of homology class $\alpha$. 
	Adding $\ell(v)+1$ marked points
	increases the dimension by $2(\ell(v)+1)$. 
	The last term $2$ appears in the formula because our moduli space  
	contains the data of a section $s$. 
	The difference between two choices of $s$ is given by a nonzero complex number.

	If $v$ is a level $0$ vertex with color ${\rm s}$, then the factor   $\widetilde {\mathcal M}^0(\mathcal R;v)$ is
	\[
	  \widetilde {\mathcal M}^{\rm reg, s}(\alpha(v);{\bf m}^{v}),
	\]
	where ${\bf m}^{v}=(-m(e_0(v)),m(e_1(v)),\cdots,m(e_{\ell(v)}(v))$ with $e_0(v),\dots,e_{\ell(v)}(v)$ being the edges in $\hat R$
	that connects $v$ to a vertex with color D. We have
	\begin{align}\label{form361new}
		\phantom{i + j + k}
		&\begin{aligned}
			&\dim \widetilde {\mathcal M}^{\rm reg, s}(\alpha(v);{\bf m}^{v}) \\
			&=2n + 2 \alpha(v) \cdot c_1(X) + 2\sum_{i=0}^{\ell(v)} (1 - |m(e_{i}(v))|)-6.
		\end{aligned}
	\end{align}
	This formula can be verified using the fact that the condition about tangency
	at the $i$-th marked point (\cite[Definition 3.57 (6)]{part1:top}) decreases the dimension 
	by $2(m(e_{i}(v))-1)$ or $-2(m(e_{0}(v))+1)$ depending on whether $i>0$ or $i=0$. 
	recall that $m(e_{0}(v)) < 0 < m(e_{i}(v))$ for $i>0$.

	If $v$ is a level $0$ vertex with color ${\rm d}_0$ or ${\rm d}_1$, then the corresponding factor is
	\[
	  \mathcal M_{k(v)+1}^{\rm reg, d}(\alpha(v);{\bf m}^{v})
	\]
	where ${\bf m}^{v}=(m(e_1(v)),\cdots,m(e_{\ell(v)}(v))$ 
	is defined as in the previous case, and $k(v) + 1$ is the number of level $0$ edges of $v$.
	We have
	\begin{align}\label{361form}
		\phantom{i + j + k}
		&\begin{aligned}
			&\dim \mathcal M_{k(v)+1}^{\rm reg, d}(\alpha(v);{\bf m}^{v}) \\
			&=n + \mu(\alpha(v)) + 2\sum_{i=1}^{\ell(v)} (1 - m(e_{i}(v)))+ k(v)- 2.
		\end{aligned}
	\end{align}
	Here $\mu$ is the Maslov index. (See, for example, \cite[Definition 2.1.15]{fooobook}.)
	
	If $v$ is a level $0$ vertex with color ${\rm str}$, then the corresponding factor is:
	\[
	  \mathcal M_{k_1(v),k_0(v)}^{\rm reg}(L_1,L_0;{\rm pt}(\frak e_\lambda(v)),{\rm pt}(\frak e_r(v));\alpha(v);{\bf m}^{v}).
	\]
	Here $k_i(v)$ is the number of level $0$ edges of $v$ contained in $R_i$, and $\frak e_l(v)$, $\frak e_r(v)$ 
	are the edges of $C$ as in \cite[Definition 3.94 (6)]{part1:top}. Furthermore, the tuple  
	${\bf m}^{v}=(m(e_1(v)),\cdots,m(e_{\ell(v)}(v))$ is defined as in the previous two cases.
	We have
	\begin{align}\label{form362}
		\phantom{i + j + k}
		&\begin{aligned}
			&\dim \mathcal M_{k_1(v),k_0(v)}^{\rm reg}(L_1,L_0;{\rm pt}(\frak e_l(v)),{\rm pt}(\frak e_r(v)); \alpha(v);{\bf m}^{v}) \\
			&=\mu(\alpha(v)) + 2\sum_{i=1}^{\ell(v)} (1- m(e_{i}(v)))+(k_1(v)+k_0(v)) - 1.
		\end{aligned}
	\end{align}
	where $\mu$ is the Maslov index.  (See, for example, \cite[Definition-Proposition 2.3.9]{fooobook}.)

	Now we can compute the dimension of $\widetilde{\mathcal M}^0(\mathcal R)$
	in \cite[(3.75)]{part2:kura} using (\ref{form359})-(\ref{form362}).
	This dimension is the sum of (\ref{form359})-(\ref{form362}) 
	minus $n\cdot \#(C_1^{\rm int}(R) \setminus C_1(C))$ minus $2(n-1)\cdot N_{>0}$.
	Here 
	$N_{>0}$ is the number of all interior edges that are not in $C_1(R)$.
	We denote by $C_1^{0<\lambda<1}(\hat R)$ the set of the edges of $\hat R$ 
	which join level $0$ vertices to positive level vertices.
	Then we have:
	\begin{align}\label{36333666}
		\phantom{i + j + k}
		&\begin{aligned}
		&\!\!\!\!\!\dim (\widetilde{\mathcal M}^0(\mathcal R))
                \\
                = &n\#\{v \mid c(v) = {\rm d_0, d_1}\} + 2(n-1)\#\{v \mid c(v) = {\rm D}\}
                \\
                &+2n\#\{v \mid c(v) = {\rm s}\} \\
                &+ \sum_{c(v) = {\rm d_0,d_1,str}}\mu(\alpha(v))
                + \sum_{c(v) = {\rm s}} 2\alpha(v)\cdot c_1(X) \\
                &+ \sum_{c(v) = {\rm D}} 2\alpha(v) \cdot c_1(\mathcal D)
                \\
                &+ 4 N_{>0} -\sum_{e \in C_1^{0<\lambda<1}(\hat R)} 2|m(e)|\\
                &- 4 \#\{v \mid c(v) = {\rm D}\} - 6 \#\{v \mid c(v) = {\rm s}\}\\
                &- 2 \#\{v \mid c(v) = {\rm d_0, d_1}\} 
                 - \#\{v \mid c(v) = {\rm str}\} \\
                &+ \sum_{i=0,1}\sum_{v \in C_0(R), c(v) = {\rm str}} k_i(v)
                + \sum_{v \in C_0(R), c(v) = {\rm d_0},{\rm d_1}} k(v)\\
                &- n \#(C_1^{\rm int}(R) \setminus C_1(C))- 2(n-1) N_{>0} 
		\end{aligned}
	\end{align}
	Recall that if $c(v) = {\rm d}_0$ or ${\rm d}_1$, then $k(v)+1$ is the number of level $0$ edges containing $v$.
	Note that the sixth line of \eqref{36333666} is obtained by summing up the terms 
	$2(\ell(v)+1)$ in \eqref{form359} and $2\sum (1 - |m(e_{i}(v))|)$ in 
	\eqref{form361new}, \eqref{361form} and \eqref{form362}.
	
	We have the following straightforward identities:
	\[
	\aligned
	&\#\{v \mid c(v) = {\rm d_0, d_1}\} =   \#(C_1^{\rm int}(R) \setminus C_1(C)), \\
	& \#\{v \mid c(v) = {\rm s},{\rm D}\} =  N_{>0},   \\
	&\sum_{\lambda(v)>0}\alpha(v) \cdot c_1(\mathcal D) = \sum_{\lambda(v)>0}\alpha(v) 
	\cdot c_1(X)+\sum_{e\in C_1^{0<\lambda<1}(\hat R)}m(e).
	\endaligned
	\]
	Moreover, $\beta = \sum_{v \in C^{\rm ins}_0(\hat R)}\alpha(v)$ and 
        $$
        \aligned
        &\sum_{i=0,1}\sum_{\{v \in C_0(R), c(v) = {\rm str}\}} k_i(v)
        + \sum_{\{v \in C_0(R) \mid c(v) = {\rm d_0},{\rm d_1}\}} k(v) \\
        &= 
        k_1 + k_0 + \#\{ v \in C_0(R) \mid c(v) = {\rm d_0},{\rm d_1}\}\endaligned
        $$
        Therefore, we can use the above equalities to simplify \eqref{36333666} and show that 
        the dimension of $\widetilde{\mathcal M}^0(\mathcal R)$ is equal to the expression given in \eqref{dimension-M0R}.
\end{proof}
Now we describe the boundary of our moduli spaces 
$\mathcal M^{\rm RGW}_{k_1,k_0}(L_1,L_0;p,q;\beta)$.
\begin{definition}
	A {\it boundary SD-ribbon tree of type} (1) for
	$\mathcal M^{\rm RGW}_{k_1,k_0}(L_1,L_0;p,q;\beta)$
	is an SD-ribbon tree 
	\[\mathcal R^{(1)}(p,r,q;\beta_1,\beta_2;k_{1,0},k_{1,1};k_{2,0},k_{2,1})=
	(R;\frak v_l,\frak v_r;\mathcal S,{\rm pt},\alpha,\le)\]
	where $\beta_1 \in \Pi_2(X;L_1,L_0;p,r)$, $\beta_2\in \Pi_2(X;L_1,L_0;r,q)$,
	$\beta=\beta_1 \# \beta_2$ and $k_{1,i} + k_{2,i} = k_i$ for $i=1,\,2$.
	This SD-ribbon tree is required to satisfy the following properties.
	\begin{itemize}
		\item[(1)] The path $C$ in $R$ contains only two interior vertices $\frak v_1$ and $\frak v_2$.
				If $e$ is the edge connecting $\frak v_1$ to $\frak v_2$, then
				$r\in L_0\cap L_1$ is associated to this edge.
		\item[(2)] The graphs $R_0$ and $R_1$ associated to $R$ do not have any interior vertex. 
			In $R_0$ (resp. $R_1$) there are $k_{i,0}$ (resp. $k_{i,1}$) exterior vertices connected to $\frak v_i$. 
		\item [(3)]	$\mathcal S(\frak v_1)$ (resp. $\mathcal S(\frak v_2)$)
			is the unique SD-tree of type $(p,r;\beta_1;k_{1,0},k_{1,1})$ (resp. SD-tree of type
			$(r,q;\beta_2;k_{2,0},k_{2,1})$) with only one vertex (which is necessarily labeled with d).
	\end{itemize}
\end{definition}

An example of a boundary SD-ribbon tree is given in Figure \ref{FIgureprqsplit}.
\begin{figure}[h]
\centering
\includegraphics[scale=0.3]{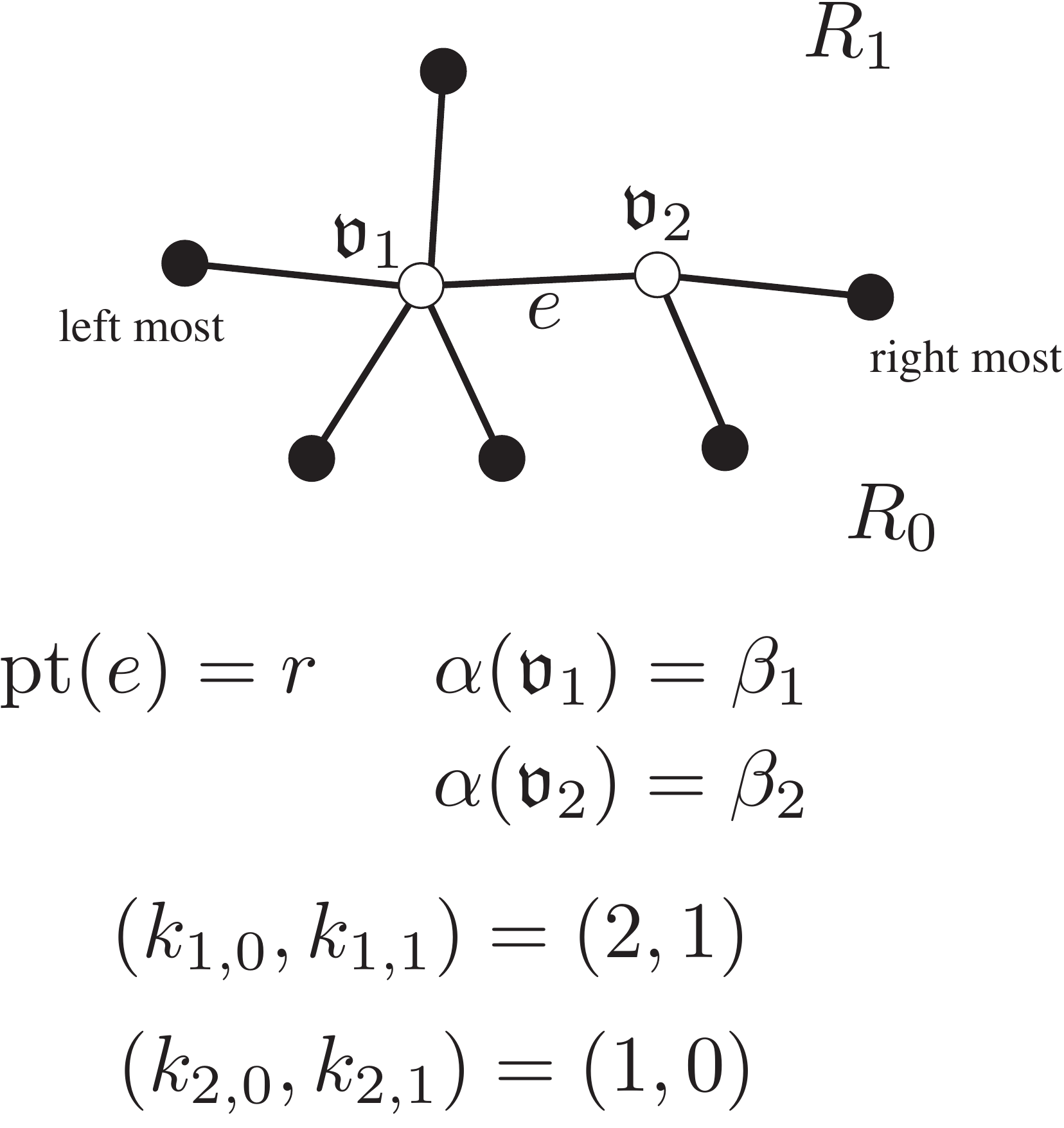}
\caption{A boundary SD-ribbon tree}
\label{FIgureprqsplit}
\end{figure}

\begin{definition}\label{def231}
	We denote by 
	\begin{equation}\label{form360}
		\mathcal M^{\rm RGW}_{k_{1,1},k_{1,0}}(L_1,L_0;p,r;\beta_1)
		\,\hat\times\,\mathcal M^{\rm RGW}_{k_{2,1},k_{2,0}}(L_1,L_0;r,q;\beta_2)
	\end{equation}
	the union of all moduli spaces ${\mathcal M}^0(\mathcal R')$ such that:
	\[
	  \mathcal R' \le \mathcal R^{(1)}(p,r,q;\beta_1,\beta_2;k_{1,0},k_{1,1};k_{2,0},k_{2,1})
	\]
	for some $r, \beta_1,\beta_2, k_{1,0}, k_{1,1}, k_{2,0}, k_{2,1}$ as above.
\end{definition}
By \cite[Theorem 4.61]{part1:top}, \eqref{form360} is closed in $\mathcal M^{\rm RGW}_{k_1,k_0}(L_1,L_0;p,q;\beta)$.
The arguments of \cite{part2:kura} show that \eqref{form360} has a Kuranishi structure and is a component of the normalized boundary of 
$\mathcal M^{\rm RGW}_{k_1,k_0}(L_1,L_0;p,q;\beta)$.

\begin{prop}\label{lema362}
	Kuranishi structures of our moduli spaces can be chosen such that the following holds.
	There exists a continuous map:
        $$
        \aligned
        \Pi : 
        &\mathcal M^{\rm RGW}_{k_{1,1},k_{1,0}}(L_1,L_0;p,r;\beta_1)\,\hat\times\,
        \mathcal M^{\rm RGW}_{k_{2,1},k_{2,0}}(L_1,L_0;r,q;\beta_2)\\
        &\to\mathcal M^{\rm RGW}_{k_{1,1},k_{1,0}}(L_1,L_0;p,r;\beta_1)\times 
        \mathcal M^{\rm RGW}_{k_{2,1},k_{2,0}}(L_1,L_0;r,q;\beta_2)
        \endaligned
        $$
        with the following properties.
        \begin{enumerate}
                \item On the inverse image of the complement of 
			\begin{equation}\label{co-dim}
                        \aligned
                        &\left(\mathcal M^{\rm RGW}_{k_{1,1},k_{1,0}}(L_1,L_0;p,r;\beta_1)^{(1)}
                        \times
                        \mathcal M^{\rm RGW}_{k_{2,1},k_{2,0}}(L_1,L_0;r,q;\beta_2)\right)
                        \\
                        &\cup
                        \left(\mathcal M^{\rm RGW}_{k_{1,1},k_{1,0}}(L_1,L_0;p,r;\beta_1)
                        \times
                        \mathcal M^{\rm RGW}_{k_{2,1},k_{2,0}}(L_1,L_0;r,q;\beta_2)^{(1)}\right)
                        \endaligned
                        \end{equation}
                        $\Pi$ is induced by an isomorphism of Kuranishi structures.
                \item Let $\frak p$ be an element of
                		\begin{equation}\label{form369696}
  		              \mathcal M^{\rm RGW}_{k_{1,1},k_{1,0}}(L_1,L_0;p,r;\beta_1)\,\hat\times\,
             			\mathcal M^{\rm RGW}_{k_{2,1},k_{2,0}}(L_1,L_0;r,q;\beta_2)
	                \end{equation}
        		        and 
                        $$
                        \overline{\frak p} := \Pi(\frak p)
                        \in
                        \mathcal M^{\rm RGW}_{k_{1,1},k_{1,0}}(L_1,L_0;p,r;\beta_1)
                        \times
                        \mathcal M^{\rm RGW}_{k_{2,1},k_{2,0}}(L_1,L_0;r,q;\beta_2).
                        $$
                		Then the Kuranishi neighborhoods 
			$\mathcal U_{\frak p} = (U_{\frak p},E_{\frak p},s_{\frak p},\psi_{\frak p})$ and
	                $\mathcal U_{\overline{\frak p}} = (U_{\overline{\frak p}},E_{\overline{\frak p}},
        		        s_{\overline{\frak p}},\psi_{\overline{\frak p}})$ of $\frak p$ and $\overline{\frak p}$
	                assigned by our Kuranishi structures have the following properties.
                Let $U_{\frak p} = V_{\frak p}/\Gamma_{\frak p}$ and 
                $U_{\overline{\frak p}} = V_{\overline{\frak p}}/\Gamma_{\overline{\frak p}}$.
                \begin{enumerate}
                \item
                There exists an injective group homomorphism
                $\phi_{\frak p} : \Gamma_{\frak p} \to \Gamma_{\overline{\frak p}}$.
                \item
                There exists a $\phi_{\frak p}$-equivariant map
                $$
                F_{\frak p} : V_{\frak p} \to V_{\overline{\frak p}}
                $$
                that is a strata-wise smooth submersion.
                \item
                $E_{\frak p}$ is isomorphic to the pullback of $E_{\overline{\frak p}}$
                by $F_{\frak p}$. In other words,
                there exists fiberwise isomorphic lift 
                $$
                \tilde F_{\frak p} : E_{\frak p} \to E_{\overline{\frak p}}
                $$
                of $F_{\frak p}$, which is $\phi_{\frak p}$ equivariant.
                \item
                $\tilde F_{\frak p} \circ s_{\frak p} = s_{\overline{\frak p}} \circ F_{\frak p}$.
                \item
                $\psi_{\overline{\frak p}} \circ F_{\frak p} = \Pi \circ \psi_{\frak p}$
                on $s_{\frak p}^{-1}(0)$.
                \end{enumerate}
                \item
                $\tilde F_{\frak p},F_{\frak p}$ are compatible with the coordinate 
                changes.
        \end{enumerate}
\end{prop}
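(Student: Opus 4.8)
The plan is to construct the comparison map $\Pi$ directly from the Kuranishi structures produced in \cite{part2:kura}, exploiting the fact that both sides are built as spaces of solutions of the same relaxed Cauchy-Riemann equation $\overline\partial u \equiv 0 \bmod E(u)$ on a source curve whose combinatorial type records the level structure. First I would recall that an element $\frak p$ of the $\hat\times$-fiber product \eqref{form369696} is, by Definition \ref{def231}, represented by $\mathcal M^0(\mathcal R')$ for some $\mathcal R' \le \mathcal R^{(1)}(p,r,q;\beta_1,\beta_2;\dots)$; the source curve of $\frak p$ therefore carries one extra ``gluing parameter'' region (a neck, or a tree of sphere/disk components at positive level) over and above the disjoint union of the two source curves appearing in the honest product. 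Forgetting that extra region — collapsing the neck and the positive-level components in the appropriate universal way — gives a canonical stable curve that is the source of a point $\overline{\frak p}$ in the honest product $\mathcal M^{\rm RGW}_{k_{1,1},k_{1,0}}(L_1,L_0;p,r;\beta_1)\times\mathcal M^{\rm RGW}_{k_{2,1},k_{2,0}}(L_1,L_0;r,q;\beta_2)$. This assignment is continuous (it is the restriction of the forgetful/stabilization map on the RGW compactification, whose continuity is established in \cite{part1:top}), and this defines $\Pi$.

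Next I would set up the chart-level comparison. Fix $\frak p$ and $\overline{\frak p} = \Pi(\frak p)$. The obstruction space $E_{\overline{\frak p}}$ used in \cite{part2:kura} is built by choosing finite-dimensional spaces of $(0,1)$-forms supported away from the nodes on the components of the source of $\overline{\frak p}$; because those components are literally components of the source of $\frak p$ (the collapsed region contributes no support), the same choice furnishes $E_{\frak p}$, and the pullback statement $E_{\frak p} \iso F_{\frak p}^* E_{\overline{\frak p}}$ is then tautological, giving $\tilde F_{\frak p}$. The map $F_{\frak p}\co V_{\frak p}\to V_{\overline{\frak p}}$ is the one sending a solution of the relaxed equation on a nearby (glued) source to the solution on the collapsed source obtained by the same forgetful operation together with the deformation parameters of the two ``halves''; it is a strata-wise smooth submersion precisely because the fibre records the gluing parameters of the collapsed region, which are free. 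The equivariance under $\phi_{\frak p}\co\Gamma_{\frak p}\to\Gamma_{\overline{\frak p}}$ is immediate since the isotropy group of $\frak p$ acts through its action on the uncollapsed components, i.e. injects into the isotropy of $\overline{\frak p}$; and $\tilde F_{\frak p}\circ s_{\frak p} = s_{\overline{\frak p}}\circ F_{\frak p}$ holds because $s$ is in both cases ``$\overline\partial u$ mod the obstruction space'' evaluated on the same components. Items (1)(e) and (3), compatibility of $\psi$ with $\Pi$ and of $(F_{\frak p},\tilde F_{\frak p})$ with coordinate changes, follow from naturality of the forgetful construction across overlapping charts.

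The one genuinely delicate point is item (1): that over the complement of the codimension-$2$ locus \eqref{co-dim}, $\Pi$ is actually induced by an \emph{isomorphism} of Kuranishi structures, not merely a submersion. This is where the structure of the RGW compactification matters: over that complement neither factor has any positive-level bubbling of its own, so the only ``extra'' datum carried by $\frak p$ is the single neck joining $\frak v_1$ to $\frak v_2$, and the boundary stratum \eqref{form360} sits inside $\partial\mathcal M^{\rm RGW}_{k_1,k_0}(L_1,L_0;p,q;\beta)$ in codimension one exactly as in the classical case — so the collapsing map is a diffeomorphism on source curves there and $F_{\frak p}$ becomes an isomorphism, matching $E$, $s$, $\psi$. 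I would prove this by the dimension bookkeeping already carried out in Proposition \ref{prop356}: the locus where $F_{\frak p}$ fails to be a local isomorphism is exactly where a positive level appears on one of the two halves, which by \eqref{dimension-M0R} is the union of the two codimension-$2$ strata in \eqref{co-dim}. Assembling the charts, checking that the $F_{\frak p}$ glue compatibly with coordinate changes (using that the Kuranishi structures of \cite{part2:kura} are constructed by the same inductive procedure on both spaces), and invoking the even codimension stratification of Proposition \ref{prop356} to control the behavior across strata, completes the argument. I expect the main obstacle to be precisely the verification that the two inductive constructions of obstruction bundles — one on $\mathcal M^{\rm RGW}_{k_1,k_0}(L_1,L_0;p,q;\beta)$ restricted to the boundary face \eqref{form360}, the other on the product of the two smaller moduli spaces — can be taken to agree up to the controlled submersion $F_{\frak p}$; this requires organizing the obstruction-space choices in \cite{part2:kura} so that the choices for a boundary face are pulled back from, or at least compatibly related to, the choices on the factors, which is a matter of being careful about the order in which charts are chosen in the compactness-and-gluing induction.
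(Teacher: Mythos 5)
Your construction of $\Pi$ rests on a misreading of what the $\hat\times$-fiber product is. Every element of \eqref{form360} is, by Definition \ref{def231}, already broken at the intersection point $r$ (the edge of $\mathcal R^{(1)}$ joining $\frak v_1$ to $\frak v_2$ carries the label $r$), so there is no neck and no gluing-parameter region inside that space; and the honest product itself contains configurations whose factors have positive-level components --- that is exactly the locus \eqref{co-dim} --- so ``collapsing the neck and the positive-level components'' does not even produce points of the stated target, let alone the right ones. In the paper's proof, $\frak p$ and $\overline{\frak p}=\Pi(\frak p)$ have the \emph{same} source curve and the same maps; the only datum $\Pi$ forgets is the synchronization of the level structures of the two halves. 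Concretely, writing a stratum as $\mathcal M^0(\mathcal R')$ with $\mathcal R'=\mathcal R'_1\#_{\hat\lambda}\mathcal R'_2$, one has ${\mathcal M}^0(\mathcal R'_1)\times{\mathcal M}^0(\mathcal R'_2)={\mathcal M}^0(\mathcal R')/\bbC_*^{h}$ with $h=\vert\lambda_1\vert+\vert\lambda_2\vert-\vert\hat\lambda\vert$, and $\Pi$ is this quotient map (\eqref{firm37272}). Accordingly, on Kuranishi charts the fibers of $F_{\frak p}$ are the parameters $\rho_i$ attached to the common positive levels, not gluing parameters of a collapsed region; this is precisely why $F_{\frak p}$ is a strata-wise submersion and why it is an isomorphism exactly where no positive level occurs, i.e.\ outside \eqref{co-dim} (see the end of the proof of Lemma \ref{lem684}: ``if we ignore the parameter $\rho_i$, the map is a bijection, and with no positive level there is no $\rho_i$''). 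With your collapsing picture, a neck/gluing parameter would be present on the open stratum as well, so both the submersion claim and item (1) would come out wrong; there is also no ``forgetful/stabilization map'' of the kind you invoke established in \cite{part1:top} (the forgetful maps there drop boundary marked points, not levels).

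On the Kuranishi-structure half of the statement, you correctly identify the compatibility of obstruction-space choices between the boundary face and the two factors as the main obstacle, but you treat it as bookkeeping about ``the order in which charts are chosen,'' whereas in the paper it is the substantive content: one must redo the construction of obstruction bundle data so that it is disk-component-wise, i.e.\ satisfies the splitting \eqref{form618383} of Definition \ref{defn687nnew}, and the existence of such data (Proposition \ref{lem685}) requires the inductive construction of quasi-component choice maps subject to Conditions \ref{conds1023}--\ref{conds31}; Lemma \ref{lem684} then yields items (2)(a)--(e) and (3) essentially formally. The equality $E_{\frak p}\cong F_{\frak p}^*E_{\overline{\frak p}}$ is therefore not tautological: the factors' Kuranishi structures come from their own inductions, and arranging the agreement is exactly what Condition \ref{conds1025} encodes. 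So while your overall division of labor (define $\Pi$ first, defer chart compatibility) matches the paper's, the definition of $\Pi$ itself and the identification of its fibers are incorrect, and the compatibility mechanism that makes the chart statements true is missing.
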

Before turning to the proof of this proposition, we elaborate on Item (3). Recall that a Kuranishi structure for $X$ assigns a 
Kuranishi chart $\mathcal U_{\frak p}  = (U_{\frak p},E_{\frak p},s_{\frak p},\psi_{\frak p})$ to each  $\frak p \in X$.
If $\frak q \in {\rm Im}(\psi_{\frak p})$, then 
we require that there exists a coordinate change 
$\Phi_{\frak p\frak q} =(U_{\frak p\frak q},\widehat\varphi_{\frak p\frak q},\varphi_{\frak p\frak q})$
from $\mathcal U_{\frak q}$ to $\mathcal U_{\frak p}$ in the following sense:
\begin{enumerate}
\item[(CoCh.1)]
There exists an open neighborhood $U_{\frak p\frak q}$ of $o_{\frak q}$ in $U_{\frak q}$.
\item[(CoCh.2)]
There exists an embedding of orbifolds $\varphi_{\frak p\frak q} : 
U_{\frak p\frak q} \to U_{\frak p}$.
\item[(CoCh.3)]
There exists an embedding of orbibundles 
$\widehat\varphi_{\frak p\frak q} : 
E_{\frak q}\vert_{U_{\frak p\frak q}} \to E_{\frak p}$
which is a lift of $\varphi_{\frak p\frak q}$.
\item[(CoCh.4)]
$\widehat\varphi_{\frak p\frak q}$ is compatible with Kuranishi maps 
and parametrization maps in the following sense:
\begin{enumerate}
\item 
$s_{\frak p} \circ \varphi_{\frak p\frak q} = \widehat\varphi_{\frak p\frak q}
\circ s_{\frak q}$.
\item
$\psi_{\frak p} \circ \varphi_{\frak p\frak q} = \psi_{\frak q}$
on $U_{\frak p\frak q} \cap s_{\frak q}^{-1}(0)$.
\end{enumerate}
\item[(CoCh.5)]
On ${\rm Im}(\varphi_{\frak p\frak q}) \cap s_{\frak p}^{-1}(0)$,  
the differential $ds_{\frak p}$ induces a bundle isomorphism:
$$
\mathcal N_{U_{\frak p\frak q}}U_{\frak p} \cong \frac{E_{\frak p}}
{{\rm Im}(\widehat\varphi_{\frak p\frak q})}. 
$$
where the bundle on the left hand side denotes the normal bundle of $\varphi_{\frak p\frak q}(U_{\frak p\frak q})$ in $U_{\frak p}$.
\end{enumerate}
See \cite[Definition A.1.3 and Definition A1.14]{fooobook2} or \cite[Definition 3.6]{fooonewbook}.
The notion of embedding of orbifolds and orbibundle is 
defined in \cite[Chapter 23]{fooonewbook}.
The coordinate change maps $\Phi_{\frak p\frak q}$ are required to satisfy certain compatibility conditions and we refer the reader to \cite[(A1.6.2)]{fooobook2},
\cite[Definition 3.9]{fooonewbook}
for these conditions.
\par
In Proposition \ref{lema362}, let $\frak p$, $\frak q$ be elements of \eqref{form369696} such that $\frak q \in {\rm Im}(\psi_{\frak p})$.
We denote $\Pi(\frak p)$, $\Pi(\frak q)$ by $\overline{\frak p}$, $\overline{\frak q}$.
We then have $\varphi_{\frak p\frak q}$, $\widehat\varphi_{{\frak p}{\frak q}}$, 
$\varphi_{\overline{\frak p}\overline{\frak q}}$, 
$\widehat\varphi_{\overline{\frak p}\overline{\frak q}}$.
Item (3) of Proposition \ref{lema362} asserts that:
\[
  F_{\frak p}\circ \varphi_{\frak p\frak q} = \varphi_{\overline{\frak p}\overline{\frak q}}\circ F_{\frak q},\qquad 
  \widetilde F_{\frak p}\circ \widehat\varphi_{{\frak p}{\frak q}} = \widehat\varphi_{\overline{\frak p}\overline{\frak q}}\circ \widetilde
  F_{\frak q}
\]
on the domain where both sides are defined.

\begin{proof}[Proof of Proposition \ref{lema362}]
	We construct the continuous map $\Pi$ in this subsection.
	The study of its relation to Kuranishi structure is postponed to
	Section \ref{sub:systemconst}.
	
	Removing the edge $e$ of 
	$\mathcal R=\mathcal R^{(1)}(p,r,q;\beta_1,\beta_2;k_{1,0},k_{1,1};k_{2,0},k_{2,1})$ 
	and adding two exterior vertices produces
	two SD-ribbon graphs $\mathcal R_1$, $\mathcal R_2$ of types $(p,r;\beta_1;k_{1,0},k_{1,1})$, 
	$(r,q;\beta_2;k_{2,0},k_{2,1})$. (See Figure \ref{Figuresplitted}.)
	Let $\mathcal R'_1$ (resp. $\mathcal R'_2$) be an SD-ribbon tree of type $(p,r;\beta_1;k_{1,0},k_{1,1})$
	(resp. $(r,q;\beta_2;k_{2,0},k_{2,1})$) such that $\mathcal R'_1 \le \mathcal R_1$
	(resp. $\mathcal R'_2 \le \mathcal R_2$).
	We glue the edge incident to the right most vertex of $\mathcal R'_1$ 
	to the edge incident to the left most vertex of $\mathcal R'_2$.
	(The right most vertex of $\mathcal R'_1$ and 
	the left most vertex of $\mathcal R'_2$ are not 
	vertices of the resulting tree.) 
	\begin{figure}[h]
                \centering
                \includegraphics[scale=0.3]{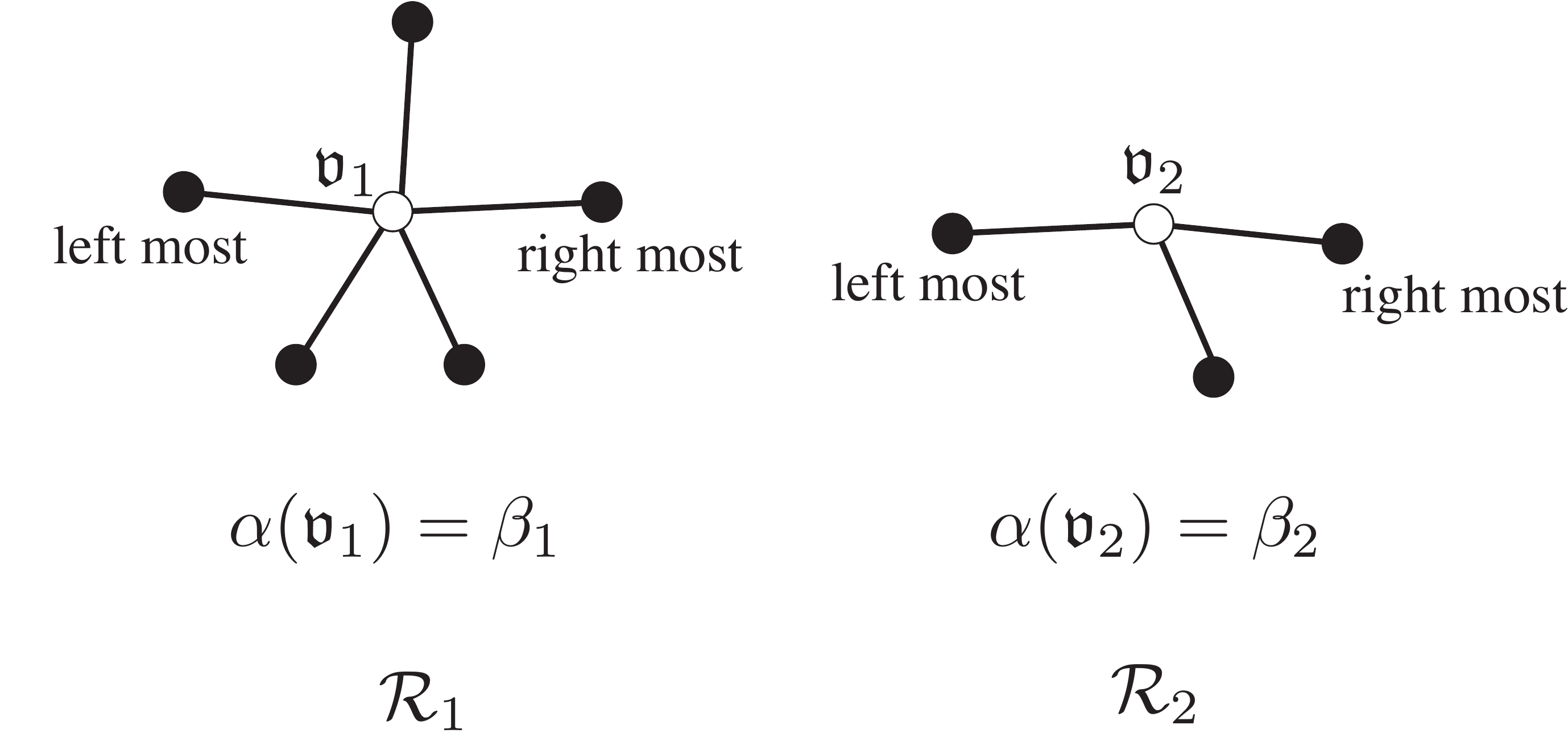}
                \caption{$\mathcal R_1$ and $\mathcal R_2$.}
                \label{Figuresplitted}
        \end{figure}
	
	This tree together with a level function determines an 
	SD-ribbon tree of type $(\beta,k_0,k_1)$.
	Let $\hat\lambda$ be a level function associated to a quasi order $\le$, which 
	restricts to the quasi order associated to $\mathcal R'_i$ on $C^{\rm ins}_0(\hat R'_i)$.
	Note that the choice of $\hat\lambda$ is {\it not} unique.
	(See Figure \ref{Section3newfigure}.)
	We denote the resulting SD-ribbon tree of type $(\beta;k_0,k_1)$ by:
	$$
	\mathcal R'_1 \#_{\hat\lambda} \mathcal R'_2
	$$
	For any SD-ribbon tree $\mathcal R'$ of type $(\beta;k_0,k_1)$ with $\mathcal R' \le \mathcal R$,
	there exist unique $\mathcal R'_1$, $\mathcal R'_2$ and $\hat\lambda$ such that:
        \begin{equation}\label{form36666}
                \mathcal R' = \mathcal R'_1 \#_{\hat\lambda} \mathcal R'_2.
                \end{equation}
                \begin{figure}[h]
                \centering
                \includegraphics[scale=0.3]{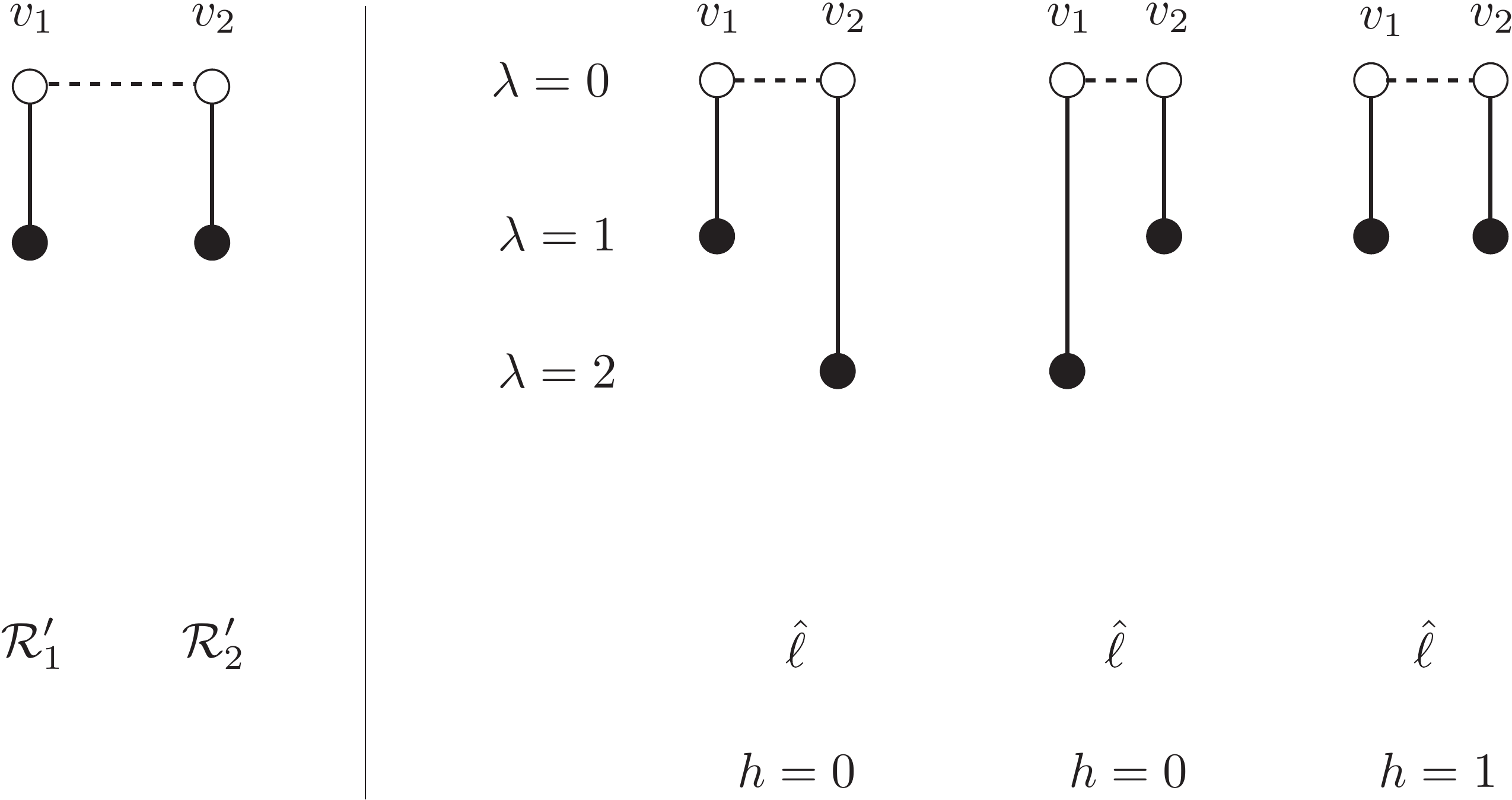}
                \caption{Three different choices of $\hat{\ell}$.}
                \label{Section3newfigure}
        \end{figure}



	For $\mathcal R'$ as in \eqref{form36666}, the map $\Pi$ restricts to a map:
	$$
	\Pi : {\mathcal M}^0(\mathcal R')\to  {\mathcal M}^0(\mathcal R'_1) \times  {\mathcal M}^0(\mathcal R'_2).
	$$
	which can be described as follows.
	Let $\vert \hat\lambda\vert$, $\vert\lambda_1\vert$ and $\vert\lambda_2\vert$  be the number of positive levels of 
	$\mathcal R'$, $\mathcal R_1'$ and $\mathcal R_2'$.
	The fact that the quasi order corresponding to $\hat\lambda$ 
	restricts to the quasi order corresponding $\lambda_i$
	on $C^{\rm ins}_0(\hat R_i)$ implies that
	$\vert \hat\lambda\vert \le \vert \lambda_1\vert + \vert\lambda_2\vert$.
	Let $h = \vert\lambda_1\vert + \vert\lambda_2\vert - \vert \hat\lambda\vert$.
	Using identifications:
	\[
	  \widetilde{\mathcal M}^0(\mathcal R'_1) \times  \widetilde{\mathcal M}^0(\mathcal R'_2)=
	  \widetilde{\mathcal M}^0(\mathcal R'),\hspace{1cm}
	  {\rm Aut}(\mathcal R')= {\rm Aut}(\mathcal R'_1) \times {\rm Aut}(\mathcal R'_2),
	\]
	it is easy to see that there exists a $\bbC_*^{h}$-action on ${\mathcal M}^0(\mathcal R')$ such that 
	\begin{equation}\label{firm37272}
		{\mathcal M}^0(\mathcal R'_1) \times  {\mathcal M}^0(\mathcal R'_2)=
		{\mathcal M}^0(\mathcal R')/\bbC_*^{h}.
	\end{equation}
	The map $\Pi$ is defined to be the projection map induced by this identification.
\end{proof}
We thus gave a detailed description of the part of the boundary of the moduli space $\mathcal M^{\rm RGW}_{k_1,k_0}(L_1,L_0;p,q;\beta)$ given in Definition \ref{def231}.
Next, we describe other boundary elements in a similar way.
\begin{definition}
	A {\it boundary SD-ribbon trees of type} (2) for 
	$\mathcal M^{\rm RGW}_{k_1,k_0}(L_1,L_0;p,q;\beta)$
	is an SD-ribbon tree 
	\[\mathcal R^{(2)}(p,q;\beta_1,\beta_2;k_{1,1}+1,k_{2,1},j;k_{0})
	=(R;\frak v_l,\frak v_r;\mathcal S,{\rm pt},\alpha,\le)\]
	where $\beta_1 \in \Pi_2(L_1,L_0;p,q)$, $\beta_2\in\Pi_2(X,L_1)$,
	$\beta=\beta_1 \# \beta_2$, $k_{1,1}+k_{2,1}=k_1$ and $j \in \{1,\dots,k_{1,1}+1\}$. 
	This SD-ribbon tree is required to satisfy the following properties: 
	\begin{itemize}
		\item[(1)] There are only two interior vertices $\frak v_1$ and $\frak v_2$.
				The vertex $\frak v_1$ has color ${\rm str}$ and $\frak v_2$ has color ${\rm d_1}$.
				There is an edge $e$ connecting $\frak v_1$ to $\frak v_2$.
		\item[(2)] The graphs $R_1$ has exactly $k_{1,1}+1$ exterior vertices connected to $\frak v_1$ and 
				$k_{2,1}$ exterior vertices connected to $\frak v_2$. 
				The edge $e$ is the $j$-th edge in $R_1$ connected to $\frak v_1$.
				Here we use the ribbon structure at $\frak v_1$ to label the 
				edges in $R_1$, which are connected to $\frak v_1$. The graph 
				$R_0$ has only $k_{0}$ exterior vertices connected to $\frak v_1$.
		\item [(3)]	$\mathcal S(\frak v_1)$ (resp. $\mathcal S(\frak v_2)$)
			is the unique SD-tree (resp. DD-tree) of type $(p,q;\beta_1;k_{0},k_{1,1}+1)$ (resp. 
			$(\beta_2;k_{2,1})$) with only one vertex (which is necessarily labeled with d).
	\end{itemize}
	We can similarly define boundary SD-ribbon trees of type (3), which have the form:
	\[\mathcal R^{(3)}(p,q;\beta_1,\beta_2;k_1;k_{1,0};k_{2,0}+1,j).\]
\end{definition}
Figure \ref{FigureSplit(2)(3)} gives examples of boundary SD-ribbon trees of types (2) and (3).
\begin{figure}[h]
\centering
\includegraphics[scale=0.3]{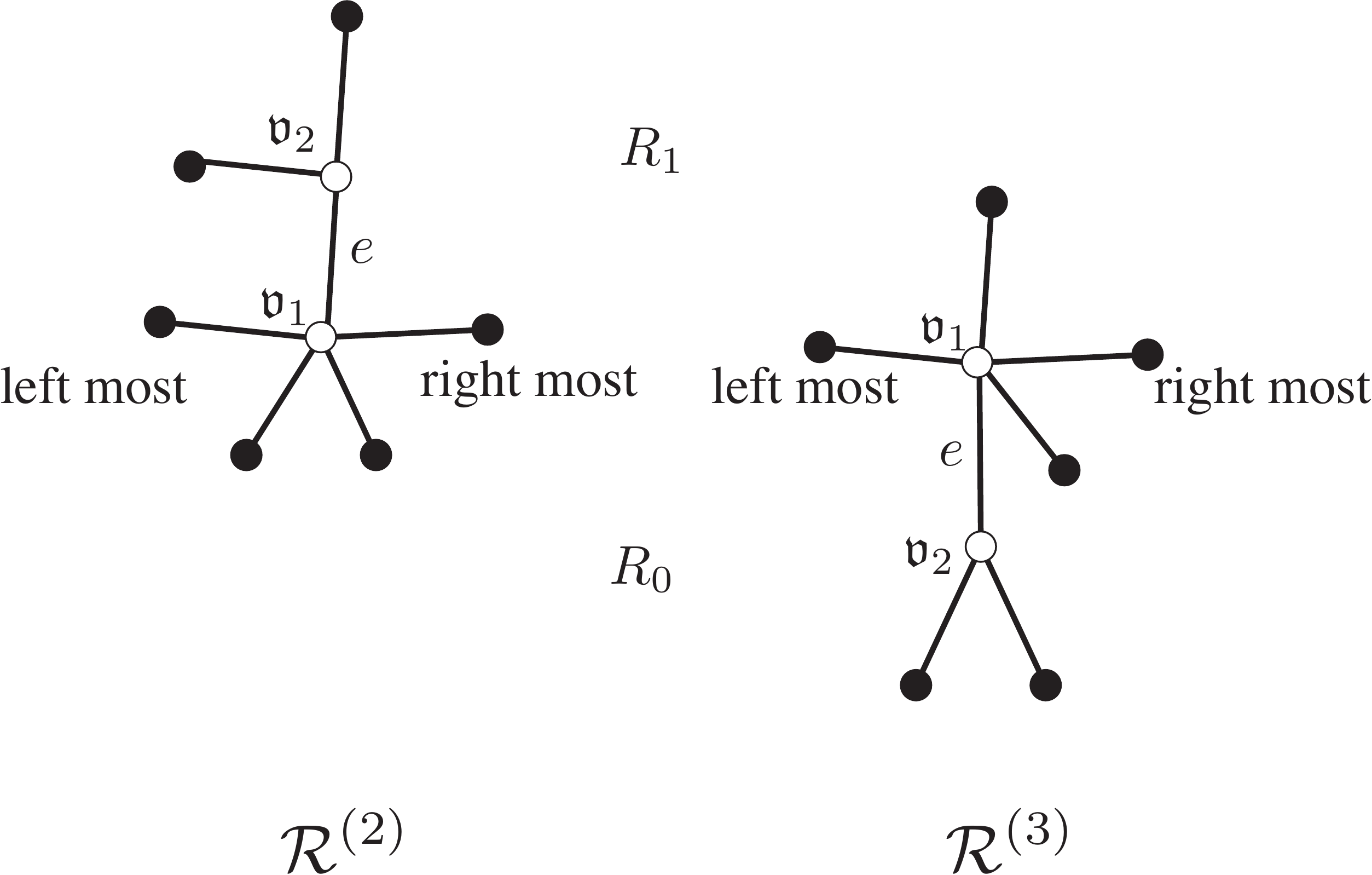}
\caption{The ribbon trees $\mathcal R^{(2)}$ and $\mathcal R^{(3)}$}
\label{FigureSplit(2)(3)}
\end{figure}

\begin{definition}\label{defn239}
	We define
	\begin{equation}\label{form365}
		\mathcal M^{\rm RGW}_{k_{1,1}+1,k_{0}}(L_1,L_0;p,q;\beta_1)
		\,{}_{{\rm ev}_{1,j}}\hat\times_{{\rm ev_{0}}}\,\mathcal M^{\rm RGW}_{k_{2,1}+1}(L_1;\beta_2)
	\end{equation}
	to be the union of all ${\mathcal M}^0(\mathcal R')$ with $\mathcal R' \le \mathcal R^{(2)}
	(p,q;\beta_1,\beta_2;k_{1,1}+1,k_{2,1},j;k_{0})$.
	Similarly, we define
	\begin{equation}\label{form3656}
		\mathcal M^{\rm RGW}_{k_{1},k_{1,0}+1}(L_1,L_0;p,q;\beta_1)
		\,{}_{{\rm ev}_{0,j}}\hat\times_{{\rm ev_{0}}}\,\mathcal M^{\rm RGW}_{k_{2,0}+1}(L_0;\beta_2).
	\end{equation}
\end{definition}
\begin{prop}\label{lem364}
	There exists a map
	\begin{equation}\label{form371}
		\aligned\Pi : &\mathcal M^{\rm RGW}_{k_{1,1}+1,k_{0}}(L_1,L_0;p,q;\beta_1)
		\,{}_{{\rm ev}_{1,j}}\hat\times_{{\rm ev_{0}}}\,\mathcal M^{\rm RGW}_{k_{2,1}+1}(L_1;\beta_2)\\
		&\to\mathcal M^{\rm RGW}_{k_{1,1}+1,k_{0}}(L_1,L_0;p,q;\beta_1)\,{}_{{\rm ev}_{1,j}}\times_{{\rm ev_{0}}}\,
		\mathcal M^{\rm RGW}_{k_{2,1}+1}(L_1;\beta_2)
		\endaligned
	\end{equation}
	satisfying the following properties.
	\begin{enumerate}
		\item On the inverse image of the complement of 
			\[
			  \aligned
			  &\left(\mathcal M^{\rm RGW}_{k_{1,1},k_{0}}(L_1,L_0;p,q;\beta_1)^{(1)}\right)\,{}_{{\rm ev}_{1,j}}					\times_{{\rm ev_{0}}}\,
			\left(\mathcal M^{\rm RGW}_{k_{2,1}+1}(L_1;\beta_2)\right)\\
			&\cup\left(\mathcal M^{\rm RGW}_{k_{1,1},k_{0}}(L_1,L_0;p,q;\beta_1)\right)
			\,{}_{{\rm ev}_{1,j}}\times_{{\rm ev_{0}}}\,\left(\mathcal M^{\rm RGW}_{k_{2,1}+1}(L_1;\beta_2)^{(1)}\right)
			\endaligned
			\]
	$\Pi$ is induced by the isomorphism of Kuranishi structures. Here the fiber product is taken over $L_1$.
	\item The same property as in Proposition \ref{lema362} {\rm (2)} holds.
	\item The same property as in Proposition \ref{lema362} {\rm (3)} holds.
\end{enumerate}
	A similar statement holds for:
	\[\mathcal M^{\rm RGW}_{k_{1},k_{1,0}}(L_1,L_0;p,q;\beta_1)\,{}_{{\rm ev}_{0,j}}\hat\times_{{\rm ev_{0}}}\,
	\mathcal M^{\rm RGW}_{k_{2,0}+1}(L_0;\beta_2).\]
\end{prop}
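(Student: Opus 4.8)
The plan is to establish Proposition \ref{lem364} by mimicking, essentially verbatim, the proof of Proposition \ref{lema362}, replacing the combinatorial manipulation of SD-ribbon trees that concatenates two strips by the one that attaches a disk bubble to a strip. Concretely, given a boundary SD-ribbon tree $\mathcal R^{(2)} = \mathcal R^{(2)}(p,q;\beta_1,\beta_2;k_{1,1}+1,k_{2,1},j;k_{0})$ with its two interior vertices $\frak v_1$ (color ${\rm str}$) and $\frak v_2$ (color ${\rm d_1}$) joined by the edge $e$, I would remove $e$ and add two exterior vertices to obtain an SD-ribbon tree $\mathcal R_1$ of type $(p,q;\beta_1;k_0,k_{1,1}+1)$ together with a DD-ribbon tree $\mathcal R_2$ of type $(\beta_2;k_{2,1})$; the $j$-th boundary edge of $R_1$ at $\frak v_1$ is the one that carried $e$, which is precisely the marked point entering ${\rm ev}^{\partial}_{1,j}$ on the first factor and $\frak v_2$ supplies the root marked point entering ${\rm ev}^{\partial}_0$ on the second. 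Then for any refinements $\mathcal R'_1 \le \mathcal R_1$, $\mathcal R'_2 \le \mathcal R_2$, gluing the relevant edges back (with the two auxiliary vertices deleted) and choosing a compatible level function $\hat\lambda$ yields $\mathcal R'_1 \#_{\hat\lambda} \mathcal R'_2 \le \mathcal R^{(2)}$, and every $\mathcal R' \le \mathcal R^{(2)}$ arises uniquely in this way.

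Next I would record the dimension count that justifies the $\bbC_*^h$-description. Writing $|\hat\lambda|$, $|\lambda_1|$, $|\lambda_2|$ for the numbers of positive levels of $\mathcal R'$, $\mathcal R'_1$, $\mathcal R'_2$, the compatibility of the quasi-order of $\hat\lambda$ with those of $\lambda_i$ on the interior vertices of $\hat R_i$ forces $|\hat\lambda| \le |\lambda_1| + |\lambda_2|$, and with $h = |\lambda_1| + |\lambda_2| - |\hat\lambda|$ one has the identifications $\widetilde{\mathcal M}^0(\mathcal R'_1) \times \widetilde{\mathcal M}^0(\mathcal R'_2) = \widetilde{\mathcal M}^0(\mathcal R')$ and ${\rm Aut}(\mathcal R') = {\rm Aut}(\mathcal R'_1) \times {\rm Aut}(\mathcal R'_2)$, whence a $\bbC_*^h$-action on ${\mathcal M}^0(\mathcal R')$ with
\[
  {\mathcal M}^0(\mathcal R'_1) \times_{L_1} {\mathcal M}^0(\mathcal R'_2) = {\mathcal M}^0(\mathcal R')/\bbC_*^h.
\]
Taking the union of the resulting quotient projections over all $\mathcal R' \le \mathcal R^{(2)}$ (and over all $r$, $\beta_1$, $\beta_2$, $k_{i,1}$, $j$) produces the continuous map $\Pi$ of \eqref{form371}; the same recipe applied to $\mathcal R^{(3)}$ handles the fiber product over $L_0$. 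Item (1) of the proposition then holds because when $h = 0$ — equivalently, when neither factor sits in its own codimension-2 stratum so that no extra positive level has been created by the gluing — the $\bbC_*^h$-action is trivial and $\Pi$ restricts to the honest isomorphism of Kuranishi structures coming from the RGW gluing analysis of \cite{part2:kura}, exactly as in the strip–strip case. Items (2) and (3), which concern the fiberwise structure $(\phi_{\frak p}, F_{\frak p}, \tilde F_{\frak p})$ over the obstruction bundles and compatibility with coordinate changes, are asserted to be formally identical to Proposition \ref{lema362} (2)–(3), and indeed the local model is the same: the Kuranishi chart is cut out by the $E(u)$-relaxed Cauchy–Riemann equation, which is insensitive to whether the bubbled-off component is a strip or a disk, so the verification is postponed to Section \ref{sub:systemconst} alongside the one for Proposition \ref{lema362}.

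The main obstacle is not in the combinatorics — attaching a disk is strictly simpler than concatenating two strips — but in correctly matching the fiber-product markings through the $\bbC_*^h$-quotient, i.e.\ checking that the projection ${\mathcal M}^0(\mathcal R')/\bbC_*^h = {\mathcal M}^0(\mathcal R'_1) \times_{L_1} {\mathcal M}^0(\mathcal R'_2)$ identifies the evaluation map ${\rm ev}^{\partial}_{1,j}$ on the strip factor with the root evaluation ${\rm ev}^{\partial}_0$ on the disk factor in a way compatible with the level structure. This is exactly the point where the type-(2)/(3) case genuinely differs from the type-(1) case of Proposition \ref{lema362}, and it is where the subtlety flagged in the introduction — that the boundary is a ``fiber product outside codimension two'' rather than on the nose — shows up most sharply: on the codimension-2 locus $\mathcal M^{\rm RGW}(\ldots)^{(1)}$ of either factor the map $\Pi$ is only a continuous surjection with nontrivial $\bbC_*$-fibers, not an isomorphism, so one must be careful that the obstruction bundle $E_{\frak p}$ is still the pullback $F_{\frak p}^* E_{\overline{\frak p}}$ there (with $F_{\frak p}$ a strata-wise submersion of positive fiber dimension). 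I would handle this by invoking Proposition \ref{prop356} to see that the positive-level strata have even codimension and by appealing to the gluing construction of \cite{part2:kura} for the product obstruction bundle, exactly as in the type-(1) argument, and then declare the remaining details identical.
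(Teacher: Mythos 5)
Your proposal follows exactly the route the paper takes: the paper's own proof of Proposition \ref{lem364} is literally ``similar to the proof of Proposition \ref{lema362} and is omitted,'' and your argument spells out precisely that similarity — remove the edge joining the ${\rm str}$- and ${\rm d}_1$-colored vertices, decompose $\mathcal R' = \mathcal R'_1 \#_{\hat\lambda}\mathcal R'_2$, define $\Pi$ as the $\bbC_*^h$-quotient projection which is an isomorphism of Kuranishi structures where $h=0$, and defer the Kuranishi-structure items (2)--(3) to Section \ref{sub:systemconst}. The only caveat is that in this disk-bubble case the identification at the level of $\widetilde{\mathcal M}^0$ must itself be the fiber product over $L_1$ via ${\rm ev}_{1,j}$ and ${\rm ev}_0$ rather than a direct product, which your displayed quotient formula records correctly even though one earlier line copies the direct-product identification from the strip--strip case.
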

The proof is similar to the proof of Proposition \ref{lema362} and is omitted.
\begin{prop}\label{lem365}
	The normalized boundary of $\mathcal M^{\rm RGW}_{k_1,k_0}(L_1,L_0;p,q;\beta)$
	is the disjoint union of \eqref{form360}, \eqref{form365}, \eqref{form3656}.
\end{prop}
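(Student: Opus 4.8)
The plan is to reduce the statement to a combinatorial classification of the SD-ribbon trees indexing the lowest-codimension strata, together with the dimension count carried out in the proof of Proposition \ref{prop356}. Writing $\mathcal M := \mathcal M^{\rm RGW}_{k_1,k_0}(L_1,L_0;p,q;\beta) = \bigcup_{\mathcal R}\mathcal M^0(\mathcal R)$ (the union over SD-ribbon trees $\mathcal R$ of type $(p,q;\beta;k_0,k_1)$), the first thing I would record is that, by that proof, $\mathcal M^0(\mathcal R)$ lies in the codimension $m(\mathcal R)-1$ corner of the Kuranishi charts of $\mathcal M$, where $m(\mathcal R) := \#\{v \mid c(v) = {\rm d}_0,{\rm d}_1,{\rm str}\}$, and has codimension $2\ell(\mathcal R)$ within that corner, $\ell(\mathcal R)$ being the number of positive levels of $\mathcal R$. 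Hence the total codimension of $\mathcal M^0(\mathcal R)$ in $\mathcal M$ is $(m(\mathcal R)-1)+2\ell(\mathcal R)$, so $\mathcal M^0(\mathcal R)$ meets the top (open, $(\dim\mathcal M-1)$-dimensional) stratum of the normalized boundary exactly when $m(\mathcal R)=2$ and $\ell(\mathcal R)=0$. Since the normalized boundary of $\mathcal M$ is, by construction, the disjoint union over the maximal such combinatorial types of their closures $\bigcup_{\mathcal R'\le\mathcal R}\mathcal M^0(\mathcal R')$ in $\mathcal M$, the task reduces to enumerating the maximal types and identifying the corresponding closures.

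For the enumeration I would argue as follows. If $\ell(\mathcal R)=0$ then $\mathcal R$ has no positive-level vertices, hence no ${\rm s}$- or ${\rm D}$-vertices, and every interior vertex is at level zero; the main path $C$ of $\mathcal R$ from $p$ to $q$ carries at least one strip component, i.e. at least one ${\rm str}$-vertex, and every interior vertex off $C$ is a disk bubble of color ${\rm d}_0$ or ${\rm d}_1$. When $m(\mathcal R)=2$ this leaves exactly three possibilities: (i) two ${\rm str}$-vertices $\frak v_1,\frak v_2$ on $C$ joined by an edge carrying some $r\in L_0\cap L_1$; (ii) one ${\rm str}$-vertex on $C$ together with one ${\rm d}_1$-vertex attached to it at a boundary node in position $j$; (iii) one ${\rm str}$-vertex on $C$ together with one ${\rm d}_0$-vertex attached similarly. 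Recording the remaining decorations ($\beta_1,\beta_2$ with $\beta_1\#\beta_2=\beta$, the distribution of the boundary marked points, and the index $j$ in cases (ii)--(iii)), these are precisely the boundary SD-ribbon trees $\mathcal R^{(1)}$, $\mathcal R^{(2)}$ and $\mathcal R^{(3)}$ introduced above, each of which is maximal among trees with $m(\mathcal R)=2$, $\ell(\mathcal R)=0$; this is the combinatorial refinement of the three boundary terms of Theorem \ref{theorem30}.

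To finish I would, for each of the three maximal types, identify its closure: by Definition \ref{def231} the closure of the type-(i) strata is exactly \eqref{form360}, and by Definition \ref{defn239} the closures of the type-(ii) and type-(iii) strata are \eqref{form365} and \eqref{form3656}. Each of these three is closed in $\mathcal M$ by \cite[Theorem 4.61]{part1:top}, and carries a Kuranishi structure exhibiting it as a component of the normalized boundary by the constructions of \cite{part2:kura} together with Propositions \ref{lema362} and \ref{lem364} (which supply the maps $\Pi$ to the honest fiber and direct products appearing in the statement). Disjointness is then immediate, since distinct $\mathcal R'$ index disjoint strata and the normalized boundary is, as a space with Kuranishi structure, the disjoint (normalized) union over the codimension-one types.

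The step I expect to be the main obstacle is the combinatorial bookkeeping in the second paragraph: verifying directly from the definition of SD-ribbon trees in \cite{part1:top} that every codimension-one stratum of $\mathcal M$ has level-zero combinatorial type one of (i)--(iii), with no exotic configurations (a broken strip carrying a simultaneous disk bubble would have $m(\mathcal R)=3$, and any positive-level bubbling would raise $\ell(\mathcal R)$, so both are excluded from the top stratum), and that the positive-level strata are absorbed correctly into the closures \eqref{form360}--\eqref{form3656} rather than producing extra components --- which is exactly where the even-codimension stratification of Proposition \ref{prop356} is used, guaranteeing that the normalization of the boundary is indexed by level-zero combinatorial type alone.
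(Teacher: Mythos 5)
Your argument is correct and follows essentially the same route as the paper: the paper's proof of Proposition \ref{lem365} likewise invokes the dimension count from the proof of Proposition \ref{prop356} to conclude that the only codimension-one strata are those indexed by the boundary SD-ribbon trees $\mathcal R^{(1)}$, $\mathcal R^{(2)}$, $\mathcal R^{(3)}$, whose closures are \eqref{form360}, \eqref{form365}, \eqref{form3656}. You have simply made explicit the combinatorial enumeration and the role of the even-codimension stratification that the paper leaves implicit.
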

\begin{proof}
	By the calculation of dimension we gave in the proof of Proposition \ref{prop356},
	the only codimension 1 strata are $\mathcal R^{(1)}(p,r,q;\beta_1,\beta_2;k_{1,0},k_{1,1};k_{2,0},k_{2,1})$, 
	$\mathcal R^{(2)}(p,q;\beta_1,\beta_2;k_{1,1}+1,k_{2,1},j;k_{0})$ and 
	$\mathcal R^{(3)}(p,q;\beta_1,\beta_2;k_1; k_{1,0};k_{2,0}+1,j)$.
	This gives the lemma.
\end{proof}

\subsection{Construction of multisection}
\label{subsub:constmulti}

	In this subsection, we prove Theorem \ref{prop61111}.
	The proof is by induction on $\omega \cap \beta$ and $(k_0,k_1)$. 
	In this inductive process we construct multi-valued perturbations for all moduli spaces 
	$\omega\cap \beta\leq E$ and $k_0+k_1\leq N$ for some constants $E$ and $N$.
	In particular, we may construct perturbations for the moduli spaces with dimension greater than $1$.
	But  the conditions (1) and (3) hold only for moduli spaces with dimension at most $1$.
	We assume that we already constructed  
	multi-valued perturbations for the moduli spaces of type 
	$(\beta;k_0,k_1;p,q)$ such that $\omega \cap \beta < \omega \cap \alpha$ or
	$\omega \cap \beta=\omega \cap \alpha$ and $k_1+k_1<j_1+j_2$.

	We fix a continuous multi-section on 
	$\partial\mathcal M^{\rm RGW}_{j_1,j_0}(L_1,L_0;p,q;\alpha)$ using the induction hypothesis.
	For example, part of the boundary of the moduli space
	$\mathcal M^{\rm RGW}_{j_1,j_0}(L_1,L_0;p,q;\alpha)$
	is described by the moduli spaces of the form
	\begin{equation}\label{bdry-type-20}
	  \mathcal M^{\rm RGW}_{j_1',j_0}(L_1,L_0;p,q;\beta_1) \hat \times_{L_1} 
	  \mathcal M^{\rm RGW}_{j_1''+1}(L_1;\beta_2),
	\end{equation}
	where $j_1'+j_1''=j_1+1$, $\beta_i\cap \mathcal D=0$ and $\beta_1\#\beta_2=\alpha$.
	Assuming $ \mathcal M^{\rm RGW}_{k_1''+1}(L_1;\beta_2)$ is non-empty, we have 
	$\omega\cap \beta_1< \omega\cap \alpha$ or $j_1'<j_1$ and $\omega\cap \beta_1\leq \omega\cap \alpha$ . Therefore, the induction hypothesis
	implies that we already fixed a multi-valued perturbation 
	for $\mathcal M^{\rm RGW}_{j_1',j_0}(L_1,L_0;p,q;\beta_1)$. We use the fiber product\footnote{
	See \cite[Proposition 20.5]{fooonewbook} for the definition of fiber product of
	multi-valued perturbations.} 
	of this perturbation and the trivial perturbation for $\mathcal M^{\rm RGW}_{j_1''+1}(L_1;\beta_2)$
	to define a multi-valued perturbation for:
	\[
	  \mathcal M^{\rm RGW}_{j_1',j_0}(L_1,L_0;p,q;\beta_1) \times_{L_1} 
	  \mathcal M^{\rm RGW}_{j_1''+1}(L_1;\beta_2)
	\]	
	Now we use the map $\Pi$ in Proposition \ref{lem364} 
	to pull-back this perturbation to 
	the space in \eqref{bdry-type-20}. More generally, we can use the already constructed perturbations
	for the moduli spaces of strips and trivial perturbations for the moduli spaces of discs to 
	define a multi-valued perturbation for any boundary component and corner of 
	$\mathcal M^{\rm RGW}_{j_1,j_0}(L_1,L_0;p,q;\alpha)$. The induction hypothesis implies that 
	these perturbations are compatible. 
	
	In the case that the virtual dimension
	of $\mathcal M^{\rm RGW}_{j_1,j_0}(L_1,L_0;p,q;\alpha)$ is greater than $1$, we extend 
	the chosen multi-valued perturbation on the boundary into a $C^0$ perturbation defined over the whole
	moduli space. 
	In the case that the virtual dimension
	is at most $1$, we need to choose this extension such that the conditions in (1) and (3) of 
	Theorem \ref{prop61111} are satisfied. To achieve this goal, we analyze the vanishing locus
	of the multi-valued perturbation over the boundary of 
	$\mathcal M^{\rm RGW}_{j_1,j_0}(L_1,L_0;p,q;\alpha)$.
	
	Let $\mathcal M^{\rm RGW}_{j_1,j_0}(L_1,L_0;p,q;\alpha)$ have virtual dimension not greater than $1$.
	On the stratum of the boundary where there exists at least one disk bubble
	on which the map is non-constant, the assumption implies that the Maslov number of the disk bubble 
	is at least $4$. This implies that there is at least one irreducible component, 
	which is a strip with homology class $\beta_1$ and is contained in a moduli space with negative
	 virtual dimension. 
	 Therefore, our multi-valued perturbation on this boundary component does not vanish.
	 
	 The rest of the proof is divided into two parts.
	We firstly consider the case where $\dim(\mathcal M^{\rm RGW}_{j_1,j_0}(L_1,L_0;p,q;\alpha))$
	is non-positive. The part of the boundary corresponding to splitting into two or more strips has a 
	strip component which has negative virtual dimension. Therefore, our multi-valued perturbation
	does not vanish on this part of the boundary, too. As a consequence, we can extend the perturbation 
	in the $C^0$ sense to a neighborhood of the boundary such that it is still non-vanishing 
	in this neighborhood. We approximate the perturbation by a section which is 
	$C^1$ outside a smaller neighborhood of the boundary. 
	Now we can extend this multi-section in a way which is transversal to $0$ on 
	$\mathcal M^{\rm RGW}_{k_1,k_0}(L_1,L_0;p,q;\beta)$ and 
	$\mathcal M^{\rm RGW}_{k_1,k_0}(L_1,L_0;p,q;\beta)^{(1)}$
	using the existence 
	theorem of multi-valued perturbations. (See, for example, \cite[Theorem 13.5]{fooonewbook}.)
	If the virtual dimension is $0$, there exists finitely many zeros which do not belong to 
	$\mathcal M^{\rm RGW}_{k_1,k_0}(L_1,L_0;p,q;\beta)^{(1)}$.
	If the  virtual dimension is negative, the multi-valued perturbation does not vanish.
	This completes the proof in the case that
	$\dim(\mathcal M^{\rm RGW}_{j_1,j_0}(L_1,L_0;p,q;\alpha))\le 0$.

	Next, we consider the case that $\mathcal M^{\rm RGW}_{j_1,j_0}(L_1,L_0;p,q;\alpha)$ is 
	$1$-dimensional. The constructed multi-valued perturbation on the boundary does not vanish 
	except on the boundary components of the form
	\begin{equation}\label{form6197}
		\mathcal M^{\rm RGW}_{j'_1,j'_0}(L_1,L_0;p,r;\beta_1) \hat \times 
		\mathcal M^{\rm RGW}_{j''_1,j''_0}(L_1,L_0;r,q;\beta_2),
	\end{equation}
	or 
	\begin{equation}\label{form6198}
		\aligned
		&\mathcal M^{\rm RGW}_{j_1-1,j_0}(L_1,L_0;p,q;\alpha) 
		\hat \times_{L_1} \mathcal M^{\rm RGW}_{3}(L_1;0), \\
		&\mathcal M^{\rm RGW}_{j_1,j_0-1}(L_1,L_0;p,q;\alpha) 
		\hat \times_{L_0} \mathcal M^{\rm RGW}_{3}(L_0;0).
		\endaligned
	\end{equation}
	In \eqref{form6197}, both of the factors have virtual dimension $0$.
	Therefore, we may assume that the zero set of the multi-valued perturbation
	we have defined on those factors do not lie in the strata of codimension at least $2$.
	Since the multi-sections on the two factors have 
	finitely many zeros and the map $\Pi$ in Proposition \ref{lem364}
	is an isomorphism, the first factor gives rise to
	finitely many points in the boundary. In the case of (\ref{form6198}), the 
	first factor has virtual dimension $0$ and the multi-section 
	there vanishes only at finitely many points.
	The second factor is identified with $L_1$ or $L_0$.
	Therefore, the fiber product is identified with the first factor. In summary, 
	the multi-valued perturbation has finitely many zeros on the boundary and item (3) holds.
        
        We fix Kuranishi charts at the finitely many zeros on the boundary.
        Since these are boundary points, we can easily extend 
        our multi-valued perturbation to the interior of the chosen 
        Kuranishi charts so that it is transversal to $0$.
        Now we extend the multi-valued perturbation further to a neighborhood of 
        the boundary of $\mathcal M^{\rm RGW}_{j_1,j_0}(L_1,L_0;p,q;\alpha)$
        in the $C^0$ sense such that the multi-valued perturbation does not vanish 
        except at those finitely many charts.
	We may assume that the multi-valued perturbation is $C^1$ outside a 
        smaller neighborhood of the boundary. We use again the existence theorem of multi-sections
	that are transversal to zero everywhere to complete the construction of the multi-valued perturbation
	on $\mathcal M^{\rm RGW}_{j_1,j_0}(L_1,L_0;p,q;\alpha)$.
%
%
\qed
\begin{remark}
	This proof never uses the smoothness of the coordinate change
	with respect to the gluing parameters $\sigma_{e}$ when $\sigma_{e}=0$.
	In most part of the proof, we extend the multi-section at the
	boundary to the interior only in the $C^0$ sense.
	When we extend the multi-section near a point of the 
	boundary where  it vanishes, we fix a 
	chart there and extend it on that chart.
	We use other charts to extend the multi-section in $C^0$ sense to a 
	neighborhood of the boundary. 
	(Recall that we only need the differentiability of the multi-valued perturbation in a neighborhood of 
	its vanishing set to define virtual fundamental chain.)
	The key point here is that the multi-valued perturbation on the boundary has only isolated zeros.
\end{remark}
\begin{remark}
	Even though we do not perturb the moduli spaces of disks of virtual dimension 
	greater than $1$
	in the proof of Theorem \ref{prop61111}, we used the fact that these 
	moduli spaces admit Kuranishi structures.
\end{remark}


\subsection{Completion of the proof of Theorem \ref{lem48}}
\label{subsub:Floer2}

	We consider $p,q\in L_0\cap L_1$  and $\beta \in \Pi_2(L_1,L_0;p,q)$ such that
	\[\dim\mathcal M^{\rm RGW}_{0,0}(L_1,L_0;p,q;\beta) = 1.\]
	By Corollary \ref{Cor27} (3), 
	we have
	\begin{equation}\label{form44}
		[\partial \mathcal M^{\rm RGW}_{0,0}(L_1,L_0;p,q;\beta),\widehat{\frak s}^n] = 0.
	\end{equation}
	We will prove that the sum of \eqref{form44} over all possible choices of $\beta$ becomes the coefficient of $[q]$ in $\partial\circ \partial ([p])$, 
	which will show that $\partial$ is a differential.

	We study various types of boundary elements appearing in Theorem \ref{theorem30}.
	The contribution of the elements in part (1) of Theorem \ref{theorem30} is given as
	\begin{equation}\label{form450}
		[(\mathcal M^{\rm RGW}_{0,0}(L_1,L_0;p,r;\beta_1) \hat \times \mathcal M^{\rm RGW}_{0,0}(L_1,L_0;r,q;\beta_2),\widehat{\frak s}^n)].
	\end{equation}
	We claim that this is equal to
	\begin{equation}\label{form460}
		[(\mathcal M^{\rm RGW}_{0,0}(L_1,L_0;p,r;\beta_1),\widehat{\frak s}^n)]
		[(\mathcal M^{\rm RGW}_{0,0}(L_1,L_0;r,q;\beta_2),\widehat{\frak s}^n)].
	\end{equation}
	The number $[(\mathcal M^{\rm RGW}_{0,0}(L_1,L_0;p,r;\beta_1),\widehat{\frak s}^n)]$ is a weighted count of the zeroes of $\frak s^n$ on this moduli space.
	By Theorem \ref{prop61111} (3), this zero set is disjoint from $\mathcal M^{\rm RGW}_{0,0}(L_1,L_0;p,r;\beta_1)^{(1)}$.
	The same conclusion holds for the virtual chain $[(\mathcal M^{\rm RGW}_{0,0}(L_1,L_0;r,q;\beta_2),\widehat{\frak s}^n)]$.
	Therefore, the map $\Pi$ in (\ref{form371}) induces an isomorphism of Kuranishi structures on a neighborhood of the zero set of $\widehat{\frak s}^n$.
	It follows that the weighted count of (\ref{form450}) is equal to (\ref{form460}).
	To complete the proof of Theorem \ref{lem48}, it suffices to show the contribution of the other boundary components vanishes.

	We firstly consider the contribution from Item (2) of Theorem \ref{theorem30}, which is
	\begin{equation}\label{form47}
		\mathcal M^{\rm RGW}_{0,1}(L_1,L_0;p,q;\beta')\, \hat\times_{L_0}\, \mathcal M^{\rm RGW}_1(L_0;\alpha).
	\end{equation}
	If $\mathcal M^{\rm RGW}_1(L_0;\alpha)$  is nonempty, then 
	$\omega(\alpha) > 0$. Using monotonicity and the fact that the minimum 
	Maslov number of $L_0$ is at least $4$, we can conclude that 
	$\mu(\alpha) \ge 4$. This inequality together with $\mu(\beta)=2$ implies that $\mu(\beta')\leq -2$. Thus the moduli space 
	$\mathcal M^{\rm RGW}_{0,1}(L_1,L_0;p,q;\beta')$ has a negative dimension.
	By Corollary \ref{Cor27}, $\widehat{\frak s}^{n}$ never vanishes on $\mathcal M^{\rm RGW}_{0,1}(L_1,L_0;p,q;\beta')$.
	Therefore, \eqref{form47}  is empty after the perturbation given by $\widehat{\frak s}^n$, and hence it does not contribute to \eqref{form44}.
	In the same way, we can show Item (3) of Theorem \ref{theorem30}  does not contribute to \eqref{form44}. This completes the proof of 
	Theorem \ref{lem48}, except that we still need to construct Kuranishi structures required for Propositions \ref{lema362}, \ref{lem364}, \ref{lem365}
	which will be done in Section \ref{sub:systemconst}.
\qed

\begin{definition}
If  Condition \ref{cond420} is satisfied, then we define the Lagrangian Floer homology of $L_0$ and $L_1$ as
$$
HF(L_1,L_0;o;X \setminus \mathcal D) \cong \frac{{\rm Ker} \left( \partial : CF(L_1,L_0;\bbQ;o) 
\to CF(L_1,L_0;\bbQ;o)\right)}{{\rm Im} \left( \partial : CF(L_1,L_0;\bbQ;o) 
\to CF(L_1,L_0;\bbQ;o)\right)}.
$$
\end{definition}
We remark that Item (1) of Theorem \ref{theorem30},
$$
{\rm rank} HF(L_1,L_0;o;X \setminus \mathcal D) \le \# L_0\cap_{o} L_1,
$$
is immediate from the definition.

\section{Construction of a System of Kuranishi Structures}
\label{sub:systemconst}

\subsection{Statement}
\label{subsub:statecoma}

In \cite{part2:kura}, we constructed a Kuranishi structure for each moduli space $\mathcal M_{k+1}^{\rm RGW}(L;\beta)$. In this section, we study how these Kuranishi structures are related to each other at their boundaries and corners. More specifically, we prove the disk moduli version of Propositions \ref{lema362}, \ref{lem364}, \ref{lem365}, stated as Theorem \ref{lema362rev}.
The notation $\hat\times_L$ is discussed in Subsection \ref{c}. Recall also from Subsection \ref{c} that $\mathcal M^{\rm RGW}_{k_1+1}(L;\beta_1)^{(1)}$ is the union of the strata of $\mathcal M_{k+1}^{\rm RGW}(L;\beta)$ which are described by DD-ribbon trees with at least one positive level. The proof of Propositions \ref{lema362}, \ref{lem364}, \ref{lem365} is similar to that of Theorem \ref{lema362rev}. Thus, we only focus on the proof of Theorem \ref{lema362rev}. 

\begin{theorem}\label{lema362rev}
	Suppose $E$ is a positive real number and $N$ is a positive integer.
	There is a system of Kuranishi structures on the moduli spaces 
	$\{\mathcal M^{\rm RGW}_{k+1}(L;\beta)\}_{k,\beta}$ with $\omega\cap \beta\leq E$ and $k\leq N$
	such that if $\beta_1 + \beta_2 = \beta$, $k_1+k_2 =k$, then the space 
	$$
	  \mathcal M^{\rm RGW}_{k_1+1}(L;\beta_1)\,\hat\times_L\,
	  \mathcal M^{\rm RGW}_{k_2+1}(L;\beta_2)
	$$
	is a codimension one stratum of $\mathcal M^{\rm RGW}_{k+1}(L;\beta)$
	with the following properties.
	There exists a continuous map
	\begin{equation}\label{form6179}
		\aligned
		\Pi : &\mathcal M^{\rm RGW}_{k_1+1}(L;\beta_1)
		\,\hat\times_L\,\mathcal M^{\rm RGW}_{k_2+1}(L;\beta_2) \\
		&\to\mathcal M^{\rm RGW}_{k_1+1}(L;\beta_1)\,\times_L\,
		\mathcal M^{\rm RGW}_{k_2+1}(L;\beta_2)
		\endaligned
	\end{equation}
	which has the same properties as Proposition \ref{form371} (1),(2) and (3).                      
\end{theorem}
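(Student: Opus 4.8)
The plan is to prove the theorem by an induction on $(\omega\cap\beta,k)$, ordered lexicographically, run in parallel with a construction of the Kuranishi structures on the spaces $\mathcal M^{\rm RGW}_{k+1}(L;\beta)$ that refines the one of \cite{part2:kura} by forcing compatibility along codimension-one boundary strata. For fixed $(\omega\cap\beta,k)$, every codimension-one boundary piece of the form $\mathcal M^{\rm RGW}_{k_1+1}(L;\beta_1)\,\hat\times_L\,\mathcal M^{\rm RGW}_{k_2+1}(L;\beta_2)$ has $\beta_1+\beta_2=\beta$ and $k_1+k_2=k$, so each factor satisfies either $\omega\cap\beta_i<\omega\cap\beta$, or $\omega\cap\beta_i=\omega\cap\beta$ and $k_i<k$; hence the Kuranishi structures on the factors, and their mutual compatibility, are available by the inductive hypothesis. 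The task at the inductive step is then threefold: (a) establish that these fiber products are exactly the codimension-one strata, (b) construct the continuous map $\Pi$ of \eqref{form6179}, and (c) extend the boundary Kuranishi data to a Kuranishi structure on $\mathcal M^{\rm RGW}_{k+1}(L;\beta)$ for which $\Pi$ enjoys the properties listed in Propositions \ref{lema362} and \ref{lem364}.

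For (a) and (b) I would argue exactly as in the proof of Proposition \ref{lema362}, replacing SD-ribbon trees by DD-ribbon trees and plain products by fiber products over $L$ at the gluing marked point. A boundary DD-ribbon tree $\mathcal R^{(1)}$ of the relevant type carries a distinguished edge $e$ joining its two interior vertices; removing $e$ and adding two exterior vertices produces DD-ribbon graphs $\mathcal R_1,\mathcal R_2$ of types $(\beta_1;k_1)$ and $(\beta_2;k_2)$. Every $\mathcal R'\le\mathcal R^{(1)}$ has a unique expression $\mathcal R'=\mathcal R'_1\#_{\hat\lambda}\mathcal R'_2$ with $\mathcal R'_i\le\mathcal R_i$, where $\hat\lambda$ is any level function restricting to that of $\mathcal R'_i$ on each piece; setting $h=|\lambda_1|+|\lambda_2|-|\hat\lambda|\ge 0$, the canonical identifications $\widetilde{\mathcal M}^0(\mathcal R'_1)\times_L\widetilde{\mathcal M}^0(\mathcal R'_2)=\widetilde{\mathcal M}^0(\mathcal R')$ and ${\rm Aut}(\mathcal R')={\rm Aut}(\mathcal R'_1)\times{\rm Aut}(\mathcal R'_2)$ exhibit a $\bbC_*^{h}$-action on $\mathcal M^0(\mathcal R')$ with quotient $\mathcal M^0(\mathcal R'_1)\times_L\mathcal M^0(\mathcal R'_2)$. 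The projections assemble to the continuous $\Pi$, and by the dimension count of the disk case of Proposition \ref{prop356} the locus where $h\ge 1$ is exactly the codimension-two union $\big(\mathcal M^{\rm RGW}_{k_1+1}(L;\beta_1)^{(1)}\times_L\mathcal M^{\rm RGW}_{k_2+1}(L;\beta_2)\big)\cup\big(\mathcal M^{\rm RGW}_{k_1+1}(L;\beta_1)\times_L\mathcal M^{\rm RGW}_{k_2+1}(L;\beta_2)^{(1)}\big)$; off this locus $\Pi$ is a bijection, and the same dimension count gives (a).

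For (c), recall that in \cite{part2:kura} the Kuranishi chart $U_{\frak p}=V_{\frak p}/\Gamma_{\frak p}$ of a point $\frak p$ is the solution space of $\overline\partial u\equiv 0\bmod E(u)$, with $E(u)$ a finite-dimensional space of $u^*TX$-valued $(0,1)$-forms supported in prescribed compact subsets of the interiors of the irreducible components, disjoint from every node and neck. For $\frak p$ mapping to the stratum $\mathcal M^{\rm RGW}_{k_1+1}(L;\beta_1)\,\hat\times_L\,\mathcal M^{\rm RGW}_{k_2+1}(L;\beta_2)$, write $\overline{\frak p}=\Pi(\frak p)=(\frak p_1,\frak p_2)$ and prescribe $E(u)=E_1(u_1)\oplus E_2(u_2)$, where $E_i(u_i)$ is the obstruction space already fixed, by induction, on the $i$-th factor. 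Since these supports avoid the gluing region, the (pre)gluing construction identifies a neighbourhood of $\frak p$ in $U_{\frak p}$ with a space of gluing/neck parameters over $V_{\frak p_1}\times_L V_{\frak p_2}$; this produces the injection $\phi_{\frak p}\colon\Gamma_{\frak p}\to\Gamma_{\overline{\frak p}}$, the strata-wise smooth submersion $F_{\frak p}\colon V_{\frak p}\to V_{\overline{\frak p}}$ whose fibers are exactly the $\bbC_*^{h}$-directions, the fiberwise isomorphism $E_{\frak p}\cong F_{\frak p}^{*}E_{\overline{\frak p}}$ with lift $\tilde F_{\frak p}$, and the compatibilities $\tilde F_{\frak p}\circ s_{\frak p}=s_{\overline{\frak p}}\circ F_{\frak p}$ and $\psi_{\overline{\frak p}}\circ F_{\frak p}=\Pi\circ\psi_{\frak p}$; where $h=0$, the map $F_{\frak p}$ is a local isomorphism, so there $\Pi$ is induced by an isomorphism of Kuranishi charts. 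Compatibility with coordinate changes, hence the assembly of these charts into a Kuranishi structure on $\mathcal M^{\rm RGW}_{k+1}(L;\beta)$ extending the boundary data, is arranged by the standard inductive device of \cite{fooonewbook}: fix obstruction spaces at finitely many points covering each stratum in order of increasing codimension, using the product prescription along the boundary; define the chart at an arbitrary point by summing pulled-back obstruction spaces; and verify the cocycle condition.

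The main obstacle is the global coherence of (c): one must ensure that the boundary prescription $E=E_1\oplus E_2$ extends to a choice of obstruction spaces over all of $\mathcal M^{\rm RGW}_{k+1}(L;\beta)$ that is simultaneously compatible with every coordinate change and with every boundary stratum, including the deeper corners where several gluings occur at once and the $\bbC_*$-factors accumulate. This is where the RGW-specific subtlety must be tracked: unlike for the ordinary stable-map compactification, the boundary is not literally a fiber product of lower moduli spaces but is so only outside a codimension-two locus, so throughout the induction one carries the refined datum — the map $\Pi$ together with its quotient-by-$\bbC_*^{h}$ structure, rather than an honest identification — and checks at each stage that the obstruction-bundle isomorphism $E_{\frak p}\cong F_{\frak p}^{*}E_{\overline{\frak p}}$ respects these quotients and their iterations at corners. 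Once this bookkeeping (the disk analogue of the tree-like $K$-system formalism of \cite{fooonewbook}) is in place, the verification of the three properties claimed for $\Pi$ is routine, and the disk version of Propositions \ref{lema362}, \ref{lem364}, \ref{lem365} follows by the same argument applied to the other codimension-one strata.
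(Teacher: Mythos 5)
Your overall route coincides with the paper's: the map $\Pi$ and the identification of the codimension-one strata are obtained combinatorially exactly as in the strip case (removing the distinguished level-$0$ edge, the unique splitting $\mathcal R'=\mathcal R'_1\#_{\hat\lambda}\mathcal R'_2$, and the $\bbC_*^{h}$-quotient description as in \eqref{firm37272}), and compatibility of the Kuranishi structures is to be achieved by prescribing, on the boundary, obstruction spaces that split as direct sums of those of the two factors and extending inward by induction on $(\omega\cap\beta,k)$, with obstruction spaces pulled back from finitely many interior base points. This is the architecture of Section \ref{sub:systemconst}, and your paragraph (c), once made precise, is essentially Lemma \ref{lem684}.

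The gap is that the step you defer as ``routine bookkeeping'' is the actual content of the paper's proof, and your proposal supplies neither of its two key ingredients. First, you never say how an obstruction space $E_{\frak p_v}$ chosen at a base point $\frak p$ is transported to a nearby, possibly more degenerate or higher-level, curve; the paper does this by target parallel transportation --- local approximations $(\frak u,\frak p,\phi)$ normalized by geodesics orthogonal to $u_{\frak p}(\Sigma_{\frak p})$ (Definitions \ref{def1014} and \ref{defn1015}) --- deliberately avoiding the extra-marked-point device of \cite{fooo:const2}, and it organizes the resulting choices as quasi component choice maps $\mathscr F_{k,\beta},\mathscr F^{\circ}_{k,\beta}$ inside the space $\mathscr{QC}(k,\beta)$. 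Second, coherence across coordinate changes, corners and limits is not a cocycle check to be ``verified'' at the end but a list of properties that must be built into the inductive construction: openness and properness of the choice maps (Condition \ref{conds1023}, established in Steps 2--4 of Lemma \ref{eximanuyobj}, including the collar extension governed by $d(\frak u,\partial\mathcal M^{\rm RGW}_{k+1}(L;\beta))$), the disk-component-wise identity (Condition \ref{conds1025}, i.e.\ Definition \ref{defn687nnew}), the direct-sum condition (Condition \ref{conds1027}, which requires perturbing the $E_{\frak p,v}$), transversality together with mapping transversality at the evaluation point (Condition \ref{conds30}) --- the latter is what makes the fiber product over $L$ in the statement carry a Kuranishi structure --- and ${\rm Aut}$-invariance (Condition \ref{conds31}). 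Without the transport mechanism and the semicontinuity/properness argument, i.e.\ without Proposition \ref{lem685}, the inductive extension you describe is not yet defined, so the proposal stops short of most of the proof even though its plan points in the right direction.
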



The proof of Theorem \ref{lema362rev} occupies the rest of this section.
For the proof, first we formulate the notion of obstruction bundle data\footnote
{This is related to but is different from the notion of obstruction data 
introduced in \cite[Definition 8.4]{part2:kura}.} in Definition \ref{defn684684}.
It is a way to associate an obstruction bundle to a neighborhood of each element of the moduli space. 
In  \cite{part2:kura}, we defined and used a version of such obstruction bundle data. For the proof of  Theorem \ref{lema362rev}, we need to slightly modify our choices of obstruction bundles so that they satisfy certain compatibility conditions at the boundary and corners.
The notion of  obstruction bundle data is introduced to state 
the required compatibility condition in a precise was as Definition \ref{defn687nnew} (disk-component-wise-ness).
Then the proof of Theorem \ref{lema362rev} is divided into two parts. 
We first show that the system of Kuranishi structures induced from a system of
disk-component-wise obstruction bundle data has the property stated in Theorem \ref{lema362rev}.
We then show the existence of disk-component-wise obstruction bundle data.

\begin{remark}
	Theorem \ref{lema362rev} concerns only the behavior of Kuranishi structures at 
	codimension one boundary components. 
	In fact, there is a similar statement for the behavior of our Kuranishi structures 
	at higher co-dimensional corners. This generalization to higher co-dimensional corners
	are counterparts of \cite[Condition 16.1 X, XI, XII]{fooonewbook}
	in the context of the stable map compactification\footnote{In \cite{foooast}, 
	the corresponding statement is called corner compatibility conditions.}.
	The main difference is that we need to replace $\times_L$ with $\hat\times_L$.
	To work out the whole construction of simultaneous perturbations, we need 
	the generalization of 	Theorem \ref{lema362rev} to the 
	higher co-dimensional corners.
	
	In Subsection \ref{subsub:componentwise}, 
	we will formulate a condition (Definition \ref{defn687nnew}) for the obstruction spaces, 
	which implies the consistency of Kuranishi structures 
	at the corners of arbitrary codimension.
	Since the proof and the statement for the case of corners is a straightforward 
	generalization of the case of boundary (but cumbersome to write in detail), 
	we focus on the case of codimension one boundary
	components.
\end{remark}

\begin{remark}
	The proof in this section is different from the approach in 
	\cite[Section 8]{fooo:const2}, where 
	the case of stable map compactification is treated. 
	In this section, we use target  parallel transportation. 
	On the other hand, in \cite[Section 8]{fooo:const2} extra marked points are added 
	to $\frak p \in \frak P(k+1,\beta)$ and are used to fix a diffeomorphism between open subsets of 
	the source domains. 
\end{remark}

\subsection{Disk component-wise-ness of the Obstruction Bundle Data}
\label{subsub:componentwise}
A {\it disk splitting tree} is defined to be a very detailed DD-ribbon tree $\mathcal S$ such that the color of all  vertices is ${\rm d}$. We say a detailed DD-ribbon tree $\check R$ belongs to a disk splitting tree $\mathcal S$ if $\mathcal S$ is obtained from $\check R$ by level shrinking and fine edge shrinking. (See \cite[Section 8]{part2:kura} for the definition of these combinatorial objects and operations.) In other words, 
geometrical objects with combinatorial type $\check R$ are limits of objects with type $\mathcal S$ such  that new disc bubble does not occur. However, it is possible to have sphere bubbles.

Let $\frak u \in \mathcal M^{\rm RGW}_{k+1}(L;\beta)$ and $\check R$ be the associated very detailed tree. Suppose $\mathcal S$ is a disk splitting tree such that $\check R$ belongs to $\mathcal S$. Let also $\lambda$ be the level  function assigned to $\check R$. For each interior vertex $\frak v$ of $\mathcal S$, let $\check R_{\frak v}$ be the subtree of $\check R$ given by the connected component of
\[
  \check R\setminus \bigcup_{e\in C^{\rm int}_1({\check R}),\, \lambda(e) = 0}e
\]
which contains the vertex $\frak v$.
Let $\overline{\mathcal S}$ be a disk splitting tree obtained from $\mathcal S$ by a sequence of shrinking of level $0$ edges \cite[Definition 3.104]{part1:top}. Let $\pi : \mathcal S \to \overline{\mathcal S}$ be the associated contraction map. For each $\frak w \in C^{\rm int}_{0}(\overline{\mathcal S})$, let ${\check R}(\frak w)$ be the very detailed DD-ribbon tree defined as
\begin{equation}\label{bewfirn182}
	{\check R}(\frak w)= \bigcup_{\pi(\frak v) = \frak w}\check R_{\frak v}
	\cup \bigcup_{\substack{e\in C^{\rm int}_1({\check R}),\, \lambda(e) = 0,\\ \, \text{$\pi(e)$ is adjacent to $\frak w$}}} e.
\end{equation}
Clearly we have $C^{\rm int}_0({\check R}(\frak w)) \subseteq C^{\rm int}_0({\check R})$ and $C^{\rm int}_1({\check R}(\frak w)) \subseteq C^{\rm int}_1({\check R})$. 

The restriction of the quasi-order\footnote{See \cite[Definition 3.55]{part1:top} for the definition of a quasi-order.} of $C^{\rm int}_0({\check R})$ to the set $C^{\rm int}_0({\check R}(\frak w))$ determines\footnote{See  \cite[Lemma 3.56]{part1:top}.} a level function $\lambda_{\frak w}$ for ${\check R}(\frak w)$. The tree ${\check R}(\frak w)$ also inherits a multiplicity function, a homology class assigned to each interior vertex and a color function from $\check R$, which turn ${\check R}(\frak w)$  into a very detailed tree associated to a detailed DD-ribbon tree $\mathcal R(\frak w)$.
There is a map
\begin{equation}\label{pifrakw}
  \pi_{\frak w} : \{1,\dots,\vert\lambda\vert\} \to \{1,\dots,\vert\lambda_{\frak w}\vert\}
\end{equation}
such that $i \le j$ implies $\pi_{\frak w}(i) \le \pi_{\frak w}(j)$ and for any $v \in C^{\rm int}_0({\mathcal R}(\frak w)) \subseteq C^{\rm int}_0({\mathcal R})$
\begin{equation}\label{form618010}
	\lambda_{\frak w}(v) = \pi_{\frak w}(\lambda(v)).
\end{equation}

Let $\Sigma_{\frak u}$ be the source curve of $\frak u$, and $\Sigma_{\frak u,v}$ denote the irreducible component of $\Sigma_{\frak u}$ corresponding to an interior vertex $v$ of $\check R$. For any $\frak w\in C^{\rm int}_{0}(\overline{\mathcal S})$, we define $\Sigma_{\frak u,\frak w}$ to be the union of irreducible components $\Sigma_{\frak u,v}$ where $v \in C^{\rm int}_0({\check R}(\frak w))$. A boundary marked point of $\Sigma_{\frak u,\frak w}$ 
is either a boundary marked point of a disc component $\Sigma_{\frak u,v}$ 
in $\Sigma_{\frak u,\frak w}$ or a boundary nodal point of $\Sigma_{\frak u}$ which joins an irreducible component of $\Sigma_{\frak u,\frak w}$ to an irreducible component of $\Sigma_{\frak u}$, which is not in $\Sigma_{\frak u,\frak w}$. The $0$-th boundary marked point $z_{0,\frak w}$ of $\Sigma_{\frak u,\frak w}$ is defined as follows. If the 0-th boundary marked point $z_0$ of  $\Sigma_{\frak u}$ is contained in $\Sigma_{\frak u,\frak w}$ then $z_{0,\frak w} = z_0$. If not, $z_{0,\frak w}$ is the boundary nodal point such that $z_0$ and $\Sigma_{\frak u,\frak w} \setminus \{z_{0,w}\}$ are contained in the different connected component of $\Sigma_{\frak u} \setminus \{z_{0,\frak w}\}$.

The restriction of $u_{\frak u} : (\Sigma_{\frak u},\partial\Sigma_{\frak u})  \to (X,L)$ to $\Sigma_{\frak u,\frak w}$ defines a map $u_{\frak u,\frak w} : (\Sigma_{\frak u,\frak w},\partial\Sigma_{\frak u,\frak w})  \to (X,L)$. The bordered nodal curve $\Sigma_{\frak u,\frak w}$ together with the boundary marked points described above, the choice of the $0$-th boundary marked point $z_{0,\frak w}$ and the map $u_{\frak u,\frak w}$ determines an element of the moduli space $\mathcal M_{k_{\frak w}+1}^{\rm RGW}(L;\beta(\frak w))$ where $\beta(\frak w) = \sum_{v \in C^{\rm int}_0(\check R(\frak w))}
\alpha(v)$ and $k_{\frak w}+1$ is the number of the boundary marked points of $\Sigma_{\frak u,\frak w}$.
We denote this element by $\frak u_{\frak w}$.

Let $\Xi_{\frak u} = (\vec w_{\frak u},(\mathcal N_{\frak u,v}), (\phi_{\frak u,v}), (\varphi_{\frak u,v,e}))$ be a TSD\footnote{See \cite[Definition 8.16]{part2:kura} for the definition of a TSD.} for $\frak u$. This induces a TSD $\Xi_{\frak u_{\frak w}}$ for $\frak u_{\frak w}$ in an obvious way. Let 
\begin{equation}\label{florm680}
	\frak y = (\vec{\frak x},\vec{\sigma},(u'_{v}),(U'_{v}),(\rho_{e}),(\rho_{i}))
\end{equation}
be an inconsistent map\footnote{See \cite[Definition 8.28]{part2:kura} for the definition of inconsistent maps.} with respect to $\Xi_{\frak u}$. Let $\mathcal S'$ be a disc splitting tree such that the very detailed tree of $\frak y$ belongs to $\mathcal S'$. We assume that $\overline {\mathcal S}$ is obtained from $\mathcal S'$ by a sequence of shrinking of level $0$ edges. Given $\frak w \in C^{\rm int}_{0}(\overline{\mathcal S})$, let $\sigma_{e}=0$ for any level $0$ edge $e\in C^{\rm int}_1({\check R})$ that corresponds to an exterior edge of ${\check R}(\frak w)$. Then we can define an inconsistent map $\frak y({\frak w})$ with respect to $\Xi_{\frak u_{\frak w}}$ in the following way. Since $C^{\rm int}_0({\check R}(\frak w)) \subseteq C^{\rm int}_0({\check R})$, $C^{\rm int}_1({\check R}(\frak w)) \subseteq C^{\rm int}_1({\check R})$, the restriction of the data of $\frak y$ determine $\vec{\frak x}_{\frak w}$, $\vec{\sigma}_{\frak w}$, $(u'_{v,\frak w})$, $(U'_{v,\frak w})$ and $(\rho_{e,\frak w})$. We also define:
$$
\rho_{\frak w,i} = \prod_{\hat i: \pi_{\frak w}(\hat i)=i} \rho_{\hat i}
$$
where $\pi_{\frak w}$ is given in \eqref{pifrakw}. 
\begin{lemma}
	The following element is an inconsistent map with respect to $\Xi_{\frak u_{\frak w}}$:
	\begin{equation}\label{form6182}
		\frak y(\frak w) = (\vec{\frak x}_{\frak w},\vec{\sigma}_{\frak w},(u'_{v,\frak w}),(U'_{v,\frak w}),
		(\rho_{e,\frak w}),(\rho_{\frak w,i})).
	\end{equation}
\end{lemma}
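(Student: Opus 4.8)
The plan is to verify directly that the tuple $\frak y(\frak w)$ in \eqref{form6182} satisfies each of the defining conditions of an inconsistent map with respect to a TSD (\cite[Definition 8.28]{part2:kura}), with $\Xi_{\frak u_{\frak w}}$ in place of $\Xi_{\frak u}$. The guiding observation is that, apart from the level parameters $(\rho_{\frak w,i})$, all of the data of $\frak y(\frak w)$ is obtained from that of $\frak y$ by restriction to the sub-objects indexed by $C^{\rm int}_0(\check R(\frak w)) \subseteq C^{\rm int}_0(\check R)$ and $C^{\rm int}_1(\check R(\frak w)) \subseteq C^{\rm int}_1(\check R)$, with $\sigma_e$ set to $0$ on those level-$0$ edges of $\check R$ that become exterior edges of $\check R(\frak w)$, and that $\Xi_{\frak u_{\frak w}}$ is, by construction, the corresponding restriction of $\Xi_{\frak u}$. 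First I would dispatch all conditions of \cite[Definition 8.28]{part2:kura} that are \emph{local}, i.e. that concern a single irreducible component $\Sigma_{\frak u,v}$ with $v \in C^{\rm int}_0(\check R(\frak w))$, a single interior edge, or a single boundary or interior marked point supported on such a component: these are the conditions on the domains $U'_{v,\frak w}$, on the maps $u'_{v,\frak w}$ together with their parametrizations, and on the edge parameters $\rho_{e,\frak w}$. Each of them is literally the restriction of the corresponding condition for $\frak y$, so it holds automatically. Next I would invoke the set-up preceding the lemma, namely the description of the boundary marked points of $\Sigma_{\frak u,\frak w}$ and of the $0$-th marked point $z_{0,\frak w}$, which was arranged precisely so that $(\Sigma_{\frak u,\frak w},z_{0,\frak w},\dots)$ defines an element of $\mathcal M^{\rm RGW}_{k_{\frak w}+1}(L;\beta(\frak w))$ of combinatorial type $\mathcal R(\frak w)$; granting this, the conditions pinning down the rooted-tree structure of $\frak y(\frak w)$ carry over as well, and $\vec\sigma_{\frak w}$ (a restriction, with some entries forced to be $0$) is indeed a legitimate assignment of gluing parameters for $\mathcal R(\frak w)$.

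The one condition that genuinely requires work is the compatibility of the level parameters $(\rho_{\frak w,i})_{i=1}^{\vert\lambda_{\frak w}\vert}$ with $\vec\sigma_{\frak w}$ and $(\rho_{e,\frak w})$. The relevant relations in \cite[Definition 8.28]{part2:kura} express, for each interior edge $e$ of $\check R$ meeting two positive levels, or level $0$ and a positive level, the parameter $\sigma_e$ in terms of $\rho_e$ and a product of consecutive level parameters $\rho_i$, together with decreasing-norm constraints on the $\vert\rho_i\vert$. Here I would use the defining property \eqref{form618010} of $\pi_{\frak w}$, namely that $\lambda_{\frak w} = \pi_{\frak w}\circ\lambda$ with $\pi_{\frak w}$ order-preserving and surjective onto $\{1,\dots,\vert\lambda_{\frak w}\vert\}$, so that the fibers of $\pi_{\frak w}$ are consecutive blocks of levels of $\check R$. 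Then $\rho_{\frak w,i} = \prod_{\hat i:\ \pi_{\frak w}(\hat i) = i}\rho_{\hat i}$ is the product of the $\rho$'s over one such block, and hence a product of $\rho_{\frak w,j}$ over a consecutive run of levels of $\check R(\frak w)$ equals the product of $\rho_{\hat i}$ over the corresponding consecutive run of levels of $\check R$. Matching the run cut out by $e$ in $\check R(\frak w)$ with the run cut out by $e$ in $\check R$, and using multiplicativity, one then checks that the defining relation for $\sigma_e$, and the relation $\sigma_e = 0$ when $e$ becomes a level-$0$ edge, consistently with the prescription for $\vec\sigma_{\frak w}$, is transported verbatim; likewise each $\vert\rho_{\frak w,i}\vert$ is a product of factors $\vert\rho_{\hat i}\vert < 1$, so the norm conditions survive.

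The main, though essentially routine, obstacle is exactly this level bookkeeping: being careful about which levels of $\check R$ are collapsed onto a given level of $\check R(\frak w)$, and checking that the range of the product appearing in each gluing relation for $\mathcal R(\frak w)$ corresponds, under $\pi_{\frak w}$, to the range appearing in the relation for $\mathcal R$. This is where both the multiplicativity built into the definition $\rho_{\frak w,i} = \prod_{\hat i:\ \pi_{\frak w}(\hat i) = i}\rho_{\hat i}$ and the order-preserving surjectivity of $\pi_{\frak w}$ are used, and it explains why this particular product is the correct definition: collapsing a block of consecutive levels of $\check R$ into a single level of $\check R(\frak w)$ multiplies the associated $\bbC_*$-scaling parameters. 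Once this point is settled, the remaining verifications are the restriction arguments sketched in the first paragraph, which I would state briefly and leave to the reader.
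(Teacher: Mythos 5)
The paper states this lemma without proof, treating it as an immediate consequence of the definitions in the companion paper; your argument is exactly the routine verification being left implicit. Your reduction of the local conditions to restriction, together with the level bookkeeping via the order-preserving surjection $\pi_{\frak w}$ and the multiplicativity of the parameters $\rho_{\frak w,i}=\prod_{\hat i:\ \pi_{\frak w}(\hat i)=i}\rho_{\hat i}$ (with $\sigma_e=0$ on the level-$0$ edges that become exterior edges of $\check R(\frak w)$), is the intended justification, so the proposal is correct and consistent with the paper.
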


Next, we shall formulate a condition on the obstruction spaces so that the resulting system of Kuranishi structures satisfy the claims in Theorem \ref{lema362rev}. For this purpose, we firstly introduce the notion of {\it obstruction bundle data}.
\begin{definition}\label{defn684684}
	Suppose we are given vector spaces $\{E_{\frak u,\Xi}(\frak y)\}$ for any $\frak u\in \mathcal M^{\rm RGW}_{k+1}(L;\beta)$, any small enough TSD $\Xi$ at $\frak u$, and an inconsistent map $\frak y$ with
	respect to $\Xi$. This data is called an {\it obstruction bundle data} for $\mathcal M^{\rm RGW}_{k+1}(L;\beta)$ if the following holds.
	\begin{enumerate}
		\item  We have:
		\[
		 E_{\frak u,\Xi}(\frak y) = \bigoplus_{v \in C^{\rm int}_0(\check R)}E_{\frak u,\Xi,v}(\frak y)
		\]
		where $E_{\frak u,\Xi,v}(\frak y) \subset 
		L^2_{m,\delta}(\Sigma_{\frak y,v},\Lambda^{0,1} \otimes T)$.
		\item $E_{\frak u,\Xi,v}(\frak y)$ is a finite dimensional subspace.
			The supports of its elements are subsets of $\Sigma^{-}_{\frak y,v}$ and are 
			away from the boundary.
		\item $E_{\frak u,\Xi,v}(\frak y)$ is independent of $\Xi$ 
			in the sense of Definition \ref{defn6ten86}.
		\item $E_{\frak u,\Xi,v}(\frak y)$ is semi-continuous with respect to $\frak u$ in the sense of 
			Definition \ref{defn6ten86r3ev}.
		\item $E_{\frak u,\Xi,v}(\frak y)$ is of $C^{\ell}$ class with respect to $\frak y$ in the sense of 
			Definition \ref{defn6888}.
		\item The linearization of the Cauchy-Riemann equation is transversal to 
			$E_{\frak u,\Xi,v}(\frak y)$ in the sense of Definition \ref{defn68899}.
		\item $E_{\frak u,\Xi,v}(\frak y)$ is invariant under the $\Gamma_{\frak u}$-action.
			(See \cite[Definition 5.1 (5)]{fooo:const1}.)
	\end{enumerate}
\end{definition}
Definition \ref{defn684684} is the RGW counterpart of \cite[Definition 5.1]{fooo:const1} for the stable map compactification. Before discussing the precise meaning of (3), (4), (5) and (6), we define {\it disk-component-wise-ness} of a system of obstruction bundle data. This is the analogue of \cite[Definition 4.2.2]{foooast} for the stable map compactification:

\begin{definition}\label{defn687nnew}
	Suppose $E$ is a positive real number and $N$ is a positive integer.
	Suppose $\{E_{\frak u,\Xi}(\frak y)\}$ is a system of obstruction bundle data for the spaces
	$\{\mathcal M^{\rm RGW}_{k+1}(L;\beta)\}_{k,\beta}$ where 
	$k=0,1,2,\dots,N$, $\beta \in H_2(X,L)$ and $\beta \cap [\mathcal D] = 0$ with $\omega\cap \beta\leq E$.
	We say this system is {\it disk-component-wise} if we always have the identification
	\begin{equation}\label{form618383}
		 E_{\frak u,\Xi}(\frak y)=
		\bigoplus_{\frak w \in C^{\rm int}_0(\overline {\mathcal S})}E_{\frak u_{\frak w},\Xi_{\frak w}}(\frak y(\frak w)),
	\end{equation}
	where $\overline {\mathcal S}$ is a detailed DD-ribbon tree as in the beginning of the 
	subsection and  $\frak y(\frak w)$ is as in \eqref{form6182}.
\end{definition}

\subsubsection*{Explanation of Definition \ref{defn684684} (3)}
We pick two TSDs at $\frak u$ denoted by 
\[
  \Xi_{(j)} = (\vec w_{(j)},(\mathcal N_{(j),v}), (\phi_{(j),v}), (\varphi_{(j),v,e}),\kappa_{(j)}).
\] 
If $\Xi_{(2)}$ is small enough in compare to $\Xi_{(1)}$, then as in \cite[(9.56), (9.57)]{part2:kura}, we can assign to any inconsistent map: 
\[
  \frak y_{(2)}=(\vec{\frak x}_{(2)},\vec{\sigma}_{(2)},(u'_{(2),v}),(U'_{(2),v}),(\rho_{(2),e}),(\rho_{(2),i}))
\] 
with respect to $\Xi_{(2)}$ an inconsistent map
\[
  \frak y_{(1)}=(\vec{\frak x}_{(1)},\vec{\sigma}_{(1)},(u'_{(1),v}),(U'_{(1),v}),(\rho_{(1),e}),(\rho_{(1),i}))
\] 
with respect to $\Xi_{(1)}$. In particular, there is a bi-holomorphic embedding
$$
I_{v;\Xi_{(2)}\Xi_{(1)}} : \Sigma_{(1),v}^{-}(\vec{\frak x}_{(1)},\vec{\sigma}_{(1)}) \to \Sigma_{(2),v}^{-}(\vec{\frak x}_{(2)},\vec{\sigma}_{(2)})
$$
as in \cite[(9.62)]{part2:kura} such that
$$
\aligned
u'_{(2),v} \circ I_{v;\Xi_{(2)}\Xi_{(1)}} &= u'_{(1),v} 
\qquad \text{if $\lambda(v) = 0$}, \\
U'_{(2),v} \circ I_{v;\Xi_{(2)}\Xi_{(1)}} &= U'_{(1),v} 
\qquad \text{if $\lambda(v) > 0$}. 
\endaligned
$$
It induces a map
$$
  \frak I_{v;\Xi_{(2)}\Xi_{(1)}} : L^2_{m}(\Sigma^-_{(2),v}(\vec{\frak x}_{(2)},\vec{\sigma}_{(2)}),T \otimes \Lambda^{0,1})
  \to  L^2_{m}(\Sigma^-_{(1),v}(\vec{\frak x}_{(1)},\vec{\sigma}_{(1)}),T \otimes \Lambda^{0,1}).
$$
\begin{definition}\label{defn6ten86}
	We say the system $\{E_{\frak u,\Xi}(\frak y)\}$
	is {\it independent of $\Xi$}, if we always have:
	\begin{equation}
		\frak I_{v;\Xi_{(2)}\Xi_{(1)}}(E_{\frak u,\Xi_{(2)}}(\frak y_{(2)})=
		E_{\frak u,\Xi_{(1)}}(\frak y_{(1)}).
	\end{equation}
\end{definition}
The choices of obstruction bundles that we made in the previous section have this property. In fact, this property was used in the proof of \cite[Lemma 9.58]{part2:kura}.

\subsubsection*{Explanation of Definition \ref{defn684684} (4)}
Let $\frak u_{(1)}\in \mathcal M^{\rm RGW}_{k+1}(L;\beta)$ and $\Xi_{(1)}$ be a small enough TSD at $\frak u_{(1)}$. Let also $\frak u_{(2)} \in \mathcal M^{\rm RGW}_{k+1}(L;\beta)$ be in a neighborhood of $\frak u_{(1)}$ determined by $\Xi_{(1)}$ and $\Xi_{(2)}$ be a TSD at $\frak u_{(2)}$. We assume that $\Xi_{(1)}$, $\Xi_{(2)}$ satisfy \cite[Conditions 9.2 and 9.12]{part2:kura}. Let $\check R_{(j)}$ be the very detailed tree associated to $\frak u_{(j)}$. Our assumption implies that there is a map $\pi:\check R_{(1)} \to \check R_{(2)}$. Let $\frak y_{(2)}$ be an inconsistent map with respect to $\Xi_{(2)}$. We use \cite[Lemma 9.22]{part2:kura} to associate an inconsistent map $\frak y_{(1)}$ with respect to $\Xi_{(1)}$. In particular, for any $\hat v\in C^{\rm int}_0(\check R_{(1)})$ with $v:=\pi(\hat v)$, we have a bi-holomorphic isomorphism
$$
  I_{\hat v}:\Sigma_{(1),\hat v}^{-} \to \Sigma_{(2),v}^{-}
$$
such that 
$$
  \aligned
  u'_{(2),v} \circ I_{\hat v} &= u'_{(1),\hat v} 
  \qquad \text{if $\lambda(v) = 0$}, \\
  U'_{(2),v} \circ I_{\hat v} &= U'_{(1),\hat v} 
  \qquad \text{if $\lambda(v) > 0$}. 
  \endaligned
$$
It induces a map:
$$
  \frak I_{v;\frak y_{(1)}\frak y_{(2)}} : L^2_{m}(\Sigma^-_{(2),v},\Lambda^{0,1} \otimes T)
  \to \bigoplus_{\pi(\hat v)=v}L^2_{m}(\Sigma^-_{(1),\hat v},\Lambda^{0,1} \otimes T).
$$
\begin{definition}\label{defn6ten86r3ev}
	We say that $\{E_{\frak u,\Xi}(\frak y)\}$
	 {\it is semi-continuous with respect to $\frak u$} if the following property is satisfied.
	If $\frak u_{(1)}$, $\frak u_{(2)}$, $\frak y_{(1)}$, $\frak y_{(2)}$, $\Xi_{(1)}$ and $\Xi_{(2)}$ are as above, then we have
	\begin{equation}
		\frak I_{v;\frak y_{(1)}\frak y_{(2)}}(E_{\frak u_{(2)},\Xi_{(2)}}(\frak y_{(2)}))\subseteq
		E_{\frak u_{(1)},\Xi_{(1)}}(\frak y_{(1)}).
	\end{equation}
\end{definition}
\cite[Lemmas 9.39 and 9.41]{part2:kura} imply that our choices of obstruction bundles in \cite[Section 9]{part2:kura} satisfy the above property.

\subsubsection*{Explanation of Definition \ref{defn684684} (5)}
Let $\frak u \in \mathcal M^{\rm RGW}_{k+1}(L;\beta)$, $\check R$ be the very detailed tree associated to $\frak u$, and $\Xi$ be a choice of TSD at $\frak u$. Let also $\frak y= (\vec{\frak x},\vec{\sigma},(u'_{v}),(U'_{v}),(\rho_{e}),(\rho_{i}))$ be an inconsistent map with respect to $\Xi$. For $v \in C_{0}^{\rm int}(\check R)$, the TSD $\Xi$ determines an isomorphism $I_{\frak y,v} : \Sigma^-_{v}(\vec{\frak x},\vec{\sigma}) \to \Sigma_{v}^-(\vec{\frak x},\vec{\sigma_0}) $. Here $\vec{\sigma_0}$ is a vector with zero entries. If $\Xi$ is small enough, then $u_v \circ I_{\frak y,v}$ (resp. $U_v \circ I_{\frak y,v}$ in the case $c(v)={\rm D}$) is $C^2$-close to $u'_{\frak y,v}$ (resp. $U'_{\frak y,v}$).
Therefore, we obtain:
\[
  \frak I_{\frak y,v} : L^2_{m}(\Sigma^-_{\frak y,v}(\vec{\frak x},\vec{\sigma_0}) ;\Lambda^{0,1} \otimes T)
  \to L^2_{m}(\Sigma_v^{-}(\vec{\frak x},\vec{\sigma}) ;\Lambda^{0,1} \otimes T).
\]
Let $\mathscr L^2_{m+\ell}(u;v)$ be a small neighborhood of $u_v\vert_{\Sigma_v^{-}}$ or $U_v\vert_{\Sigma_v^{-}}$ with respect to the $L^2_{m+\ell}$-norm. 
\begin{definition}\label{defn6888}
	We say $\{E_{\frak u,\Xi}(\frak y)\}$
	is in $C^{\ell}$ with respect to $\frak y$, 
	if there exists a $C^{\ell}$ map
	$$
	\frak e_i : \prod_{e  \in  C^{\rm int}_1(\mathcal S)}\mathcal V_{e}^{{\rm deform}}
	\times \prod_{v  \in  C^{\rm int}_0(\mathcal S)}\mathcal V_{v}^{{\rm source}}
	\times \mathscr L^2_{m+\ell+1}(u;v)\to L^2_{m}(\Sigma_v^-;\Lambda^{0,1} \otimes T)
	$$
	for $i=1$, $\dots$, $\dim (E_{\frak u,\Xi}(\frak y))$ with the following properties.
	For the inconsistent map $\frak y$ with respect to $\Xi$ and $v \in C_{0}^{\rm int}(\check R)$,
	let $\frak y(v) \in  \mathscr L^2_{m+\ell+1}(u;v)$ be the map $u'_{\frak y,v} \circ I_{\frak y,v}$ or $U'_{\frak y,v} \circ I_{\frak y,v}$.
	Then the set of elements:
	$$
	  \frak I_{\frak y,v}\circ \frak e_i(\vec{\frak x},\vec{\sigma},\frak y(v))
	$$
	for $i=1$, $\dots$, $\dim (E_{\frak u,\Xi}(\frak y))$
	forms a basis for $E_{\frak u,\Xi}(\frak y)$.
\end{definition}
This condition is mostly the analogue of \cite[Definition 8.7]{foooexp} in the context of the stable map compactifications, and we refer the reader to the discussion there for a more detailed explanation. If this condition is satisfied, then the gluing analysis in \cite{part2:kura} gives rise to $C^{\ell}$-Kuranishi charts and $C^{\ell}$-coordinate changes. The proof of the fact that the choices of obstruction data in the previous section and Subsection \ref{subsub:existobst} satisfy this condition is similar to \cite[Subsection 11.4]{foooexp} and hence is omitted.
\begin{remark}\label{rem691new}
	We discussed the notion of $C^{\ell}$-obstruction data. There is also the notion of 
	smooth obstruction data which is slightly stronger.
	This is related to \cite[Definition 8.7 (3)]{foooexp}, and we 
	do not discuss this point in this paper. 
	This condition is necessary to construct smooth Kuranishi structures rather 
	than $C^{\ell}$-Kuranishi structures. 
	Kuranishi structures of class $C^{\ell}$ would suffice for our purposes of this paper.
	Smooth Kuranishi structures would be essential to study the Morse-Bott 
	case and/or construct filtered $A_{\infty}$-category based on de-Rham model.
\end{remark}

\subsubsection*{Explanation of Definition \ref{defn684684} (6)}

We consider $\frak u \in \mathcal M^{\rm RGW}_{k+1}(L;\beta)$ and $\Xi$. A system $\{E_{\frak u,\Xi}(\frak y)\}$ determines the vector spaces $E_{\frak u,\Xi}(\frak u)$ in the case that $\frak y =\frak u$.

\begin{definition}\label{defn68899}
	We say the linearization of the Cauchy-Riemann equation is transversal to
	$E_{\frak u,\Xi}(\frak u)$, if $L^2_{m,\delta}(\frak u;T \otimes \Lambda^{0,1})$
	is generated by the image of the operator $D_{\frak u}\overline\partial$ 
	in \cite[(8.3)]{part2:kura} and $E_{\frak u,\Xi}(\frak u)$.
\end{definition}
%
%
%

\subsubsection*{From Disk-component-wise-ness to Theorem \ref{lema362rev}}
The construction of \cite[Section 9]{part2:kura} implies that we can use an obstruction bundle data to construct a 
Kuranishi structure. The next lemma shows that to prove Theorem \ref{lema362rev}, it suffices to find a
system of obstruction bundle data which is disk-component-wise.

\begin{lemma}\label{lem684}
	If a system of obstruction bundle data is disk-component-wise,
	then the Kuranishi structures constructed in the last section
	on moduli spaces $\mathcal M^{\rm RGW}_{k+1}(L;\beta)$
	satisfy the claims in Theorem \ref{lema362rev}.
\end{lemma}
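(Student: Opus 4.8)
The plan is to read off, for every $\frak p$ in the claimed codimension‑one stratum, the comparison data $(\phi_{\frak p},F_{\frak p},\tilde F_{\frak p})$ required in Proposition \ref{lem364} (hence Proposition \ref{lema362}(2),(3)) directly from the gluing construction of \cite{part2:kura}, using the direct‑sum decomposition \eqref{form618383} as the one new input. Write $\overline{\mathcal S}$ for the relevant disk splitting tree; in the codimension‑one case it has exactly two interior vertices $\frak w_1,\frak w_2$, and the components of $\frak p$ are $\frak u_{\frak w_1}\in\mathcal M^{\rm RGW}_{k_1+1}(L;\beta_1)$ and $\frak u_{\frak w_2}\in\mathcal M^{\rm RGW}_{k_2+1}(L;\beta_2)$, glued along a single boundary node which is the $0$‑th marked point of $\frak u_{\frak w_2}$ and some $j$‑th boundary marked point of $\frak u_{\frak w_1}$, with equal images in $L$. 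On the level of underlying spaces, the disk analogue of \eqref{firm37272}, namely ${\mathcal M}^0(\mathcal R'_1\#_{\hat\lambda}\mathcal R'_2)/\bbC_*^{h}={\mathcal M}^0(\mathcal R'_1)\times {\mathcal M}^0(\mathcal R'_2)$ with $h=|\lambda_1|+|\lambda_2|-|\hat\lambda|$, defines the continuous projection $\Pi$ of \eqref{form6179} once we intersect with the fibre‑product condition over $L$; the codimension‑one statement then follows from the dimension bookkeeping exactly as in the proof of Proposition \ref{prop356} and Proposition \ref{lem365}.

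It remains to upgrade $\Pi$ to a map of Kuranishi structures. First I would fix a small TSD $\Xi$ at the geometric representative $\frak u$ of $\frak p$; it induces TSDs $\Xi_{\frak w_i}$ at $\frak u_{\frak w_i}$. By the Lemma establishing \eqref{form6182}, an inconsistent map $\frak y$ with respect to $\Xi$ restricts — after setting $\sigma_e=0$ on the edges exterior to $\check R(\frak w_i)$ and grouping layer parameters through $\pi_{\frak w_i}$ of \eqref{pifrakw} — to an inconsistent map $\frak y(\frak w_i)$ with respect to $\Xi_{\frak w_i}$. Disk‑component‑wise‑ness \eqref{form618383} gives $E_{\frak u,\Xi}(\frak y)=E_{\frak u_{\frak w_1},\Xi_{\frak w_1}}(\frak y(\frak w_1))\oplus E_{\frak u_{\frak w_2},\Xi_{\frak w_2}}(\frak y(\frak w_2))$, and since by Definition \ref{defn684684}(2) the $i$‑th summand consists of forms supported on the part of the source curve carried by the $\frak w_i$‑th cluster, on the boundary stratum — where the separating node is a genuine node, $\sigma_e=0$ — the obstructed equation $\overline\partial\frak y\equiv 0\mod E_{\frak u,\Xi}(\frak y)$ decouples into the obstructed equations for the two clusters. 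Hence $\frak y\mapsto(\frak y(\frak w_1),\frak y(\frak w_2))$ maps the Kuranishi neighbourhood $V_{\frak p}$ into $V_{\frak u_{\frak w_1}}\times V_{\frak u_{\frak w_2}}$, and, the two clusters sharing the glued node which evaluates to a single point of $L$, into the fibre product $V_{\overline{\frak p}}=V_{\frak u_{\frak w_1}}\times_L V_{\frak u_{\frak w_2}}$; this is $F_{\frak p}$. Its fibres are precisely the $\bbC_*^{h}$ of layer‑parameter ratios divided out in \eqref{firm37272}, so $F_{\frak p}$ is surjective and, on each stratum of fixed combinatorial type, a smooth submersion by the gluing analysis of \cite{part2:kura}. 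Since the automorphism group of a glued ribbon tree is the product of those of its pieces, $\Gamma_{\frak u}$ injects into $\Gamma_{\frak u_{\frak w_1}}\times\Gamma_{\frak u_{\frak w_2}}=\Gamma_{\overline{\frak p}}$, giving $\phi_{\frak p}$, and $F_{\frak p}$ is $\phi_{\frak p}$‑equivariant.

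The remaining items are then formal. The decomposition $E_{\frak p}=F_{\frak p}^{*}E_{\overline{\frak p}}$ supplies the fibrewise‑isomorphic lift $\tilde F_{\frak p}$; since the Kuranishi map is the $E$‑component of $\overline\partial$, the decoupling of the equation yields $\tilde F_{\frak p}\circ s_{\frak p}=s_{\overline{\frak p}}\circ F_{\frak p}$; and $\psi_{\overline{\frak p}}\circ F_{\frak p}=\Pi\circ\psi_{\frak p}$ on $s_{\frak p}^{-1}(0)$ holds because $\Pi$ was defined to be exactly the "restrict to clusters, divide by $\bbC_*^{h}$" map. Compatibility with coordinate changes (Proposition \ref{lem364}(3)) follows because the coordinate changes of \cite{part2:kura} are assembled cluster‑by‑cluster from the same obstruction bundle data; here Definitions \ref{defn6ten86} and \ref{defn6ten86r3ev} — independence of $\Xi$ and semi‑continuity in $\frak u$ — ensure the direct‑sum decomposition is respected under the changes of $\Xi$ and of base point entering the coordinate changes, so the two compatibility squares displayed after Proposition \ref{lema362} commute. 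Finally, property (1) — that $\Pi$ comes from an honest isomorphism of Kuranishi structures over the complement of the codimension‑two loci \eqref{co-dim} — is the special case where neither cluster carries a positive level, whence $h=0$, the $\bbC_*^{h}$‑quotient is trivial, and $F_{\frak p}$ is a diffeomorphism onto $V_{\overline{\frak p}}$.

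The main obstacle is the verification underlying the second paragraph: that on the boundary stratum the inconsistent‑map model of $V_{\frak p}$, restricted to $\sigma_e=0$, is genuinely identified with the fibre product of the inconsistent‑map models of the $V_{\frak u_{\frak w_i}}$, with the layer parameters $\rho_i$ splitting off cleanly as the $\bbC_*^{h}$‑directions so that the obstructed equation really does decouple. This is exactly where disk‑component‑wise‑ness is used, but it requires unwinding the gluing construction of \cite{part2:kura}, and care is needed because (as noted after Theorem \ref{prop61111}) the coordinate changes need not be smooth as $\sigma_e\to 0$; hence one can only assert that $F_{\frak p}$ is a submersion stratum by stratum, and one must check this is all that Proposition \ref{lema362}(2) actually demands. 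The combinatorial bookkeeping around $\pi_{\frak w}$ and the grouping of layer parameters, and the extension to higher corners alluded to in the remark after Theorem \ref{lema362rev}, are lengthy but routine and present no conceptual difficulty.
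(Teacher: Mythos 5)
Your proposal is correct and follows essentially the same route as the paper's proof: you define the comparison map by restricting inconsistent solutions with $\sigma_{e_0}=0$ to the two clusters via \eqref{form6182}, use the disk-component-wise decomposition \eqref{form618383} to decouple the thickened Cauchy--Riemann equation and to produce the fibrewise-isomorphic lift, and obtain the isomorphism off the codimension-two loci exactly as in the paper, by noting that when neither cluster has a positive level there are no $\rho_i$-parameters (equivalently $h=0$). One small correction: the automorphism group of the glued object is in general only a \emph{proper} subgroup of ${\rm Aut}(\frak u_{\frak w_1})\times{\rm Aut}(\frak u_{\frak w_2})$ (see the footnote to \eqref{inc-isot-gp}: a pair $(\gamma_1,\gamma_2)$ need not glue because the $\bbC_*$-rescaling constants on positive-level components may disagree), so your phrase ``is the product of those of its pieces'' is inaccurate, though harmless since only the injection $\phi_{\frak p}$ is needed.
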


\begin{proof}
	This is in fact true by tautology. For the sake of completeness, we give the proof below.
	Let $\frak u \in \mathcal M^{\rm RGW}_{k+1}(L;\beta)$, $\check R$ be the 
	very detailed DD-ribbon tree associated to $\frak u$, and $\mathcal S$ be the 
	disk splitting tree such that $\check R$ belongs to $\mathcal S$.
	We assume that $\frak u$ is a boundary point, i.e., there are $k_1$, $k_2$, $\beta_1$ and 
	$\beta_2$ such that $\frak u$ is contained in
	$\mathcal M^{\rm RGW}_{k_1+1}(L;\beta_1)\,\hat\times_L\,
	\mathcal M^{\rm RGW}_{k_2+1}(L;\beta_2).$
	 In particular, the disk splitting tree $\overline{\mathcal S}$ 
	 in Figure \ref{Figuresimplegraph} is obtained 
	from $\mathcal S$ by shrinking of level 0 edges. We also have a map
	$\pi : \mathcal S \to \overline{\mathcal S}$.
	The construction of the beginning of this subsection allows us to from 
	$\frak u_{\frak w_1}\in \mathcal M^{\rm RGW}_{k_1+1}(L;\beta_1)$ and 
	$\frak u_{\frak w_2}\in \mathcal M^{\rm RGW}_{k_2+1}(L;\beta_2)$ from $\frak u$.
	Here $\frak w_1$, $\frak w_2$ are the two interior vertices of $\overline{\mathcal S}$.
	(See Figure \ref{Figuresimplegraph}.)
	The map $\Pi$ in (\ref{form6179}) is given by
	$\Pi(\frak u) = (\frak u_{\frak w_1},\frak u_{\frak w_2})$.
	Let $\overline{\frak u} = (\frak u_{\frak w_1},\frak u_{\frak w_2})$.

        \begin{figure}[h]
        \centering
        \includegraphics[scale=0.6]{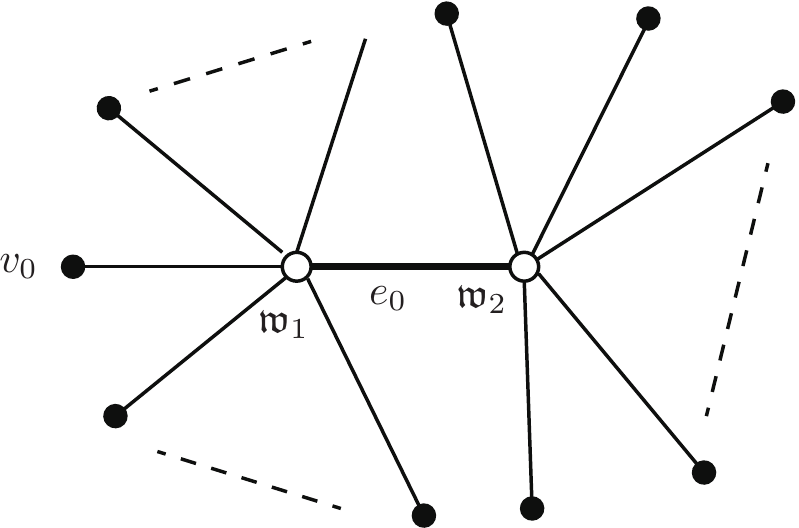}
        \caption{$\overline{\mathcal S}$.}
        \label{Figuresimplegraph}
        \end{figure}

	A Kuranishi neighborhood of $\frak u$ in $\mathcal M^{\rm RGW}_{k_1+1}
	(L;\beta_1)\,\hat\times_L\,\mathcal M^{\rm RGW}_{k_2+1}(L;\beta_2)$
	gives a Kuranishi neighborhood of $\frak u$ 
	in a {\it normalized boundary} of $\mathcal M^{\rm RGW}_{k+1}(L;\beta)$.
	It contains inconsistent solutions 
	$\frak y = (\vec{\frak x},\vec{\sigma},(u'_{v}),(U'_{v}),(\rho_{e}),(\rho_{i}))$
	with respect to $\Xi$ such that $\sigma_{e_0} = 0$.
	Here $e_0$ is the unique interior edge of level 0 of $\overline{\mathcal S}$.
	We may regard $e_0$ as an edge of ${\mathcal S}$ and $\check R$, too.
        We denote this set by $\partial_{e_0} \mathcal U(\frak u;\Upsilon)$, 
        where $\Upsilon= (\Xi,\{E_{\frak u,\Xi}\})$.

        The TSD $\Xi$ induces the TSD $\Xi_j$ on $\frak u_{\frak w_j}$ for $j=1,2$.
        We define  $\Upsilon_j = (\Xi_j,\{E_{\frak u_{\frak w_j},\Xi_j}\})$, which is a TSO.
        Then we obtain a Kuranishi neighborhood $\mathcal U(\frak u_{\frak w_j};\Upsilon_j)$
        of $\frak u_{\frak w_j}$ 
        in the moduli space $\mathcal M^{\rm RGW}_{k_j+1}(L;\beta_j)$, for $j=1,2$.
        We can define evaluation maps ${\rm ev}_{j,i} : \mathcal U(\frak u_{\frak w_j};\Upsilon_j) 
        \to L$ for $i=0,\dots,k_j$ and define 
        \begin{equation}\label{form6186}
        \mathcal U(\frak u_{\frak w_1};\Upsilon_1) \,{}_{{\rm ev}_{1,i}}\times_{{\rm ev}_{2,0}} \, 
        \mathcal U(\frak u_{\frak w_2};\Upsilon_2).
        \end{equation}
        Here $i$ is determined so that the edge $e_0$ is the $i$-th edge of $\frak w_1$.
        (\ref{form6186}) is a Kuranishi neighborhood of $(\frak u_{\frak w_1},\frak u_{\frak w_2})$ 
        in the fiber product Kuranishi structure of 
        $\mathcal M^{\rm RGW}_{k_1+1}(L;\beta_1)
        \,\times_L\,
        \mathcal M^{\rm RGW}_{k_2+1}(L;\beta_2)$.
        \par
        We next define a map
        $$
        F_{\frak u} : \partial_{e_0} \mathcal U(\frak u;\Upsilon)
        \to 
        \mathcal U(\frak u_{\frak w_1};\Upsilon_1) \,{}_{{\rm ev}_{1,i}}\times_{{\rm ev}_{2,0}} \, 
        \mathcal U(\frak u_{\frak w_2};\Upsilon_2).
        $$
	For $j=1,2$, let $\check R(\frak w_j)$ be the very detailed DD-ribbon tree associated to 
	$\frak w_j$, defined in the beginning of this subsection.
	Given an inconsistent solution $\frak y\in \partial_{e_0} \mathcal U(\frak u;\Upsilon)$, we can define
	$\frak y_{(j)} = (\vec{\frak x}_{(j)},\vec{\sigma}_{(j)},(u'_{(j),v}),
        (U'_{(j),v}),(\rho_{(j),e}),(\rho_{(j),i}))$, an inconsistent solution with respect to 
        $\Upsilon_{j}$, as in \eqref{form6182}. Identity \eqref{form618383} implies that 
        $\frak y_{(j)}$ satisfies \cite[(8.33), (8.34)]{part2:kura}, 
        the thickened non-linear Cauchy-Riemann equations.
        Thus $\frak y_{(j)}$ is an inconsistent solution with respect to $\Upsilon_j$ for $j=1,2$.
	Since $\frak y$ is an inconsistent solution with $\sigma_{e_0}=0$, we also have
        (see \cite[Definition 8.28 (10)]{part2:kura})
        $$
        {\rm ev}_{1,i}(\frak y_{(1)}) = {\rm ev}_{2,0}(\frak y_{(2)}).
        $$
        Let $F_{\frak u}(\frak y)=(\frak y_{(1)},\frak y_{(2)})$.
        We have
        \begin{equation}\label{inc-isot-gp}
	        {\rm Aut}(\frak u) \subseteq {\rm Aut}(\frak u_{\frak w_1}) \times {\rm Aut}(\frak u_{\frak w_2}).
        \end{equation}
        because the restriction of all automorphisms to disk components are identity maps.
        Thus any $\gamma\in {\rm Aut}(\frak u)$ maps the sources curves of $\frak u_{\frak w_1}$
        and $\frak u_{\frak w_2}$ to themselves. Consequently, $\gamma$ induces 
        $(\gamma_1,\gamma_2)\in {\rm Aut}(\frak u_{\frak w_1}) \times {\rm Aut}(\frak u_{\frak w_2})$
        which determines $\gamma$ uniquely\footnote{However,
        any $(\gamma_1,\gamma_2)\in {\rm Aut}(\frak u_{\frak w_1}) \times {\rm Aut}(\frak u_{\frak w_2})$ does not
        necessarily determine an element of ${\rm Aut}(\frak u)$. For example, we could have two vertices $v_1$ and $v_2$ with the same positive levels such that $v_i$ belongs to  
        $C^{\rm int}_0(\check R(\frak w_i))$. 
        Then there is $c_i\in \bbC_*$ such that $U_{v_i}\circ \gamma_i=c_i \cdot U_{\gamma_i(v_i)}$. In the case that $c_1\neq c_2$, we cannot produce an automorphism of $\frak u$ using $\gamma_1$, $\gamma_2$.}.
        It is then easy to see that $F_{\frak u}$ is ${\rm Aut}(\frak u)$-invariant.
        
        By (\ref{form618383}) we have
        $$
        \mathcal E_{0,\frak u,\Xi}(\frak y)
        \cong
        \bigoplus_{j=1,2}\mathcal E_{0,\frak u_{\frak w_j},\Xi_j}(\frak y_{(j)})
        $$
        We also have
        $$
        \bigoplus_{e \in C^{\rm int}_{\rm th}(\check R), \lambda(e) > 0} \mathscr L_e\cong
        \bigoplus_{j=1,2}\bigoplus_{e \in C^{\rm int}_{\rm th}(\check R(\frak w_j)), \lambda(e) > 0} 
        \mathscr L_e.
        $$
        This is because the set of the edges of positive level of $\check R$ is 
        the union of the set of the edges of positive level of $\check R(\frak w_1)$ 
        and $\check R(\frak w_2)$. Therefore, we obtain a bundle map
        $$
        \tilde F_{\frak u} : \mathcal E_{\frak u,\Xi} \to \mathcal E_{\frak u_{\frak w_1},\Xi_{1}}
        \oplus \mathcal E_{\frak u_{\frak w_2},\Xi_{2}},
        $$
        which is a lift of $F_{\frak u}$. The bundle map $\tilde F_{\frak u}$ is an isomorphism on each fiber.
        Therefore, we proved (a), (b) and (c) of Theorem \ref{lema362rev} (2).
        Item (d), compatibility with Kuranishi maps, and (e), compatibility with the parametrization maps,
        are obvious from the construction.
        Item (3), compatibility with the coordinate change,
        is also an immediate consequence of the definitions.
        
        It remains to prove that $F_{\frak u}$ is an isomorphism outside the strata 
        of codimension 2. For this purpose, it suffices to consider the cases where $
        \check R(\frak w_1)$ and $\check R(\frak w_2)$
        have no vertex of positive level. Note that if we ignore the parameter $\rho_i$, 
        then the map $F_{\frak u}$ is a bijection. 
        In the present case where there is no vertex of positive level,
        there is no parameter $\rho_i$.
        This completes the proof of Lemma \ref{lem684}.
\end{proof}

\subsection{Existence of disk-component-wise Obstruction Bundle Data}
\label{subsub:existobst}
The main goal of this subsection is to prove:
\begin{prop}\label{lem685}
	There is a system of obstruction bundle data which is disk-component-wise.
\end{prop}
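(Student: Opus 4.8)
The plan is to construct the obstruction bundle data by an induction that carries the splitting formula \eqref{form618383} along as part of the inductive hypothesis, following the pattern of the stable map construction of \cite{fooo:const1} and \cite{foooast} and re-running the gluing analysis of \cite[Section 9]{part2:kura} while keeping track of the decomposition of the source curve into disk components. By Lemma \ref{lem684} this is enough to prove Theorem \ref{lema362rev}. I would induct on the pair $(\omega\cap\beta,k)$ attached to $\mathcal M^{\rm RGW}_{k+1}(L;\beta)$, ordered lexicographically with the energy first. The key combinatorial point is that this order strictly decreases when one passes from a configuration $\frak u$ with $\ge 2$ disk components to one of its disk components $\frak u_{\frak v}\in\mathcal M^{\rm RGW}_{k_{\frak v}+1}(L;\beta(\frak v))$: a ghost disk component has energy $0<\omega\cap\beta$; in a splitting into components all of positive energy the energy itself strictly drops; and the unique component of full energy $\beta(\frak v)=\beta$ satisfies $k_{\frak v}<k$, because each remaining (ghost) disk component carries at least three boundary marked points and hence absorbs strictly more boundary special points than it returns as nodes. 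For the base case $\omega\cap\beta=0$ every element of the moduli space is a constant map, $D_{\frak u}\overline\partial$ is surjective, and we take all obstruction spaces to be $0$.

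In the inductive step I would distinguish two cases according to the disk splitting tree $\mathcal S$ of $\frak u$. If $\mathcal S$ has a single interior vertex, then $\frak u$ consists of one disk together with its tower of sphere bubbles in $\mathcal D$, the identity \eqref{form618383} is vacuous, and we choose $E_{\frak u,\Xi}(\frak y)=\bigoplus_{v\in C^{\rm int}_0(\check R)}E_{\frak u,\Xi,v}(\frak y)$ exactly as in \cite[Section 9]{part2:kura} --- a $\Gamma_{\frak u}$-invariant finite dimensional space of $(0,1)$-forms supported in the thick part away from the boundary and the nodes; the boundary strata of the corresponding Kuranishi chart, where further disk bubbling takes place, have strictly smaller complexity, so the obstruction spaces there are already fixed and impose the forced values near the boundary, while near the open locus we may enlarge the interior piece to achieve Definition \ref{defn684684}~(6). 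If $\mathcal S$ has more than one interior vertex, then each disk component $\frak u_{\frak v}$ has strictly smaller complexity, so $E_{\frak u_{\frak v},\Xi_{\frak v}}(\frak y(\frak v))$ is already defined, and we are \emph{forced} to set
$$
E_{\frak u,\Xi}(\frak y) := \bigoplus_{\frak v\in C^{\rm int}_0(\mathcal S)}E_{\frak u_{\frak v},\Xi_{\frak v}}(\frak y(\frak v)),
$$
which is \eqref{form618383} in the case $\overline{\mathcal S}=\mathcal S$. The general case of \eqref{form618383}, for a $\overline{\mathcal S}$ obtained from $\mathcal S$ by shrinking some level-$0$ edges, then follows by transitivity, since $\frak u_{\frak w}$ is itself assembled from the $\frak u_{\frak v}$ with $\pi(\frak v)=\frak w$ and the thickening parameters $\rho_i$ are reorganized precisely by the maps $\pi_{\frak w}$ of \eqref{pifrakw} as recorded in \eqref{form6182}. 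One then checks that the resulting system satisfies (1)--(7) of Definition \ref{defn684684}: conditions (1), (2), (7) and compatibility with coordinate changes are manifest from the component-wise form of the construction; independence of $\Xi$ and semi-continuity in $\frak u$ are inherited summand by summand from the corresponding verifications in \cite[Section 9]{part2:kura}, since each summand lives on a single irreducible component away from its boundary and nodes; the $C^{\ell}$ property is arranged as in \cite[Subsection 11.4]{foooexp} and \cite[Section 9]{part2:kura}; and transversality (6), which at a configuration with several disk components reduces as usual to transversality at each of them, is ensured by enlarging the interior pieces in the single-disk-component case.

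The point that needs the most care --- and that is the real content beyond bookkeeping --- is that the forced prescription on a deeper stratum must be compatible, in the precise $C^{\ell}$ and semi-continuous sense of Definitions \ref{defn6888} and \ref{defn6ten86r3ev}, with the choices already made on the lower-codimension strata into whose closure it lies, and in particular with the interior pieces freely chosen on the single-disk-component locus. This is exactly the corner-compatibility phenomenon flagged in Subsection \ref{subsub:statecoma}. It is handled by the structure of the induction: because the splitting \eqref{form618383} is part of the inductive hypothesis, each disk component carries a \emph{single} obstruction space, independent of which ambient stratum $\frak u$ is approached through, so the various forced values automatically agree on overlaps; what remains is to verify that the gluing construction of \cite[Section 9]{part2:kura} respects the decomposition of the source curve into disk components and correctly matches the parameters across the level-$0$ nodes being smoothed. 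This is the direct-sum analogue of the computation already carried out in the proof of Lemma \ref{lem684}, and, exactly as noted in Subsection \ref{subsub:statecoma} for the passage from the boundary to higher-codimension corners, it is a routine but lengthy generalization, which we carry out following \cite[Section 9]{part2:kura} and \cite{foooast}.
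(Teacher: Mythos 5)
Your overall scheme is the same as the paper's: induct on $(\omega\cap\beta,k)$, let the data at strata with several disk components be \emph{forced} by the already–constructed data of the components (this is the paper's Condition \ref{conds1025}), and make free choices only on the single–disk–component locus, achieving transversality there and arranging compatibility on a collar of the boundary. However, as stated your induction has a concrete gap at the transversality step. At a configuration $\frak u$ with two or more disk components the obstruction space is forced, so nothing can be ``enlarged'' there; you must therefore \emph{deduce} Definition \ref{defn684684}(6) at $\frak u$ from the component data alone. Your assertion that transversality ``reduces as usual to transversality at each of them'' is not true with only surjectivity of $D\overline\partial+E$ on each component: the domain at the nodal configuration is the fiber product of the component domains over evaluation at the boundary node, and the matching condition can obstruct surjectivity. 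This is exactly why the paper carries the extra \emph{mapping transversality} property (surjectivity of $\mathcal{EV}_{z_0}$ on $D_{\frak u}\overline\partial^{-1}(E)$, Definition \ref{defn68899revref}) through the induction as part of Condition \ref{conds30}; it is the hypothesis, checkable at the lower level, that guarantees transversality of the forced sums at all higher multi-component strata and on a small collar around them. Your inductive hypothesis must be strengthened accordingly, and your base case and single-component step must be arranged so that this evaluation transversality also holds (the paper does this in Step 1 and Step 5 of Lemma \ref{eximanuyobj}).

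A second, lesser point: the statement that the forced boundary values ``impose'' the data near the boundary is where the actual technical content lies, and your proposal only gestures at it. The paper's mechanism is the quasi-component formalism (OBI)--(OBIII): the boundary assignment is packaged as a subset of $\mathscr{QC}(k,\beta)$, shown to be open (for $\mathscr F^{\circ}$) and proper (for $\mathscr F$) over the boundary (Steps 2 and 3), and then transported into a $\rho$-collar by the maps $\psi_{\frak u,\frak u'}$ (Step 4), with new interior charts added only outside the collar (Step 5). This is what makes the resulting system satisfy the \emph{semi}-continuity of Definition \ref{defn6ten86r3ev} and the direct-sum Condition \ref{conds1027} (the latter needs a small perturbation of the $E_{\frak p,v}$, cf.\ the citation to \cite{fooo:const2}), neither of which follows merely from ``each disk component carries a single obstruction space''. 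If you supply the collar construction and carry mapping transversality in the induction, your argument becomes essentially the paper's proof.
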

The proof is divided into 5 parts. In the first three parts, we define various objects (OBI)-(OBIII) and formulate certain conditions we require them to satisfy. In Part 4, we show that one can use these objects to obtain a system of obstruction bundle data which is disk-component-wise. Finally, in Part 5, we show the existence of objects satisfying the required conditions.

\subsubsection*{Disk-component-wise Obstruction Bundle Data: Part 1}

Suppose $E$ is a positive real number and $N$ is a positive integer. Let $\mathscr{TP}$ be the set of all pairs $(k,\beta)$ such that $\mathcal M_{k+1}^{\rm RGW}(L;\beta) \ne \emptyset$, $\omega\cap \beta\leq E$ and $k\leq N$. Let $(k,\beta), (k',\beta') \in \mathscr{TP}$ we say $(k',\beta') < (k,\beta)$ if $\beta'\cap \omega < \beta \cap \omega$ or $\beta'\cap \omega = \beta \cap \omega$, $k' < k$. We also say $(k',\beta') \le (k,\beta)$ if $(k',\beta') < (k,\beta)$ or $(k',\beta') = (k,\beta)$. By Gromov compactness theorem, for each  $(k,\beta) \in \mathscr{TP}$ the set $\{ (k',\beta') \in \mathscr{TP} \mid (k',\beta') < (k,\beta)\}$ is a finite set.
\par\medskip
\noindent {\bf (OBI):} 
For $(k,\beta) \in \mathscr{TP}$, $\frak P(k+1,\beta)$ is a finite subset of ${\rm Int}(\mathcal M_{k+1}^{\rm RGW}(L;\beta))$, the interior of the moduli space $\mathcal M_{k+1}^{\rm RGW}(L;\beta)$. To be more specific, the space ${\rm Int}(\mathcal M_{k+1}^{\rm RGW}(L;\beta))$ consists of elements that their source curves have only one disc component.
\par\smallskip
Let $\frak p \in \frak P(k+1,\beta)$. We write $\Sigma_{\frak p}$ for the source curve of $\frak p$ and $u_{\frak p} : (\Sigma_{\frak p},\partial \Sigma_{\frak p}) \to (X,L)$ for the map part of $\frak p$. Let  $\check R_{\frak p}$ be the very detailed  tree describing the combinatorial type of $\frak p$.
For $v \in C^{\rm int}_{0}(\check R_{\frak p})$, we denote the corresponding component of $\Sigma_{\frak p}$ by $\Sigma_{\frak p_v}$ and the restriction of $\frak p$ to $\Sigma_{\frak p_v}$ by $\frak p_{v}$.
\par\smallskip
\noindent {\bf (OBII):} 
For any $v \in C^{\rm int}_{0}(\check R_{\frak p})$, we 
take a finite dimensional subspace
$$
  E_{\frak p_{v}} \subseteq 
  \begin{cases}
  C^{\infty}(\Sigma_{\frak p_{v}};u^*_{\frak p_{v}}TX \otimes \Lambda^{0,1})&\text{if $c(v)={\rm d}$ or ${\rm s}$}, \\
  C^{\infty}(\Sigma_{\frak p_{v}};u_{\frak p_{v}}^*T\mathcal D \otimes \Lambda^{0,1})&\text{if $c(v)={\rm D}$},
  \end{cases}
$$ 
whose support is away from nodal and marked points and the boundary of $\Sigma_{\frak p_v}$. 
\par\smallskip
We require:
\begin{conds}
	The restriction of $u_{\frak p}$ to a neighborhood of the support of ${\rm Supp}(E_{\frak p_{v}})$ 
	is a smooth embedding. In particular, if ${\rm Supp}(E_{\frak p_{v}})$ is nonzero, $u_{{\frak p}_{v}}$ is non-constant.
\end{conds}

\subsubsection*{Disk-component-wise Obstruction Bundle Data: Part 2} Fix $\frak u=(\Sigma_{\frak u,v},z_{\frak u, v},u_{\frak u, v}; v\in C_0^{\rm int}(\check R_{\frak u}))\in \mathcal M^{\rm RGW}_{k+1}(L;\beta)$, where $\check R_{\frak u}$ is the very detailed tree assigned to $\frak u$. 

There is a forgetful map from the moduli space $\mathcal M^{\rm RGW}_{k+1}(L;\beta)$ to the moduli space of stable discs $\mathcal M_{k+1}^{\rm d}$, where for any $\frak u\in \mathcal M^{\rm RGW}_{k+1}(L;\beta)$ we firstly forget all the data of $\frak u$ except the source curve $\Sigma_{\frak u}$, and then shrink the unstable components. There is a metric space $\mathcal C_{k+1}^{\rm d}$, called the {\it universal family}, with a map $\pi:\mathcal C_{k+1}^{\rm d} \to \mathcal M_{k+1}^{\rm d}$ such that $\pi^{-1}(\zeta)$, for $\zeta\in \mathcal M_{k+1}^{\rm d}$, is a representative for $\zeta$. (See, for example, \cite[Section 2]{fooo:const1} or 
\cite[Subsection 4.1]{part1:top}.) We pull-back $\mathcal C_{k+1}(L;\beta)$ to $\mathcal M^{\rm RGW}_{k+1}(L;\beta)$ via the forgetful map to obtain the space $\mathcal C^{\rm RGW}_{k+1}(L;\beta)$ with the projection map $\pi_{\rm RGW}:\mathcal C^{\rm RGW}_{k+1}(L;\beta) \to \mathcal M^{\rm RGW}_{k+1}(L;\beta)$. The pull-back of the metric on $\mathcal C_{k+1}(L;\beta)$ to $\mathcal C^{\rm RGW}_{k+1}(L;\beta)$ defines a quasi metric\footnote{A quasi-metric is a distance function which satisfies the reflexive property and triangle inequality. But we allow for two distinct points to have distance zero.} on $\mathcal C^{\rm RGW}_{k+1}(L;\beta)$. Here we obtain a quasi-metric because the forgetful map from $\mathcal M^{\rm RGW}_{k+1}(L;\beta)$ to $\mathcal M_{k+1}^{\rm d}$ is not injective. Note that this quasi metric is in fact a metric in each fiber $\pi_{\rm RGW}^{-1}(\frak u)$. The fiber $\pi_{\rm RGW}^{-1}(\frak u)$ can be identified with a quotient of $\Sigma_{\frak u}$. Thus by pulling back the metric on each fiber $\pi_{\rm RGW}^{-1}(\frak u)$, we define a quasi metric on the source curve $\Sigma_{\frak u}$ of $\frak u$. 

\begin{lemma}\label{delta-k-b}
	For each $\beta$ and $k$, there is a positive constant $\delta(k,\beta)$ with the following property. If $\frak u\in  \mathcal M^{\rm RGW}_{k+1}(L;\beta)$
	and $v\in C^{\rm int}_0(\check R_{\frak u})$ is a vertex with $u_{\frak u,v}:\Sigma_{\frak u,v}\to X$ being a  non-constant map, then there is $x\in \Sigma_{\frak u,v}$
	such that the distance between $x$ and any nodal point and boundary point of $\Sigma_{\frak u}$ is greater than $\delta(k,\beta)$. Moreover, if $x'\in \Sigma_{\frak u}$ is chosen such 
	that $u_{\frak u}(x)=u_{\frak u}(x')$, then the distance between $x$ and $x'$ is greater than $\delta(k,\beta)$.
\end{lemma}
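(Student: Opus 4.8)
The plan is to argue by contradiction, using the Gromov-type compactness theorem for $\mathcal M^{\rm RGW}_{k+1}(L;\beta)$ established in \cite{part1:top}. First I would observe that for a \emph{single} pair $(\frak u,v)$ with $u_{\frak u,v}$ non-constant the assertion holds with \emph{some} positive constant $\delta(\frak u,v)$: a non-constant $J$-holomorphic map from a disc or a sphere is somewhere injective, so the set of interior points $x\in\Sigma_{\frak u,v}$ which lie away from the finitely many nodal, boundary and marked points of $\Sigma_{\frak u}$, at which $du_{\frak u,v}$ is injective, and for which $u_{\frak u}^{-1}(u_{\frak u}(x))=\{x\}$, is open and nonempty; any such $x$ has a quasi-metric neighbourhood on which all of these properties persist, and this produces a positive $\delta(\frak u,v)$. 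Let $\delta(k,\beta)$ be the infimum of $\delta(\frak u,v)$ over all such pairs; it suffices to prove $\delta(k,\beta)>0$. If not, pick $\frak u_i\in\mathcal M^{\rm RGW}_{k+1}(L;\beta)$ and vertices $v_i$ with $u_{\frak u_i,v_i}$ non-constant and $\delta(\frak u_i,v_i)\to 0$.

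By compactness, after passing to a subsequence $\frak u_i\to\frak u_\infty$ in $\mathcal M^{\rm RGW}_{k+1}(L;\beta)$; since there are only finitely many combinatorial types for fixed $(k,\beta)$, we may assume all $\frak u_i$ have the same type and $v_i=v$ is a fixed vertex. The standard energy quantization for $J$-holomorphic curves in a compact symplectic manifold gives a uniform lower bound $E(u_{\frak u,v})\ge \hbar>0$ for non-constant components, so the reparametrised limit of $u_{\frak u_i,v}$ contains at least one non-constant component $u_{\frak u_\infty,v_\infty}$ of (the bubble tree inside) $\frak u_\infty$. Tracking the special points of $\Sigma_{\frak u_i}$ and the preimage sets $u_{\frak u_i}^{-1}(u_{\frak u_i}(\cdot))$ under the convergence, and passing to the limit, one finds that every interior point $x$ of $\Sigma_{\frak u_\infty,v_\infty}$ either lies in the closure of the set of nodal, boundary and marked points of $\Sigma_{\frak u_\infty}$ or admits a point $x'\ne x$, arbitrarily close to $x$, with $u_{\frak u_\infty}(x')=u_{\frak u_\infty}(x)$. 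This is impossible for a non-constant $J$-holomorphic disc or sphere: by Aronszajn's unique continuation theorem its branch and multiple-point locus is discrete off the nodes, so its somewhere-injective locus is open and dense and in particular contains an interior point away from the finitely many special points. This contradiction proves $\delta(k,\beta)>0$.

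\textbf{The main obstacle.} The delicate point is controlling the quasi-metric under Gromov convergence. The quasi-metric on $\Sigma_{\frak u}$ is pulled back from the universal curve over the stable-disc moduli space $\mathcal M^{\rm d}_{k+1}$, hence it is degenerate precisely on the components that become unstable after forgetting the map; one must therefore verify that a component carrying a non-constant map retains definite quasi-size in the limit, so that $\delta(\frak u_i,v_i)\to0$ really forces a degeneration of the \emph{map} rather than a collapse in the \emph{domain}, and reconcile the Gromov convergence of the maps with convergence in the universal curve over $\mathcal M^{\rm d}_{k+1}$. For the "moreover" clause there is the additional subtlety that a second preimage $x_i'$ of $x_i$ could lie on a component of $\Sigma_{\frak u_i}$ different from $\Sigma_{\frak u_i,v_i}$; this component cannot approach $\Sigma_{\frak u_i,v_i}$ in the quasi-metric without the two colliding, which is ruled out quantitatively by the uniform energy gap and is in any case absorbed into the nodal-point alternative in the limit. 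Once these quasi-metric convergence statements are made precise, the unique-continuation input is routine.
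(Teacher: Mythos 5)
Your overall architecture is the same as the paper's: the paper proves the statement pointwise for each $\frak u$ with a constant $\delta(k,\beta,\frak u)$, observes that the same constant works on a neighborhood $\mathcal U(\frak u)$, and concludes by compactness of $\mathcal M^{\rm RGW}_{k+1}(L;\beta)$; your argument is just the sequential-compactness (contradiction) version of this, and your ``main obstacle'' paragraph is exactly the paper's neighborhood step. However, there is a concrete flaw in how you run the pointwise step. You assert that a non-constant $J$-holomorphic disc or sphere is somewhere injective and then work with the set of $x$ for which $u_{\frak u}^{-1}(u_{\frak u}(x))=\{x\}$, claiming it is open and nonempty. Both claims fail in general: a non-constant component may be a multiple cover (so it has no injective points at all), and even a simple component can have its image entirely contained in the image of another component of $\Sigma_{\frak u}$, in which case no point of $\Sigma_{\frak u,v}$ has a singleton preimage under the global map $u_{\frak u}$. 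So the set you take the ``infimum of $\delta(\frak u,v)$'' over can be empty, and the same over-strong input reappears at the limit stage, where you invoke density of the somewhere-injective locus of the limit component -- again vacuous for a multiply covered bubble.

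The fix is to notice that the lemma does not ask for injectivity: it only asks that every other point $x'$ of the fiber $u_{\frak u}^{-1}(u_{\frak u}(x))$ be at distance $>\delta$ from $x$. For a fixed $\frak u$ this quantitative isolation holds for all $x$ in the chosen non-constant component away from a discrete bad set (branch points and accumulation points of the multiple-point locus, controlled by unique continuation applied to the underlying simple curve, not to $u_{\frak u,v}$ itself), because the fiber through such an $x$ is finite and the contributions from other components are kept away either by the distance to the nodes or, for components collapsed by the stabilization, by the distance to the node at which they are attached. With this weaker pointwise statement in place, your sequence/limit argument (equivalently, the paper's neighborhood-plus-finite-cover argument) goes through; as written, though, the step producing $\delta(\frak u,v)$ and the step deriving the contradiction in the limit both rest on a property that non-constant components need not have.
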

\begin{proof}
	Given any $\frak u$, there is a constant $\delta(k,\beta,\frak u)$ such that the lemma holds for any non-constant irreducible component $u_{\frak u,v}$ of $u_{\frak u}$.
	In fact, there is a neighborhood $\mathcal U(\frak u)$ such that the lemma holds for the constant $\delta(k,\beta,\frak u)$ and any $\frak u' \in \mathcal U(\frak u)$.
	Now we can conclude the lemma using compactness of $\mathcal M^{\rm RGW}_{k+1}(L;\beta)$.
\end{proof}

In the following definition, $\epsilon(k',\beta')$ is a constant which shall be fixed later.

\begin{definition}\label{def1014} 
	A triple $(\frak u,\frak p,\phi)$ is said to be a {\it local approximation} to ${\frak u}$, if we have:
	\begin{enumerate}
		\item There is $(k',\beta')\le (k,\beta)$ such that $\frak p \in \frak P(k'+1,\beta')$.
		\item $\phi$ is a smooth embedding from a neighborhood of 
		$\bigcup_v {\rm Supp}(E_{\frak p_{v}})$ to $\Sigma_{\frak u}$. 
		If $x$ belongs to the image of $\phi$, 
		then its distance to the nodal points in $\Sigma_{\frak u}$ is greater than $\delta(k',\beta')$. 
		For each $v\in C_0^{\rm int}(\check R_{\frak p})$, there is $v'\in C_0^{\rm int}(\check R_{\frak u})$
		such that $\phi$ maps ${\rm Supp}(E_{\frak p_{v}})$ to $\Sigma_{\frak u,v'}$.
		Furthermore, if $x'$ is another point in the source curve of $\frak u$ such that 
		$u_{\frak u}(x)=u_{\frak u}(x')$, then the distance between $x$ and $x'$ is greater than 
		$\delta(k',\beta')$.
		\item For each $v$, we require:
			$$
			  d_{C^2;{\rm Supp}(E_{\frak p_{v}})}(u_{\frak u} \circ \phi,u_{\frak p}) < \epsilon(k',\beta').
			$$
		\item $\phi$ satisfies the following point-wise inequality:
			$$
			  \vert \overline\partial \phi \vert < \vert \partial \phi \vert/100.
			$$
	\end{enumerate}
\end{definition}
The next definition is similar to \cite[Definition 4.8]{part2:kura}.
(See also the discussion right after \cite[Definition 8.17]{part2:kura}.)
\begin{definition}\label{defn1015}
	Let $(\frak u,\frak p,\phi)$ be a local approximation to ${\frak u}$. 
	We say a map $\hat\phi$ from a neighborhood of $\bigcup_v  {\rm Supp}(E_{\frak p_{v}})$ to 
	$\Sigma_{\frak u}$ is a {\it normalization} of $\phi$ if the following holds.
	\begin{enumerate}
	\item  If $x$ belongs to the image of $\hat \phi$, then its distance to the nodal points in 
		$\Sigma_{\frak u}$ is greater than $\frac{1}{3}\delta(k',\beta')$. Furthermore, if $x'$ is another point in the source 
		curve of $\frak u$ such that $u_{\frak u}(x)=u_{\frak u}(x')$, then the distance between $x$ and $x'$ 
		is greater than $\frac{1}{3}\delta(k',\beta')$.
	\item For each $v$, we require
		$$
		d_{C^2;{\rm Supp}(E_{\frak p_{v}})}(u_{\frak u} \circ \hat\phi,u_{\frak p}) < 2\epsilon(k',\beta'),
		$$
		and
		\[
		d_{C^0;{\rm Supp}(E_{\frak p_{v}})}(\hat\phi,\phi) < \frac{\delta(k',\beta')}{3}.
		\]		
	\item Let $z$ be in the neighborhood of ${\rm Supp}(E_{\frak p_{v}})$.
		\begin{enumerate}
			\item Suppose $z$ is in a component with color ${\rm d}$ or ${\rm s}$.
				We take the unique minimal geodesic $\gamma$ in $X\setminus \mathcal D$ 
				(with respect to the metric $g$), which joins $u_{\frak p}(z)$
				to $(u_{\frak u} \circ \hat\phi)(z)$.
				Then
				$$
				  \frac{d\gamma}{dt}(0) \perp T_{u_{\frak p}(z)} u_{\frak p}(\Sigma_{\frak p}).
				$$
			\item Suppose $z$ and $\hat\phi(z)$ are in a component with color ${\rm D}$.
				We take the unique minimal geodesic $\gamma$ in $\mathcal D$ 
				(with respect to the metric $g'$), which joins $u_{\frak p}(z)$
				to $(u_{\frak u} \circ \hat\phi)(z)$.
				Then
				$$
				  \frac{d\gamma}{dt}(0) \perp T_{u_{\frak p}(z)} u_{\frak p}(\Sigma_{\frak p}).
				$$
			\item Suppose $z$ is in a component with color ${\rm D}$ and 
				$\hat\phi(z)$ is in a component with color ${\rm d}$ or ${\rm s}$.
				We take the unique minimal geodesic $\gamma$ in $\mathcal D$ 
				(with respect to the metric $g'$), which joins $u_{\frak p}(z)$
				to $(\pi\circ u_{\frak u} \circ \hat\phi)(z)$.
				Then
				$$
				  \frac{d\gamma}{dt}(0) \perp T_{u_{\frak p}(z)} u_{\frak p}(\Sigma_{\frak p}).
				$$
		\end{enumerate}
	\end{enumerate}
\end{definition}

\begin{lemma}
	If the constant $\epsilon(k',\beta')$ is small enough, then for any local approximation $(\frak u,\frak p,\phi)$ to 
	${\frak u} \in \mathcal M^{\rm RGW}_{k+1}(L;\beta)$, there exists a normalization $ \hat\phi$ of $\phi$ and for 
	any other normalization $ \hat\psi$ of $\phi$, we have:
	\[
	  \hat\phi|_{\bigcup_v  {\rm Supp}(E_{\frak p_{v}})} = \hat\psi|_{\bigcup_v  {\rm Supp}(E_{\frak p_{v}})}.
	\]
\end{lemma}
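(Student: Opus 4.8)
The plan is to read Definition \ref{defn1015}(3) as the requirement that $(u_{\frak u}\circ\hat\phi)(z)$ lie on the normal slice to the surface $u_{\frak p}(\Sigma_{\frak p})$ through the point $u_{\frak p}(z)$, and to produce such an $\hat\phi$ by a normal-slice/implicit-function argument (the analogue of the construction behind \cite[Definition 4.8]{part2:kura}), carried out one interior vertex $v$ at a time. Since by (OBII) the supports ${\rm Supp}(E_{\frak p_v})$ are away from the nodes, the marked points and the boundary and lie on distinct irreducible components $\Sigma_{\frak p_v}$, it suffices to work separately on a neighbourhood $W_v$ of each ${\rm Supp}(E_{\frak p_v})$ and then assemble. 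For fixed $v$, the embedding condition of (OBII) makes $\Sigma^*_v:=u_{\frak p}(W_v)$ an embedded $2$-manifold in $X$ (if $c(v)\in\{{\rm d},{\rm s}\}$) or in $\mathcal D$ (if $c(v)={\rm D}$); using the exponential map of $g$ (resp. $g'$) I would fix a tubular neighbourhood of $\Sigma^*_v$ and transport its foot-point projection back along $u_{\frak p}$ to a smooth submersion ${\rm pr}_v$ onto $W_v$ with ${\rm pr}_v\circ u_{\frak p}=\mathrm{id}$ and with the property that, for $y$ near $\Sigma^*_v$, the short minimal geodesic from $u_{\frak p}(z)$ to $y$ is orthogonal to $\Sigma^*_v$ precisely when ${\rm pr}_v(y)=z$. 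Writing $\Pi_v$ for the identity, or for the projection $\pi$ onto $\mathcal D$ in the color-mismatch case $c(v)={\rm D}$ with $\phi({\rm Supp}(E_{\frak p_v}))$ lying in a ${\rm d}$- or ${\rm s}$-component, Definition \ref{defn1015}(3) for $\hat\phi$ becomes the single equation $G_v\circ\hat\phi=\mathrm{id}$ on $W_v$, where $G_v:={\rm pr}_v\circ\Pi_v\circ u_{\frak u}$; the tubular neighbourhoods may be chosen of a size uniform over the finite set $\frak P(k',\beta')$, and the closeness conditions keep the relevant points inside them once $\epsilon$ is small.

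For existence I would set $\psi_v:=G_v\circ\phi$. Since ${\rm pr}_v\circ\Pi_v\circ u_{\frak p}=\mathrm{id}$, since $u_{\frak u}\circ\phi$ is $C^2$-within $\epsilon$ of $u_{\frak p}$ on a fixed neighbourhood of ${\rm Supp}(E_{\frak p_v})$ by Definition \ref{def1014}(3), and since ${\rm pr}_v$ and $\Pi_v$ are fixed smooth maps with $C^3$-bounds uniform over $\frak P(k',\beta')$, the map $\psi_v$ is $C^2$-within $O(\epsilon)$ of the identity. Hence, once $\epsilon(k',\beta')$ is small enough (depending only on $(k',\beta')$, the already-fixed $\delta(k',\beta')$, and the finitely many obstruction spaces), $\psi_v$ is a diffeomorphism onto its image and that image contains ${\rm Supp}(E_{\frak p_v})$; I then set $\hat\phi:=\phi\circ\psi_v^{-1}$ and assemble over $v$. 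Tautologically $G_v\circ\hat\phi=\mathrm{id}$, so Definition \ref{defn1015}(3) holds. The quantitative inverse function theorem gives $d_{C^2}(\psi_v^{-1},\mathrm{id})=O(\epsilon)$, hence $d_{C^0}(\hat\phi,\phi)=O(\epsilon)$ and $d_{C^2}(u_{\frak u}\circ\hat\phi,u_{\frak u}\circ\phi)=O(\epsilon)$; here the bound on $\|\phi\|_{C^2}$ and the lower bound on $|du_{\frak u}|$ near $\phi({\rm Supp}(E_{\frak p_v}))$ (so in particular that $u_{\frak u}$ is immersive there) come from Definition \ref{def1014}(3)--(4) together with compactness of $\mathcal M^{\rm RGW}_{k+1}(L;\beta)$. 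Combining these estimates with the triangle inequality and Definition \ref{def1014}(2)--(3), all the inequalities in Definition \ref{defn1015}(1)--(2) --- distance more than $\tfrac13\delta(k',\beta')$ to nodes and between $u_{\frak u}$-preimages, $d_{C^2}(u_{\frak u}\circ\hat\phi,u_{\frak p})<2\epsilon(k',\beta')$, $d_{C^0}(\hat\phi,\phi)<\tfrac13\delta(k',\beta')$ --- hold for $\epsilon(k',\beta')$ small; thus $\hat\phi$ is a normalization of $\phi$.

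For uniqueness, let $\hat\psi$ be any other normalization. The equivalence established in the first paragraph converts Definition \ref{defn1015}(3) into $G_v\circ\hat\psi=\mathrm{id}=G_v\circ\hat\phi$ on ${\rm Supp}(E_{\frak p_v})$. It then suffices to show $G_v$ is injective on a neighbourhood of $\phi({\rm Supp}(E_{\frak p_v}))$ big enough to contain the images of $\hat\phi$ and $\hat\psi$, which are within $\tfrac13\delta(k',\beta')$ of $\phi$ by Definition \ref{defn1015}(2): on such a neighbourhood --- available because $\delta(k',\beta')$ is small relative to the geometry fixed so far, there are no nearby nodes and $u_{\frak u}$-preimages stay more than $\delta(k',\beta')$ apart (Definition \ref{def1014}(2)), and $u_{\frak u}$ is immersive there (second paragraph) --- the map $u_{\frak u}$ is an embedding whose image is transverse to the fibres of ${\rm pr}_v\circ\Pi_v$ (those fibres being $C^0$-close to the normal slices of $\Sigma^*_v$), so $G_v$ is an embedding. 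Then $G_v(\hat\phi(z))=z=G_v(\hat\psi(z))$ forces $\hat\phi(z)=\hat\psi(z)$ for all $z\in{\rm Supp}(E_{\frak p_v})$, and ranging over $v$ yields $\hat\phi|_{\bigcup_v{\rm Supp}(E_{\frak p_v})}=\hat\psi|_{\bigcup_v{\rm Supp}(E_{\frak p_v})}$.

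The mechanism is the standard normal-slice construction, so I expect no conceptual difficulty; the main obstacle is the bookkeeping of constants. One must fix $\delta(k',\beta')$ and the finitely many obstruction spaces $E_{\frak p_v}$ first and only then take $\epsilon(k',\beta')$ small, so that the $O(\epsilon)$ correction $\phi\mapsto\hat\phi$ respects the prescribed tolerances $\tfrac13\delta(k',\beta')$ and $2\epsilon(k',\beta')$ and so that $\hat\phi,\hat\psi$ stay in the region where $u_{\frak u}$ and $G_v$ are controlled; one must also treat the color-mismatch vertices ($c(v)={\rm D}$ while $\hat\phi$ lands in a ${\rm d}$- or ${\rm s}$-component) by first applying $\pi$ and then taking the normal slice inside $\mathcal D$; and one must check that every $C^2$- and immersion-bound used is uniform, which is exactly where compactness of $\mathcal M^{\rm RGW}_{k+1}(L;\beta)$ and finiteness of $\frak P(k',\beta')$ are needed.
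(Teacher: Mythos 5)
Your proposal is correct and takes essentially the same route as the paper: the paper's entire proof of this lemma is the one-sentence remark that it "follows from the implicit function theorem and compactness of $\mathcal M^{\rm RGW}_{k+1}(L;\beta)$", and your normal-slice/foot-point-projection reformulation of Definition \ref{defn1015}(3), solved near $\phi$ by the (quantitative) inverse function theorem with uniqueness from local injectivity and with uniform constants supplied by compactness of the moduli space and finiteness of $\frak P(k'+1,\beta')$, is precisely a detailed implementation of that strategy, including the color-mismatch case handled by first projecting with $\pi$ to $\mathcal D$. No divergence from the paper's approach to report.
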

From now on, we assume that the constant $\epsilon(k',\beta')$ satisfies the assumption of this lemma.
\begin{proof}
	This follows from the implicit function theorem and compactness of $\mathcal M^{\rm RGW}_{k+1}(L;\beta)$.
\end{proof}

\begin{definition}
	For $j=1,2$, suppose $(\frak u,\frak p_j,\phi_j)$ is a local approximation to $\frak u$. We say these two 
	approximations are {\it equivalent} if $\frak p_1 =\frak p_2$ and 
	$$
	  \hat\phi_1|_{\bigcup_v  {\rm Supp}(E_{\frak p_{v}})} = \hat\phi_2|_{\bigcup_v  {\rm Supp}(E_{\frak p_{v}})}.
	$$ 
	This is obviously an equivalence relation. Each equivalence class is called a {\it quasi component} (of $\frak u$). See Figure \ref{quasi-comp} 
	for the schematic picture of a quasi-component. 
\end{definition}

\begin{figure}[h]
	\centering
	\includegraphics[scale=0.6]{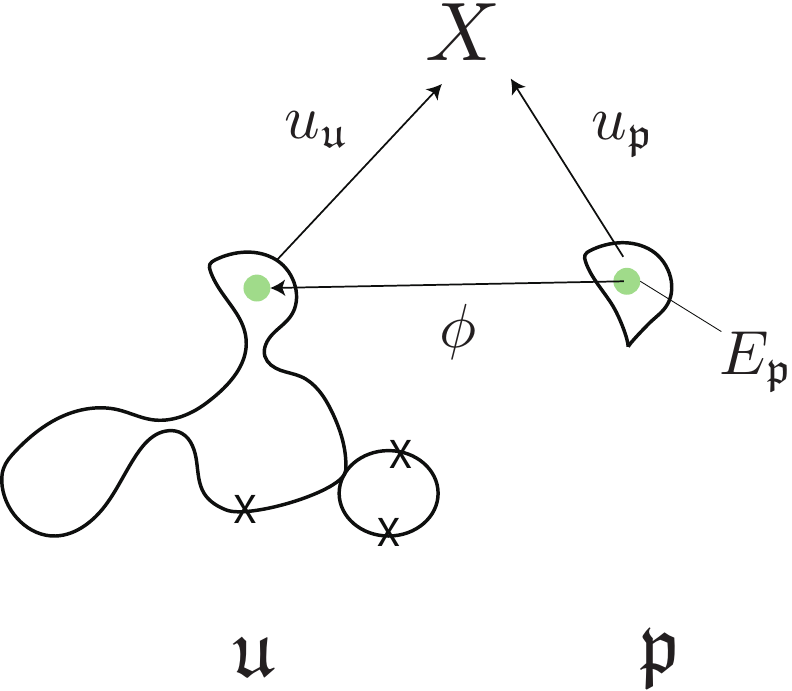}
	\caption{$[\frak u,\frak p,\phi]$ is a quasi-component of $\frak u$}
	\label{quasi-comp}
\end{figure}

We next define obstruction spaces $E_{\frak u,{\bf p},\Xi}(\frak y)$ where ${\bf p}$ is a quasi component of $\frak u$ and ${\frak y}= (\vec{\frak x},\vec{\sigma},(u'_{v}),(U'_{v}),(\rho_{e}),(\rho_{i}))$ is an inconsistent map with respect to a TSD $\Xi$ at $\frak u$. The definition is similar to the corresponding definitions in \cite[Subsection 9.2]{part2:kura}. The TSD $\Xi$ induces a holomorphic embedding
\[
  \psi_{\frak y,\frak u}: \bigcup_{v\in C^{\rm int}_0(\check R_{\frak u})}\Sigma_{\frak u,v}^-\to \Sigma(\vec{\frak x},\vec{\sigma}).
\]
By taking $\Xi$ to be small enough, we may assume the image of $\phi$ is contained in the domain of $\psi_{\frak y,\frak u}$. Define
$$
\phi_{\frak y,\frak p} = \psi_{\frak y,\frak u} \circ \phi.
$$
Using Implicit function theorem, we can modify $\phi_{\frak y,\frak p}$ to obtain $\hat\phi_{\frak y;\frak p}$ from a neighborhood of $\bigcup_v {\rm Supp}(E_{\frak p_{v}})$ to $\Sigma_{\frak y}$ such that the analogue of Definition \ref{defn1015} is satisfied.
This map is clearly independent of the representative of ${\bf p}$ and
also independent of $\Xi$ if this TSD is sufficiently small.

By replacing $I^{\rm t}_{\rm d}(x)$, $I^{\rm t}_{\rm s}(x)$ and $I^{\rm t}_{\rm D}(x)$ with $\hat\phi_{\frak y;\frak p}$ and using the vector spaces $E_{\frak p_{v}}$, we obtain
\begin{equation}\label{form6189}
	 E_{\frak u,{\bf p},\Xi}(\frak y)
	\subset \bigoplus_{v \in C^{\rm int}_0(\check R_{\frak y})} L^2_{m}(\Sigma_{\frak y,v};T \otimes \Lambda^{0,1})
\end{equation}
in the same way as in \cite[(4.11)]{part2:kura}. Here $\check R_{\frak y}$ is the very detailed tree describing the combinatorial type of $\frak y$.

\subsubsection*{Disk-component-wise Obstruction Bundle Data: Part 3}
Our obstruction bundle data $E_{\frak u,\Xi}(\frak y)$ is a direct sum of $E_{\frak u,{\bf p},\Xi}(\frak y)$ for an appropriate set of quasi components ${\bf p}$ of $\frak u$. Our next task is to find a way to choose this set of quasi components.
\begin{definition}
	For ${\frak u} \in \mathcal M_{k+1}^{\rm RGW}(L;\beta)$, we denote by $\mathscr{QC}(k,\beta)(\frak u)$
	the set of all quasi components of ${\frak u}$. Let
	$$
	  \mathscr{QC}(k,\beta) := \bigcup_{{\frak u} \in \mathcal M_{k+1}(L;\beta)} \mathscr{QC}(k,\beta)(\frak u).
	$$
	The map
	\begin{equation} \label{Pi}	
	  \Pi : \mathscr{QC}(k,\beta) \to \mathcal M_{k+1}^{\rm RGW}(L;\beta)
	\end{equation}
	is the obvious projection.
\end{definition}

\begin{lemma}
	If the constant $\epsilon(k',\beta')$ is small enough, then $\mathscr{QC}(k,\beta)(\frak u)$ is a finite set.
\end{lemma}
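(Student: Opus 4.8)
The plan is to partition the quasi-components of $\frak u$ according to the underlying stable object $\frak p$, to bound the number of relevant $\frak p$'s by Gromov compactness, and then, for each fixed $\frak p$, to show that the normalized maps $\hat\phi|_{S}$ — which by the definition of equivalence are exactly what distinguishes quasi-components with that $\frak p$, where $S:=\bigcup_{v}{\rm Supp}(E_{\frak p_{v}})$ — take only finitely many values. First I would note that $\{(k',\beta')\in\mathscr{TP}\mid(k',\beta')\le(k,\beta)\}$ is finite by Gromov's compactness theorem (as in Part 1) and that each $\frak P(k'+1,\beta')$ is finite by (OBI); hence only finitely many $\frak p$ can occur in a local approximation $(\frak u,\frak p,\phi)$ to $\frak u$. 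Fixing one such $\frak p$, I may assume $S\ne\emptyset$, since otherwise $E_{\frak u,{\bf p},\Xi}\equiv 0$ and $\frak p$ is irrelevant to the obstruction bundle data. By the condition stated after (OBII), $u_{\frak p}$ restricts to an embedding on a neighbourhood of the compact set $S\subset\Sigma_{\frak p}$, which lies away from the nodal, marked and boundary points; I would fix a neighbourhood $W\supset S$ with compact closure inside that embedding neighbourhood and observe that restricting a local approximation to $W$ alters neither $\hat\phi|_{S}$ nor its equivalence class. So it remains to bound the number of maps $\hat\phi|_{S}$ arising from local approximations with domain $W$.

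Next I would show this family is precompact in $C^{0}(S,\Sigma_{\frak u})$. Assume not, and take local approximations $(\frak u,\frak p,\phi_i)$ with $\hat\phi_i|_{S}$ pairwise distinct. By Definition \ref{defn1015}(2), the compositions $u_{\frak u}\circ\hat\phi_i$ are uniformly $C^{2}$-close to the embedding $u_{\frak p}|_{W}$, hence are immersions with $|d(u_{\frak u}\circ\hat\phi_i)|$ bounded below uniformly; in particular each $\hat\phi_i$ is an immersion between surfaces. On each component of $\Sigma_{\frak u}$ that $\hat\phi_i(W)$ can meet, $u_{\frak u}$ is a non-constant $J$-holomorphic map, so it has only finitely many critical points (where $du_{\frak u}=0$), and these are isolated; since $\hat\phi_i(W)$ avoids the $\tfrac{1}{3}\delta(k',\beta')$-neighbourhood of the nodes (Definition \ref{defn1015}(1)) and since $\hat\phi_i(W)$ approaching a critical point would force $|d(u_{\frak u}\circ\hat\phi_i)|$ to be small there, the images $\hat\phi_i(W)$ stay a definite distance from all critical points of $u_{\frak u}$. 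Consequently $du_{\frak u}$ has a uniformly bounded left inverse along $\hat\phi_i(W)$, which together with the $C^{2}$-bound on $u_{\frak u}\circ\hat\phi_i$ gives a uniform $C^{1}$-bound on the $\hat\phi_i$ over $W$. By Arzel\`a--Ascoli, a subsequence converges in $C^{0}$ on $W$, so $\hat\phi_i$ and $\hat\phi_j$ are $C^{0}$-close for large $i,j$.

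The last step converts precompactness into finiteness by a rigidity argument. Here I would use that the normalization produced by the implicit function theorem in the normalization lemma (the lemma following Definition \ref{defn1015}) is $C^{0}$-within $O(\epsilon(k',\beta'))$ of the original map $\phi$ — much smaller than the crude bound $\tfrac{1}{3}\delta(k',\beta')$ in Definition \ref{defn1015}(2). Then for large $i,j$ the triangle inequality gives $d_{C^{0};S}(\hat\phi_j,\phi_i)<\tfrac{1}{3}\delta(k',\beta')$, while the other requirements for ``$\hat\phi_j$ is a normalization of $\phi_i$'' in Definition \ref{defn1015} — the separation estimates in (1), the $C^{2}$-estimate in (2), and the perpendicularity gauge condition in (3) — involve only $\hat\phi_j$, $u_{\frak u}$ and $u_{\frak p}$, hence already hold. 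Thus $\hat\phi_j$ is a normalization of $\phi_i$, and the uniqueness clause of the normalization lemma forces $\hat\phi_i|_{S}=\hat\phi_j|_{S}$, contradicting the choice of the sequence. Hence the set of $\hat\phi|_{S}$ for each fixed $\frak p$ is finite, and summing over the finitely many $\frak p$ shows $\mathscr{QC}(k,\beta)(\frak u)$ is finite.

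The step I expect to be the main obstacle is the rigidity argument, which rests on two points that must be pinned down carefully: that the implicit-function-theorem normalization is genuinely $O(\epsilon)$-close to $\phi$ (so that the constants in Definition \ref{defn1015} leave enough room for the triangle inequality), and that the perpendicularity condition in Definition \ref{defn1015}(3) is a true slice — it should cut the remaining freedom in $\hat\phi$ to a single point rather than to a positive-dimensional family. The compactness step is comparatively routine given Definition \ref{defn1015}(2) and the compactness of $\Sigma_{\frak u}$. There is also an alternative to the compactness step closer to the picture behind Lemma \ref{delta-k-b}: near $u_{\frak p}(S)$ the preimage under $u_{\frak u}$ splits into finitely many ``sheets'' on each of which $u_{\frak u}$ is injective, a normalized local approximation must send $S$ into a union of such sheets, and on a fixed choice of sheets the perpendicularity condition determines $\hat\phi|_{S}$ uniquely; this gives finiteness directly, at the cost of proving that the sheet decomposition is finite.
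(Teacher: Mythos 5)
Your first reduction (finitely many $(k',\beta')\le(k,\beta)$ by Gromov compactness, finiteness of each $\frak P(k'+1,\beta')$, then fixing $\frak p$) is exactly the paper's, but your main mechanism is not: the paper's proof is essentially the ``alternative'' you sketch in your last sentence. It fixes a point $y$ in the source curve of $\frak p$ and a neighborhood $U_y$, and asserts that for $\epsilon(k',\beta')$ small two normalizations $\hat\phi,\hat\psi$ with $\hat\phi(y)\ne\hat\psi(y)$ have disjoint images of $U_y$; since only finitely many disjoint pieces of a definite size fit into the compact curve $\Sigma_{\frak u}$, there are finitely many possibilities for $\hat\phi|_{U_y}$, hence finitely many quasi components with that $\frak p$. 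You instead argue by Arzel\`a--Ascoli precompactness of the normalized maps plus a rigidity step through the uniqueness clause of the normalization lemma. That is a legitimate alternative in principle (compact plus discrete gives finite), and it buys a statement that does not require quantifying how many sheets of $u_{\frak u}$ pass near $u_{\frak p}(S)$; the paper's packing argument is shorter but leaves the disjointness claim and the passage from ``finitely many values at $y$'' to ``finitely many quasi components'' equally implicit.

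Two steps of your route do not yet close. First, the claim that the images $\hat\phi_i(W)$ stay a definite distance from the critical points of $u_{\frak u}$ is argued circularly: ``approaching a critical point forces $|d(u_{\frak u}\circ\hat\phi_i)|$ to be small'' only follows if $|d\hat\phi_i|$ is already bounded, which is precisely what you are trying to establish; a priori $d\hat\phi_i$ could blow up to compensate for small $du_{\frak u}$. The conclusion can be rescued, but by a different mechanism: at a critical point a nonconstant $J$-holomorphic map is locally a branched covering, so points near it have other preimages of the same value at distance tending to zero, and the separation clause of Definition \ref{defn1015} (1) (distance greater than $\frac{1}{3}\delta(k',\beta')$ from any other preimage) therefore keeps $\hat\phi_i(W)$ uniformly away from the critical set; only after this do you get a bounded left inverse for $du_{\frak u}$ and the uniform $C^1$ bound (and you should also dispose of the color-${\rm D}$ components, where the comparison map involves the projection as in Definition \ref{defn1015} (3)(b),(c)). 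Second --- and you flag this yourself --- your rigidity step needs the normalization to be $O(\epsilon(k',\beta'))$-close to $\phi$ in $C^0$, not merely within $\frac{1}{3}\delta(k',\beta')$ as Definition \ref{defn1015} (2) guarantees; otherwise the triangle inequality yields only $d_{C^0}(\hat\phi_j,\phi_i)<\frac{1}{3}\delta(k',\beta')+o(1)$ and ``$\hat\phi_j$ is a normalization of $\phi_i$'' fails its defining strict inequality. That sharper estimate is plausible (the displacement needed to impose the orthogonality gauge is controlled by the $C^2$ distance in Definition \ref{def1014} (3)), but it is an extra quantitative lemma that must be proved; alternatively, replace this detour by a direct local-uniqueness statement: two normalized maps that are $C^0$-close, land on the same sheet of $u_{\frak u}$ over the embedded image $u_{\frak p}(W)$, and both satisfy the orthogonality condition of Definition \ref{defn1015} (3) must agree on $S$. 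With these two repairs your argument goes through; as written, these are genuine gaps.
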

\begin{proof}

	By Gromov compactness, there is only a finite number of $(k',\beta') \le (k,\beta)$ such that
	$\mathcal M^{\rm RGW}_{k'+1}(L;\beta') \ne \emptyset$.
	Let $(k',\beta')$ be such a pair and $\frak p$ be an element of the finite set $\frak P(k'+1,\beta')$.
	Assume $y$ is an element of the source curve of $\frak p$. There is a neighborhood $U_y$ of $y$
	in the source curve of $y$ such that if $\epsilon'(k',\beta')$ is small enough, then the following holds.
	Let $[\frak u,\frak p,\phi]$ and $[\frak u,\frak p,\psi]$ be two quasi components of an element 
	$\frak u\in \mathcal M^{\rm RGW}_{k+1}(L;\beta)$ with $\hat \phi$ and $\hat \psi$ being the normalizations of 
	$\phi$ and $\psi$. 
	If  $\hat \phi(y)\neq\hat \psi(y)$, then $\hat \phi|_{U_y}$ and $\hat \psi|_{U_y}$ are disjoint. This would imply that
	given the element $\frak u$, there are only finitely many possibilities for the restriction of the normalization 
	map to $U_y$. Therefore, there are finitely many quasi components of the form 
	$[\frak u,\frak p,\phi]$ for $\frak u$. Finiteness of the sets $\frak P(k'+1,\beta')$ completes the proof.
%
%
\end{proof}
We next define a topology on $\mathscr{QC}(k,\beta)$.
Let ${\frak u} \in \mathcal M_{k+1}^{\rm RGW}(L;\beta)$, $\Xi$ be a TSD at $\frak u$ and $\mathfrak U({\frak u},\Xi)$ be the associated set of inconsistent solutions. (See \cite[Definition
8.28]{part2:kura}.)
We construct a map
$$
  \frak I : \mathfrak U({\frak u},\Xi)\cap\mathcal M_{k+1}^{\rm RGW}(L;\beta) 
  \to \mathscr{QC}(k,\beta).
$$
such that $\Pi \circ \frak I = {\rm id}$ assuming $\Xi$ is small enough. The TSD $\Xi$ induces a map
$$
  \psi_{\frak u',\frak u}: \bigcup_{v\in C^{\rm int}_0(\check R_{\frak u})}
  \Sigma_{\frak u,v}^-\to \Sigma_{\frak u'}
$$
for $\frak u' \in \mathfrak U({\frak u},\Xi)$. Let $(\frak u,\frak p,\phi)$ be a  local approximation to ${\frak u}$. If $\Xi$ is sufficiently small, then $(\frak u,\frak p,\psi_{\frak u',\frak u}\circ \phi)$ is a  local approximation to ${\frak u}'$. Using Implicit function theorem, it is easy to see that the equivalence class of $(\frak u,\frak p,\psi_{\frak u',\frak u}\circ \phi)$ depends only on the equivalence class of $(\frak u,\frak p,\phi)$. We thus obtain the map $\frak I$. This map in a small neighborhood of $\frak u$ is independent of the choice of $\Xi$. The map $\frak I$ is also injective. 

\begin{definition}\label{def1020}
	A neighborhood system of a quasi component ${\bf p} = [\frak u,\frak p,\phi]$ 
	of $\frak u$ in $\mathscr{QC}(k,\beta)$ is given 
	by mapping the neighborhood system of $\frak u$ in  $\mathcal M_{k+1}^{\rm RGW}(L;\beta)$ 
	via the map $\frak I$.
\end{definition}

The following is a straightforward consequence of the definition.
\begin{lemma}
 	$\mathscr{QC}(k,\beta)$ is Hausdorff and metrizable with respect to this topology.
	 For each quasi component ${\bf p}$  of $\frak u$, there exists a neighborhood of ${\bf p}$
	in $\mathscr{QC}(k,\beta)$ such that the restriction of $\Pi$ to this neighborhood 
	is a homeomorphism onto an open subset.
\end{lemma}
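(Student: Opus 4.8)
The plan is to prove the two assertions in reverse order, since the local-homeomorphism statement is essentially built into the definition of the topology, and metrizability is cheap once the Hausdorff property is in hand. For a quasi component ${\bf p}=[\frak u,\frak p,\phi]$ write $\frak I_{\bf p}$ for the section of $\Pi$ over a neighbourhood of $\frak u$ attached to ${\bf p}$ by the construction preceding Definition~\ref{def1020}, so that $\Pi\circ\frak I_{\bf p}=\mathrm{id}$, $\frak I_{\bf p}(\frak u)={\bf p}$, and $\frak I_{\bf p}$ is injective. The first point to check is that the neighbourhood bases prescribed in Definition~\ref{def1020} are coherent, so that they actually define a topology: if ${\bf p}'=\frak I_{\bf p}(\frak u')$ lies in a basic neighbourhood $\frak I_{\bf p}(W)$ of ${\bf p}$, then near ${\bf p}'$ the basic neighbourhoods of ${\bf p}$ and of ${\bf p}'$ coincide. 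Since $\frak u'=\Pi({\bf p}')\in W$, this reduces to $\frak I_{\bf p}=\frak I_{{\bf p}'}$ on a neighbourhood of $\frak u'$, which is exactly the content of the two properties of $\frak I$ recorded just before Definition~\ref{def1020} — injectivity, and independence of the auxiliary choice of TSD in a neighbourhood of the base point. Granting this, $\Pi$ is continuous (because $\Pi^{-1}(V)\supseteq\frak I_{\bf p}(V)$ is a neighbourhood of each of its points) and open (because $\Pi(\frak I_{\bf p}(W))=W$), so on the open set $\frak I_{\bf p}(W)$ it is a continuous open bijection onto $W$, hence a homeomorphism; this is the second assertion of the lemma, the required neighbourhood of ${\bf p}$ being $\frak I_{\bf p}(W)$ for any open $W\ni\frak u$.

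Next I would prove that $\mathscr{QC}(k,\beta)$ is Hausdorff, by cases. If ${\bf p}_1,{\bf p}_2$ lie over distinct $\frak u_1\ne\frak u_2$, choose disjoint open sets $V_i\ni\frak u_i$ in the compact metrizable space $\mathcal M^{\rm RGW}_{k+1}(L;\beta)$; then $\frak I_{{\bf p}_1}(V_1)$ and $\frak I_{{\bf p}_2}(V_2)$ are disjoint, because a common point would be sent by $\Pi$ into $V_1\cap V_2=\emptyset$. If ${\bf p}_1=[\frak u,\frak p_1,\phi_1]$ and ${\bf p}_2=[\frak u,\frak p_2,\phi_2]$ lie over the same $\frak u$, there are two sub-cases. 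When $\frak p_1\ne\frak p_2$, the basic neighbourhoods $\frak I_{{\bf p}_1}(W)$ and $\frak I_{{\bf p}_2}(W)$ are disjoint for every $W$, since the equivalence relation defining quasi components never identifies triples with different middle entry $\frak p$. When $\frak p_1=\frak p_2=\frak p$ but $\phi_1\not\sim\phi_2$, the normalizations satisfy $\hat\phi_1\ne\hat\phi_2$ on $\bigcup_v {\rm Supp}(E_{\frak p_v})$, and the quantitative separation already exploited in the proof of finiteness of $\mathscr{QC}(k,\beta)(\frak u)$ — if $\hat\phi_1(y)\ne\hat\phi_2(y)$ then $\hat\phi_1$ and $\hat\phi_2$ have disjoint restrictions to a fixed neighbourhood $U_y$ — persists for all $\frak u'$ in a small enough $W$, by the continuous dependence of the normalization (Definition~\ref{defn1015}, via the implicit function theorem) on the point of $\mathfrak U(\frak u,\Xi)$; since $\mathscr{QC}(k,\beta)(\frak u)$ is finite, one $W$ works for all finitely many pairs, and then $\frak I_{{\bf p}_1}(W)\cap\frak I_{{\bf p}_2}(W)=\emptyset$.

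Finally I would deduce metrizability. By the first paragraph $\Pi$ is a local homeomorphism onto $\mathcal M^{\rm RGW}_{k+1}(L;\beta)$, which is compact and metrizable; hence $\mathscr{QC}(k,\beta)$ is locally compact and locally metrizable, and, being locally compact and Hausdorff by the previous step, it is regular. It is also second countable: since $\bigcup_{(k',\beta')\le(k,\beta)}\frak P(k'+1,\beta')$ is finite and the fibres of $\Pi$ are finite, $\mathscr{QC}(k,\beta)$ is covered by countably many charts $\frak I_{\bf p}(W)$, each homeomorphic to an open subset of the second countable space $\mathcal M^{\rm RGW}_{k+1}(L;\beta)$. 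A regular Hausdorff second countable space is metrizable by Urysohn's metrization theorem; alternatively one can transport the quasi-metric on the fibres of $\mathcal C^{\rm RGW}_{k+1}(L;\beta)$ by a TSD to write down a metric directly, declaring quasi components of different combinatorial type to be at distance $1$. The genuine obstacle in this argument is the coherence check of the first paragraph together with the uniform persistence of separation in the equal-type sub-case of the Hausdorff argument; these are the places where the implicit-function-theorem construction of normalizations and the estimates behind the finiteness lemma for $\mathscr{QC}(k,\beta)(\frak u)$ are really used, while everything else is formal bookkeeping.
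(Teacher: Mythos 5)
Your first two paragraphs supply what the paper leaves implicit (it records no proof, calling the lemma a straightforward consequence of Definition \ref{def1020}), and they are essentially right: the coherence of the neighbourhood bases and the local-homeomorphism statement rest on the cocycle property $\frak I_{{\bf p}'}=\frak I_{\bf p}$ near $\frak u'$ when ${\bf p}'=\frak I_{\bf p}(\frak u')$ (strictly speaking this is a base-point-change statement, proved by the same normalization/implicit-function-theorem argument as Definition \ref{defn1015}, rather than literally by ``independence of $\Xi$'', but that is a harmless rephrasing), and your three-case Hausdorff argument correctly isolates the only nontrivial case (same $\frak u$, same $\frak p$, inequivalent $\phi$) and handles it by persistence of the separation of the normalizations, which is the right mechanism and the same one used in the finiteness lemma.

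The step that does not hold up as written is second countability. From ``$\bigcup_{(k',\beta')\le(k,\beta)}\frak P(k'+1,\beta')$ is finite and the fibres of $\Pi$ are finite'' it does not follow that $\mathscr{QC}(k,\beta)$ is covered by countably many charts: there is a chart for each of its (typically uncountably many) points, and a finite-to-one local homeomorphism onto a second countable space does not by itself yield a countable chart cover --- one needs either a uniform lower bound (Lebesgue-number type) on the chart domains $W$, or a ``sections over connected opens are equal or disjoint'' argument, which requires local connectedness of $\mathcal M^{\rm RGW}_{k+1}(L;\beta)$, not obviously available. What finite fibres do give cheaply is separability ($\Pi^{-1}(Q)$ is countable and dense for $Q$ countable dense in the base), but separable, regular and locally second countable is not enough for Urysohn. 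The clean repair is your ``alternative'' route made precise: the subsets $\mathscr{QC}_{\frak p}$ with fixed middle entry $\frak p$ are clopen and finite in number, and each embeds into the space of pairs $(\frak u,\hat\phi|_{\bigcup_v{\rm Supp}(E_{\frak p_v})})$, metrized by the distance on $\mathcal M^{\rm RGW}_{k+1}(L;\beta)$ together with the $C^0$ distance of normalized maps measured via the (quasi-)metric on $\mathcal C^{\rm RGW}_{k+1}(L;\beta)$; the topology of Definition \ref{def1020} agrees with this metric topology by the same continuity of $\hat\phi$ in $\frak u$. Finally, be careful with ``quasi components of different combinatorial type are at distance $1$'': this is correct if ``type'' means the reference element $\frak p$, but wrong if it means the stratum of $\frak u$ --- the topology must mix strata, since Steps 2 and 3 of Lemma \ref{eximanuyobj} rely on convergence of quasi components along sequences $\frak u_j\to\frak u\in\partial\mathcal M^{\rm RGW}_{k+1}(L;\beta)$ whose combinatorial types differ.
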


Let $\mathscr F$ be a subset of $\mathscr{QC}(k,\beta)$. For  $\frak u \in \mathcal M_{k+1}(L;\beta)$, we define
$$
\mathscr F(\frak u) = \Pi^{-1}(\frak u) \cap \mathscr F.
$$
It is a map which assigns to $\frak u$ a finite set of quasi components of $\frak u$. Justified by this, we call $\mathscr F$ a {\it quasi component choice map}.
\begin{definition}
	A quasi component choice map $\mathscr F$ is open (resp. closed) if it is an open 
	(resp. closed) subset of $\mathscr{QC}(k,\beta)$ (with respect to the topology of
	Definition \ref{def1020}). 
	We say $\mathscr F$ is proper if the restriction of $\Pi$ to $\mathscr F$ is proper.
\end{definition}

\par\smallskip
\noindent {\bf (OBIII):} For each $(k,\beta) \in \mathscr{TP}$, we take quasi component choice maps $\mathscr F_{k,\beta}$ and  $\mathscr F^{\circ}_{k,\beta}$.
\par\medskip
We are mainly concerned with the objects as in (OBIII) which satisfy the following condition.
\begin{conds}\label{conds1023}
	The quasi component choice map $\mathscr F^{\circ}_{k,\beta}$ is open and is 
	a subset of $\mathscr F_{k,\beta}$. The quasi component map $\mathscr F_{k,\beta}$ is proper.
\end{conds}

The next condition is related to the disk-component-wise-ness. Let ${\frak u} \in \mathcal M_{k+1}^{\rm RGW}(L;\beta)$ and $\check R$ be the very detailed tree associated to $\frak u$. Let $\check R$ belong to the disk splitting tree $\mathcal S$
. Let $\frak w$ be an interior vertex of ${\mathcal S}$. Following the discussion of the beginning of Subsection \ref{subsub:componentwise}, we obtain an element $\frak u_{\frak w} \in \mathcal M_{k_{\frak w}+1}^{\rm RGW}(L;\beta(\frak w))$ for each interior vertex $\frak w$ of $\mathcal S$. Define
\begin{equation}\label{form1012}
	\mathscr I_{\frak w} : \mathscr{QC}(k_{\frak w},\beta(\frak w))(\frak u_{\frak w}) \to 
	\mathscr{QC}(k,\beta)(\frak u)
\end{equation}
to be the map given by:
$$
  \mathscr I_{\frak w}([\frak u_{\frak w},\frak p,\phi]) = [\frak u,\frak p,\phi].
$$
The target of $\phi$ on the left hand side is $\Sigma_{\frak u_{\frak w}}$, the source curve of $\frak u_{\frak w}$, which is a subset of the source curve $\Sigma_{\frak u}$ of $\frak u$. So $\phi$ on the right hand side is regarded as a map to $\Sigma_{\frak u}$. It is clear that $\mathscr I_{\frak w}$ maps equivalent objects to equivalent ones and hence the above definition is well-defined.

\begin{lemma}
	The map $\mathscr I_{\frak w}$ is injective.
	If $\frak w \ne \frak w'$, then the image of $\mathscr I_{\frak w}$ is disjoint from the 
	image of $\mathscr I_{\frak w'}$.
\end{lemma}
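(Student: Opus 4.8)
\textbf{Proof plan for the final Lemma (injectivity of $\mathscr I_{\frak w}$ and disjointness of the images for distinct $\frak w$).}

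The plan is to unwind the definitions of a quasi-component and of the map $\mathscr I_{\frak w}$ and track which irreducible components of the source curve $\Sigma_{\frak u}$ carry the supports $\bigcup_v {\rm Supp}(E_{\frak p_v})$. First I would recall that by (OBI) each $\frak p \in \frak P(k'+1,\beta')$ has a single disc component, and by the discussion at the beginning of Subsection \ref{subsub:componentwise}, the subcurve $\Sigma_{\frak u,\frak w}$ of $\Sigma_{\frak u}$ is precisely the union of the irreducible components $\Sigma_{\frak u,v}$ with $v \in C^{\rm int}_0(\check R(\frak w))$. These subcurves are distinct as subsets of $\Sigma_{\frak u}$ for distinct $\frak w$, meeting only at the boundary nodal points that are used as the $0$-th marked points $z_{0,\frak w}$; in particular their interiors (away from nodal points) are disjoint.

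For injectivity: suppose $\mathscr I_{\frak w}([\frak u_{\frak w},\frak p_1,\phi_1]) = \mathscr I_{\frak w}([\frak u_{\frak w},\frak p_2,\phi_2])$, i.e.\ $[\frak u,\frak p_1,\phi_1] = [\frak u,\frak p_2,\phi_2]$ as quasi-components of $\frak u$. By the definition of the equivalence relation this forces $\frak p_1 = \frak p_2 =: \frak p$ and $\hat\phi_1|_{\bigcup_v{\rm Supp}(E_{\frak p_v})} = \hat\phi_2|_{\bigcup_v{\rm Supp}(E_{\frak p_v})}$, where the normalizations are taken inside $\Sigma_{\frak u}$. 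Since $\phi_j$ on the left-hand side has target $\Sigma_{\frak u_{\frak w}} \subset \Sigma_{\frak u}$, and the normalization of a local approximation is characterized (by the uniqueness lemma following Definition \ref{defn1015}) by the point-wise perpendicularity conditions of Definition \ref{defn1015}, which are conditions on the target geometry, the normalization of $\phi_j$ computed in $\Sigma_{\frak u_{\frak w}}$ agrees with the one computed in $\Sigma_{\frak u}$ on the overlap. Hence $\hat\phi_1$ and $\hat\phi_2$ agree as maps into $\Sigma_{\frak u_{\frak w}}$ on $\bigcup_v{\rm Supp}(E_{\frak p_v})$, so $[\frak u_{\frak w},\frak p_1,\phi_1] = [\frak u_{\frak w},\frak p_2,\phi_2]$, which is what injectivity asserts.

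For disjointness: if $[\frak u,\frak p,\phi]$ lies in the image of $\mathscr I_{\frak w}$, then by construction the embedding $\phi$ (and hence its normalization $\hat\phi$) maps $\bigcup_v {\rm Supp}(E_{\frak p_v})$ into $\Sigma_{\frak u_{\frak w}} \subset \Sigma_{\frak u}$, and moreover, because the supports are away from nodal and boundary points (Part 2, (OBII)) and $\phi$ keeps its image at distance $> \delta(k',\beta')$ from nodal points of $\Sigma_{\frak u}$ (Definition \ref{def1014}(2)), the image in fact lies in the open part $\Sigma_{\frak u,\frak w} \setminus \{\text{nodes}\}$. If $\frak w \ne \frak w'$, this open part is disjoint from the corresponding open part for $\frak w'$, so the same quasi-component $[\frak u,\frak p,\phi]$ cannot lie in the image of both $\mathscr I_{\frak w}$ and $\mathscr I_{\frak w'}$ — its support data would have to sit in two disjoint pieces of $\Sigma_{\frak u}$ simultaneously. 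The only point requiring a little care — and the place I expect to have to argue most carefully — is exactly this last assertion that each connected piece $\bigcup_v{\rm Supp}(E_{\frak p_v})$ is pushed entirely into a \emph{single} $\Sigma_{\frak u,\frak w}$: this follows since $\frak p$ has one disc component and the supports lie on components connected (through that disc component) within a single $\check R(\frak w)$, combined with Definition \ref{def1014}(2), which guarantees $\phi$ sends ${\rm Supp}(E_{\frak p_v})$ into a single irreducible component $\Sigma_{\frak u,v'}$ of $\Sigma_{\frak u}$ and keeps the images away from the nodes separating the $\Sigma_{\frak u,\frak w}$. Once that is in place both statements follow immediately.
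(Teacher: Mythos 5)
Your proposal is correct and follows essentially the same route as the paper: injectivity is read off from the definition of the equivalence relation (matching $\frak p$ and matching normalizations), and disjointness follows because a quasi component in the image of $\mathscr I_{\frak w}$ has the image of $\phi$ contained in $\Sigma_{\frak u,\frak w}$, and these subcurves are disjoint away from nodes for distinct $\frak w$. Your extra care about normalizations computed in $\Sigma_{\frak u_{\frak w}}$ versus $\Sigma_{\frak u}$ is a reasonable elaboration of what the paper dismisses as ``obvious from the definition.''
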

\begin{proof}
	The injectivity is obvious from the definition.
	If $[\frak u,\frak p,\phi]$ is in the image of $\mathscr I_{\frak w}$,
	then the image of $\phi$ is contained in the component $\Sigma_{\frak u,\frak w}$
	corresponding to $\frak w$. Therefore, for $\frak w \ne \frak w'$, the image of the maps
	$\mathscr I_{\frak w}$ and $\mathscr I_{\frak w'}$ are disjoint.
\end{proof}
\begin{conds}\label{conds1025}
	Let ${\frak u} \in \mathcal M_{k+1}^{\rm RGW}(L;\beta)$ and 
	$\check R$ and $\mathcal S$ 
	 be given as above.
	Then we require
	\begin{equation}
		\aligned
		\mathscr F_{k,\beta}({\frak u}) 
		&= \bigcup_{\frak w} \mathscr I_{\frak w} \left( \mathscr F_{k_{\frak w},\beta_{\frak w}}
		({\frak u}_{\frak w})  \right),\\
		\mathscr F^{\circ}_{k,\beta}({\frak u}) 
		&= \bigcup_{\frak w} \mathscr I_{\frak w} 
		\left( \mathscr F^{\circ}_{k_{\frak w},\beta_{\frak w}}({\frak u}_{\frak w})  \right).
		\endaligned
	\end{equation}
\end{conds}
We need the following definition to state the next condition:
\begin{definition}
	Let ${\frak u} \in \mathcal M_{k+1}^{\rm RGW}(L;\beta)$ and $\frak y$ be an 
	inconsistent map with respect to a TSD $\Xi$ at $\frak u$. We assume $\Xi$ is sufficiently small
	such that the vector spaces 
	$E_{\frak u,{\bf p},\Xi}(\frak y)$ in \eqref{form6189} are well-defined. We define
	\begin{equation}\label{form19223rev}
		\aligned
		E_{\frak u,\mathscr F,\Xi}(\frak y):=
		&\sum_{[\bf p] \in \mathscr F_{k,\beta}({\frak u})}E_{\frak u,{\bf p},\Xi}(\frak y)
		\subset \bigoplus_{v \in C^{\rm int}_0(\check R)} L^2_{m}(\Sigma_{\frak y,v}
		;T \otimes \Lambda^{0,1}),
		\endaligned
	\end{equation}
	where $\Sigma$ denotes the sum of vector subspaces of a vector space.
	Similarly, we define
	\begin{equation}\label{form19223revrev}
		\aligned
		E_{\frak u,\mathscr F^{\circ},\Xi}(\frak y):=
		&\sum_{[\bf p] \in \mathscr F^{\circ}_{k,\beta}({\frak u})}E_{\frak u,{\bf p},\Xi}(\frak y)
		\subset 
		\bigoplus_{v \in C^{\rm int}_0(\check R)} L^2_{m}(\Sigma_{\frak y,v}
		;T \otimes \Lambda^{0,1}).
		\endaligned
	\end{equation}
	Note that $E_{\frak u,\mathscr F^{\circ},\Xi}(\frak y) \subseteq E_{\frak u,\mathscr F,\Xi}(\frak y)$.
\end{definition}

\begin{conds}\label{conds1027}
	We require that the sum in \eqref{form19223rev} is a direct sum for 
	$\frak y = \frak u$. Namely,
	\begin{equation}\label{form19223revrevrev}
		\aligned
		E_{\frak u,\mathscr F,\Xi}(\frak u)=
		&\bigoplus_{[\bf p] \in \mathscr F_{k,\beta}({\frak u})}E_{\frak u,{\bf p},\Xi}(\frak y)
		.
		\endaligned
	\end{equation}
\end{conds}
Note that the above condition implies that the sum in \eqref{form19223revrev} for $\frak y = \frak u$ is also a direct sum.
\begin{definition}\label{defn68899rev}
	We say the linearization of the non-linear Cauchy-Riemann equation is {\it transversal} to 
	$E_{\frak u,\mathscr F^\circ,\Xi}(\frak u)$
	if the sum of the images of the operator $D_{\frak u}\overline\partial$ 
	in \cite[(8.3)]{part2:kura} and 
	$E_{\frak u,\mathscr F^\circ,\Xi}(\frak u)$ is $L^2_{m,\delta}(\frak u;T \otimes \Lambda^{0,1})$.
\end{definition}

\begin{definition}\label{defn68899revref}
	Consider the operator:
	\[
	  \mathcal{EV}_{z_0} :  W^2_{m+1,\delta}(\frak u;T) \to T_{u(z_0)}L
	\]
	given  by evaluation at the point $z_0$, the 0-th boundary marked point of the source of $\frak u$.
	Recall that the Hilbert space $W^2_{m+1,\delta}(\frak u;T)$ is the domain of the operator 
	$D_{\frak u}\overline\partial$ in \cite[(8.3)]{part2:kura}.
	 We say $E_{\frak u,\mathscr F^\circ,\Xi}(\frak u)$ satisfies the {\it mapping transversality}
	property, if the restriction
	\[
	  \mathcal{EV}_{z_0}\vert_{D_{\frak u}\overline\partial^{-1}(E_{\frak u,\mathscr F^\circ,\Xi}(\frak u))} : 
	  D_{\frak u}\overline\partial^{-1}(E_{\frak u,\mathscr F^\circ,\Xi}(\frak u))\to T_{u(z_0)}L,
	\]
	is surjective.
\end{definition}
\begin{conds}\label{conds30}
	We require that the linearization of the non-linear Cauchy-Riemann equation is 
	transversal to $E_{\frak u,\mathscr F^\circ,\Xi}(\frak u)$
	and $E_{\frak u,\mathscr F^\circ,\Xi}(\frak u)$ satisfies the mapping transversality property.
\end{conds}

Let ${\rm Aut}(\frak u)$ be the group of automorphisms of $\frak u$. If $\gamma \in {\rm Aut}(\frak u)$ and $[\frak u,\frak p,\phi]$ is a quasi component of $\frak u$, then $[\frak u,\frak p,\gamma\circ\phi]$ is also a quasi component of $\frak u$.
Thus ${\rm Aut}(\frak u)$ acts on $\mathscr{QC}(k,\beta)(\frak u)$.
\begin{conds}\label{conds31}
	We require that $\mathscr F_{k,\beta}({\frak u})$, $\mathscr F^{\circ}_{k,\beta}({\frak u})$ 
	are invariant with respect to the action of ${\rm Aut}(\frak u)$.
\end{conds}

\subsubsection*{Disk-component-wise Obstruction Bundle Data: Part 4} Given the above objects satisfying the mentioned conditions, we can construct the desired obstruction bundle data:

\begin{lemma}
	Suppose we are given the objects in {\rm (OBI)}-{\rm (OBIII)}, which satisfy 
	Conditions \ref{conds1023}, \ref{conds1025}, \ref{conds1027}, \ref{conds30}, \ref{conds31}.
	Then $\{E_{\frak u,\mathscr F,\Xi}(\frak y)\}$ is a disk-component-wise system of 
	obstruction bundle data.
\end{lemma}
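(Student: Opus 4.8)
The plan is to verify that the family $\{E_{\frak u,\mathscr F,\Xi}(\frak y)\}$ defined by \eqref{form19223rev} satisfies each of the seven axioms of Definition \ref{defn684684} together with the disk-component-wise identity \eqref{form618383}. Most of these will follow almost formally from the corresponding properties of the building blocks $E_{\frak u,{\bf p},\Xi}(\frak y)$ established in Part 2, so the proof is largely a matter of bookkeeping: assemble the direct sum, and check that the defining conditions pass to finite sums of quasi-component obstruction spaces. I would organize the argument axiom by axiom.

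First I would record that Condition \ref{conds1027} gives the direct sum decomposition at $\frak y = \frak u$, and then note that by the $C^\ell$-dependence on $\frak y$ (which I take from the construction of $E_{\frak u,{\bf p},\Xi}(\frak y)$ via $\hat\phi_{\frak y;\frak p}$ in Part 2) this remains a direct sum for nearby $\frak y$, after possibly shrinking $\Xi$; combined with the per-vertex splitting $E_{\frak u,{\bf p},\Xi}(\frak y) \subset \bigoplus_{v} L^2_m(\Sigma_{\frak y,v};T\otimes\Lambda^{0,1})$ this yields Definition \ref{defn684684}(1), (2). For (3) and (4), the independence of $\Xi$ and semi-continuity in $\frak u$ are asserted in Part 2 for each $E_{\frak u,{\bf p},\Xi}(\frak y)$, and these properties are stable under taking the sum over the finite set $\mathscr F_{k,\beta}(\frak u)$; for semi-continuity one must also use that $\mathscr F_{k,\beta}$ is proper (Condition \ref{conds1023}), so that under a degeneration $\frak u_{(2)}\to \frak u_{(1)}$ the quasi-components of $\frak u_{(2)}$ appearing in $\mathscr F$ limit to quasi-components of $\frak u_{(1)}$ in $\mathscr F$, matching the images of $\frak I_{v;\frak y_{(1)}\frak y_{(2)}}$. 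Axiom (5), the $C^\ell$-property, is inherited from the $C^\ell$-property of each $E_{\frak u,{\bf p},\Xi}$ because a finite direct sum of $C^\ell$ families of subspaces is $C^\ell$. Axiom (6), transversality of $D_{\frak u}\overline\partial$ to $E_{\frak u,\mathscr F,\Xi}(\frak u)$, is immediate from Condition \ref{conds30}, since $E_{\frak u,\mathscr F^\circ,\Xi}(\frak u) \subseteq E_{\frak u,\mathscr F,\Xi}(\frak u)$ and transversality only improves when the obstruction space is enlarged. Axiom (7), $\Gamma_{\frak u}$-invariance, follows from Condition \ref{conds31} together with the fact that $\gamma \in {\rm Aut}(\frak u)$ sends $E_{\frak u,[\frak u,\frak p,\phi],\Xi}(\frak y)$ to $E_{\frak u,[\frak u,\frak p,\gamma\circ\phi],\Xi}(\gamma\cdot\frak y)$.

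The substantive point, and the one I expect to be the main obstacle, is the disk-component-wise identity \eqref{form618383}. Here I would use Condition \ref{conds1025}: for a boundary/corner degeneration with disk splitting tree $\mathcal S$ contracting to $\overline{\mathcal S}$, the set $\mathscr F_{k,\beta}(\frak u)$ is the disjoint union, over interior vertices $\frak w$ of $\overline{\mathcal S}$, of the images $\mathscr I_{\frak w}(\mathscr F_{k_{\frak w},\beta_{\frak w}}(\frak u_{\frak w}))$, and these images have disjoint supports sitting in the respective subcurves $\Sigma_{\frak u,\frak w}$. Since $E_{\frak u,\mathscr I_{\frak w}([\frak u_{\frak w},\frak p,\phi]),\Xi}(\frak y) = E_{\frak u_{\frak w},[\frak u_{\frak w},\frak p,\phi],\Xi_{\frak w}}(\frak y(\frak w))$ — which I would justify by unwinding the definition of $\hat\phi_{\frak y;\frak p}$ and observing that $\phi$ has image in $\Sigma_{\frak u,\frak w}$, so the normalization and the resulting obstruction space only see the data of $\frak u_{\frak w}$ and $\frak y(\frak w)$ — summing over $\mathscr F_{k,\beta}(\frak u)$ breaks into the sum over $\frak w$ of the sums over $\mathscr F_{k_{\frak w},\beta_{\frak w}}(\frak u_{\frak w})$, giving exactly $\bigoplus_{\frak w} E_{\frak u_{\frak w},\Xi_{\frak w}}(\frak y(\frak w))$. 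The care needed is to check that the supports genuinely lie in the interiors of the $\Sigma_{\frak u,\frak w}$ and stay away from the nodes joining different $\frak w$'s — this is where Lemma \ref{delta-k-b} and the distance bounds $\delta(k',\beta')$ in Definitions \ref{def1014} and \ref{defn1015} enter — so that the direct sum over $\frak w$ is honest and compatible with the vertex-wise direct sum in Definition \ref{defn684684}(1). Finally, I would note that all of this is internally consistent only once the objects of (OBI)--(OBIII) satisfying Conditions \ref{conds1023}--\ref{conds31} are known to exist, which is the content of Part 5 and is addressed separately.
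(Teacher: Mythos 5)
Your proposal is correct and follows essentially the same route as the paper: the paper's proof is a terse axiom-by-axiom verification of Definition~\ref{defn684684} (with items (1), (2), (3), (5) immediate from the construction, semi-continuity from properness of $\mathscr F_{k,\beta}$, transversality from Condition~\ref{conds30}, $\Gamma_{\frak u}$-invariance from Condition~\ref{conds31}), and disk-component-wise-ness declared an immediate consequence of Condition~\ref{conds1025}. Your write-up simply fills in the details the paper leaves tacit (notably the support-disjointness argument identifying $E_{\frak u,\mathscr I_{\frak w}({\bf p}),\Xi}(\frak y)$ with $E_{\frak u_{\frak w},{\bf p},\Xi_{\frak w}}(\frak y(\frak w))$), so no further comparison is needed.
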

\begin{proof}
	The system $\{E_{\frak u,\mathscr F,\Xi}(\frak y)\}$ satisfies Definition \ref{defn684684} (1), (2), 
	(3) and (5) as immediate consequences of the construction.
	Definition \ref{defn684684} (3) is a consequence of the properness of $\mathscr F_{k,\beta}$.
	(Compare to \cite[Lemma 9.39]{part2:kura}.)
	Definition \ref{defn684684} (5) is a consequence of Condition \ref{conds30}.
	Definition \ref{defn684684} (6) is a consequence of Condition \ref{conds31}.
	Disc-component-wise-ness is an immediate consequence of 
	Condition \ref{conds1025}.
\end{proof}

\subsubsection*{Disk-component-wise Obstruction Bundle Data: Part 5}
To complete the proof of Proposition \ref{lem685}, it suffices to 
prove the next lemma.
\begin{lemma}\label{eximanuyobj}
There exist objects {\rm (OBI)}-{\rm (OBIII)} which satisfy 
Conditions \ref{conds1023}, \ref{conds1025}, \ref{conds1027}, \ref{conds30} and \ref{conds31}.
\end{lemma}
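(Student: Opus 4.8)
The plan is to construct the objects (OBI)--(OBIII) by induction on $(k,\beta)\in\mathscr{TP}$ with respect to the partial order $\le$ introduced above. By Gromov compactness the set of pairs strictly below a given one is finite, so the induction is well-founded, and the base case (where $\mathcal M^{\rm RGW}_{k+1}(L;\beta)$ admits no proper disk-bubbling strata, e.g.\ $\beta\cap\omega$ minimal) is the argument below with an empty collar. At the inductive step we assume $\frak P(k'+1,\beta')$, the subspaces $\{E_{\frak p_v}\}$, and the choice maps $\mathscr F_{k',\beta'}$, $\mathscr F^{\circ}_{k',\beta'}$ have been constructed for all $(k',\beta')<(k,\beta)$ and satisfy Conditions \ref{conds1023}, \ref{conds1025}, \ref{conds1027}, \ref{conds30}, \ref{conds31}, and we must produce the corresponding data for $(k,\beta)$.

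First I would treat a collar of the boundary and corners of $\mathcal M^{\rm RGW}_{k+1}(L;\beta)$. For $\frak u$ in a stratum described by a disk splitting tree $\mathcal S$, all the pieces $\frak u_{\frak w}$ lie in moduli spaces $\mathcal M^{\rm RGW}_{k_{\frak w}+1}(L;\beta(\frak w))$ with $(k_{\frak w},\beta(\frak w))<(k,\beta)$, and the decomposition $\frak u\mapsto(\frak u_{\frak w})_{\frak w\in C^{\rm int}_0(\overline{\mathcal S})}$ of Subsection \ref{subsub:componentwise} is available. Condition \ref{conds1025} then \emph{forces}
\[
  \mathscr F_{k,\beta}(\frak u)=\bigcup_{\frak w}\mathscr I_{\frak w}\bigl(\mathscr F_{k_{\frak w},\beta(\frak w)}(\frak u_{\frak w})\bigr),\qquad
  \mathscr F^{\circ}_{k,\beta}(\frak u)=\bigcup_{\frak w}\mathscr I_{\frak w}\bigl(\mathscr F^{\circ}_{k_{\frak w},\beta(\frak w)}(\frak u_{\frak w})\bigr),
\]
so the choice maps are completely determined there by the inductive hypothesis. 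Using injectivity and disjointness of the $\mathscr I_{\frak w}$ together with the topology of Definition \ref{def1020}, these unions assemble to an open (resp.\ proper) subset of $\mathscr{QC}(k,\beta)$ over a neighborhood of the boundary, and $\mathrm{Aut}$-invariance (Condition \ref{conds31}) is inherited via \eqref{inc-isot-gp}. The key observation is that Conditions \ref{conds1027} and \ref{conds30} then hold \emph{automatically} along this collar: by the disk-component-wise identification \eqref{form618383} (which is valid because the supports of the $E_{\frak p_v}$ lie away from the nodes and boundary and hence distribute over the pieces $\Sigma_{\frak u,\frak w}$) one has $E_{\frak u,\mathscr F^{\circ},\Xi}(\frak u)\cong\bigoplus_{\frak w}E_{\frak u_{\frak w},\mathscr F^{\circ},\Xi_{\frak w}}(\frak u_{\frak w})$, each summand is a direct sum over its quasi components and transversal to $D_{\frak u_{\frak w}}\overline\partial$ by induction, and the RGW gluing analysis of \cite{part2:kura} presents $D_{\frak u}\overline\partial$ near the boundary as a small deformation of the product of the $D_{\frak u_{\frak w}}\overline\partial$, so transversality and directness persist; mapping transversality at $z_0$ is inherited from the unique piece $\frak u_{\frak w}$ carrying the $0$-th marked point. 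An open-condition argument extends all this to a genuine collar neighborhood $\mathcal C$ of the boundary.

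Next I would work on the compact set $K=\mathcal M^{\rm RGW}_{k+1}(L;\beta)\setminus\mathcal C$. By the standard Fukaya--Oh--Ohta--Ono argument, for each $\frak u\in K$ one can choose a point $\frak p$ in the interior of the moduli space with $u_{\frak p}$ $C^2$-close to $u_{\frak u}$ on suitable embedded non-constant regions, and finite-dimensional subspaces $E_{\frak p_v}$ (supported a definite distance from nodes and boundary, by Lemma \ref{delta-k-b}, on embedded non-constant pieces, so that (OBII) holds) such that at $\frak u$ and all nearby points: (i) $D\overline\partial$ is transversal to the induced span, (ii) $\mathcal{EV}_{z_0}$ restricted to $D\overline\partial^{-1}$ of that span surjects onto $T_{u(z_0)}L$, and (iii) for generic choices the spans of distinct quasi components are in general position, making the relevant sums direct. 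Finitely many such $\frak p$ cover $K$ by compactness; replace each by its $\Gamma_{\frak p}$-orbit with $\Gamma_{\frak p}$-averaged obstruction spaces, and let $\frak P(k+1,\beta)$ be the resulting finite set. Define $\mathscr F^{\circ}_{k,\beta}$ to be the union of the collar part of Paragraph 2 with small open $\mathscr{QC}(k,\beta)$-neighborhoods of the finitely many newly chosen quasi components, and $\mathscr F_{k,\beta}$ a slightly larger set containing it on which $\Pi$ is proper (e.g.\ the closure of a somewhat larger open set, cut down by a compact exhaustion), shrinking so that the boundary formula of Condition \ref{conds1025} continues to hold exactly. Transversality and directness now hold at every point of $K$ (open condition, verified at the chosen points) and on $\mathcal C$ (Paragraph 2), so Conditions \ref{conds30}, \ref{conds1027} hold throughout; Conditions \ref{conds1023}, \ref{conds1025}, \ref{conds31} hold by construction. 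This closes the induction and, by Lemma \ref{lem684}, proves Theorem \ref{lema362rev}.

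The main obstacle is precisely the tension between Paragraphs 2 and 3: Condition \ref{conds1025} makes the choice maps completely rigid along the boundary and corners, leaving no freedom to repair transversality there, so one must verify that the inductively supplied data already guarantees transversality of both $D\overline\partial$ and $\mathcal{EV}_{z_0}$, and directness, along the \emph{entire} boundary --- which is exactly what the disk-component-wise identification \eqref{form618383} together with the product structure of the linearized operator under RGW gluing provides --- and only then extend into the interior, where no such constraint is active. A secondary technical point (and the one needing care for the higher-corner version alluded to in the remarks) is checking that the maps $\mathscr I_{\frak w}$ and the topology of Definition \ref{def1020} interact well enough that the collar data really is open, resp.\ proper, as a subset of $\mathscr{QC}(k,\beta)$; this is where compatibility of the $\mathscr{QC}$-topologies under boundary degeneration enters.
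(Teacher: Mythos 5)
Your proposal is correct and follows essentially the same route as the paper: induction on $(k,\beta)$, with the boundary values of $\mathscr F_{k,\beta}$, $\mathscr F^{\circ}_{k,\beta}$ forced by Condition \ref{conds1025} via the maps $\mathscr I_{\frak w}$, extension to a collar (with Condition \ref{conds30} preserved by openness of transversality, i.e.\ the paper's shrinking of $\rho$), and then a finite cover of the remaining compact part by interior points $\frak p$ with $\Gamma_{\frak p}$-invariant obstruction spaces as in the base case. The openness/properness issue you flag at the end is exactly what the paper's Steps 2 and 3 verify, using convergence of quasi components under boundary degeneration in the topology of Definition \ref{def1020}.
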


\begin{proof}
	The proof is by induction on $(k,\beta)$ and is given in several steps.
	Here we denote $(k,\beta) < (k',\beta')$ when either $\beta\cap [\omega] < \beta'\cap [\omega]$
	or $\beta\cap [\omega] = \beta'\cap [\omega]$, $k<k'$.
	
	{\bf Step 1} ({\it The base of induction}):
	We assume that $(k,\beta)$ is minimal in this 
	order $<$. In this case, the moduli space $\mathcal M_{k+1}^{\rm RGW}(L;\beta)$
	has no boundary. We follow a similar argument as in \cite[Subsection 9.2]{part2:kura}.
	For each $\frak p \in \mathcal M_{k+1}^{\rm RGW}(L;\beta)$, 
	we fix a vector space $E_{\frak p, v}$ for $v \in C^0_{\rm int}(\check R_{\frak p})$ 
	as in (OBII). We require that the linearization of the non-linear Cauchy-Riemann equation is 
	transversal to
	\begin{equation}\label{form1017}
		\bigoplus_{v \in C^0_{\rm int}(\check R_{\frak p})} E_{\frak p, v}
	\end{equation}
	at $\frak p$ (Definition \ref{defn68899rev}) 
	and \eqref{form1017} has the mapping transversality property at $\frak p$ 
	(Definition \ref{defn68899revref}). Using Lemma \ref{delta-k-b}, we may assume that the distance 
	of any point $x$ in the support of the elements of 
	$E_{\frak p, v}$ to nodal points of $\Sigma_{\frak p}$ is at least $\delta(k,\beta)$.
	Moreover, if $x'$ is another point in the source curve of $\frak p$ such that 
	$u_{\frak p}(x)=u_{\frak p}(x')$, then the distance between $x$ and $x'$ is greater than 
	$\delta(k',\beta')$.

	For each $\frak p$, we also pick a TSD $\Xi_{\frak p}$ 
	and a compact neighborhood $\mathcal K(\frak p)$ of $\frak p$ 
	in $\mathcal M_{k+1}^{\rm RGW}(L;\beta)$, which satisfy the following conditions. 
	Firstly we require that the compact neighborhood $\mathcal K(\frak p)$ is included in 
	the set of inconsistent maps determined by $\Xi_{\frak p}$. 
	Thus for any $\frak u \in \mathcal K(\frak p)$, there is a holomorphic embedding 
	$\phi_{\frak u,\frak p}$ from a neighborhood of $\bigcup_v  {\rm Supp}(E_{\frak p_{v}})$ to 
	$\Sigma_{\frak u}$, assuming that $\Xi_{\frak p}$ is small enough. We may also assume that 
	we can normalize $\phi_{\frak u,\frak p}$ as in Definition \ref{defn1015} to obtain:
	\[
	  \hat\phi_{\frak u,\frak p} :  \bigcup_v  {\rm Supp}(E_{\frak p_{v}}) \to \Sigma_{\frak u}.
	\]
	We use $\hat\phi_{\frak u,\frak p}$ to transport $E_{\frak p, v}$ to the point $\frak u$ and 
	obtain $E_{\frak p, v}(\frak u)$. We may choose $\Xi_{\frak p}$ small enough such that 
	the linearization of the non-linear Cauchy-Riemann equation is transversal to
	\begin{equation}\label{form1018}
		E_{\frak p, v}(\frak u) = \bigoplus_{v \in C^0_{\rm int}(\check R_{\frak p})} E_{\frak p,v}(\frak u)
	\end{equation}
	at $\frak u$ and \eqref{form1018} satisfies the mapping transversality property at $\frak u$, 
	for any $\frak u \in \mathcal K(\frak p)$.

	Now we take a finite set $\frak P(k+1,\beta)  \subset \mathcal M_{k+1}^{\rm RGW}(L;\beta)$ such that
	\begin{equation}\label{form1019}
		\bigcup_{\frak p \in \frak P(k+1,\beta)} {\rm Int}\, \mathcal K(\frak p) =
		\mathcal M_{k+1}^{\rm RGW}(L;\beta).
	\end{equation}
	We define
	\begin{equation}
		\aligned
		\mathscr F_{k,\beta}(\frak u) &= \{[\frak u,\frak p,\phi_{\frak u,\frak p}] \mid \frak p \in 
		\frak P(k+1,\beta),\,\,
		\frak u \in \mathcal K(\frak p)\},\\
		\mathscr F^{\circ}_{k,\beta}(\frak u) &= \{[\frak u,\frak p,\phi_{\frak u,\frak p}] \mid \frak p 
		\in \frak P(k+1,\beta),\,\,
		\frak u \in  {\rm Int}\, \mathcal (K(\frak p))\}.
		\endaligned
	\end{equation}
	Condition \ref{conds1023} is immediate from the definition. Condition \ref{conds1025} 
	is void in this case. We can perturb $E_{\frak p,v}$ arbitrarily small in $C^2$ topology 
	so that Condition \ref{conds1027} holds.  (See \cite[Lemma 9.9]{fooo:const2}.) 
	Condition \ref{conds30} follows from the 
	choice of  $E_{\frak p,v}$ and \eqref{form1019}. We can take $E_{\frak p,v}$ to be invariant
	under the action of ${\rm Aut}(\frak p)$ and hence Condition \ref{conds31} holds. 
	Thus we complete the first step of the induction.

	Next, we suppose that the required objects in {\rm (OBI)}-{\rm (OBIII)}
	are defined for $(k',\beta')$ with $(k',\beta') < (k,\beta)$.
	We use Condition \ref{conds1025} to define $\mathscr F'_{k,\beta}(\frak u)$, 
	$\mathscr F^{\prime \circ}_{k,\beta}(\frak u)$ for $\frak u \in \partial  \mathcal M_{k+1}(L;\beta)$.
	
	\vspace{3pt}
	{\bf Step 2:}{\it\, The set:
	\[
	  \bigcup_{\frak u \in \partial  \mathcal M_{k+1}(L;\beta)} \mathscr F^{\prime \circ}_{k,\beta}(\frak u)
	\]
	is an open subset of $\Pi^{-1}(\partial  \mathcal M_{k+1}(L;\beta))$, where $\Pi$ is the map in \eqref{Pi}.}
	\vspace{3pt}
	
	Let ${\bf p}_j \in \mathscr{QC}(k,\beta)$ with $\frak u_j = \Pi({\bf p}_j) \in  \partial  
	\mathcal M^{\rm RGW}_{k+1}(L;\beta)$.
	Suppose also $\lim_{j\to \infty} {\bf p}_j = {\bf p} \in  \mathscr F^{\prime \circ}_{k,\beta}(\frak u)$
	with $ \frak u=\lim_{j\to \infty} \frak u_j$. We need to show that
	${\bf p}_j \in \mathscr F^{\prime \circ}_{k,\beta}(\frak u_j)$ for sufficiently large values of $j$.
	Let the combinatorial type of $\frak u$ be given by a very detailed DD-ribbon tree $\check R$ which
	belongs to the disc splitting tree $\mathcal S$. We may assume that 
	the very detailed tree associated to $\frak u_j$ is independent of $j$ because there are finitely many
	very detailed tree obtained by level shrinking, level $0$ edge shrinking and fine edge shrinking.
	We denote this very detailed DD-ribbon tree by $\check R'$. We also assume that $\check R'$
	belongs to the disc splitting tree $\mathcal S'$. Since $\mathcal S'$ is obtained from $\mathcal S$
	by shrinking level $0$ edges, there is a standard shrinking map $\pi:\mathcal S\to \mathcal S'$.
	Note that $\mathcal S$ and $\mathcal S'$ have at least two interior vertices.
	
        By Condition \ref{conds1025}, there exists $\frak w \in C^0_{\rm int}(\mathcal S)$ 
        and ${\bf p}_{\frak w} \in \mathscr F^{\circ}_{k,\beta}(\frak u_{\frak w})$
        such that 
        $$
        {\bf p} = \mathscr I_{\frak w} ({\bf p}_{\frak w}).
        $$
        Let ${\frak w}' = \pi(\frak w)$,
        which is an interior vertex of $\mathcal S'$.
        We also define $ \mathcal S(\frak w'):=\pi^{-1}(\frak w')$, which is a subtree of $\mathcal S$.
        Let ${\frak u}_{ \mathcal S(\frak w')}$ be an object of $\mathcal M^{\rm RGW}_{k_{\frak w'}+1}(L;\beta_{\frak w'})$
        which is obtained from $\frak u$ and $\mathcal S(\frak w')$ 
        in the same way as in the beginning of Subsection \ref{subsub:componentwise}.
        Convergence of $\frak u_j$ to $\frak u$ implies
        $$
        \lim_{j\to \infty} \frak u_{j,{\frak w}'} = {\frak u}_{ \mathcal S(\frak w')}
        $$
        by the definition of the RGW topology.

	Since ${\frak w}$ is a vertex of $ \frak R(\frak w')$, there exists:
        $$
        \mathscr I'_{\frak w} : \mathscr{QC}(k_{\frak w},\beta_{\frak w})(\frak u_{\frak w}) \to 
        \mathscr{QC}(k_{\frak w'},\beta_{\frak w'})( {\frak u}_{ \mathcal S(\frak w')}).
        $$
        We define
        $
        {\bf p}_{\frak w'} = \mathscr I'_{\frak w}({\bf p}_{\frak w}).
        $
        Using the definition of the topology of  $\mathscr{QC}(k,\beta)$
        and of $\mathscr I_{\frak w}$,
        it is easy to see that 
        there exists 
        ${\bf p}_{j,\frak w'} \in \mathscr{QC}(k_{\frak w'},\beta_{\frak w'})(\frak u_{j,{\frak w}'})$
        such that 
        $$
        \lim_{j\to \infty}{\bf p}_{j,\frak w'} = {\bf p}_{\frak w'}
        $$
        in $\mathscr{QC}(k_{\frak w'},\beta_{\frak w'})$
        and 
        $$
        \mathscr I'_{\frak w} ({\bf p}_{j,\frak w'}) = {\bf p}_{j}.
        $$
        Now by induction hypothesis
        $$
        {\bf p}_{j,\frak w'} \in \mathscr F^{\circ}_{k_{\frak w'},\beta_{\frak w'}}(\frak u_{j,{\frak w'}})
        $$
        for sufficiently large $j$.
        Condition \ref{conds1025} implies 
        ${\bf p}_j \in \mathscr F^{\prime \circ}_{k,\beta}(\frak u_j)$ for large $j$, as required.
	
	\vspace{3pt}
	{\bf Step 3:}{\,\it The restriction of $\Pi$ to
        $$
        \bigcup_{\frak u \in \partial  \mathcal M_{k+1}(L;\beta)} \mathscr F^{\prime}_{k,\beta}(\frak u)
        $$ 
        is a proper map to $\partial  \mathcal M_{k+1}^{\rm RGW}(L;\beta)$.}
	\vspace{3pt}
	
        Let ${\bf p}_j \in \mathscr F'_{k,\beta}(\frak u_j)$ with $\frak u_j  
        \in  \partial  \mathcal M^{\rm RGW}_{k+1}(L;\beta)$.
        Suppose $\lim_{j\to \infty} \frak u_j = \frak u \in  \partial  \mathcal M^{\rm RGW}_{k+1}(L;\beta)$.
        It suffices to find a subsequence of ${\bf p}_j$ which 
        converges to an element of  $\mathscr F'_{k,\beta}(\frak u)$.
	Let the combinatorial type of $\frak u$ be given by a very detailed DD-ribbon tree $\check R$ which
	belongs to the disc splitting tree $\mathcal S$. After passing to a subsequence, we may assume that 
	the very detailed tree associated to $\frak u_j$ is independent of $j$. We denote this tree by $\check R'$
	which belongs to the disc splitting tree $\mathcal S'$. Let $\pi:\mathcal S\to \mathcal S'$ be defined 
	as in the previous step.

        By Condition \ref{conds1025} and after passing to a subsequence, we may assume that there exist
	$\frak w \in C^{0}_{\rm int}(\mathcal S')$ and ${\bf p}_{j,\frak w} \in\mathscr F'_{k_{\frak w},
	\beta_{\frak w}}(\frak u_{j,\frak w})$ such that 
        $$
        \mathscr I_{\frak w}({\bf p}_{j,\frak w}) = {\bf p}_j.
        $$
	Let $\mathcal S_{\frak w}$ be the subtree $\pi^{-1}(\frak w)$ of $\mathcal S$.
        We obtain $\frak u_{\frak R_{\frak w}}$ from $\frak u$ in the same way as in 
        the beginning of Subsection \ref{subsub:componentwise}. Convergence of $\frak u_j$ to $\frak u$ implies
        that
	\[
          \lim_{j \to \infty} \frak u_{j,\frak w} = \frak u_{\frak R_{\frak w}}
	\]
        by the definition of the RGW topology. Now we use the induction hypothesis to find a subsequence 
        such that 
        ${\bf p}_{j,\frak w} \in 
        \mathscr F'_{k_{\frak w},\beta_{\frak w}}(\frak u_{j,\frak w})$
        converges to 
        $
        {\bf p}_{\frak w} \in \mathscr F'_{k_{\frak w},\beta_{\frak w}}(\frak u_{\frak R_{\frak w}}).
        $
        Therefore
        $$
        \lim_{j\to \infty} {\bf p}_{j} = 
        \mathscr I_{\frak w}({\bf p}_{\frak w}) \in \mathscr F'_{k,\beta}(\frak u).
        $$
	This completes the proof of this step.

	\vspace{3pt}
	{\bf Step 4:\,}({\it Extension to a neighborhood of the boundary})
        In the previous steps, we defined $\mathscr F^{\prime}_{k,\beta}$ and $\mathscr F^{\prime\circ}_{k,\beta}$ 
        on the boundary. Next, we extend these quasi component choice maps to a neighborhood of the boundary.
	We fix $\rho > 0$ sufficiently small such that 
        if $d(\frak u,\frak u') < 5\rho$,
        $\frak u' \in \partial \mathcal M_{k+1}^{\rm RGW}(L;\beta)$
        and $[\frak u',\frak p,\phi] \in \mathscr F'_{k,\beta}(\frak u')$,
        then $[\frak u,\frak p,\psi_{\frak u,\frak u'}\circ \phi]$ is a quasi component.
        Then for $\frak u\in \mathcal M_{k+1}^{\rm RGW}(L;\beta)$ with $d(\frak u,\partial \mathcal M_{k+1}^{\rm RGW}(L;\beta)) < 2\rho$,
        we define
        \begin{enumerate}
        \item $\mathscr F^{\prime}_{k,\beta}(\frak u)$ 
	        is the set of $[\frak u,\frak p,\phi]$ such that there are
	        $\frak u' \in \partial \mathcal M^{\rm RGW}_{k+1}(L;\beta)$
	        and $[\frak u',\frak p,\phi'] \in \mathscr F^{\prime}_{k,\beta}(\frak u')$ with the following properties:
		\begin{enumerate}
			\item $d(\frak u,\frak u') \le 2d(\frak u,\partial \mathcal M^{\rm RGW}_{k+1}(L;\beta))\le\rho$.
			\item $(\frak u,\frak p,\phi)$ is equivalent to $(\frak u,\frak p,\psi_{\frak u,\frak u'}\circ \phi')$.
		\end{enumerate}
        \item
        $\mathscr F^{\prime\circ}_{k,\beta}(\frak u)$ is the set of $[\frak u,\frak p,\phi]$
        such that there are $\frak u' \in \partial \mathcal M^{\rm RGW}_{k+1}(L;\beta)$
        and $[\frak u',\frak p,\phi'] \in \mathscr F^{\prime\circ}_{k,\beta}(\frak u')$ with the following properties:
        \begin{enumerate}
   	         \item $\frak u = \frak u'$ or
	         $d(\frak u,\frak u') < 2d(\frak u,\partial \mathcal M^{\rm RGW}_{k+1}(L;\beta)) <  \rho$.
	     \item $(\frak u,\frak p,\phi)$ is equivalent to $(\frak u,\frak p,\psi_{\frak u,\frak u'}\circ \phi')$.
        \end{enumerate}
        \end{enumerate}

        We put 
        $$
        \mathscr F^{\prime\circ}_{k,\beta} 
        = \bigcup_{\frak u} \mathscr F^{\prime\circ}_{k,\beta}(\frak u),
        \qquad
        \mathscr F^{\prime}_{k,\beta} 
        = \bigcup_{\frak u} \mathscr F^{\prime}_{k,\beta}(\frak u).
        $$
        It follows easily from Step 2 that $\mathscr F^{\prime\circ}_{k,\beta}$ is open.
        It follows easily from Step 3 that the restriction of $\Pi$ to $\mathscr F^{\prime\circ}_{k,\beta}$ is proper.

        Items (b) in the above definition implies that
        $\mathscr F^{\prime}_{k,\beta}(\frak u)$ and $\mathscr F^{\prime\circ}_{k,\beta}(\frak u)$ coincide with 
        the previously defined spaces for $\frak u \in \partial \mathcal M^{\rm RGW}_{k+1}(L;\beta)$.
        Therefore, Condition \ref{conds1025} holds.
        Thus we have constructed objects for $(k,\beta)$ which satisfy 
        all the required conditions except Condition \ref{conds30}.
        By taking a smaller value of $\rho$ if necessary,  we can guarantee that Condition \ref{conds30} is also satisfied.

	\vspace{3pt}
	{\bf Step 5:\,}({\it Extension to the rest of the moduli space $\mathcal M_{k+1}^{\rm RGW}(L;\beta)$})
        The rest of the proof is similar to Step 1. 
        For each $\frak p \in {\rm Int}(\mathcal M^{\rm RGW}_{k+1}(L;\beta))$
        we choose $\Xi_{\frak p}$ ,  $E_{\frak p,v}$ and $\mathcal K(\frak p)$
        as in the first step of the induction.
        We take a finite set $\frak P(k+1,\beta)$ such that
        \begin{equation}\label{form1019rev}
        \bigcup_{\frak p \in \frak P(k+1,\beta)} {\rm Int}\, \mathcal K(\frak p) 
        = \mathcal M^{\rm RGW}_{k+1}(L;\beta)
        \setminus B_{\rho}(\partial \mathcal M^{\rm RGW}_{k+1}(L;\beta)).
        \end{equation}
        is satisfied instead of (\ref{form1019}).
        Now we define
        \begin{equation}
        \aligned
        \mathscr F_{k,\beta}(\frak u) &= 
        \mathscr F'_{k,\beta}(\frak u)  \cup \{[\frak u,\frak p,\phi_{\frak u,\frak p}] \mid \frak p \in \frak P(k,\beta),\,\,
        \frak u \in \mathcal K(\frak p)\}\\
        \mathscr F^{\circ}_{k,\beta}(\frak u) &= 
        \mathscr F^{\prime\circ}_{k,\beta}(\frak u)  \cup  
        \{[\frak u,\frak p,\phi_{\frak u,\frak p}]\mid \frak p \in \frak P(k,\beta),\,\,
        \frak u \in  {\rm Int}\, \mathcal K(\frak p)\}.
        \endaligned
        \end{equation}
        They satisfy all the required conditions including Condition \ref{conds30}.
        We thus have completed the inductive step.
\end{proof}

We verified Proposition \ref{lem685} and hence Theorem \ref{lema362rev}. This completes the construction of a system of Kuranishi structures on $\mathcal M_{K+1}^{\rm RGW}(L;\beta)$ which are compatible at the boundary components and corners. For the proof of Theorem \ref{mainthm-part3}, we also need to construct a system of Kuranishi structures on the moduli space of strips, that are compatible at the boundary components and corners, that is, Propositions \ref{lema362}, \ref{lem364}, \ref{lem365}. The proof in the case of strips is similar to the case of disks and we omit it here.


\section{Floer homology of Lagrangians with Minimal Maslov Number $2$}
\label{sec:forget}

In this section, we relax Condition \ref{cond420} (1)  and consider Lagrangians with minimal Maslov number $2$.

\subsection{Disk potential and the square of the boundary operator}
\label{maslovtwo}

In this subsection, we closely follow the arguments in \cite[Section 16]{ohbook}, except we use 
virtual fundamental classes.
Let $L\subset X \setminus \mathcal D$ be a connected monotone spin Lagrangian submanifold and $p \in L$ be a point.
Suppose $\alpha \in \Pi_2(X,L)$. We consider the fiber product
$$
\mathcal M^{\rm RGW}(L;\alpha;p) = \{p\} \,\,{}_{L}\times_{{\rm ev}_0} \mathcal M^{\rm RGW}_1(L;\alpha)
$$
The virtual dimension of this space is
$$
n + \mu(\alpha) - 2 - n = \mu(\alpha) - 2.
$$
Therefore, in the case that $\mu(\alpha) = 2$, $\mathcal M^{\rm RGW}(L;\alpha;p) $ has virtual dimension $0$. We take a multi-valued perturbation $\{\widehat{\frak s}^n\}$, which is transversal to $0$, and define
\begin{equation}\label{eq:defPO}
\frak{PO}^L_{\alpha,p} = [\mathcal M^{\rm RGW}(L;\alpha;p),\widehat{\frak s}^n]
\in \bbQ.
\end{equation}
(There is a related construction in \cite{T2}.)
\begin{lemma}\label{lem49}	$\frak{PO}^L_{\alpha,p}$ is independent of the choice of $p$, $\widehat{\frak s}^n$ and the other auxiliary choices in the definition of the Kuranishi structure if $n$ is sufficiently large.
	Moreover, if $F : X \to X$ is a symplectic diffeomorphism, which is the identity map in a neighborhood of $\mathcal D$, then 
	$$
	  \frak{PO}^{F(L)}_{F_*(\alpha),F(p)} = \frak{PO}^{L}_{\alpha,p}.
	$$
\end{lemma}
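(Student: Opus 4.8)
The plan is to prove the three independence statements --- of the point $p$, of the multi-valued perturbation $\widehat{\frak s}^n$, and of the auxiliary choices in the Kuranishi structure --- by the standard cobordism argument, and then to treat the symplectic-diffeomorphism invariance by transport of structure. First, for the independence of $p$: pick two points $p_0,p_1\in L$ and a path $\gamma\colon[0,1]\to L$ joining them (here connectedness of $L$ is used). Consider the parametrized moduli space $\bigcup_{t\in[0,1]}\mathcal M^{\rm RGW}(L;\alpha;\gamma(t))$, which is the fiber product of $\mathcal M^{\rm RGW}_1(L;\alpha)$ with the path $\gamma$ over ${\rm ev}_0$; its virtual dimension is $\mu(\alpha)-2+1=1$. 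One puts a Kuranishi structure on this $1$-dimensional space restricting to the chosen ones at $t=0,1$, perturbs compatibly using the existence theorem for multisections, and obtains an oriented $1$-dimensional chain whose boundary is $\mathcal M^{\rm RGW}(L;\alpha;p_1)$ minus $\mathcal M^{\rm RGW}(L;\alpha;p_0)$ (no other boundary strata appear: a disk bubbling off would produce a factor of Maslov index $\ge 2$, forcing the remaining component to have negative virtual dimension, and the RGW-strata of positive level are codimension $2$ and are avoided by the perturbation, exactly as in Corollary \ref{Cor27} and Theorem \ref{prop61111}(3)). Hence the weighted counts agree. The same scheme, now with a homotopy of perturbations or of Kuranishi structures parametrized by $[0,1]$, handles independence of $\widehat{\frak s}^n$ and of the auxiliary data; this is the usual argument for well-definedness of Gromov--Witten-type invariants and I would cite \cite{fooonewbook} for the relevant existence and cobordism results.

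For the second statement, let $F\colon X\to X$ be a symplectic diffeomorphism that is the identity near $\mathcal D$. Then $F$ carries $X\setminus\mathcal D$ to itself and $L$ to $F(L)$, and because $F$ is the identity near $\mathcal D$ it respects the normal data of $\mathcal D$ that enter the RGW compactification. Composition with $F$ therefore gives a homeomorphism
\[
  F_*\colon \mathcal M^{\rm RGW}_1(L;\alpha)\;\longrightarrow\;\mathcal M^{\rm RGW}_1(F(L);F_*\alpha),
\]
intertwining the evaluation maps (${\rm ev}_0\circ F_* = F\circ {\rm ev}_0$), and hence a homeomorphism of the fiber products $\mathcal M^{\rm RGW}(L;\alpha;p)\cong \mathcal M^{\rm RGW}(F(L);F_*\alpha;F(p))$. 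This map is covered by an isomorphism of Kuranishi structures: one may simply define the Kuranishi structure and the obstruction bundle data on the target moduli space as the $F_*$-pushforward of the ones on the source (the obstruction bundle data of Definition \ref{defn684684}, built from local approximations and target parallel transport, are natural under the symplectomorphism since $F$ pulls back the relevant metrics up to a choice one is free to make), and likewise push forward the multisection $\widehat{\frak s}^n$. Since $F$ is a diffeomorphism it preserves orientations of all the data and the weighted count of zeros is unchanged, so
\[
  \frak{PO}^{F(L)}_{F_*\alpha,F(p)} = [\mathcal M^{\rm RGW}(F(L);F_*\alpha;F(p)),F_*\widehat{\frak s}^n] = [\mathcal M^{\rm RGW}(L;\alpha;p),\widehat{\frak s}^n] = \frak{PO}^L_{\alpha,p}.
\]
Combined with the first part --- which says the right-hand side of the displayed chain does not depend on which perturbation or Kuranishi structure one used --- this gives the asserted equality for \emph{any} admissible choices on either side.

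I expect the main obstacle to be purely bookkeeping: making precise that the auxiliary data (almost complex structure, the metrics $g,g'$ near $\mathcal D$, the finite sets $\frak P(k+1,\beta)$, the TSDs, the obstruction bundle data, etc.) can be chosen $F$-equivariantly, or equivalently that pushing forward a given system of such data under $F$ again yields a system satisfying all the conditions of Section \ref{sub:systemconst} (Conditions \ref{conds1023}, \ref{conds1025}, \ref{conds1027}, \ref{conds30}, \ref{conds31}). Once one grants that $F$ is the identity near $\mathcal D$ so that the RGW boundary structure is literally preserved, each condition transports formally, but spelling this out is the only nontrivial point; the cobordism arguments themselves are entirely routine and, consistent with the paper's stated convention, I would only sketch them and refer to \cite[Section 8]{fooobook2}, \cite{fooonewbook} for the details and for the treatment of orientations and signs.
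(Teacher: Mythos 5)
Your proposal is correct and follows essentially the same route as the paper: a one-parameter cobordism argument over a path in $L$ (with the family of almost complex structures/auxiliary data varying alongside), where the only problematic boundary strata—disk splittings—are excluded by monotonicity and the Maslov-index-$2$ constraint, and the positive-level RGW strata are avoided as in Theorem \ref{prop61111}(3); the symplectomorphism invariance is then obtained, exactly as in the paper, by transporting all data under $F$ to get an isomorphism $\mathcal M^{\rm RGW}_1(L;\alpha;J)\cong\mathcal M^{\rm RGW}_1(F(L);F_*\alpha;F_*J)$ of spaces with Kuranishi structures and invoking the first part. The only cosmetic difference is that the paper runs the point and the almost complex structure in a single parametrized family, whereas you separate the two cobordisms, which changes nothing of substance.
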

\begin{proof}
        Let $p,q \in L$ and $\gamma : [0,1] \to X$ be a path joining $p$ to $q$.
        Let $J_0,J_1$ be two almost complex structures and 
        $\mathcal J = \{J_t \mid t \in [0,1] \}$ be a 
        family of almost complex structures\footnote{We always use 
        almost complex structures which are constructed as in \cite[Subsection 3.2]{part1:top}. 
        As we mentioned in \cite[Remark 3.9]{part1:top}, the space of such complex structures has trivial homotopy groups.}
         joining $J_0$ to $J_1$ parametrized by $[0,1]$.
        If we use the almost complex structure $J_t$ to define the moduli space 
        $\mathcal M^{\rm RGW}_1(L;\alpha)$, then the 
        resulting moduli space is denoted by $\mathcal M^{\rm RGW}_1(L;\alpha;J_t)$. We consider
	\begin{equation} \label{point-moduli-space-family}
        \bigcup_{t \in [0,1]} 
        \left(\{\gamma(t)\} \,\,{}_{L}\times_{{\rm ev}_0} \mathcal M^{\rm RGW}_1(L;\alpha;J_t)
        \right) \times \{t\},
        \end{equation}
        which we denote by $\mathcal M^{\rm RGW}(L;\alpha;\mathcal J;\gamma)$.
	As a straightforward generalization of Proposition \ref{prop356}, 
        one can show that this space has a 1-dimensional Kuranishi structure with boundary.
        In fact, we can fix a Kuranishi structure so that its restriction to $t =0,1$ 
        agrees with the ones that are used to define $\frak{PO}^L_{\alpha,p}$ and $\frak{PO}^L_{\alpha,q}$,
        respectively.
        We next define a  multi-valued perturbation $\{\widehat{\frak t}^n\}$ transversal to $0$.
        We  require that it agrees with the ones that are used to 
        define $\frak{PO}^L_{\beta,p}$ and $\frak{PO}^L_{\beta,q}$
        at $t=0,1$.

      We claim that the normalized boundary of $\mathcal M^{\rm RGW}(L;\alpha;\mathcal J;\gamma)$ is given by the 
      union of $\mathcal M^{\rm RGW}(L;\alpha;J_t;\gamma(t))$ for $t=0$ and $1$. In fact, we can follow the proof of  Theorem \ref{mainthm-part3} given in Sections \ref{sec:minimas3} 
      and \ref{sub:systemconst} of this paper to show that the other possibility for boundary elements is given by the elements of the fiber product:
      \begin{equation} \label{bdry-element-2}
        \mathcal M^{\rm RGW}_{2}(L;\alpha_1;\mathcal J;\gamma){}_{{\rm ev}_1}\hat\times_{{\rm ev}_0} 
	\mathcal M^{\rm RGW}_{1}(L;\alpha_2;\mathcal J)
      \end{equation}
      Here $\alpha=\alpha_1\#\alpha_2$. The first factor in \eqref{bdry-element-2} is defined by replacing 
      the moduli space $\mathcal M^{\rm RGW}_1(L;\alpha;J_t)$ in \eqref{point-moduli-space-family} with 
      $\mathcal M^{\rm RGW}_2(L;\alpha_1;J_t)$. In this definition, we use the evaluation at the $0$-th boundary marked point of 
      the elements of $\mathcal M^{\rm RGW}_2(L;\alpha_1;J_t)$. Evaluation at the first boundary marked point determines the map
      ${\rm ev}_1:\mathcal M^{\rm RGW}_{2}(L;\alpha_1;\mathcal J;\gamma) \to L$.
      The second factor in \eqref{bdry-element-2} is the union of the moduli spaces $\mathcal M^{\rm RGW}_1(L;\alpha;J_t)$
      for $t\in [0,1]$.
      If the moduli space \eqref{bdry-element-2} is non-empty, then $\omega(\alpha_1)$ and $\omega(\alpha_2)$ are both 
      positive. This implies that $\mu(\alpha_i)\geq 2$ because $L$ is a orientable monotone Lagrangian. However, this 
      contradicts the assumption that $\mu(\alpha)=\mu(\alpha_1) + \mu(\alpha_2)$ is equal to $2$.
      
	The first half of Lemma \ref{lem49} follows from the description of the normalized boundary of  
	$\mathcal M^{\rm RGW}(L;\alpha;\mathcal J;\gamma)$ and Corollary \ref{Cor27} (3).
	In order to verify the second part, let $J$ be an almost complex structure on $X$, and push it forward using $F$ 
	to obtain the complex structure $F_*J$. Clearly, we have the following isomorphism of spaces with Kuranishi structures:
	\[
	  \mathcal M^{\rm RGW}_1(L;\alpha;J) \cong \mathcal M^{\rm RGW}_1(F(L);F_*(\alpha);F_*(J)).
	\]
	The second half of the lemma follows from this isomorphism.
\end{proof}

\begin{definition}\label{defn41414}
	Let  $\rho : H_1(L) \to \bbQ_*$ be a group homomorphism. 
	We define a {\it potential function} $\frak{PO}$ as follows:
	\[
	  \frak{PO}_L(\rho)  = \sum_{\beta} \rho(\partial \beta) \frak{PO}^L_{\beta,p} \in \bbQ.
	\]
	Here the sum is taken over all $\beta \in \Pi_2(X,L)$ with $\mu(\beta) = 2$, and $\rho(\partial \beta)$ denotes the image of the boundary of $\beta$ in $H_1(L)$ with respect to $\rho$.
Lemma \ref{lem49} implies that this is an invariant of $L$.
\end{definition}
We now generalize Theorem \ref{lem48} as follows.
\begin{theorem}\label{lem413}
	Suppose Condition \ref{cond420} {\rm (2)} holds. 
	Then the boundary operator 
	\[
	  \partial : CF(L_1,L_0;\bbQ) \to CF(L_1,L_0;\bbQ)
	\]
	defined by \eqref{form43} satisfies the following identity:
	\begin{equation}
		\partial \circ \partial = \frak{PO}_{L_1}(1) - \frak{PO}_{L_0}(1).
	\end{equation}
\end{theorem}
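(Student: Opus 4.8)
The plan is to imitate the proof of Theorem \ref{lem48} (carried out in Subsection \ref{subsub:Floer2}), but now allowing the Maslov index $2$ disk bubbles that were excluded there. As before, for each $p,q \in L_0 \cap_o L_1$ and each $\beta$ with $\dim \mathcal M^{\rm RGW}_{0,0}(L_1,L_0;p,q;\beta) = 1$ we have $[\partial \mathcal M^{\rm RGW}_{0,0}(L_1,L_0;p,q;\beta),\widehat{\frak s}^n] = 0$ by Corollary \ref{Cor27} (3), and we must identify the contributions of the three boundary types in Theorem \ref{theorem30}. The type (1) contribution is handled exactly as in Subsection \ref{subsub:Floer2}: the map $\Pi$ of Proposition \ref{lem364} is an isomorphism near the zero set of $\widehat{\frak s}^n$ (using Theorem \ref{prop61111} (3) to stay away from the codimension $2$ stratum), so summing over $r$, $\beta_1$, $\beta_2$ produces exactly the coefficient of $[q]$ in $\partial\circ\partial([p])$.

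The new point is that the type (2) and type (3) contributions no longer vanish. Consider the type (3) boundary $\mathcal M^{\rm RGW}_{0,1}(L_1,L_0;p,q;\beta')\,\hat\times_{L_0}\,\mathcal M^{\rm RGW}_1(L_0;\alpha)$ with $\beta'\#\alpha=\beta$, $\beta'\cdot[\mathcal D]=\alpha\cdot[\mathcal D]=0$. Monotonicity plus the minimal Maslov number $\ge 2$ hypothesis on $L_0$ forces $\mu(\alpha)=2$ (any bubble has $\mu\ge 2$ and $\mu(\beta')+\mu(\alpha)=\mu(\beta)=2$, so $\mu(\beta')=0$). Then $\dim \mathcal M^{\rm RGW}_{0,1}(L_1,L_0;p,q;\beta')=0$ and the fiber product over $L_0$ is exactly $\mathcal M^{\rm RGW}(L_0;\alpha;{\rm pt})$-weighted: using the forgetful map compatibility of the Kuranishi structures (Section \ref{sec:forget}) the fiber product of perturbed virtual chains factors as $[\mathcal M^{\rm RGW}_{0,0}(L_1,L_0;p,q;\beta'),\widehat{\frak s}^n]\cdot \frak{PO}^{L_0}_{\alpha,q}$. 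I would then observe that $\beta'$ ranges over classes with $\mu(\beta')=0$ and $p,q$ fixed, so after summing over $\alpha$ with $\mu(\alpha)=2$ the total type (3) contribution to the coefficient of $[q]$ in $\partial\circ\partial([p])$ is $\langle\partial p, q\rangle$ times $\sum_\alpha \frak{PO}^{L_0}_{\alpha,q} = \frak{PO}_{L_0}(1)$ (here $\rho=1$ so $\rho(\partial\alpha)=1$, and we use Lemma \ref{lem49} to drop the dependence on the base point $q$). Similarly the type (2) boundary contributes $+\frak{PO}_{L_1}(1)\cdot\langle\partial p,q\rangle$; the sign is opposite to the type (3) case because the Maslov $2$ disk bubbles off on $L_1$, i.e.\ at the \emph{incoming} end of the strip, which is the standard sign discrepancy in Floer theory (cf.\ \cite[Section 16]{ohbook}). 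Collecting the three contributions and using $[\partial\mathcal M,\widehat{\frak s}^n]=0$ gives, as operators, $0 = \partial\circ\partial - \frak{PO}_{L_1}(1)\cdot\mathrm{id} + \frak{PO}_{L_0}(1)\cdot\mathrm{id}$, which is the claimed identity.

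The main obstacle is the compatibility of the Kuranishi structures with the forgetful map that collapses the Maslov $2$ disk, which is what allows the fiber product of virtual chains over $L_i$ to be evaluated as the product of the strip count with $\frak{PO}^{L_i}_{\alpha,\cdot}$; this is precisely the nontrivial issue flagged in the introduction and developed in Section \ref{sec:forget}, so here I would simply cite those results. A secondary but genuine point is the bookkeeping of orientations and signs: one must check that the two disk-bubble strata enter $\partial\circ\partial$ with opposite signs and with the correct overall sign relative to $\frak{PO}$, and that the fiber product over $L_i$ of an oriented $0$-dimensional chain with (the Kuranishi structure identified with) $L_i$ carried by ${\rm ev}^\partial_0$ recovers the strip count without extra sign; following the remark in the introduction and \cite[Section 8]{fooobook2}, I would treat this as routine and not spell it out. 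Finally, one should remark that only finitely many $(\beta',\alpha)$ contribute, by Gromov compactness and the energy bound from Lemma \ref{lem460}, so all sums are finite and the identity makes sense term by term.
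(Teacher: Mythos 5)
Your overall skeleton matches the paper's: reduce to the vanishing of $[\partial \mathcal M^{\rm RGW}_{0,0}(L_1,L_0;p,q;\beta),\widehat{\frak s}^n]$ for the one-dimensional strip moduli spaces, treat the type (1) strata exactly as in Theorem \ref{lem48}, and invoke the forgetful-map compatibility of Section \ref{sec:forget} for the disk-bubble strata. However, the way you evaluate the type (2)/(3) contributions contains a genuine error. You claim the perturbed count on $\mathcal M^{\rm RGW}_{0,1}(L_1,L_0;p,q;\beta')\,\hat\times_{L_0}\,\mathcal M^{\rm RGW}_1(L_0;\alpha)$ factors as $[\mathcal M^{\rm RGW}_{0,0}(L_1,L_0;p,q;\beta'),\widehat{\frak s}^n]\cdot\frak{PO}^{L_0}_{\alpha,q}$ and then identify the strip factor with $\langle\partial p,q\rangle$. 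But in this stratum $\mu(\beta')=0$, so $\mathcal M^{\rm RGW}_{0,0}(L_1,L_0;p,q;\beta')$ has virtual dimension $-1$: its perturbed count is not $\langle\partial p,q\rangle$ (which counts $\mu=1$ strips) --- the perturbation simply never vanishes there. Your stated total, $\langle\partial p,q\rangle\cdot\frak{PO}_{L_0}(1)$, would lead to the identity $\partial\circ\partial=(\frak{PO}_{L_1}(1)-\frak{PO}_{L_0}(1))\,\partial$, which is not the theorem; and indeed your final displayed operator identity silently replaces $\partial$ by $\mathrm{id}$, so the argument is internally inconsistent.

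The correct bookkeeping, and the actual role of the forgetful map, is as follows. Split the type (3) strata into two cases. If $p\ne q$ or $\beta'\ne 0$, the marked strip moduli space $\mathcal M^{\rm RGW}_{0,1}(L_1,L_0;p,q;\beta')$ has dimension $\mu(\beta')=0$, so a generic perturbation could a priori have isolated zeros there; the point of Theorem \ref{comp-forg} and Proposition \ref{prop12866} is that the perturbation is pulled back under the forgetful map from $\mathcal M^{\rm RGW}_{0,0}(L_1,L_0;p,q;\beta')$, which has dimension $\mu(\beta')-1<0$ and hence carries a nowhere-vanishing perturbation by Theorem \ref{prop61111} (3); therefore this case contributes nothing, and no ``factorization of counts'' is needed or used. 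The entire nonzero contribution comes from the degenerate case $p=q$, $\beta'=0$, where the strip component is constant and the stratum reduces to $\{p\}\times_{L_0}\mathcal M^{\rm RGW}_1(L_0;\alpha)$ with $\mu(\alpha)=2$; its count is $\sum_\alpha\frak{PO}^{L_0}_{\alpha,p}=\frak{PO}_{L_0}(1)$, contributing only to the coefficient of $[p]$, i.e.\ a multiple of the identity (and similarly $-\frak{PO}_{L_1}(1)$ from the type (2) strata). Your proposal never isolates this constant-strip case --- your factorization formula is ill-defined precisely there --- so the term actually producing $\frak{PO}_{L_1}(1)-\frak{PO}_{L_0}(1)$ is not accounted for. (A smaller point: monotonicity only forces $\mu(\alpha)\ge 2$; classes with $\mu(\alpha)\ge 4$ must be discarded by the same negative-dimension argument, not assumed away.)
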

The proof of this theorem in the case that $\mathcal D = \emptyset$, can be found in \cite[Chapter 16]{ohbook}.
\begin{proof}
	We follow a similar proof as in Theorem \ref{lem48}. We also use the same notations as before.
	\par
	First, in this subsection, we prove the theorem assuming the existence of Kuranishi structures 
	and multi-valued perturbations compatible with forgetful maps (which are 
	defined in Subsection \ref{subsec:forget}), whose existence are proved as Theorem \ref{comp-forg} and Proposition \ref{prop12866}.
	
	The same argument as in Section \ref{sec:minimas3} shows that \eqref{form460} is responsible for part of the boundary terms in the left hand side of 
	\eqref{form44}. This term gives rise to the 
	coefficient of $[q]$ in $\partial\circ\partial([p])$.
	Next, we study the contribution of the terms of the form
	\begin{equation}\label{form47-b}
		\mathcal M^{\rm RGW}_{0,1}(L_1,L_0;p,q;\beta')\, \hat\times_{L_0}\, \mathcal M^{\rm RGW}_1(L_0;\alpha),
	\end{equation}
	where $\mu(\beta')+\mu(\alpha)=2$. The moduli space $\mathcal M^{\rm RGW}_1(L_0;\alpha)$ is non-empty only if 
	$\omega(\alpha)>0$. Monotonicity of $\alpha$ implies that $\mu(\alpha)\geq 2$. In particular, $\mu(\beta') \le 0$.
	 We have two different possibilities 
	to consider:
	\par\medskip
	\noindent{\bf Case 1}: ($p\ne q$ or $\beta' \ne 0$)
	Consider the forgetful map
	$$
	\frak{fg} : \mathcal M^{\rm RGW}_{0,1}(L_1,L_0;p,q;\beta')\to \mathcal M^{\rm RGW}_{0,0}(L_1,L_0;p,q;\beta').
	$$
	We have
	$$
	\dim \mathcal M^{\rm RGW}_{0,0}(L_1,L_0;p,q;\beta') = \mu(\beta') - 1 < 0.
	$$
	This implies that $\widehat{\frak s}^n$ does not vanish on this space by Theorem \ref{prop61111} (3).
	Therefore, by Theorem \ref{comp-forg} and Proposition \ref{prop12866},
	the multi-valued perturbation $\widehat{\frak s}^n$ neither vanishes on 
	$\mathcal M^{\rm RGW}_{0,1}(L_1,L_0;p,q;\beta')$.
	Consequently, the contribution of \eqref{form47-b} is trivial in this case.
	\par\smallskip
	\noindent{\bf Case 2}: ($p = q$ and $\beta'  = 0$)
	In this case, (\ref{form47-b}) has the form
	$$
	\{p\} \times_{L_0} \mathcal M^{\rm RGW}_1(L_0;\alpha)
	$$
	and $0 \# \alpha = \beta$. Therefore, $\mu(\alpha) = 2$.
	Thus the contribution of the terms as in \ref{form47-b} is equal to
	$$
	\sum_{\alpha}[ \mathcal M^{\rm RGW}_1(L_0;\alpha),\widehat{\frak s}^n] =  \frak{PO}_{L_0}(1).
	$$
Similarly, we can show that the  contribution of the remaining part of the boundary, given 
in Definition \ref{defn239},
is $- \frak{PO}_{L_0}(1)$. (See \cite[Theorem 8.8.10 (2)]{fooobook2} for the sign.)
\end{proof}

\begin{definition}
If  Condition \ref{cond420} (2) is satisfied and 
$\frak{PO}_{L_0}(1) = \frak{PO}_{L_1}(1)$, we define Floer homology by
$$
HF(L_1,L_0;o;X \setminus \mathcal D) \cong \frac{{\rm Ker} \left( \partial : CF(L_1,L_0;\bbQ;o) 
\to CF(L_1,L_0;\bbQ;o)\right)}{{\rm Im} \left( \partial : CF(L_1,L_0;\bbQ;o) 
\to CF(L_1,L_0;\bbQ;o)\right)}.
$$
\end{definition}

\subsection{Forgetful Map of the Boundary Marked Points}
\label{subsec:forget}

In the last subsection, we used compatibility of our Kuranishi structures with the forgetful map of the boundary marked points to show that the boundary elements given by Definition \ref{defn239} do not affect various constructions in the case that our Lagrangians are monotone with minimal Maslov numbers $2$. (See the proof of Case 1 of Theorem \ref{lem413}.) We will describe the compatibility in this subsection. 

We start with the definition of the case that the source curve has no disk bubble. We consider the moduli spaces of pseudo-holomorphic disks $\mathcal M_{k+1}^{\rm reg, d}(\beta;{\bf m})$ and of pseudo-holomorphic strips  $\mathcal M_{k_1,k_0}^{\rm reg}(L_1,L_0;p,q;\beta;{\bf m})$. In the following, we assume that $\beta$ is a non-trivial homology class.

\begin{definition}\label{defn366}
Let $1 \le j \le k$. We define 
\begin{equation}\label{map732}
	\frak{fg}^{\partial}_j : \mathcal M_{k+1}^{\rm reg, d}(\beta;{\bf m})
	\to \mathcal M_{k}^{\rm reg, d}(\beta;{\bf m})
\end{equation}
as follows. Let $[(\Sigma,\vec z,\vec w),u] \in  \mathcal M_{k+1}^{\rm reg, d}(\beta;{\bf m})$. We put $\vec z = (z_0,\dots,z_k)$, $z_i \in \partial \Sigma$. We set $\vec z^{\,\prime} = (z_0,\dots,z_{j-1},z_{j+1},\dots,z_k)$. Then
$$
\frak{fg}^{\partial}_j ([(\Sigma,\vec z,\vec w),u])
= [(\Sigma,\vec z^{\,\prime},\vec w),u].
$$
Let $1 \le j \le k_0$ (resp. $1 \le j \le k_1$). We define
\begin{equation}\label{map733}
\aligned
\frak{fg}^{\partial}_{0,j} : \mathcal M_{k_1,k_0}^{\rm reg}(L_1,L_0;p,q;\beta;{\bf m})
\to \mathcal M_{k_1,k_0-1}^{\rm reg}(L_1,L_0;p,q;\beta;{\bf m})
\endaligned
\end{equation}
(resp.
\begin{equation}\label{map734}
\aligned
\frak{fg}^{\partial}_{1,j} : \mathcal M_{k_1,k_0}^{\rm reg}(L_1,L_0;p,q;\beta;{\bf m}) 
\to \mathcal M_{k_1-1,k_0}^{\rm reg}(L_1,L_0;p,q;\beta;{\bf m}))
\endaligned
\end{equation}
in a similar way by forgetting $z_{0,j}$ (resp. $z_{1,j}$).
\end{definition}

\begin{remark}\label{rem367}
	We do not consider the forgetful map for the $0$-th marked point
	of the elements of $\mathcal M_{k+1}^{\rm reg, d}(\beta;{\bf m})$ because we do not need it in the present paper.
\end{remark}

Note that:
$$
\mathcal M_{k+1}^{\rm reg, d}(\beta;\emptyset) \subset \mathcal M_{k+1}^{\rm RGW}(\beta),
$$
and 
$$
\mathcal M_{k_1,k_0}^{\rm reg}(L_1,L_0;p,q;\beta;\emptyset) \subset \mathcal M^{\rm RGW}_{k_1,k_0}(L_1,L_0;p,q;\beta).
$$

\begin{lemma}\label{forgetful-bdry-RGW}
	The map \eqref{map732} in the case ${\bf m} = \emptyset$ can be extended to a continuous map:
	\begin{equation}\label{map732exted}
		\frak{fg}^{\partial}_j : \mathcal M_{k+1}^{\rm RGW}(\beta)\to \mathcal M_{k}^{\rm RGW}(\beta).
	\end{equation}
	Moreover, \eqref{map733} and \eqref{map734} for ${\bf m} = \emptyset$ can be extended to 
	\eqref{map733exted} and \eqref{map734exted} below, respectively:
	\begin{equation}\label{map733exted}
		\frak{fg}^{\partial}_{0,j} : \mathcal M_{k_1,k_0}^{\rm reg}(L_1,L_0;p,q;\beta)\to 
		\mathcal M_{k_1,k_0-1}^{\rm reg}	(L_1,L_0;p,q;\beta),
	\end{equation}
	\begin{equation}\label{map734exted}
		\frak{fg}^{\partial}_{1,j} : \mathcal M_{k_1,k_0}^{\rm reg}(L_1,L_0;p,q;\beta)\to 
		\mathcal M_{k_1-1,k_0}^{\rm reg}(L_1,L_0;p,q;\beta).
	\end{equation}
\end{lemma}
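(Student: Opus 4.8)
The plan is to extend the forgetful maps from the open strata (where source curves have a single disk component and no sphere bubbles) to the whole RGW compactification by working combinatorially with the ribbon-tree descriptions and then checking continuity stratum by stratum. First I would recall from \cite{part1:top} that $\mathcal M_{k+1}^{\rm RGW}(\beta)$ is stratified by DD-ribbon trees and $\mathcal M^{\rm RGW}_{k_1,k_0}(L_1,L_0;p,q;\beta)$ by SD-ribbon trees; on each stratum ${\mathcal M}^0(\mathcal R)$ the geometric objects are collections of pseudo-holomorphic maps on irreducible components glued along nodes, together with level data. Forgetting the $j$-th boundary marked point $z_j$ is a purely local operation: on the irreducible component $\Sigma_{\frak u,v}$ carrying $z_j$, one deletes $z_j$ and, if this makes that component unstable (which for a disk component with a non-constant map it cannot, since the map provides stability, but for a ghost disk component it may), one contracts it in the usual Deligne--Mumford fashion. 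Because we assumed $\beta \ne 0$ (Remark: ``we assume that $\beta$ is a non-trivial homology class''), at least one disk component carries a non-constant map, so the notion of the $0$-th marked point and the overall stability of the stabilized curve is unproblematic; this is exactly why we also avoid forgetting the $0$-th marked point (Remark \ref{rem367}).

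The key steps, in order, would be: (1) define $\frak{fg}^{\partial}_j$ on each stratum ${\mathcal M}^0(\mathcal R)$ of $\mathcal M^{\rm RGW}_{k+1}(L;\beta)$ by deleting $z_j$ from the appropriate component, contracting any resulting unstable ghost components, and keeping all sphere-bubble data, multiplicity functions, level functions and section data unchanged; (2) check that this induces a well-defined map on the quotients (i.e. it is equivariant for the automorphism groups ${\rm Aut}(\mathcal R)$ and compatible with the $\bbC_*^n$-actions defining ${\mathcal M}^0(\mathcal R)$ from $\widetilde{\mathcal M}^0(\mathcal R)$), so the target combinatorial type $\mathcal R'$ is the tree obtained from $\mathcal R$ by removing the $j$-th exterior edge and stabilizing; (3) verify continuity across strata, i.e. if a sequence $\frak u_i \to \frak u$ in the RGW topology then $\frak{fg}^{\partial}_j(\frak u_i) \to \frak{fg}^{\partial}_j(\frak u)$ — this requires unwinding the definition of the RGW topology from \cite[Subsection 4.x]{part1:top} and observing that the gluing parameters $\sigma_e$, $\rho_e$, the rescaled maps and the section data are all unaffected by deleting a boundary marked point, while the only new subtlety, collision of $z_j$ with a node or with another marked point in the limit, is resolved by the standard bubbling-off picture, which in the RGW setting produces a ghost disk that is then contracted; (4) do the analogous construction for the strip moduli spaces, where forgetting $z_{0,j}$ or $z_{1,j}$ is the same local deletion-and-contraction on the relevant boundary-marked disk or strip component, using the $R_0$/$R_1$ decomposition of the SD-ribbon tree.

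The main obstacle I expect is Step (3), specifically handling the limit in which the forgotten marked point $z_j$ runs into a boundary nodal point (or into the $0$-th marked point) of the limit curve. In that situation the naive ``delete $z_j$'' map is discontinuous on the nose, and one must show that after the required contraction of the unstable ghost component the resulting map agrees with the limit of the images — this is the RGW analogue of the classical statement that forgetting a marked point is a continuous (indeed smooth) morphism of Deligne--Mumford-type moduli spaces, but here it must be checked against the more elaborate RGW topology with its levels, rescalings and section data. I would handle this by reducing, via the local structure of the RGW topology near a boundary node, to the ordinary moduli space of stable marked disks (the forgetful map there being known to be continuous, see \cite{part1:top} and the references therein), and then noting that the extra RGW data (sphere bubbles, sections, multiplicities, level functions) sit on components disjoint from the one where the collision happens and are carried along unchanged. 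The remaining items --- equivariance, compatibility with the stratification $\{(\cdot)^{(n)}\}$ of Proposition \ref{prop356}, and the identity $\frak{fg}^{\partial}_j \circ \frak{fg}^{\partial}_{j'} = \frak{fg}^{\partial}_{j'} \circ \frak{fg}^{\partial}_j$ for distinct indices --- are routine and would be stated without detailed proof, in keeping with the paper's convention of sketching arguments that are modifications of standard ones.
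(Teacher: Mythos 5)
Your proposal is correct and follows essentially the same route as the paper: the paper also defines $\frak{fg}^{\partial}_j$ stratum by stratum on each $\mathcal M^0(\mathcal R)$ by deleting the exterior vertex carrying $z_j$ (organizing your "delete, and contract if the ghost disk becomes unstable" prescription into three explicit cases on the DD-subtree at that vertex, the contraction case being exactly your Case of a ghost disk left with two special points), and it simply asserts continuity from the definition of the RGW topology, which you elaborate slightly more. The only point treated differently is minor: the paper notes that a constant root disk cannot meet $\mathcal D$ and hence carries no inside vertices, which pins down the combinatorics in its Case 1, while your stability bookkeeping makes the procedure well-defined without invoking this explicitly.
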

\begin{proof}
	We will construct (\ref{map732exted}) in detail. 
	The construction of (\ref{map733exted}) and (\ref{map734exted}) is similar.
	Let $\mathcal R$ be a DD-ribbon tree of type $(\beta;k)$.
	Let $\frak v^j$ be the $j$-th exterior vertex corresponding to the boundary marked point that we will forget.
	There is a unique interior vertex $\frak v$ which is connected to $\frak v^j$. The color of the vertex $\frak v$ is necessarily  d.
	Let $\mathcal S(\frak v)$ be the DD-tree associated to $\frak v$.
	Suppose $\mathcal S(\frak v)$ is of type $(\beta(\frak v);k_{\frak v})$ and its number of levels is $\vert \lambda_{\frak v} \vert$.
	We consider the next three cases separately.
	\par\smallskip
	\noindent {\bf Case 1}: ($\beta(\frak v) \ne 0$.) We consider the root vertex $v$ of $\mathcal S(\frak v)$, which has color d. 
	If $\beta(v) = 0$, then the $v$-part of any element of $\mathcal M^0(\mathcal R)$ is a stable map 
	$((\Sigma_v,\vec z_v,\vec w_v),u_v)$ with $u_v$ being a constant map. 
	Hence $u_v$ does not intersect $\mathcal D$.
	Therefore, $\mathcal S(\frak v)$ does not have an inside vertex.
	It implies that $\beta(\frak v) = \beta(v) = 0$, which is a contradiction. Thus $\beta(v) \ne 0$ and 
	removing the $j$-th external vertex and the edge incident to it gives rise to a DD-tree $\mathcal R'$ 
	of type $(\beta;k-1)$.
	Moreover, forgetting the $j$-th boundary marked point determines
	a map $\mathcal M^0(\mathcal R) \to \mathcal M^0(\mathcal R')$ and we define the extension of 
	$\frak{fg}^{\partial}_j$ to $\mathcal M^0(\mathcal R)$ to be given by this map.
%
	\par\smallskip
	\noindent {\bf Case 2}: ($\beta(\frak v) = 0$, $k_{\frak v} \ge 3$.)
	Using the argument of the previous case, we can show that $\mathcal S(\frak v)$ has no inside vertex. 
	So it has only one interior vertex $v$. 
	Since $v$ is incident to at least 4 edges, after removing $\frak v^j$ and 
	the edge incident to it, $\mathcal S(\frak v)$ is still stable. 
	The rest of the construction is similar to Case 1.
	\par\smallskip
	\noindent {\bf Case 3}: ($\beta(\frak v) = 0$, $k_{\frak v} = 2$.)
	As in Case 2, we can conclude that $\mathcal S(\frak v)$ has only one interior vertex denoted by $v$ and  
	the stable map associated to $v$ is a constant map.
	In addition to $\frak v^j$, there are two other vertices $\frak v'$ and $\frak v''$ of $\mathcal R$ which are connected 
	to $\frak v$.
	We remove $\frak v^j$, $\frak v$ and the edges containing them. Then we connect 
	$\frak v'$ and $\frak v''$ with a new edge to obtain a new DD-ribbon tree $\mathcal R'$.
	(See Figure \ref{FIgurelem368Case3}.)
	By forgetting the factor corresponding to $\frak v$, we also obtain a map 
	$\mathcal M^0(\mathcal R) \to \mathcal M^0(\mathcal R').$
	This gives the restriction of the map in \eqref{map732exted} to $\mathcal M^0(\mathcal R)$.
	The continuity of this map shall be obvious from the definition 
	of the RGW topology in \cite[Section 4]{part1:top}.
	\begin{figure}[h]
        \centering
        \includegraphics[scale=0.4]{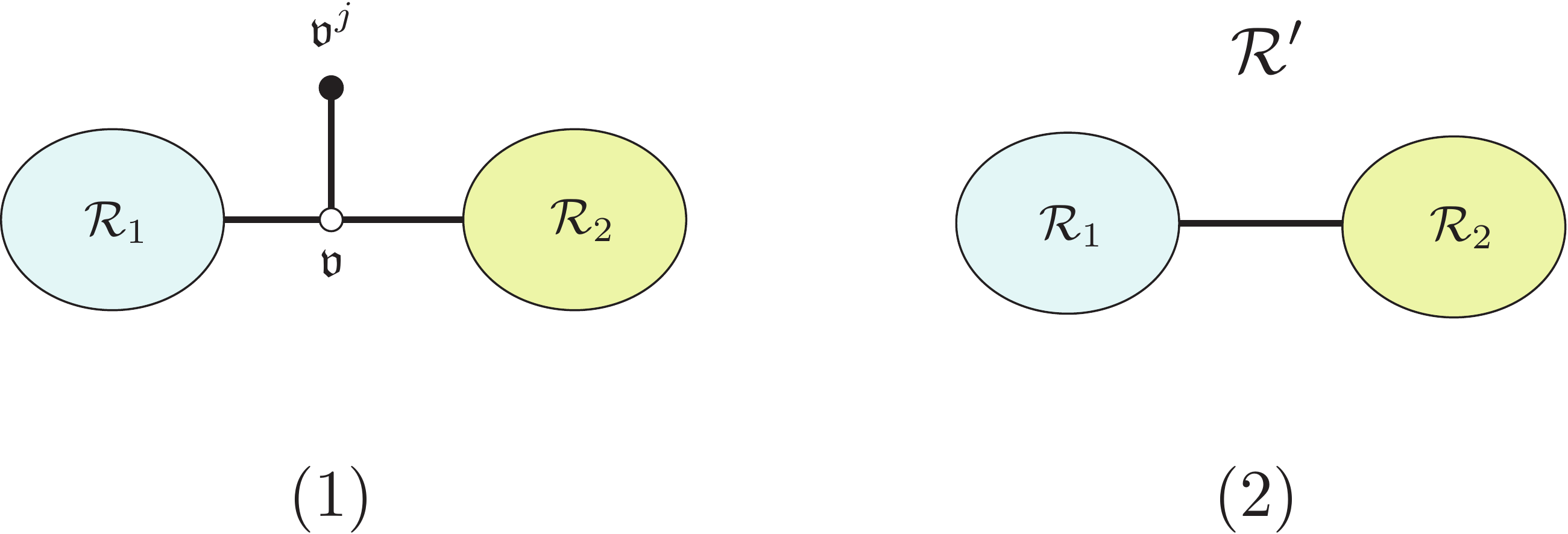}
        \caption{$\mathcal R'$ in Case 3}
        \label{FIgurelem368Case3}
        \end{figure}
\end{proof}

\subsection{Compatibility of Kuranishi Structures with Forgetful Maps}
\label{subsub:compforget}

We next discuss the relation between forgetful maps 
and Kuranishi structures.

\begin{definition}\label{lemma369}
        Let $\frak p \in \mathcal M_{k+1}^{\rm RGW}(\beta)$
        and $\overline{\frak p} = \frak{fg}^{\partial}_j(\frak p) \in \mathcal M_{k}^{\rm RGW}(\beta)$.
        Let $\mathcal U_{\frak p} = (U_{\frak p},E_{\frak p},s_{\frak p},\psi_{\frak p})$ 
        and $\mathcal U_{\overline{\frak p}} = 
        (U_{\overline{\frak p}},E_{\overline{\frak p}},s_{\overline{\frak p}},\psi_{\overline{\frak p}})$ be their Kuranishi charts.
        We say those Kuranishi charts are compatible with the forgetful map if the following holds.
        
        By shrinking the charts, we may assume $U_{\frak p} = V_{\frak p}/\Gamma_{\frak p}$ 
        and $U_{\overline{\frak p}} = V_{\overline{\frak p}}/\Gamma_{\overline{\frak p}}$,
        where $V_{\frak p}$, $V_{\overline{\frak p}}$ are manifolds and $\Gamma_{\frak p}$ 
        and $\Gamma_{\overline{\frak p}}$ are finite groups acting on them.
        \begin{enumerate}
                \item
                There exists a group homomorphism
                $\phi_{\frak p} : \Gamma_{\frak p} \to \Gamma_{\overline{\frak p}}$.
                \item
                There exists a $\phi_{\frak p}$ equivariant map
                $$
                F_{\frak p} : V_{\frak p} \to V_{\overline{\frak p}}
                $$
                that is a strata-wise smooth submersion.
                \item
                $E_{\frak p}$ is isomorphic to the pullback of $E_{\overline{\frak p}}$
                by $F_{\frak p}$. In other words,
                there exists fiberwise isomorphic lift 
                $$
                \tilde F_{\frak p} : E_{\frak p} \to E_{\overline{\frak p}}
                $$
                of $F_{\frak p}$, which is $\phi_{\frak p}$ equivariant.
                \item
                $\tilde F_{\frak p} \circ s_{\frak p} = s_{\overline{\frak p}} \circ F_{\frak p}$.
                \item
                $\psi_{\overline{\frak p}} \circ F_{\frak p} = \frak{fg}^{\partial}_j \circ \psi_{\frak p}$
                on $s_{\frak p}^{-1}(0)$.
                \item
                $\tilde F_{\frak p},F_{\frak p}$ is compatible with the coordinate 
                change in the same sense as in Item {\rm (3)} of Proposition \ref{lema362}.
        \end{enumerate}
        The same statement holds for \eqref{map732exted} and \eqref{map733exted}.
\end{definition}



\begin{definition}\label{defn61144}
	Let $\frak u \in \mathcal M_{k+1}^{\rm RGW}(L;\beta)$.
	Let $\check R$ be the very detailed tree describing the combinatorial type of $\frak u$.
	We fix a TSD $\Xi$ at $\frak u$.
	Let $\frak y = (\vec{\frak x},\vec{\sigma},(u'_{v}),(U'_{v}),(\rho_{e}),(\rho_{i}))$ 
	be an inconsistent map with respect to $\Xi$.
	\begin{enumerate}
		\item Remove all the edges $e$ of $\check R$ with $\sigma_e = 0$, and let $\check R_0$
		be one of the connected components of the resulting graph. The union of all the spaces 
		$\Sigma_{\frak y,v}$, where $v$ belongs to $\check R_0$, is called 
		{\it an irreducible component of $\frak y$}.
		If it does not make any confusion, the union of all the interior vertices $v$ of $\check R$,
		which belong to $\check R_0$, is also called an irreducible component.
		\item An irreducible component of $\frak y$ is called a {\it trivial component}
			if the following holds:
		\begin{enumerate}
			\item All the vertices in this component have color ${\rm d}$.
			\item All the homology classes assigned to the vertices in this component are $0$.
		\end{enumerate}
		\item We say $\frak y$ {\it preserves triviality} if for any interior vertex $v$ 
		in a trivial component, the map $u'_{v}$ is constant.
\end{enumerate}
\end{definition}

In this subsection we will construct Kuranishi structures on 
$\mathcal M_{k+1}^{\rm RGW}(L;\beta)$ which are compatible 
with forgetful map in the sense we defined above.

\begin{lemma}\label{lem115555}
	Given any element $\frak u\in\mathcal M_{k+1}^{\rm RGW}(L;\beta)$,
	the Kuranishi neighborhood of $\frak u$, constructed in \cite{part2:kura}, 
	is contained in the set of inconsistent maps which preserve triviality.
\end{lemma}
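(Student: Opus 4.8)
\textbf{Proof plan for Lemma \ref{lem115555}.}

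The statement asserts that the inconsistent maps $\frak y$ appearing in the Kuranishi neighborhood of $\frak u$ automatically have the property that on every trivial component (one whose vertices all have color ${\rm d}$ and carry the zero homology class) the local map $u'_v$ is constant. The plan is to trace through the construction of \cite{part2:kura} and observe that this triviality-preservation is not an extra condition to be imposed but rather a consequence of the very definition of the thickened Cauchy--Riemann equation together with the choice of obstruction spaces made above in Subsection \ref{subsub:existobst}.

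First I would recall from Condition in Part 2 (the condition right after (OBII)) that the obstruction space $E_{\frak p_v}$ is supported away from nodal and marked points and, crucially, is nonzero only when $u_{{\frak p}_v}$ is non-constant and $u_{\frak p}$ restricted to a neighborhood of its support is an embedding. Consequently, for a vertex $v$ lying in a trivial component of $\frak u$ — where the original map $u_{\frak u,v}$ is constant — the obstruction space $E_{\frak u,{\bf p},\Xi}(\frak y)$ has no summand supported on $\Sigma_{\frak y,v}$: by Lemma \ref{delta-k-b} and Definition \ref{def1014}, any quasi component ${\bf p}$ whose obstruction support meets $\Sigma_{\frak y,v}$ would force $u_{\frak u}$ (hence $u'_v$ for $\frak y$ near $\frak u$) to be non-constant on that region, which is incompatible with $v$ being in a trivial component up to the $C^2$-smallness of $\epsilon(k',\beta')$. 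So the thickened equation $\overline\partial u'_v \equiv 0 \bmod E_{\frak u,\Xi}(\frak y)$ restricted to such a $v$ is simply the genuine equation $\overline\partial u'_v = 0$.

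Next I would use the boundary condition and the fact that the homology class assigned to $v$ is $0$: a $J$-holomorphic disk (or sphere, but in a trivial component all vertices have color ${\rm d}$) with boundary on $L$ and trivial relative homology class has zero symplectic area, hence is constant. One must be slightly careful about the marked/nodal points and the matching conditions $\rho_e$ on the edges $e$ internal to the trivial component: since $\sigma_e = 0$ is not assumed on those edges, the component is genuinely glued, but the gluing of genuinely holomorphic pieces of total area zero is still area zero, so the glued map on the whole trivial component is constant. This is where I would invoke the structure of inconsistent maps from \cite[Definition 8.28]{part2:kura} to see that the $\rho_i$ and $\rho_e$ data do not introduce area and that the only solutions are the constant ones. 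The area/energy identity is standard and I would not reproduce it.

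The main obstacle, I expect, is bookkeeping rather than conceptual: one has to verify carefully that \emph{no} quasi component ${\bf p} \in \mathscr F_{k,\beta}(\frak u)$ can have obstruction support landing on a trivial component of $\frak u$, which requires chasing the definition of the quasi-component choice map through Condition \ref{conds1025} (disk-component-wise-ness) and the inductive construction in Lemma \ref{eximanuyobj} — in particular checking that the local approximations $\frak p$ used have disk components that are themselves non-constant wherever they carry obstruction. Once that is pinned down, the constancy of $u'_v$ on trivial components is forced by the holomorphicity of the unobstructed equation and the vanishing of area, so the lemma follows. I would organize the write-up as: (i) reduce to showing the thickened equation is unobstructed on trivial components; (ii) prove that reduction using the support conditions on $E_{\frak p_v}$ and $\delta(k,\beta)$; (iii) conclude constancy from the zero-area argument.
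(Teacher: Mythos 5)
Your proposal is correct and takes essentially the same route as the paper's proof: the quasi components used to build $E_{\frak u,\mathscr F,\Xi}(\frak y)$ have their supports away from the components of $\frak u$ with trivial homology class, so on trivial components the thickened equation reduces to the genuine Cauchy--Riemann equation, and constancy follows from the vanishing homology class (zero area). The extra bookkeeping you flag (supports, $\delta(k,\beta)$, gluing within a trivial component) is exactly what the paper's short argument implicitly relies on.
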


\begin{proof}
	Suppose $\Xi$ is a small enough TSD such that we can form the obstruction bundle 
	$E_{\frak u,\mathscr F,\Xi}(\frak y)$ over inconsistent maps $\frak y$ with respect to 
	$\Xi$. Let $\Upsilon$ be the TSO given as $(\Xi,\{E_{\frak u,\mathscr F,\Xi}\})$.
	We assume that $\frak y$ is chosen such that it represents an element of 
	$\widehat{\mathcal U}(\frak u,\Upsilon)$. If $[\frak u,\frak p,\phi]$ is a quasi component of
	$\frak u$, then the image of $\phi$ is away from the components of $\frak u$ 
	with trivial homology classes. Consequently, restriction of the obstruction bundle 
	$E_{\frak u,\mathscr F,\Xi}(\frak y)$ to any trivial component of $\frak y$ is trivial.
	Therefore, the restriction of $u_{\frak y}$ to any such component has trivial homology class
	and satisfies the Cauchy-Riemann equation with a trivial obstruction bundle, and hence it 
	is a constant map.
%
\end{proof}
Suppose $\frak y$ is an inconsistent map with respect to $\Xi$. Let $\Xi'$ be another TSD at the same point $\frak u$. If $\Xi'$ is small enough, then we obtain a corresponding inconsistent map $\frak y'$ with respect to $\Xi'$. (See the discussion preceding \cite[Lemma 9.22]{part2:kura}.)
It is clear that $\frak y$ preserves triviality if and only if $\frak y'$ preserves triviality.
\par
We form the forgetful map:
\begin{equation}\label{formdeffgggg}
	\frak{fgg} : \mathcal M^{\rm RGW}_{k+1}(L;\beta) \to \mathcal M^{\rm RGW}_{1}(L;\beta)
\end{equation}
by forgetting the boundary marked points other than the $0$-th one,
by composing the forgetful maps defined in Subsection \ref{subsec:forget}.
\begin{lemma}\label{lem166666}
	Let $\frak u \in \mathcal M^{\rm RGW}_{k+1}(L;\beta)$ and 
	$\frak u' =  \frak{fgg}(\frak u) \in \mathcal M_{1}^{\rm RGW}(L;\beta)$.
	For any TSD $\Xi'= (\vec w_{\frak u'},(\mathcal N_{\frak u',v,i}),(\phi_{\frak u',v}),
	(\varphi_{\frak u',v,e}),\delta')$ at $\frak u'$ there is a TSD 
	$\Xi=(\vec w_{\frak u},(\mathcal N_{\frak u,v,i}),(\phi_{\frak u,v}),
	(\varphi_{\frak u,v,e}),\delta)$ at $\frak u$ such that 
	any inconsistent map $\frak y$ with respect to $\Xi$ which preserves triviality
	induces an inconsistent map $\frak y'$ with respect to $\frak u'$.
\end{lemma}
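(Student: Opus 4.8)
The statement is essentially a bookkeeping lemma: starting from a TSD $\Xi'$ at the forgetful image $\frak u' = \frak{fgg}(\frak u)$, we must produce a TSD $\Xi$ at $\frak u$ whose induced combinatorial/analytic data on inconsistent maps is compatible with the forgetful map, restricted to the class of triviality-preserving inconsistent maps. First I would recall (from \cite[Definition 8.16]{part2:kura}) that a TSD at $\frak u$ consists of: a tuple of interior marked points $\vec w$ on each non-disk component used to rigidify it, submanifold slices $\mathcal N_{\frak u,v,i}$ transverse to the image of $u_{\frak u}$ through those marked points, trivializations $\phi_{\frak u,v}$ of neighborhoods of the components, gluing chart data $\varphi_{\frak u,v,e}$ at the nodes, and a smallness parameter $\delta$. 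The key observation is the one already recorded after Lemma \ref{lem115555}: a triviality-preserving inconsistent map $\frak y$ at $\frak u$ has all of its trivial components (color ${\rm d}$, zero homology) carrying constant maps, so collapsing those components — which is exactly what $\frak{fgg}$ does on source curves, using Cases 1--3 of Lemma \ref{forgetful-bdry-RGW} — does not destroy any of the analytic content.

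\textbf{Key steps.} Step one: analyze how $\frak{fgg}$ acts on the very detailed tree $\check R$ of $\frak u$, producing $\check R'$ for $\frak u'$; by Lemma \ref{forgetful-bdry-RGW} this is the composite of the elementary forgetful operations, each of which either deletes an exterior edge (Cases 1, 2) or deletes a trivial disk vertex and merges its two remaining edges (Case 3). In all cases $C^{\rm int}_0(\check R') \subseteq C^{\rm int}_0(\check R)$ up to the identification of merged edges, and the positive-level vertices (those relevant for obstruction data and for the interior marked points $\vec w$) are in bijection. Step two: take $\Xi$ to simply \emph{pull back} the data of $\Xi'$: for each surviving interior vertex $v$ of $\check R$ with a counterpart $v'$ in $\check R'$, set $\vec w_{\frak u,v} = \vec w_{\frak u',v'}$, $\mathcal N_{\frak u,v,i} = \mathcal N_{\frak u',v',i}$, $\phi_{\frak u,v} = \phi_{\frak u',v'}$, and $\varphi_{\frak u,v,e} = \varphi_{\frak u',v',e}$ for edges $e$ that persist; for the trivial (collapsed) components of $\check R$, since their maps are constant there is essentially a unique choice forced by stability, and one picks any compatible gluing chart near the merged node. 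Step three: choose $\delta$ small enough relative to $\delta'$ — this is the one genuinely quantitative point — so that any inconsistent map $\frak y$ with respect to $\Xi$ which preserves triviality, after forgetting the relevant marked points and collapsing the trivial components (which by triviality-preservation means collapsing constant-map components, a harmless operation in the $L^2_{m+\ell}$-topologies), lands in the neighborhood $\mathfrak U(\frak u',\Xi')$ and thereby defines $\frak y'$. Step four: check that the assignment $\frak y \mapsto \frak y'$ is well-defined (independent of the small choices in the trivial components) and is the ``obvious'' map; the independence-of-$\Xi$ remark preceding the lemma handles the compatibility of this under change of $\Xi$.

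\textbf{Main obstacle.} The hard part will be Step three: verifying that the smallness parameter $\delta$ can be chosen so that the forgetful operation genuinely stays within the domain of the gluing construction at $\frak u'$, \emph{uniformly} over the (finitely many, by Gromov compactness) combinatorial types and over the relevant moduli. This is where one must invoke the gluing estimates of \cite[Section 9]{part2:kura}: collapsing a chain of constant-map disk components changes the source domain only by removing some trivial bubbles, and the pre-gluing / Newton-iteration data transform continuously under this, but making the bookkeeping of coordinates $\vec{\frak x}$, gluing parameters $\vec\sigma$, and the $\rho_i$-parameters (via maps like $\pi_{\frak w}$ in \eqref{pifrakw}) precise across Case 3 of Lemma \ref{forgetful-bdry-RGW} — where an interior vertex disappears and two edges are identified — requires care. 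Once Step three is in place, the remaining verifications (Steps one, two, four) are essentially formal consequences of the definitions of TSD and of inconsistent map in \cite{part2:kura}, and, as in the proof of Lemma \ref{lem684}, are ``true by tautology'' modulo the analytic input already established.
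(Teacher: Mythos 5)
Your overall strategy is the same as the paper's: build $\Xi$ by transporting the data of $\Xi'$ to $\frak u$ (placing the extra marked points away from the constant components, which are already source stable), make arbitrary but compatible choices on the components that $\frak{fgg}$ collapses, and then send a triviality-preserving $\frak y$ to $\frak y'$ by erasing the constant components, using that no obstruction lives there. Where you diverge from the paper is in locating the difficulty: there is no quantitative step at all, no shrinking of $\delta$, and no appeal to the Newton iteration or gluing estimates of \cite[Section 9]{part2:kura} — the paper simply takes $\delta=\delta'$. The actual content is in the choice of the TSD: the trivializations of the universal families over the surviving disk components are chosen so that they commute with the forgetful map (the diagram \eqref{diadia}), and on each collapsed constant component the analytic families of coordinates at its nodes are defined by fixing an identification with the strip $[0,1]\times\bbR$ (or a half-strip). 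With these choices the glued source curve $\Sigma_{\frak u}(\vec{\frak x}_{\frak y},\vec\sigma_{\frak y})$ is \emph{literally} identified with a fiber $\Sigma_{\frak u'}(\vec{\frak x}_{\frak y'},\vec\sigma_{\frak y'})$ of the $\Xi'$-deformation (equation \eqref{form6200}), so no closeness estimate is needed to land in the $\Xi'$-neighborhood; your plan of "any compatible gluing chart near the merged node" plus a smallness argument would instead leave an unidentified reparametrization to compare away. The remaining points, which you gloss, are then exactly the ones the paper checks: the maps $u'_{\frak y,v}$ on the disk components of one irreducible component glue to a single smooth map on $\Sigma^+$ (they agree on overlaps because the gluing is along level-$0$ edges), this defines $u'_{\frak y',v}$ via \eqref{form6201}--\eqref{form6202}, and consistency at the level-$0$ nodes with $\sigma_{\frak y',e}=0$ holds precisely because the erased components carry constant maps, i.e.\ because $\frak y$ preserves triviality. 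So your proposal would reach the goal, but the machinery you flag as the main obstacle is not needed, and the genuinely load-bearing step is the compatible choice of trivializations and nodal (strip) coordinates in $\Xi$.
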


\begin{proof}
        Let $\check R$ (resp. $\check R'$) be the very detailed DD-ribbon tree 
        describing the combinatorial type of $\frak u$ (resp. $\frak u'$).
	By construction we observe that the vertices of $\check R$
	with color $\rm s$ or $\rm D$ are in one to one correspondence with
	the vertices of the same color in $\check R'$.
        Also the set of vertices of $\check R'$ with color $\rm d$ is a subset of the
        vertices of $\check R$ with color $\rm d$. The difference 
        $C^{\rm int}_{0}(\check R)\setminus C^{\rm int}_{0}(\check R')$
        consists of vertices $v$ such that the map $u_{v}$ is constant on it.\footnote{This is 
        not a necessary and sufficient condition.} In particular, for any such vertex $v$, the component
        $\Sigma_{\frak u,v}$ together with the marked points and nodal points is already source stable.
        Therefore, we can require that the additional marked points $\vec w_{{\frak u}}$ of 
        the TSD $\Xi$ do not belong to such irreducible components of $\Sigma_{\frak u}$.
        Thus we may find marked points $\vec w_{\frak u}$ on $\Sigma_{\frak u'}$
        such that they are identified with $\vec w_{{\frak u'}}$ using the map  
        $\Sigma_{\frak u} \to \Sigma_{\frak u'}$ which collapses the components associated to  
        $C^{\rm int}_{0}(\check R)\setminus C^{\rm int}_{0}(\check R')$. We define $\vec w_{\frak u}$
        to be the set of the additional marked points of $\Xi$. We also assume that the set of transversals 
        of $\Xi$ are identified with that of $\Xi'$ in an obvious way.
        
        We also require that the trivialization of the universal families of irreducible components 
        associated to  $\Xi$ and to $\Xi'$ to be related to each other as follows. For any sphere component 
        we assume that the associated trivializations agree with each other.
	For $v \in C^{\rm int}_{0}({\check R}')
        \subset C^{\rm int}_{0}(\check R)$ with level $0$, let $\mathcal M^{{\rm source}}_{\frak u,v}$,
        $\mathcal M^{{\rm source}}_{\frak u',v}$ be the corresponding moduli spaces of marked disks
        and $\mathcal{C}^{{\rm source}}_{\frak u,v} \to \mathcal M^{{\rm source}}_{\frak u,v}$,
        $\mathcal{C}^{{\rm source}}_{\frak u',v} \to \mathcal M^{{\rm source}}_{\frak u',v}$ be the universal 
        families.
        We take a trivialization of $\mathcal{C}^{{\rm source}}_{\frak u,v}$
        over a sufficiently small neighborhood $ \mathcal V_{\frak u,v}^{{\rm source}}$ 
        of $\Sigma_{\frak u,v}$ so that the following diagram commutes:
        \begin{equation}\label{diadia}
	\begin{tikzcd}
		 \mathcal V_{\frak u,v}^{{\rm source}} \times \Sigma_v  \ar[r,"\phi_{\frak u,v}"]\ar[d]& \mathcal{C}^{{\rm source}}_{\frak u,v} \ar[d]\\
		  \mathcal V_{\frak u',v}^{{\rm source}} \times \Sigma_v  \ar[r,"\phi_{{\frak u'},v}"]& \mathcal{C}^{{\rm source}}_{\frak u',v}
	\end{tikzcd}
	\end{equation}
        where the vertical arrows are obvious forgetful maps.
        For the trivializations of the universal families of disk components
        corresponding to $v \in C^{\rm int}_{0}(\check R)
        \setminus C^{\rm int}_{0}(\check R')$, that are parts of $\Xi$, we take an arbitrary choice.

        For $v\in C^{\rm int}_{0}({\check R}')\subset C^{\rm int}_{0}(\check R)$ and an edge $e$
        incident to $v$, we pick $\varphi_{\frak u,v,e}$ to be the analytic family induced by $\varphi_{\frak u',v,e}$.
        In the case that $v\in C^{\rm int}_{0}({\check R})\setminus C^{\rm int}_{0}(\check R')$, the corresponding
        component $\Sigma_{\frak u,v}$ in addition to boundary marked points has at most two boundary nodes. 
        If there are two boundary nodes inducing edges $e_+$ and $e_-$ incident to $v$, then we can identify 
        $\Sigma_{\frak u,v}$ with the strip $[0,1]\times \bbR$ where the boundary node associated to 
        $e_{\pm}$ is in correspondence with the point at $\pm \infty$ on the boundary of $[0,1]\times \bbR$.
        We fix one such identification and let $[0,1]\times [T,\infty)$ and $[0,1]\times (-\infty,-T]$, for a large 
        value of $T$, induce the analytic families of coordinates $\varphi_{\frak u,v,\pm e}$. 
        In the case that there is only one interior
        edge incident to $v$, we follow a similar strategy with the difference that we only need to use the half 
        strip $[0,1]\times [T,\infty)$ to define the corresponding analytic family of coordinates.
        We also let $\delta=\delta'$.
%

        Now, let
        $$
          \frak y = (\vec{\frak x}_{\frak y},\vec{\sigma}_{\frak y},(u'_{{\frak y},v}),(U'_{{\frak y},v}),
          (\rho_{{\frak y},e}),(\rho_{{\frak y},i}))
        $$
        be an inconsistent map with respect to $\Xi$ which preserves triviality. We wish to define
        $$
        {\frak y'} = (\vec{\frak x}_{\frak y'},\vec{\sigma}_{{\frak y'}},
        (u'_{{\frak y'},v}),
        (U'_{{\frak y'},v}),(\rho_{{\frak y'},e}),(\rho_{{\frak y'},i}))
        $$
        an inconsistent map with respect to $\Xi'$.
        It is clear from the definition of $\Xi$ that there are $\vec{\frak x}_{{\frak y'}},\vec{\sigma}_{{\frak y'}}$ 
        such that
        \begin{equation}\label{form6200}
	        \Sigma_{\frak u}(\vec{\frak x}_{\frak y},\vec{\sigma}_{\frak y})
		\cong \Sigma_{{\frak u'}}(\vec{\frak x}_{{\frak y'}},\vec{\sigma}_{{\frak y'}}).
        \end{equation}
        We take $\rho_{{\frak y},e} = \rho_{{\frak y'},e}$, $\rho_{{\frak y},i} = \rho_{{\frak y'},i}.$
        Moreover, $U'_{{\frak y},v} = U'_{{\frak y'},v}$ and $u'_{{\frak y},v} = u'_{{\frak y'},v}$
        if the color of $v$ is ${\rm s}$.

        We consider a disk component $\Sigma_{{\frak y'},v}$.
        There exists a unique irreducible component (in the sense of 
        Definition \ref{defn61144}, where we use $\vec{\sigma}_{{\frak y'}}$) which contains this component.
        We denote by $\Sigma^+_{\frak u',v}(\vec{\frak x}_{{\frak y'}},\vec{\sigma}_{{\frak y'}})$ the union of 
        the disk components contained in this
        irreducible component.\footnote{See \cite[(8.11)]{part2:kura} for the meaning of the symbol $+$.}
        We take the irreducible components of $\frak y$ which correspond to it
        and define $\Sigma^+_{{\frak u},v}(\vec{\frak x}_{{\frak y}},\vec{\sigma}_{{\frak y}})$
        in the same way.
        By (\ref{form6200}) we have an isomorphism
        \begin{equation}\label{form6201}
        \Sigma^+_{\frak u',v}(\vec{\frak x}_{{\frak y'}},\vec{\sigma}_{{\frak y'}})
        \cong \Sigma^+_{{\frak u},v}(\vec{\frak x}_{{\frak y}},\vec{\sigma}_{{\frak y}}).
        \end{equation}
        The maps $u'_{\frak y,v}$ for various $v$ in this irreducible component 
        induces a map 
        \begin{equation}\label{form6202}
        (\Sigma^+_{{\frak u},v}(\vec{\frak x}_{{\frak y}},\vec{\sigma}_{{\frak y}}),
        \partial \Sigma^+_{{\frak u},v}(\vec{\frak x}_{{\frak y}},\vec{\sigma}_{{\frak y}})) 
        \to (X,L).
        \end{equation} 
        This map is smooth. (Since
        $\Sigma^+_{{\frak u},v}(\vec{\frak x}_{{\frak y}},\vec{\sigma}_{{\frak y}})$ is obtained by gluing along 
        the components associated to the level $0$ edges, the maps $u'_{\frak y,v}$ are consistent on overlaps.)
	We use \eqref{form6201} and \eqref{form6202} to define $u'_{{\frak y'},v}$.
        Using the fact that $\frak y$ is an consistent map preserving triviality,
        it is easy to see that $u'_{{\frak y'},v}$ for various $v$ are consistent 
        at the nodal points corresponding to the level 0 edges $e$ with $\sigma_{{\frak y'},e} = 0$, and 
        ${\frak y'} = (\vec{\frak x}_{\frak y'},\vec{\sigma}_{{\frak y'}},(u'_{{\frak y'},v}),(U'_{{\frak y'},v}),
        (\rho_{{\frak y'},e}),(\rho_{{\frak y'},i}))$ is an inconsistent map with respect to $\Xi'$.
 \end{proof}

\begin{remark}
        The notion of preserving triviality plays an important role in the proof.
        The other important point is that we do not put any obstruction bundle on the components where 
        the maps are constant.
\end{remark}

Let $\frak u$, $\frak y$, $\frak u'$ and $\frak y'$ be as in Lemma \ref{lem115555} and $\check R$, $\check R'$ be the very detailed tree describing the combinatorial types of $\frak u$, $\frak u'$, respectively. We define
\[
  \aligned
  L^2_{m,\delta,{\rm nontri}}(\frak y,{\frak u})=
  &\bigoplus_{v \in C^{\rm int}_{0}(\check R) ; c(v) = {\rm s}}
L^2_{m,\delta}(\Sigma^+_{\frak y,v};(u'_{\frak y,v})^*TX \otimes \Lambda^{0,1}) \\
&\oplus \bigoplus_{v \in C^{\rm int}_{0}(\check R) ; c(v) = {\rm D}}
L^2_{m,\delta}(\Sigma^+_{\frak y,v};(\pi\circ U'_{\frak y,v})^*T\mathcal D \otimes \Lambda^{0,1})
\\
&\oplus 
\bigoplus_{v \in C^{\rm int}_{0}(\check R) ; c(v) = {\rm d}, 
\atop\text{$u'_{\frak y,v}$ is not constant}}
L^2_{m,\delta}(\Sigma^+_{\frak y,v};(u'_{\frak y,v})^*TX \otimes \Lambda^{0,1}) 
\endaligned
\]
$$
\aligned
L^2_{m,\delta,{\rm nontri}}
({\frak y'},{\frak u'})
=
&\bigoplus_{v \in C^{\rm int}_{0}(\check R') ; c(v) = {\rm s}}
L^2_{m,\delta}(\Sigma^-_{{\frak y'},v};(u'_{{\frak y'},v})^*TX \otimes \Lambda^{0,1}) \\
&\oplus \bigoplus_{v \in C^{\rm int}_{0}(\check R') ; c(v) = {\rm D}}
L^2_{m,\delta}(\Sigma^-_{{\frak y'},v};(\pi\circ U'_{{\frak y'},v})^*T\mathcal D \otimes \Lambda^{0,1})
\\
&\oplus 
\bigoplus_{v \in C^{\rm int}_{0}(\check R') ; c(v) = {\rm d}, 
\atop\text{$u'_{{\frak y'},v}$ is not constant}}
L^2_{m,\delta}(\Sigma^-_{{\frak y'},v};(u'_{{\frak y'},v})^*TX \otimes \Lambda^{0,1}).
\endaligned
$$
There are canonical identification between the components appearing in the above two formulas. Therefore, there exists a canonical map
\begin{equation}
	I_{{\frak y}\frak y'} : L^2_{m,\delta,{\rm nontri}}(\frak y',\frak u')
	\to L^2_{m,\delta,{\rm nontri}}(\frak y,{\frak u}).
\end{equation}
\begin{definition}\label{defn118}
	Let $\{E_{\frak u,\Xi}(\frak y)\}$ and $\{E_{\frak u',\Xi'}(\frak y')\}$ be obstruction bundle data for the moduli spaces
	$\mathcal M_{k+1}^{\rm RGW}(L;\beta)$ and $\mathcal M_{1}^{\rm RGW}(L;\beta)$, respectively.
	We say that they are {\it compatible with the forgetful map} if 
	\begin{equation}\label{form19199}
		I_{{\frak y}\frak y'} (E_{\frak u',\Xi'}(\frak y'))=E_{\frak u,\Xi}(\frak y)
	\end{equation}
	when $\frak u'$, $\Xi'$, $\frak y'$ are related to $\frak u$, $\Xi$, $\frak y$ as in Lemma \ref{lem115555}.
\end{definition}
\begin{definition}\label{defn119}
	A system of obstruction bundle data for
	$\{\mathcal M_{k+1}^{\rm RGW}(L;\beta)\}_{\omega\cap \beta\leq E}$ is said to be 
	{\it compatible with the forgetful map} if Definition \ref{defn118}
	holds for each of the moduli spaces $\mathcal M_{k+1}^{\rm RGW}(L;\beta)$ and 
	$\mathcal M_{1}^{\rm RGW}(L;\beta)$ with $\omega\cap \beta\leq E$.
\end{definition}

Suppose $\{E_{\frak u,\Xi}(\frak y)\}$ is a system of obstruction bundle data for $\{\mathcal M_{k+1}^{\rm RGW}(L;\beta)\}_{\omega\cap \beta\leq E}$, which is disk-component-wise and is compatible with forgetful map. Let $\frak u$ be an element of $\mathcal M_{k+1}^{\rm RGW}(L;\beta)$ and $\frak u'=\frak {fgg}(\frak u)$. Suppose $\Xi$, $\Xi'$ are TSDs at $\frak u$, $\frak u'$ which are related to each other as in Lemma \ref{lem166666}. Using Lemmas \ref{lem115555} and \ref{lem166666} and consistency of obstruction bundle data with the forgetful map, we can define a map:
\[
  F_{\frak u} : \widehat {\mathcal U}(\frak u;\Upsilon) \to \widehat {\mathcal U}(\frak u';\Upsilon').
\]
Here $\Upsilon = (\Xi,E_{\frak u,\Xi})$, $\Upsilon' = (\Xi',E_{\frak u',\Xi'})$.
In the process of forgetting boundary marked points and passing from $\frak u$ to $\frak u'$, we only might collapse disc components. Since the elements of $\Gamma_{\frak u}$ and $\Gamma_{\frak u'}$ act as identity on disc components, the isotropy groups $\Gamma_{\frak u}$ and $\Gamma_{\frak u'}$ are isomorphic. The map $F_{\frak u}$ is also $\Gamma_\frak u$-equivariant. 

It is straightforward to lift the map $F_{\frak u}$ to a $\Gamma_{\frak u}$-equivariant map:
\[
  \tilde F_{\frak u}:\mathcal E_{\frak u} \to \mathcal E_{\frak u'}
\]
such that
\[
  \tilde F_{\frak u}\circ \frak s_{\frak u}=\frak s_{\frak u'} \circ F_{\frak u},
\]
and for any $\frak y\in \frak s_{\frak u}^{-1}(0)/\Gamma_{\frak u}$ we have
\[
  \psi_{\frak u'}\circ F_{\frak u}(\frak y)=\frak {fgg} \circ \psi_{\frak u}(\frak y)
\]
The maps $F_{\frak u}$ and $\tilde F_{\frak u}$ are also compatible with coordinate changes. We can summarize this discussion as follows:

\begin{theorem}\label{comp-forg}
	Suppose a system of obstruction bundle data $\{E_{\frak u,\Xi}(\frak y)\}$ for moduli spaces
	$\{\mathcal M_{k+1}^{\rm RGW}(L;\beta)\}_{\omega\cap \beta\leq E}$
	is disk-component-wise and is compatible with forgetful map.
	Then the resulting system of Kuranishi structures 
	is compatible at the boundary components and corners (in the sense of Propositions \ref{lema362}, \ref{lem364} and \ref{lem365})
	and compatible with the forgetful map in the sense of Definition \ref{lemma369}.
\end{theorem}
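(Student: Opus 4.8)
\textbf{Proof plan for Theorem \ref{comp-forg}.}

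The plan is to combine the two structural results about the obstruction bundle data that have been developed in the preceding subsections: \emph{disk-component-wise-ness} (Definition \ref{defn687nnew}), which by Lemma \ref{lem684} yields the compatibility of the induced Kuranishi structures at codimension one boundary strata in the sense of Propositions \ref{lema362}, \ref{lem364} and \ref{lem365}; and \emph{compatibility with the forgetful map} (Definitions \ref{defn118} and \ref{defn119}), which should be shown to yield compatibility of the Kuranishi structures with the forgetful maps in the sense of Definition \ref{lemma369}. The first half of the theorem is already essentially Lemma \ref{lem684}, so the content of the proof is the second half, together with the observation (to be made explicit) that the two compatibility properties can be arranged simultaneously.

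First I would make precise how a system of obstruction bundle data that is compatible with the forgetful map produces the data $(\phi_{\frak u},F_{\frak u},\tilde F_{\frak u})$ required by Definition \ref{lemma369}. This is exactly the construction carried out in the paragraph immediately preceding the theorem statement: given $\frak u \in \mathcal M^{\rm RGW}_{k+1}(L;\beta)$, set $\frak u'=\frak{fgg}(\frak u)$, choose TSDs $\Xi$ at $\frak u$ and $\Xi'$ at $\frak u'$ related as in Lemma \ref{lem166666}, form $\Upsilon=(\Xi,E_{\frak u,\Xi})$, $\Upsilon'=(\Xi',E_{\frak u',\Xi'})$, and use Lemmas \ref{lem115555}, \ref{lem166666} together with \eqref{form19199} to define $F_{\frak u}:\widehat{\mathcal U}(\frak u;\Upsilon)\to\widehat{\mathcal U}(\frak u';\Upsilon')$ and its lift $\tilde F_{\frak u}$. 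The verification of items (1)--(6) of Definition \ref{lemma369} then breaks up as: (1) $\phi_{\frak u}$ is the isomorphism $\Gamma_{\frak u}\cong\Gamma_{\frak u'}$ coming from the fact that only disc components get collapsed and $\Gamma$ acts trivially on disc components; (2) $F_{\frak u}$ is a strata-wise smooth submersion because passing from $\frak y$ to $\frak y'$ only collapses the (constant-map) trivial components and composes the corresponding gluing parameters, which is a submersion on each stratum; (3)--(4) the identity \eqref{form19199} says precisely that $E_{\frak u,\Xi}(\frak y)=I_{\frak y\frak y'}(E_{\frak u',\Xi'}(\frak y'))$, giving the fiberwise-isomorphic lift $\tilde F_{\frak u}$ intertwining Kuranishi maps; (5) compatibility with parametrizations is the displayed identity $\psi_{\frak u'}\circ F_{\frak u}=\frak{fgg}\circ\psi_{\frak u}$ already established; (6) compatibility with coordinate changes follows because all the choices (TSD, obstruction data) are made consistently along the poset of combinatorial types. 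Since the forgetful map $\frak{fgg}$ is the composition of the elementary maps $\frak{fg}^{\partial}_j$ of Subsection \ref{subsec:forget}, the statement for a single marked point forgetting follows by the same argument, and the general case by composition.

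The main obstacle is the \emph{simultaneity}: one must show that a single system of obstruction bundle data can be chosen that is at once disk-component-wise and compatible with the forgetful map, so that both conclusions of the theorem hold for the same Kuranishi structures. The plan is to revisit the inductive construction of Lemma \ref{eximanuyobj} (the proof of Proposition \ref{lem685}) and note that the additional constraint \eqref{form19199} is compatible with the inductive scheme: the obstruction spaces $E_{\frak p,v}$ are supported on non-constant disc components, the forgetful map only affects trivial (constant) components, and the TSD at $\frak u$ can be chosen via Lemma \ref{lem166666} to be compatible with a prescribed TSD at $\frak u'=\frak{fgg}(\frak u)$; hence one performs the induction of Lemma \ref{eximanuyobj} simultaneously over all moduli spaces $\mathcal M^{\rm RGW}_{k+1}(L;\beta)$ for fixed $\beta$, choosing the data for $\mathcal M^{\rm RGW}_1(L;\beta)$ first and then pulling it back along $\frak{fgg}$ to constrain the data for larger $k$. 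The quasi-component choice maps $\mathscr F_{k,\beta}$ are likewise transported along $\frak{fgg}$, which is consistent with Conditions \ref{conds1023}--\ref{conds31} since those only involve the non-trivial (disc-embedded) part of the curve. I would remark that the careful bookkeeping here is the counterpart, in the RGW setting, of the corresponding simultaneous construction in \cite[Section 8]{fooo:const2} and \cite{foooast} for the stable map compactification, and that the one new feature — the presence of positive-level (sphere-bubble and section) components — causes no difficulty precisely because no obstruction bundle is placed on constant-map components, as emphasised in the remark following Lemma \ref{lem166666}. This completes the plan; the routine but lengthy verifications of items (1)--(6) and of Conditions \ref{conds1023}--\ref{conds31} under the extra constraint \eqref{form19199} are omitted, being parallel to the arguments already given.
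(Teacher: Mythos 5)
Your proposal is correct and follows essentially the same route as the paper: the boundary/corner compatibility is exactly Lemma \ref{lem684}, and the forgetful-map compatibility is the construction of $F_{\frak u}$ and $\tilde F_{\frak u}$ via Lemmas \ref{lem115555}, \ref{lem166666} and the identity \eqref{form19199}, carried over from $\frak{fgg}$ to the single-point forgetful map $\frak{fg}^{\partial}_j$ by repeating the same argument, which is precisely what the paper does. The only caveat is that the ``simultaneity'' you single out as the main obstacle is not part of this theorem at all --- it is hypothesized in the statement, and its existence is established in the separate lemma immediately following, by the same pull-back-along-$\frak{fgg}$ induction you sketch --- so that portion of your plan, while consistent with the paper, is beyond the scope of the proof of Theorem \ref{comp-forg} itself.
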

\begin{proof}
	Compatibility of the forgetful map with Kuranishi structures of moduli spaces 
	$\mathcal M_{k+1}^{\rm RGW}(L;\beta)$ is equivalent to the existence of the maps 
	$F_{\frak u}$ and $F_{\frak u'}$ with the above properties. We just need to point out that in Definition \ref{lemma369}
	we consider the map 
	\[
	  \frak{fg}_j^\partial  : \mathcal M_{k+1}^{\rm RGW}(L;\beta) \to  \mathcal M_{k}^{\rm RGW}(L;\beta) .
	\] 
	given by forgetting the $j$-th marked point. The proof of a similar result for the map $\frak{fg}_j^\partial$
	is essentially the same. Let $\frak u_1\in \mathcal M_{k+1}^{\rm RGW}(L;\beta)$,
	$\frak u_2=\frak{fg}_j^\partial(\frak u_1)$. Starting with a TSO $\Upsilon_2$
	at $\frak u_2$, we can follow the proof of Lemma \ref{lem166666} to define a TSO $\Upsilon_1$ at $\frak u_1$,
	and form a map from $\widehat {\mathcal U}(\frak u_1,\Upsilon_1)$ to
	$\widehat {\mathcal U}(\frak u_2,\Upsilon_2)$. The remaining properties can be verified in 
	a similar way.
\end{proof}
\begin{remark}
	In general, one needs to be careful about the differentiability 
	of $F_{\frak u}$.
	The strata-wise smoothness is easy to show by elliptic regularity.
	The issue of differentiability, when a change of stratum happens, is discussed
	in \cite[page 778]{fooobook2}.
	This issue is relevant to the application of Theorem \ref{comp-forg},
	when we want to pull-back a multi-valued perturbation by the forgetful map.

	There are two ways to resolve this issue.
	First we can consider multi-sections which have exponential decay in the gluing parameter $T$. 
	(We use $T,\theta$ where $\sigma = \exp(-(T+\theta\sqrt{-1}))$.)
	Even though the forgetful map $F_{\frak u}$ may not be smooth,
	the pull back of a multi-section with exponential decay 
	is a multi-section which is not only smooth but also has an exponential decay. (See also \cite[page 778]{fooobook2}.)
	
	In our situation discussed in the next subsection,
	we can use a simpler method to resolve this issue.
	For the purpose of this paper,
	we need to pull back a never vanishing multi-section. 
	Thus pulling back the multi-section in $C^0$ sense is enough. 
	This is because we need differentiability of the multi-section only 
	in a neighborhood of its zero set.
\end{remark}

To complete our construction of a system of Kuranishi structures which is compatible with the forgetful map, it remains to prove the following result.
\begin{lemma}
There exists a system of obstruction bundle data which is disk component wise and 
is compatible with forgetful map.
\end{lemma}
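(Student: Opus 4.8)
The statement asks for a system of obstruction bundle data for the disk moduli spaces $\{\mathcal M_{k+1}^{\rm RGW}(L;\beta)\}$ which is simultaneously disk-component-wise (Definition \ref{defn687nnew}) and compatible with the forgetful map (Definition \ref{defn119}). Since Lemma \ref{eximanuyobj} already produces a disk-component-wise system by induction on $(k,\beta)$, the natural approach is to redo that inductive construction while carrying along the extra constraint \eqref{form19199} of Definition \ref{defn118}. Concretely, I would induct on $(k,\beta)$ with the same well-founded order $<$ used in Lemma \ref{eximanuyobj}, but at each stage choose the objects (OBI)--(OBIII) so that they are, in addition, pulled back from the $k=0$ level via the forgetful maps $\frak{fg}^\partial_j$ wherever the source curve has a component on which the map is constant.

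\textbf{Key steps.} First, for the base case and more generally, I would observe that $\mathcal M_1^{\rm RGW}(L;\beta)$ (the target of the total forgetful map $\frak{fgg}$ in \eqref{formdeffgggg}) already receives a disk-component-wise obstruction bundle data from Lemma \ref{eximanuyobj}; fix such a choice once and for all for every relevant $\beta$. Second, for each $(k,\beta)$ with $k\geq 1$, I would \emph{define} $E_{\frak u,\Xi}(\frak y)$ by pulling back the chosen data on $\mathcal M_1^{\rm RGW}(L;\beta)$ via the map $I_{\frak y\frak y'}$ of \eqref{form19199}, using Lemmas \ref{lem115555} and \ref{lem166666} to make sense of the pullback: Lemma \ref{lem115555} guarantees that the Kuranishi neighborhoods consist of inconsistent maps preserving triviality, and Lemma \ref{lem166666} provides, for a TSD $\Xi'$ at $\frak u' = \frak{fgg}(\frak u)$, a compatible TSD $\Xi$ at $\frak u$ so that $\frak y \mapsto \frak y'$ is well-defined. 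The quasi-components $\bf p$ of $\frak u$ whose supporting maps lie on non-constant disk components (respectively sphere components) are in canonical bijection with those of $\frak u'$, because forgetting a marked point only collapses disk components carrying constant maps and with at most two boundary nodes; this is exactly the content of the case analysis in Lemma \ref{forgetful-bdry-RGW} and of the description $C_0^{\rm int}(\check R)\setminus C_0^{\rm int}(\check R')$ in the proof of Lemma \ref{lem166666}. So the pulled-back data is automatically supported away from the collapsed components, which is what makes it a legitimate obstruction bundle data satisfying Definition \ref{defn684684} (1)--(7).

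\textbf{Verifying the two properties.} Compatibility with the forgetful map is then true by construction: the identity \eqref{form19199} holds because we literally defined $E_{\frak u,\Xi}(\frak y)$ as $I_{\frak y\frak y'}(E_{\frak u',\Xi'}(\frak y'))$. Disk-component-wise-ness \eqref{form618383} needs a short argument: given $\frak u$ lying in the codimension-one stratum $\mathcal M^{\rm RGW}_{k_1+1}(L;\beta_1)\,\hat\times_L\,\mathcal M^{\rm RGW}_{k_2+1}(L;\beta_2)$ with splitting tree $\overline{\mathcal S}$ having interior vertices $\frak w_1,\frak w_2$, one checks that the forgetful map $\frak{fgg}$ commutes with passing to $\frak u_{\frak w_j}$ — more precisely, the data $E_{\frak u_{\frak w_j},\Xi_j}(\frak y(\frak w_j))$ is the pullback under $\frak{fgg}$ of the data on $\mathcal M_1^{\rm RGW}(L;\beta_j)$, which in turn is disk-component-wise by the choice in Step~1 (with respect to the induced splitting of $\beta_j$). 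Since the quasi-component decomposition respects both the splitting of the source curve at the level-$0$ node $e_0$ and the collapsing under $\frak{fgg}$, the direct sum decomposition \eqref{form618383} follows by combining the two compatible decompositions. The remaining conditions in Definition \ref{defn684684} (independence of $\Xi$, semicontinuity in $\frak u$, $C^\ell$-ness in $\frak y$, transversality, $\Gamma_{\frak u}$-invariance) and Conditions \ref{conds30}, \ref{conds31} are inherited from the corresponding properties of the chosen data on $\mathcal M_1^{\rm RGW}(L;\beta)$ together with the inductive construction of Lemma \ref{eximanuyobj}; the transversality of $D_{\frak u}\overline\partial$ in particular must be arranged at the $k=0$ level and then propagates because adding marked points does not change the linearized operator.

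\textbf{The main obstacle.} The delicate point is the simultaneous compatibility: the disk-component-wise construction of Lemma \ref{eximanuyobj} builds obstruction spaces \emph{one disk component at a time} by an induction over $(k,\beta)$, while the forgetful-map compatibility forces all the data above a given disk-bubble configuration to come from the $k=0$ case. Reconciling these means one cannot freely perturb $E_{\frak p,v}$ independently at each stage as in Lemma \ref{eximanuyobj} Step~1; instead the perturbations must be made at the level of $\mathcal M_1^{\rm RGW}(L;\beta)$ and only there, then propagated. I would handle this by performing the entire induction of Lemma \ref{eximanuyobj} \emph{on the spaces $\mathcal M_1^{\rm RGW}(L;\beta)$ alone} (induction now purely on $\omega\cap\beta$, since $k=1$ is fixed), arranging all transversality, properness and $\Gamma$-invariance conditions there, and then \emph{defining} the data on all $\mathcal M_{k+1}^{\rm RGW}(L;\beta)$ by forgetful pullback — at which point disk-component-wise-ness is the only nontrivial thing left to check, and it reduces, as sketched above, to the disk-component-wise-ness of the $k=1$ data with respect to the induced homology splittings. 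This is morally identical to the interplay between \cite[Definition 4.2.2]{foooast} and the forgetful-compatible obstruction data of \cite{fooo:const2}, and the proof can be modeled on that argument.
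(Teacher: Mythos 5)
Your proposal is correct and follows essentially the same route as the paper: the paper likewise runs the induction of Lemma \ref{eximanuyobj} on $\omega\cap\beta$, at each stage building the quasi-component data $(\frak P(\beta), E_{\frak p}, \mathscr F_\beta, \mathscr F^{\circ}_\beta)$ only on $\mathcal M_{1}^{\rm RGW}(L;\beta)$ and then defining the data on every $\mathcal M_{k+1}^{\rm RGW}(L;\beta)$ by pullback under $\frak{fgg}$, i.e.\ by imposing \eqref{form19199}, so that forgetful compatibility holds by construction and Conditions \ref{conds1023}, \ref{conds1025}, \ref{conds1027}, \ref{conds30}, \ref{conds31} (hence disk-component-wise-ness) are inherited from the $k=0$ level. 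Your extra discussion of well-definedness of the pullback via Lemmas \ref{lem115555} and \ref{lem166666} is exactly what the paper's argument implicitly relies on.
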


\begin{proof}
	The proof is essentially the same as the proof in Section \ref{sub:systemconst}.
	As before, we construct the system of obstruction bundle data by induction on $\beta \cap [\omega]$.
	In each step of the induction, we firstly construct an 
	obstruction bundle data on $\mathcal M^{\rm RGW}_{1}(L;\beta)$.
	This system automatically induces an obstruction bundle data 
	on  $\mathcal M^{\rm RGW}_{k+1}(L;\beta)$ by requiring Condition \eqref{form19199}.
	

	To be more detailed, we fix a finite subset $\frak P(\beta)$ of 
	$\mathcal M_{1}^{\rm RGW}(L;\beta)$ as in (OBI) and a vector space $E_{\frak p}$ for 
	$\frak p \in  \frak P(\beta)$ as in (OBII).
	We also fix spaces $\mathscr F_{\beta}$, $\mathscr F^{\circ}_{\beta}$ 
	which fixes a set of quasi components for each $\frak u \in \mathcal M_{1}^{\rm RGW}(L;\beta)$.
	We require these objects satisfy Conditions
	 \ref{conds1023}, \ref{conds1025}, \ref{conds1027}, \ref{conds30}, \ref{conds31}.
	If $\frak u \in \mathcal M_{k+1}^{\rm RGW}(L;\beta)$, then we define 
	$\mathscr F_{\beta}(\frak u)$, $\mathscr F^{\circ}_{\beta}(\frak u)$
	to be $\mathscr F_{\beta}(\frak u')$, $\mathscr F^{\circ}_{\beta}(\frak u')$
	where $\frak u'=\frak{fgg}(\frak u)$. Since the obstruction bundle data for 
	$\mathcal M_{1}^{\rm RGW}(L;\beta)$ satisfies Conditions
	 \ref{conds1023}, \ref{conds1025}, \ref{conds1027}, \ref{conds30} and \ref{conds31}
	the induced obstruction bundle for $\mathcal M_{k+1}^{\rm RGW}(L;\beta)$ satisfies 
	the corresponding conditions.
	\end{proof}
	
\subsection{Perturbation and forgetful map}
\label{subsub:constmulti2}

We next discuss compatibility of perturbations and forgetful map. 

\begin{prop}\label{prop12866}
	Let $L_0$, $L_1$ be a pair of transversal compact monotone 
	Lagrangians in $X \setminus \mathcal D$ such that their minimal Maslov numbers are $2$. 
	For a positive number $E$, there is a system of multi-valued perturbations 
        $\{\frak s_{n}\}$ on the moduli spaces $\mathcal M^{\rm RGW}_{k_1,k_0}(L_1,L_0;p,q;\beta)$
        of virtual dimension $\le 1$ and $\omega \cap \beta \le E$ such that: 
        \begin{enumerate}
        \item It satisfies Item (1) of Theorem \ref{prop61111}.
        \item The multi-valued perturbations $\{\frak s_{n}\}$ are compatible with the description of the 
        boundary given by Propositions \ref{lema362}, \ref{lem364} and \ref{lem365}.
        \item The multi-valued perturbations $\{\frak s_{n}\}$ are compatible with the forgetful map of the 
		marked points given by Theorem \ref{comp-forg}.
	\item It satisfies Item (3) of Theorem \ref{prop61111}.
        \end{enumerate}       
\end{prop}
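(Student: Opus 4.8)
\textbf{Proof plan for Proposition \ref{prop12866}.}
The plan is to adapt the inductive construction of multi-valued perturbations from Subsection \ref{subsub:constmulti} (the proof of Theorem \ref{prop61111}), carrying along the extra requirement of compatibility with the forgetful maps. The induction is again on $\omega\cap\beta$ and on $(k_0,k_1)$, and we now also induct on $\omega\cap\alpha$ for the disk moduli spaces $\mathcal M^{\rm RGW}_{k+1}(L_i;\alpha)$. The key new ingredient is Theorem \ref{comp-forg}: because our Kuranishi structures are compatible with the forgetful map $\frak{fgg}:\mathcal M^{\rm RGW}_{k+1}(L_i;\alpha)\to\mathcal M^{\rm RGW}_1(L_i;\alpha)$ and with $\frak{fg}^\partial_{i,j}$ on the strip moduli spaces (Definition \ref{lemma369}), we are free to first choose a perturbation on the bottom space $\mathcal M^{\rm RGW}_1(L_i;\alpha)$ and then \emph{define} the perturbation on $\mathcal M^{\rm RGW}_{k+1}(L_i;\alpha)$ and on the strip spaces with extra marked points by pulling back along $\frak{fgg}$ (resp. $\frak{fg}^\partial_{i,j}$). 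This pulled-back multi-section is automatically forgetful-compatible; the issue is only its differentiability when the stratum changes, which I would handle exactly as in the Remark after Theorem \ref{comp-forg} — either by using multi-sections with exponential decay in the gluing parameter, or, since for the present purpose the relevant perturbations near the problematic boundary are non-vanishing, by pulling back only in the $C^0$ sense (which suffices, as differentiability is needed only near the zero set).

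First I would run the induction for the disk moduli spaces $\mathcal M^{\rm RGW}_{k+1}(L_i;\alpha)$: at the bottom of the induction choose, for $\mathcal M^{\rm RGW}_1(L_i;\alpha)$, a multi-section that restricts correctly to the already-constructed perturbations on the boundary components (which by Theorem \ref{lema362rev} are fiber products $\hat\times_L$ of lower-energy spaces) and that satisfies Items (1) and (3) of Theorem \ref{prop61111}; here the $\hat\times_L$ vs.\ $\times_L$ discrepancy is absorbed exactly as in the proof of Theorem \ref{prop61111}, using that the map $\Pi$ is an isomorphism of Kuranishi structures away from codimension-$2$ strata, and that the perturbation on the boundary has only isolated zeros. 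Then define the perturbation on $\mathcal M^{\rm RGW}_{k+1}(L_i;\alpha)$ for $k\ge 1$ by pullback along $\frak{fgg}$. Second, I would run the induction for the strip moduli spaces $\mathcal M^{\rm RGW}_{k_1,k_0}(L_1,L_0;p,q;\beta)$: at each step, on the boundary pieces of types (1), (2), (3) from Theorem \ref{theorem30} we use the fiber-product perturbations of (already chosen) strip perturbations of smaller energy with the (already chosen, forgetful-compatible) disk perturbations, pulled back via the maps $\Pi$ of Propositions \ref{lema362}, \ref{lem364}, \ref{lem365}; for $k_0+k_1>0$ the perturbation on $\mathcal M^{\rm RGW}_{k_1,k_0}(L_1,L_0;p,q;\beta)$ is obtained by pulling back the one on $\mathcal M^{\rm RGW}_{0,0}(L_1,L_0;p,q;\beta)$ along the composite of the $\frak{fg}^\partial_{i,j}$'s, which ensures Item (3). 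Finally, on $\mathcal M^{\rm RGW}_{0,0}(L_1,L_0;p,q;\beta)$ with virtual dimension $\le 1$, I would extend the boundary perturbation to the interior exactly as in Subsection \ref{subsub:constmulti}, analyzing the vanishing locus over the boundary; the only difference from the minimal-Maslov-$>2$ case is that disk bubbles now carry Maslov index $\ge 2$ rather than $\ge 4$, but the key consequence is the same: whenever there is a non-constant disk bubble the complementary strip component has strictly smaller (virtual) dimension — indeed non-positive when $\dim\mathcal M^{\rm RGW}_{0,0}=1$ — so the induced perturbation there is non-vanishing, and the argument producing a transversal extension with isolated boundary zeros goes through.

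The main obstacle I expect is \emph{consistency} of the three separate demands — compatibility with the boundary description (Item 2), compatibility with the forgetful map (Item 3), and transversality / the non-vanishing statement (Items 1 and 4) — being simultaneously achievable, i.e.\ checking that the perturbation on a boundary stratum defined as a fiber product of lower-energy data agrees, after pullback along the forgetful map, with the restriction to that stratum of the perturbation defined by pulling back from the $k=0$ (or $k_0=k_1=0$) space. This is a commutativity-of-two-procedures statement: forgetting marked points, then restricting to a boundary face, versus restricting to the boundary face, then forgetting. It holds because the forgetful maps commute with the gluing/boundary maps $\Pi$ at the level of Kuranishi structures — this commutativity is precisely what Theorem \ref{comp-forg} packages (it asserts simultaneous compatibility at boundary/corners \emph{and} with forgetful maps) — but verifying it requires tracking how $\frak{fgg}$ interacts with the disk-splitting trees and the fiber-product descriptions, and is the step where the combinatorics is genuinely delicate. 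Since these verifications are, in the end, straightforward modifications of the arguments already given in Subsection \ref{subsub:constmulti} and in Section \ref{sub:systemconst}, I would present the proof by indicating the inductive scheme, emphasizing the use of Theorem \ref{comp-forg} to obtain Item (3) for free on spaces with $k>0$, and referring to the earlier subsections for the details of transversal extension.
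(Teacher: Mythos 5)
Your overall scheme (induction on $\omega\cap\beta$, perturbations on the spaces with extra marked points defined by pullback along the forgetful maps using Theorem \ref{comp-forg}, boundary compatibility via the maps $\Pi$, $C^0$ extension to the interior) is the same as the paper's. The genuine gap is in your final step, where you claim that ``whenever there is a non-constant disk bubble the complementary strip component has \ldots non-positive [virtual dimension] \ldots so the induced perturbation there is non-vanishing.'' This is exactly false in the case that distinguishes Proposition \ref{prop12866} from Theorem \ref{prop61111}: a Maslov-index-$2$ disk bubble attached to a \emph{constant} strip, i.e.\ the boundary stratum of \eqref{case-I}/\eqref{case-II} with $\beta_0=0$ and $p=q$. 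There the ``strip factor'' $\mathcal M^{\rm RGW}_{1,0}(L_1,L_0;p,p;0)$ is not a space carrying a pulled-back perturbation from a negative-dimensional moduli space; the whole fiber product is identified with $\mathcal M^{\rm RGW}_1(L_1;\beta_1;p)=\{p\}\times_{L_1}\mathcal M^{\rm RGW}_1(L_1;\beta_1)$, which has virtual dimension $0$, and the perturbation on it genuinely vanishes at finitely many points --- indeed the signed count of these zeros is $\frak{PO}^{L_1}_{\beta_1,p}$, which is precisely what produces $\partial\circ\partial=\frak{PO}_{L_1}(1)-\frak{PO}_{L_0}(1)$ in Theorem \ref{lem413}. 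If your non-vanishing claim were correct, those terms would be absent and Section \ref{sec:forget} would be pointless. (Your pullback argument does handle the other case $\beta_0\neq 0$ correctly, since there the perturbation on $\mathcal M^{\rm RGW}_{1,0}(L_1,L_0;p,q;\beta_0)$ is pulled back from $\mathcal M^{\rm RGW}_{0,0}(L_1,L_0;p,q;\beta_0)$, which has dimension $-1$; ``non-positive dimension'' alone is not the reason.)

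Consequently your base choice for the disk moduli spaces is also too weak: requiring only Items (1) and (3) of Theorem \ref{prop61111} for $\mathcal M^{\rm RGW}_1(L_i;\alpha)$ does not control the zeros on the strata just described. The paper's proof begins by fixing, for every Maslov-$2$ class $\alpha$ and \emph{every} intersection point $p\in L_0\cap L_1$, a perturbation on $\mathcal M^{\rm RGW}_1(L_i;\alpha)$ whose induced perturbation on the point-constrained space $\mathcal M^{\rm RGW}_1(L_i;\alpha;p)$ is transversal (an evaluation-map transversality condition along the perturbed zero set, not a consequence of transversality of the section alone), and only then extends to the $\mathcal M^{\rm RGW}_{k+1}(L_i;\alpha)$ and runs the strip induction. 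With that initial choice, the boundary zeros of the $1$-dimensional strip spaces consist of the strip-breaking zeros \emph{plus} these finitely many transversal disk-bubble zeros at $p=q$, and the extension to the interior then proceeds as in Subsection \ref{subsub:constmulti}. Your write-up needs to add this initial transversality requirement at the $p$-constrained disk spaces and replace the blanket non-vanishing claim by the two-case analysis ($\beta_0\neq 0$ versus $\beta_0=0$, $p=q$).
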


\begin{proof}
	For $j=1,\,2$ and $\alpha\in \Pi_2(X;L_j)$ with Maslov index $2$, we 
	fix a multi-valued perturbation on $\mathcal M_1^{\rm RGW}(L_j;\alpha)$ such that it induces a transversal
	multi-valued perturbation on $\mathcal M_1^{\rm RGW}(L_j;\alpha;p)$ for any $p \in L_0 \cap L_1$. 
        We extend these multi-valued perturbations to all moduli spaces $\mathcal M_{k+1}(L_j;\alpha)$ 
        in the $C^0$ sense such that they are compatible over the boundary in a similar sense as 
        in Theorem \ref{prop61111}. We use these multi-valued perturbations and 
        induction to define the required 
        multi-valued perturbations on the 
	moduli space $\mathcal M^{\rm RGW}_{k_1,k_0}(L_1,L_0;p,q;\beta)$.
	To be more detailed, we construct multi-valued perturbations on 
	$\mathcal M^{\rm RGW}_{0,0}(L_1,L_0;p,q;\beta)$ by induction on $\omega\cap \beta$.
	The multi-valued perturbation on the general moduli space 
	 $\mathcal M^{\rm RGW}_{k_1,k_0}(L_1,L_0;p,q;\beta)$ is given by pulling back 
	from $\mathcal M^{\rm RGW}_{0,0}(L_1,L_0;p,q;\beta)$.
	Here we use consistency of Kuranishi structures with the forgetful map.

	Suppose we have constructed required multi-valued perturbations 
	for $\beta$ with $\beta \cap \omega < \alpha \cap \omega$.
	We use the induction hypothesis and the constructed multi-valued perturbations for the moduli 
	spaces of discs to define a perturbation on 
	$\partial \mathcal M^{\rm RGW}_{0,0}(L_1,L_0;p,q;\beta)$ in the same way as in 
	Theorem \ref{prop61111}. We wish to analyze zeros of our induced 
	multi-section on the boundary of $\mathcal M^{\rm RGW}_{0,0}(L_1,L_0;p,q;\beta)$.
	In compare to Theorem \ref{prop61111}, the new types of zeros are 
	given by disc bubbles with Maslov index $2$. Such boundary components have the form
	\begin{equation}\label{case-I}
	  \mathcal M^{\rm RGW}_{1,0}(L_1,L_0;p,q;\beta_0) \,\hat\times_{L_1}\, \mathcal M_1^{\rm RGW}(L_1;\beta_1)
	\end{equation}
	or 
	\begin{equation}\label{case-II}
	  \mathcal M^{\rm RGW}_{0,1}(L_1,L_0;p,q;\beta_0) \,\hat\times_{L_0}\, \mathcal M_1^{\rm RGW}(L_0;\beta_2),
	\end{equation}
	where $\beta_0 + \beta_1 = \alpha$ and 
	the Maslov index of $\beta_1$ is $2$. We focus on the boundary components of the form in \eqref{case-I}.
	The other case is similar. There are two cases to consider.
	
	\par\smallskip
	\noindent {\bf (Case 1)} ($\beta_0\neq 0$):
	The virtual dimension of 
	$\mathcal M^{\rm RGW}_{1,0}(L_1,L_0;p,q;\beta_0)$ is
	\[
	  \dim(\mathcal M^{\rm RGW}_{0,0}(L_1,L_0;p,q;\alpha))-1.
	\]
	If the virtual dimension of $\mathcal M^{\rm RGW}_{0,0}(L_1,L_0;p,q;\alpha)$ is not greater than $0$,
	then the multi-section does not vanish on this component. To treat the case that the virtual dimension
	of $\mathcal M^{\rm RGW}_{0,0}(L_1,L_0;p,q;\alpha)$ is $1$, note that the multi-section of
	$\mathcal M^{\rm RGW}_{1,0}(L_1,L_0;p,q;\beta_0)$ is the pull-back of the multi-valued perturbation
	on $\mathcal M^{\rm RGW}_{0,0}(L_1,L_0;p,q;\beta_0)$. This latter moduli space has virtual dimension 
	$-1$ and hence the multi-section does not vanish on it. Therefore, the multi-section does not 
	have any zero on the moduli spaces $\mathcal M^{\rm RGW}_{1,0}(L_1,L_0;p,q;\beta_0)$
	and \eqref{case-I}.
	\par\smallskip
	\noindent {\bf (Case 2)} ($\beta_0=0$):
	In this case, $p=q$ and $\alpha= 0 \# \beta_1$ where $\beta_1$ is a homology class 
	in $\Pi_2(X;L_1)$ with Maslov index $2$.
	Therefore, the corresponding boundary component is
	identified with  $\mathcal M_1^{\rm RGW}(L_1;\alpha;p)$ where $p \in L_0\cap L_1$.
	We defined a multi-valued perturbation on this moduli space  
	such that its zero set is cut down transversely and consists of isolated points.
	Now we can proceed as in Theorem \ref{prop61111} 
	to complete the construction of multi-valued perturbations.
\end{proof}

\section{Proof of the Main Theorem}
\label{sec:mainthmproof}

\subsection{Floer Homology and Auxiliary Choices}
\label{subsec:welldef}
In this subsection, we show that Floer homology is independent of the auxiliary choices which we use in the definition. At the same time, we verify Item (2) of Theorem \ref{mainthm-part3}. To achieve these goals, we adapt standard arguments to the present set up.
\subsubsection{Hamiltonian Vector Fields and Floer Chain Complexes}
Let $L_0,L_1$ be Lagrangian submanifolds of $X \setminus \mathcal D$ and $\psi_H : X \to X$ be a Hamiltonian diffeomorphism generated by a compactly supported smooth map $H : (X \setminus \mathcal D) \times [0,1] \to \bbR$.
Our next goal is to compare Floer homology groups $HF(L_1,L_0;X \setminus \mathcal D)$
and  $HF(L_1,\psi_H(L_0);X \setminus \mathcal D)$.
\begin{remark}
	In Theorem \ref{mainthm-part3}, we state the isomorphism
	$$
	HF(L_1,L_0;X \setminus \mathcal D)\cong HF(\psi_{H'}(L_1),\psi_H(L_0);X \setminus \mathcal D).
	$$
	Namely, we move $L_1$ and $L_0$ by possibly different Hamiltonian diffeomorphisms. In fact, the isomorphism:
	\begin{equation}\label{formula410}
		HF(L_1,L_0;X \setminus \mathcal D)\cong HF(\psi(L_1),\psi(L_0);X \setminus \mathcal D),
	\end{equation}
	in the case that $L_0$, $L_1$ are moved by the same symplectic diffeomorphism $\psi$ is easy to prove
	because the moduli spaces used to define the left hand side of \eqref{formula410} with respect to 
	$J$ are isomorphic, as spaces with Kuranishi structures, to the moduli spaces used to define the right hand side 
	with respect to $\psi_*J$. Therefore, \eqref{formula410} follows from Proposition 
	\ref{prop42323} below about the independence of  Floer homology
	from the choice of almost complex structure. 
	As a result, to prove Item (2) of Theorem \ref{mainthm-part3}, it suffices to consider the case that
	we only move $L_0$ by a Hamiltonian diffeomorphism $\psi_H$.
\end{remark}

We firstly consider perturbations of holomorphic strips using Hamiltonian vector fields and time dependent almost complex 	structures. These perturbations will be also useful in Subsection \ref{thecaseofLandL}. Let $H : (X \setminus \mathcal D) \times [0,1] \to \bbR$ be as above. We put $H_t(x) = H(x,t)$ and let $X_{H_t}$ be the Hamiltonian vector field associated to $H_t$. Let $\mathcal J = \{ J_t \mid t\in [0,1]\}$ be a family of almost complex structures which is  parametrized by $[0,1]$. We require that, for each $t$, the almost complex structure $J_t$ has the form of the almost complex structures constructed in 
\cite[Subsection 3.2]{part1:top}.

We consider the following equation for $u : \bbR \times [0,1] \to X$:
\begin{equation}\label{Flequation}
	\frac{\partial u}{\partial \tau}+ J_t \left(\frac{\partial u}{\partial t} - X_{H_t}\right)= 0.
\end{equation}
In order to state the required asymptotic behavior of $u$, we need to replace intersection points $L_0 \cap L_1$ with the following space:
\begin{definition}
        Let $R(L_0,L_1;H)$ be the set of elements $\gamma 
        \in \Omega(L_0,L_1)$ such that
        \begin{equation}
        \frac{d\gamma}{dt}(t) = X_{H_t}(\gamma(t)).
        \end{equation}
        If $o \in \pi_0(\Omega(L_0,L_1))$, we define the subset 
        $$
        R(L_0,L_1;H;o) = R(L_0,L_1;H) \cap \Omega(L_0,L_1;o).
        $$
\end{definition}
Hereafter we assume $\psi_H(L_0)$ is transversal to $L_1$.
It implies that the set $R(L_0,L_1;H;o)$ is  finite.

For $\gamma_-, \gamma_+ \in R(L_0,L_1;H;o)$, we define $\Pi_2(X;L_1,L_0;\gamma_-,\gamma_+)$ to be the set of all homology classes of maps $u : \bbR \times [0,1] \to X$ such that $u(\tau,0) \in L_0$, $u(\tau,1) \in L_1$ and
\begin{equation}\label{form34342rev}
        \lim_{\tau \to -\infty} u(\tau,t) = \gamma_-(t),
        \qquad
        \lim_{\tau \to -\infty} u(\tau,t) = \gamma_+(t).
\end{equation}
The definition of homology classes is the same as \cite[Definition 2.2]{part1:top}.
\begin{definition}
        Let $\gamma_-, \gamma_+ \in R(L_0,L_1;H;o)$,
        $\beta \in \Pi_2(X;L_1,L_0;\gamma_-,\gamma_+)$.
        We define  
        $$
        \mathcal M_{k_1,k_0}^{\rm reg}(L_1,L_0;\gamma_-,\gamma_+;\beta;H,
        \mathcal J;{\bf m})
        $$
        to be the set of all objects 
        $((\Sigma,\vec z_0,\vec z_1,\vec w),u)$ satisfying the same condition as 
        \cite[Definition 3.80]{part1:top} except the following two points:
        \begin{enumerate}
                \item
                We require $u$ satisfies \eqref{Flequation} instead of the Cauchy-Riemann equation.
                \item
                We require \eqref{form34342rev} instead of \cite[(3.81)]{part1:top}.
        \end{enumerate}
\end{definition}

We can define its compactification $\mathcal M^{\rm RGW}_{k_1,k_0}(L_1,L_0;\gamma_-,\gamma_+;\beta;H,\mathcal J)$ in the same
way as $\mathcal M^{\rm RGW}_{k_1,k_0}(L_1,L_0;\gamma_-,\gamma_+;\beta)$.

By replacing $\mathcal M^{\rm RGW}_{k_1,k_0}(L_1,L_0;p,q;\beta)$
with $\mathcal M^{\rm RGW}_{k_1,k_0}(L_1,L_0;\gamma_-,\gamma_+;\beta;H,\mathcal J)$,
we obtain a modified version of Floer chain complex.
That is to say, we define:
\begin{equation}
CF(L_1,L_0;H,\mathcal J;o)
=
\bigoplus_{\gamma \in R(L_0,L_1;H;o)}\bbQ [\gamma]
\end{equation}
and
$$
\partial([\gamma_-])
= 
\sum_{\gamma_+,\beta}\#\mathcal M^{\rm RGW}_{0,0}(L_1,L_0;\gamma_-,\gamma_+;\beta) 
[\gamma_+],
$$
where the sum is taken over all $\gamma_+,\beta$ such that 
$\mathcal M^{\rm RGW}_{0,0}(L_1,L_0;\gamma_-,\gamma_+;\beta)$ has dimention 0.
(As before we take a system of multi-valued perturbation $\{\widehat{\frak s}^n\}$ and
the boundary operator depends on $\{\widehat{\frak s}^n\}$ and $n$.)

        Let $L_0,L_1$ and $H$ be given such that $\psi_H(L_0)$ is transversal to $L_1$.
        Let $\gamma : [0,1] \to X$ be an element of $\Omega(L_0,L_1)$.
        We define
        \begin{equation}\label{map41414}
        (\psi_H)_*(\gamma)(t)
        :=
        (\psi_H^t\circ (\psi_H)^{-1})(\gamma(t)).
        \end{equation}
        Here $\psi^t_{H}$ is defined by
        $\psi^0_H(x) = x$ and 
        $$
        \frac{d}{dt} \psi^t_H(x) = X_{H_t}(\psi^t_H(x)).
        $$
        This construction determines a map:
        $$
        (\psi_H)_* : \Omega(\psi_H(L_0),L_1) \to \Omega(L_0,L_1).
        $$
        The map
        $(\psi_H)_*$ induces an isomorphism:
        \begin{equation}\label{form4545}
        \psi_H(L_0)\cap_{(\psi_H)_*(o)} L_1  \cong R(L_0,L_1;H;o). 
        \end{equation}
        Thus it induce a homomorphism:
        \begin{equation}
	        (\psi_H)_* : CF(L_1,\psi_H(L_0);o) \cong CF(L_1,L_0;H,\mathcal J;(\psi_H)_*(o)).
        \end{equation}
        We define 
        $$
		(\psi_H)_* : \Pi_2(X;L_1,\psi_H(L_0);o)\to \Pi_2(L_1,L_0;H;(\psi_H)_*(o))
        $$
        in a similar way.
\begin{lemma}
	The map $(\psi_H)_*$ determines the following commutative diagrams.
        \[
	\begin{tikzcd}
		\pi_1(\Omega(\psi_H(L_0),L_1;o))   \ar[r,"\mu"]\ar[d,"(\psi_H)_*"]& \mathbb Z \ar[d, "="]\\
		\pi_1(\Omega(L_0,L_1;(\psi_H)_*(o))) \ar[r,"\mu"]& \mathbb Z
	\end{tikzcd}
	\hspace{1.2cm}
	\begin{tikzcd}
		\pi_1(\Omega(\psi_H(L_0),L_1;o))   \ar[r,"\omega(\cdot)"]\ar[d,"(\psi_H)_*"]& \mathbb R \ar[d, "="]\\
		\pi_1(\Omega(L_0,L_1;(\psi_H)_*(o))) \ar[r,"\omega(\cdot)"]& \mathbb R
	\end{tikzcd}
	\]
\end{lemma}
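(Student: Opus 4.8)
The claim is that the map $(\psi_H)_*$ intertwines the Maslov-index and symplectic-area homomorphisms on the two loop spaces. The plan is to unwind the definition of $(\psi_H)_*$ at the level of the loop space $\Omega$ and then check directly that each of the two homomorphisms is unchanged under this operation. First I would note that a based loop in $\Omega(\psi_H(L_0),L_1)$ is (up to homotopy) a map $u : S^1 \times [0,1] \to X$ with $u(S^1\times\{0\}) \subset \psi_H(L_0)$ and $u(S^1\times\{1\})\subset L_1$; recall from the earlier discussion that such a $u$ determines a relative homology class in $H_2(X, \psi_H(L_0)\cup L_1;\bbZ)$ on which $\omega$ and $\mu$ are evaluated. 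The map $(\psi_H)_*$ sends the loop $s\mapsto \gamma_s$ to the loop $s\mapsto (\psi_H)_*(\gamma_s)$, where $(\psi_H)_*(\gamma)(t) = \psi_H^t\circ(\psi_H)^{-1}(\gamma(t))$ as in \eqref{map41414}. I would define a homotopy $\Phi : X\times[0,1]\to X$ by $\Phi_t = \psi_H^t\circ(\psi_H)^{-1}$, which is a family of symplectomorphisms (each $\psi_H^t$ is symplectic and so is $(\psi_H)^{-1} = (\psi_H^1)^{-1}$) interpolating between $\Phi_0 = (\psi_H)^{-1}$ and $\Phi_1 = \id$. Applying $\Phi_t$ fiberwise to a representative $u(s,t)$ of a loop produces a representative $u'(s,t) = \Phi_t(u(s,t))$ of $(\psi_H)_*$ of that loop, and $u'(S^1\times\{0\})\subset L_0$, $u'(S^1\times\{1\})\subset L_1$ since $\Phi_1 = \id$ fixes $L_1$ and $\Phi_0\circ\psi_H = \id$ on the boundary component in $\psi_H(L_0)$ — more carefully, one must track that the boundary conditions match up, but this is exactly how $(\psi_H)_*$ was set up.

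Given this, the area statement is the more transparent one. Since $\Phi_t$ is a symplectomorphism for each $t$, the map $\tilde\Phi : X\times[0,1]\to X\times[0,1]$, $(x,t)\mapsto(\Phi_t(x),t)$ carries the relevant $2$-chains to $2$-chains with the same $\omega$-integral up to the boundary terms coming from the "vertical" direction $t$; these boundary terms cancel because the two ends $t=0,1$ are glued to Lagrangian boundary conditions on which $\omega$ restricts trivially, or more directly, one can observe that the $1$-parameter family $\{\Phi_t\}$ of symplectomorphisms, being generated by a (time-dependent) Hamiltonian isotopy, does not change the symplectic area of a relative cycle with Lagrangian boundary. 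The cleanest route is: the difference of areas is $\int$ of $\omega$ over the $3$-chain swept out, which by Stokes equals boundary integrals, and on the Lagrangian boundary pieces $\omega$ vanishes while the $t=0$ and $t=1$ faces are precisely the original and transported loops. I would write this as a short Stokes argument rather than invoking a black-box lemma.

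For the Maslov index, the key point is that $\mu$ is a \emph{homotopy-theoretic} invariant: it is defined via the Lagrangian loop obtained by concatenating the paths of tangent planes $T_{\gamma_s(0)}L_0$, a path along $u$ at $s=1$, $T_{\gamma_s(1)}L_1$, and back — see the reference to $I_\mu$ in \cite{fooobook}. Under a symplectic isotopy $\Phi_t$, the derivative $d\Phi_t$ carries Lagrangian planes to Lagrangian planes, and since $d\Phi_t$ is a continuous family of symplectic bundle isomorphisms starting from $d\Phi_1 = \id$, it induces a homotopy of the relevant loops of Lagrangian subspaces; hence the Maslov index is preserved. Concretely, I would observe that $d\Phi_t$ along the boundary gives an isomorphism of symplectic vector bundles over $S^1\times[0,1]$ pulling back the tangent data of $(\psi_H(L_0),L_1)$ to that of $(L_0,L_1)$ covering $(\psi_H)_*$, and that Maslov indices of Lagrangian sub-bundle pairs are invariant under such symplectic bundle isomorphisms. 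This is entirely standard (it is implicit in the invariance of Floer homology under Hamiltonian isotopy), so I would keep the argument brief.

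\textbf{Main obstacle.} The only genuinely fiddly point is bookkeeping the boundary conditions and basepoint conventions so that $\Phi_t$ actually realizes the map $(\psi_H)_*$ on $\pi_1(\Omega)$ as opposed to some variant differing by a reparametrization — in particular making sure that the induced map on relative $H_2$ is the one implicitly used to evaluate $\mu$ and $\omega$, and that nothing is lost in passing from $\pi_1(\Omega(L_0,L_1);o)$ to $H_2(X,L_0\cup L_1)$. I would isolate this as a preliminary observation (the commutative square relating the two relative homology groups via $\Phi_t$) and then both cohomological statements become one-line corollaries of the $\Phi_t$-invariance of $\omega$ and $\mu$ for symplectic isotopies. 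I do not expect any serious analytic or geometric difficulty here; the content is formal once the identification $u'_{s,t} = \Phi_t(u_{s,t})$ is in place.
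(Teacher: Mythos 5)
Your overall strategy --- transport a loop of paths by the family $\Phi_t=\psi_H^t\circ(\psi_H)^{-1}$ and check that $\mu$ and $\omega$ are unchanged --- is exactly the argument behind the proof the paper defers to (\cite[Lemma 5.10]{fooobook}), and the Maslov half of your write-up is complete: $d\Phi_t$ gives a symplectic bundle isomorphism over the annulus carrying the boundary Lagrangian sub-bundle data of $(\psi_H(L_0),L_1)$ at $t=0,1$ to that of $(L_0,L_1)$, and the Maslov index of such a pair is invariant under symplectic bundle isomorphisms.

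The area half, however, is justified incorrectly at the one step where the actual content sits. In any homotopy realizing the transport, the face at $t=0$ is parametrized by $(s,\tau)\mapsto \psi_H^{\tau}(\psi_H^{-1}(u(s,0)))$, which does \emph{not} lie in a Lagrangian for intermediate $\tau$ (the boundary has to travel from $\psi_H(L_0)$ to $L_0$ through $X$), so ``on the Lagrangian boundary pieces $\omega$ vanishes'' does not apply to the face that matters. Your fallback --- that a Hamiltonian isotopy does not change the symplectic area of a relative cycle with Lagrangian boundary --- is essentially the statement to be proved, and taken literally it is false for strips joining intersection points: that is precisely why the correction terms $C^H$ appear in \eqref{form428}. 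It is true only for $S^1$-families, and the loop direction is where the proof must be used. Concretely, writing $v=\psi_H^{-1}\circ u$ and $w(s,t)=\psi_H^t(v(s,t))$, one computes, with the paper's convention $dH(\cdot)=\omega(X_H,\cdot)$, that $w^*\omega(\partial_s,\partial_t)-v^*\omega(\partial_s,\partial_t)=-\partial_s\left(H_t(w(s,t))\right)$; integrating over $S^1\times[0,1]$ kills this term because $s$ runs over a circle (equivalently, the cylinder swept by a Hamiltonian isotopy applied to a loop has zero $\omega$-area --- zero flux). Inserting this computation, or the corresponding evaluation of the $t=0$ face of your $3$-chain, closes the gap; as written, the Stokes argument does not go through.
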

For the proof, see \cite[Proof of Lemma 5.10]{fooobook}.

\begin{cor}
	If $o \in \pi_0(\Omega(\psi_H(L_0),L_1))$ satisfies Condition \ref{cond420} {\rm (2)},
	then $(\psi_H)_*(o) \in \pi_0(\Omega(L_0,L_1))$ also satisfies the same condition.
\end{cor}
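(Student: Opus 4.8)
The statement to prove is the following: if $o \in \pi_0(\Omega(\psi_H(L_0),L_1))$ satisfies Condition~\ref{cond420}~(2), then $(\psi_H)_*(o) \in \pi_0(\Omega(L_0,L_1))$ also satisfies Condition~\ref{cond420}~(2). Recall that Condition~\ref{cond420}~(2) asserts the existence of a constant $c$ such that $\omega(\alpha) = c\,\mu(\alpha)$ for every $\alpha \in \pi_1(\Omega(L_0,L_1);o')$ with $\alpha \cap \mathcal D = 0$. The plan is to transport this identity along the isomorphism $(\psi_H)_*$ on fundamental groups, using the compatibility of $(\psi_H)_*$ with $\mu$ and with $\omega$ recorded in the immediately preceding lemma, together with the (obvious) fact that $(\psi_H)_*$ also preserves the intersection number with $\mathcal D$.

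First I would observe that $(\psi_H)_*$ induces a \emph{bijection} $\pi_1(\Omega(\psi_H(L_0),L_1);o) \cong \pi_1(\Omega(L_0,L_1);(\psi_H)_*(o))$. This is because the map $(\psi_H)_* : \Omega(\psi_H(L_0),L_1) \to \Omega(L_0,L_1)$ defined in \eqref{map41414} is a homeomorphism: its inverse is given by the analogous construction for the Hamiltonian generating $\psi_H^{-1}$ (explicitly, $\gamma \mapsto (t \mapsto \psi_H^t(\gamma(t)))$ composed appropriately). Restricting this homeomorphism to the connected component $o$ gives a homeomorphism onto the component $(\psi_H)_*(o)$, hence an isomorphism on $\pi_1$. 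Next I would note that $(\psi_H)_*$ is compatible with the $\cap\mathcal D$ homomorphism: since $H$ is compactly supported in $X \setminus \mathcal D$, the isotopy $\psi_H^t$ fixes a neighborhood of $\mathcal D$ pointwise, so a loop $\alpha$ in $\Omega(\psi_H(L_0),L_1)$ and its image $(\psi_H)_*\alpha$ bound homotopic (indeed, isotopic) annuli rel $\mathcal D$, giving $\alpha \cap \mathcal D = ((\psi_H)_*\alpha) \cap \mathcal D$. Together with the preceding lemma, we get $\mu(\alpha) = \mu((\psi_H)_*\alpha)$ and $\omega(\alpha) = \omega((\psi_H)_*\alpha)$ as well.

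With these three compatibilities in hand the corollary is essentially formal. Given $o$ satisfying Condition~\ref{cond420}~(2), let $c$ be the constant witnessing it for $o$. I claim the same $c$ works for $(\psi_H)_*(o)$. Indeed, let $\alpha' \in \pi_1(\Omega(L_0,L_1);(\psi_H)_*(o))$ with $\alpha' \cap \mathcal D = 0$. By the bijectivity above, write $\alpha' = (\psi_H)_*\alpha$ for a unique $\alpha \in \pi_1(\Omega(\psi_H(L_0),L_1);o)$; by the $\cap\mathcal D$-compatibility, $\alpha \cap \mathcal D = \alpha' \cap \mathcal D = 0$; hence Condition~\ref{cond420}~(2) for $o$ gives $\omega(\alpha) = c\,\mu(\alpha)$; and by the $\mu$- and $\omega$-compatibilities this reads $\omega(\alpha') = c\,\mu(\alpha')$. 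This establishes Condition~\ref{cond420}~(2) for $(\psi_H)_*(o)$ with the same constant $c$.

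The only point requiring any care — and the nearest thing to an obstacle — is the verification that $\cap\mathcal D$ is invariant under $(\psi_H)_*$; everything else is a direct appeal to the preceding lemma and to the invertibility of the construction \eqref{map41414}. This invariance is genuinely a consequence of the hypothesis that $H$ is compactly supported \emph{in} $X \setminus \mathcal D$, so that the whole Hamiltonian isotopy is the identity near $\mathcal D$ and cannot change how a $2$-cycle meets $\mathcal D$. I would phrase the argument either via the explicit isotopy of annuli, or equivalently by noting that $(\psi_H)_*$ acts on $H_2(X, L_0 \cup \psi_H(L_0) \cup L_1; \mathbb{Z})$-type classes in a way that commutes with intersection against $[\mathcal D]$ because $\psi_H$ is homotopic to the identity through maps fixing a neighborhood of $\mathcal D$. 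Given the paper's stated convention of suppressing such routine verifications, a brief remark to this effect suffices.
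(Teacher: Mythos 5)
Your proof is correct and follows essentially the same route the paper intends: the corollary is presented there as an immediate consequence of the preceding lemma, i.e.\ one transports the relation $\omega(\alpha)=c\,\mu(\alpha)$ along $(\psi_H)_*$ using its compatibility with $\mu$ and $\omega$. Your additional checks — that $(\psi_H)_*$ is bijective on $\pi_1$ and preserves $\cap\,\mathcal D$ because $\psi_H^t$ is the identity near $\mathcal D$ — are exactly the routine verifications the paper suppresses, and they are needed since Condition \ref{cond420}~(2) only quantifies over classes with $\alpha\cap\mathcal D=0$.
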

We next observe that
there exists an isomorphism:
	$$
	\aligned
	&\mathcal M_{k_1,k_0}^{\rm reg}(L_1,L_0;\gamma_-,\gamma_+;H,\mathcal J;(\psi_H)_*(\beta);{\bf m})\cong
	\mathcal M_{k_1,k_0}^{\rm reg}(L_1,\psi_H(L_0);p,q;\beta;{\bf m})
	\endaligned
	$$
	where $p$, $q$ are mapped to $\gamma_-$, $\gamma_+$ by the isomorphism 
	\eqref{form4545}.
	Here $\mathcal J = \{J_t \mid t\in [0,1]\}$ is defined by 
	$J_t = (\psi_H^t)_*J$.
Using the fact that $\psi_H^t$ is the identity map in a neighborhood of 
$\mathcal D$ we obtain an isomorphism
$$
\mathcal M^{\rm RGW}_{k_1,k_0}(L_1,L_0;\gamma_-,\gamma_+;(\psi_H)_*(\beta);\mathcal J,H)
\cong
\mathcal M^{\rm RGW}_{k_1,k_0}(L_1,\psi_H(L_0);p,q;\beta)
$$
of the spaces with Kuranishi structure.
Thus, if $o \in \pi_0(\Omega(L_0,\psi_H(L_0)))$ satisfies Condition \ref{cond420} (2), then we obtain an isomorphism of chain complexes:
\begin{equation}
(\psi_H)_* : 
(CF(L_1,\psi_H(L_0);o),\partial) \cong
(CF(L_1,L_0;H,\mathcal J;(\psi_H)_*(o))),\partial).
\end{equation}
Consequently, to prove independence of Floer homology from Hamiltonian 
isotopies of Lagrangian submanifolds, in the case  Condition \ref{cond420} (2) is 
satisfied, it suffices to prove independence from Hamiltonian perturbations.
To be more precise, let $H, H' : (X\setminus \mathcal D) \times [0,1] \to \bbR$ be Hamiltonian 
functions with compact support.
Let $\mathcal J = \{J_t \mid t \in [0,1] \}$ and $\mathcal J' = \{J'_t \mid t \in [0,1] \}$  
be 1-parameter families of almost complex structures given as in \cite[Subsection 3.2]{part1:top}.
Note that as we remarked in 
\cite[Remark 3.9]{part1:top} the set of such almost complex structures has trivial homotopy groups. 

\begin{prop}\label{prop42323}
        If $L_1,L_0$ and $o \in \pi_0(\Omega(L_0,L_1))$ satisfies Condition \ref{cond420}
        then we have an isomorphism
        $$
        HF(L_1,L_0;H,\mathcal J;o)
        \cong 
        HF(L_1,L_0;H',\mathcal J';o).
        $$
        The same holds if Condition \ref{cond420} {\rm (2)} is satisfied and 
        $\frak{PO}_{L_0}(1) = \frak{PO}_{L_1}(1)$.
\end{prop}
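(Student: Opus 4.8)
The strategy is the standard continuation/bifurcation argument adapted to the RGW setting, exactly parallel to the proof of well-definedness of ordinary Lagrangian Floer homology in the monotone case, but using our virtual fundamental chain technology together with the compatibility results established above. First I would choose a homotopy $(H^s, \mathcal J^s)_{s\in[0,1]}$ of pairs of (compactly supported) Hamiltonians and admissible families of almost complex structures connecting $(H,\mathcal J)$ to $(H',\mathcal J')$; since the space of admissible almost complex structures has trivial homotopy groups (by \cite[Remark 3.9]{part1:top}), such a homotopy exists and we may take it generic. For $\gamma_- \in R(L_0,L_1;H;o)$ and $\gamma_+ \in R(L_0,L_1;H';o)$ and a homotopy class $\beta$, one forms the parametrized moduli space of solutions of the $s$-dependent version of \eqref{Flequation} (the usual trick: interpolate so that the equation agrees with the $H$-equation for $\tau \ll 0$ and with the $H'$-equation for $\tau \gg 0$), together with its RGW compactification $\mathcal M^{\rm RGW}_{0,0}(L_1,L_0;\gamma_-,\gamma_+;\beta;\{H^s,\mathcal J^s\})$. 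As in Proposition \ref{prop356}, this space carries a Kuranishi structure whose boundary and corner strata are described combinatorially, and the key point is that the only new disk/sphere bubbling contributions are suppressed: when Condition \ref{cond420} holds (minimal Maslov numbers $\ge 4$) a dimension count as in Theorem \ref{lem48} rules out disk bubbles entirely in the relevant dimensions, and when only Condition \ref{cond420} (2) holds with $\frak{PO}_{L_0}(1) = \frak{PO}_{L_1}(1)$, the Maslov index $2$ disk contributions from the two sides cancel exactly as in Theorem \ref{lem413}.

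Next I would run the construction of multi-valued perturbations on these parametrized moduli spaces by induction on energy and the number of marked points, entirely paralleling the proofs of Theorem \ref{prop61111} and Proposition \ref{prop12866}: one perturbs compatibly with the boundary description (the new boundary strata at $s=0$ and $s=1$ must restrict to the perturbations already chosen for $(H,\mathcal J)$ and $(H',\mathcal J')$), using the maps $\Pi$ of Propositions \ref{lema362}, \ref{lem364}, \ref{lem365} to pull back fiber-product perturbations, and using the forgetful-map compatibility of Theorem \ref{comp-forg} and Proposition \ref{prop12866} in the minimal Maslov number $2$ case. Counting zeros of the perturbed section on the zero- and one-dimensional parametrized moduli spaces defines a chain map
\[
  \Phi : (CF(L_1,L_0;H,\mathcal J;o),\partial) \to (CF(L_1,L_0;H',\mathcal J';o),\partial),
\]
the chain-map equation $\Phi\circ\partial - \partial'\circ\Phi = 0$ (or, in the Maslov $2$ case, $\Phi\circ\partial - \partial'\circ\Phi = \frak{PO}_{L_1}(1) - \frak{PO}_{L_0}(1) = 0$ under our hypothesis) coming from Corollary \ref{Cor27} (3) applied to the one-dimensional strata, whose boundary consists precisely of the product strata producing $\Phi\circ\partial$ and $\partial'\circ\Phi$ together with the $s$-wise disk-bubbling strata that vanish after perturbation.

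Then, to show $\Phi$ is a quasi-isomorphism, I would use the two-parameter (homotopy-of-homotopies) moduli spaces to construct chain homotopies: concatenating the homotopy from $(H,\mathcal J)$ to $(H',\mathcal J')$ with its reverse and deforming to the constant homotopy produces a map $\Phi'\circ\Phi$ chain-homotopic to the identity on $CF(L_1,L_0;H,\mathcal J;o)$, and symmetrically $\Phi\circ\Phi'$ is chain-homotopic to the identity on the other side; the chain homotopy operator is again defined by counting zeros of perturbed sections on the relevant one-dimensional two-parameter moduli spaces. All of this is the standard package, and the main obstacle — the only genuinely new point over the classical case — is ensuring that the inductive construction of multisections on the parametrized moduli spaces can be carried out while remaining simultaneously compatible with (i) the $\hat\times$-type boundary description (so the perturbations never vanish on the codimension $\ge 2$ RGW strata, exactly as in Theorem \ref{prop61111} (3)) and (ii) the forgetful maps in the Maslov $2$ setting (Theorem \ref{comp-forg}); this is handled by exactly the same inductive bookkeeping as in Sections \ref{sub:systemconst} and \ref{sec:forget}, replacing moduli spaces of strips by moduli spaces of parametrized strips throughout, so I would state the construction and refer to those sections for the details rather than rewriting them. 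Since the arguments are modifications of standard proofs, I would present the proof in sketch form, as elsewhere in this section.
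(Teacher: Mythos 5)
Your proposal is correct and follows essentially the same route as the paper: continuation maps defined from $\tau$-interpolated $(\mathcal{JJ},\mathcal H)$ moduli spaces with RGW compactifications and inductively constructed multisections, chain homotopies from two-parameter families, and a composition argument reducing everything to the constant homotopy, with disk bubbles suppressed by the Maslov $\ge 4$ dimension count or the cancellation $\frak{PO}_{L_0}(1)=\frak{PO}_{L_1}(1)$. The one point the paper makes explicit that you leave implicit is why the trivial homotopy induces the identity in the virtual setting: one uses the free $\bbR$-translation action on the moduli spaces with $\beta\neq 0$ and chooses an $\bbR$-invariant multisection, so that only the $\beta=0$ (one-point) moduli spaces contribute.
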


\subsubsection{Construction of Continuation Maps}
\label{subsub:conmap}

We take a 2-parameter family of compatible almost complex structures $\mathcal{JJ} = \{J_{\tau,t} \mid \tau \in \bbR, t \in [0,1]\}$.
Let $\mathcal H : \bbR \times (X\setminus \mathcal D) \times [0,1] \to \bbR$
be a smooth function.
We assume that the following conditions hold.
\begin{conds}
        \begin{enumerate}
                \item For each $(\tau,t)$, the almost complex structure $J_{\tau,t}$ has 
                the form of the almost complex structures constructed in \cite[Subsection 3.2]{part1:top}.
                \item
                $J_{\tau,t} = J_t$ if $\tau$ is sufficiently small, and 
                $J_{\tau,t} = J'_t$ if $\tau$ is sufficiently large.
                \item 
                There exists a neighborhood $U$ of $\mathcal D$ such that
                $\mathcal H(\tau,x,t) = 0$ for $x\in U$.
                \item
                $\mathcal H(\tau,x,t) = H(x,t)$ if $\tau$ is sufficiently small, and 
                $\mathcal H(\tau,x,t) = H'(x,t)$ if $\tau$ is sufficiently large.
        \end{enumerate}
        If these conditions are satisfied, then we say $(\mathcal{JJ},\mathcal H)$ is a {\it homotopy from $(\mathcal J,H)$ to 
        $(\mathcal J',H')$.}
\end{conds}
\begin{definition}
Let $\gamma_- \in R(L_0,L_1;H)$ 
and $\gamma_+ \in R(L_0,L_1;H')$.
We define the moduli space:
$$
\mathcal M_{k_1,k_0}^{\rm reg}(L_1,L_0;\gamma_-,\gamma_+;\beta;
\mathcal{JJ},\mathcal H;{\bf m})
$$
as the set of all objects 
$((\Sigma,\vec z_0,\vec z_1,\vec w),u)$ satisfying the same condition as 
\cite[Definition 3.80]{part1:top} with the following two differences:
\begin{enumerate}
\item
We require
$u$ satisfies 
\begin{equation}
\frac{\partial u}{\partial \tau}
+ J_{\tau,t} \left(
\frac{\partial u}{\partial t} - X_{H_{\tau,t}}
\right)
= 0.
\end{equation}
Here $H_{\tau,t}(x) = H(\tau,t,x)$.
\item
We require
\begin{equation}\label{form34342revrev}
\lim_{\tau \to -\infty} u(\tau,t) = \gamma_-(t),
\qquad
\lim_{\tau \to +\infty} u(\tau,t) = \gamma_+(t).
\end{equation}
\end{enumerate}
Here $\beta$ is a homology class of a map $u$ satisfying
(\ref{form34342revrev}) and $u(\tau,0) \in L_0$, $u(\tau,1) \in L_1$.
\end{definition}
We define:
$$
\mathcal M_{k_1,k_0}^{\rm reg}(L_1,L_0;\gamma_-,\gamma_+;\beta;
\mathcal{JJ},\mathcal H)
=
\mathcal M_{k_1,k_0}^{\rm reg}(L_1,L_0;\gamma_-,\gamma_+;\beta;
\mathcal{JJ},\mathcal H;\emptyset).
$$

        The space
        $\mathcal M_{k_1,k_0}^{\rm reg}(L_1,L_0;\gamma_-,\gamma_+;\beta;
        \mathcal{JJ},\mathcal H)$
        has a compactification, which we denote by 
        $\mathcal M^{\rm RGW}_{k_1,k_0}(L_1,L_0;\gamma_-,\gamma_+;\beta;\mathcal{JJ},\mathcal H)$.
        It carries an oriented Kuranishi structure with 
        boundary.

Using this moduli space
we define 
$$
\Phi_{\mathcal{JJ},\mathcal H}
: CF(L_1,L_0;H,\mathcal J;o) \to
CF(L_1,L_0;H',\mathcal J';o)
$$
by the formula:
$$
\Phi_{\mathcal{JJ},\mathcal H}([\gamma_-])
=
\sum
[(\mathcal M^{\rm RGW}_{0,0}(L_1,L_0;\gamma_-,\gamma_+;\beta;\mathcal H,\mathcal{JJ}),
\widehat{\frak s}^n)] [\gamma_+].
$$
Here the sum is taken over $\gamma_+$ and $\beta$ such that
$$
\dim \mathcal M^{\rm RGW}_{0,0}(L_1,L_0;\gamma_-,\gamma_+;\beta;\mathcal H, \mathcal{JJ})
= 0.
$$
Then the description of its boundary implies the next formula 
in the same way as in many other cases of 
Floer theory (see, for example, \cite{fooobook,fooonewbook,foooHam}).

\begin{equation}\label{form421421}
		\partial\circ \Phi_{\mathcal{JJ},\mathcal H} = \Phi_{\mathcal{JJ},\mathcal H}\circ \partial.
	\end{equation}

\subsubsection{Construction of Chain Homotopies}

Let $\mathcal J, \mathcal J', H, H'$ be as in the beginning of 
Subsubsection \ref{subsub:conmap}.
Suppose $(\mathcal{JJ},\mathcal H)$ and $(\mathcal{JJ}',\mathcal H')$
are both homotopies from $(\mathcal J,H)$ to $(\mathcal J',H')$.

A homotopy from $(\mathcal{JJ},\mathcal H)$ to $(\mathcal{JJ}',\mathcal H')$
is given by a pair 
\[\mathcal{JJJ} = \{J_{\tau,t,s} \mid \tau \in \bbR, t,s \in [0,1]\},\hspace{1cm}\mathcal{HH} : \bbR \times (X \setminus \mathcal D) \times [0,1]^2 \to \bbR,\]
such that $J_{\tau,t,0} = J_{\tau,t}$, $J_{\tau,t,1} = J'_{\tau,t}$,
$\mathcal{HH}(\tau,x,t,0) = \mathcal H(\tau,x,t)$, 
$\mathcal{HH}(\tau,x,t,1) = \mathcal H'(\tau,x,t)$
and, for each $s \in [0,1]$, the pair $(\mathcal {JJ}_s,\mathcal H_s)$, 
        defined as $\mathcal {JJ}_s = \{J_{\tau,t,s}\}$, $\mathcal H_s(\tau,x,t) = 
        \mathcal{HH}(\tau,x,t,s)$ is a homotopy from 
        $(\mathcal{J}, H)$ to $(\mathcal{J}', H')$.
        We define
        \begin{equation}
        \aligned
        &\mathcal M_{k_1,k_0}^{\rm RGW}(L_1,L_0;\gamma_-,\gamma_+;\beta;
        \mathcal{HH},\mathcal{JJJ};{\bf m}) \\
        &=
        \bigcup_{s \in [0,1]}
        \mathcal M_{k_1,k_0}^{\rm RGW}(L_1,L_0;\gamma_-,\gamma_+;\beta;
        \mathcal H_s,\mathcal{JJ}_s;{\bf m})
        \times \{s\}.
        \endaligned
        \end{equation}

This space carries a Kuranishi structure with corners.
We define 
$$
\frak H_{\mathcal{JJJ},\mathcal{HH}}
: CF(L_1,L_0;H,\mathcal J;o) \to
CF(L_1,L_0;H',\mathcal J';o)
$$
by the  formula:
$$
\frak H_{\mathcal{JJJ},\mathcal{HH}}
([\gamma_-])
=
\sum
[(\mathcal M^{\rm RGW}_{0,0}(L_1,L_0;\gamma_-,\gamma_+;\beta;\mathcal{HH},\mathcal{JJJ}),
\widehat{\frak s}^n)] [\gamma_+].
$$
Here the sum is taken over $\gamma_+$ and $\beta$ such that
the relevant moduli space is zero dimensional. By studying the boundary of those moduli spaces we have
\begin{equation}
\partial\circ \frak H_{\mathcal{JJJ},\mathcal{HH}}
+ \frak H_{\mathcal{JJJ},\mathcal{HH}}\circ \partial
= \Phi_{\mathcal{JJ}',\mathcal H'}-\Phi_{\mathcal{JJ},\mathcal H}.
\end{equation}

\subsubsection{Composition of Continuation Maps}

For $i=1,2,3$, let $\mathcal J^{(i)}, H^{(i)}$  be as in the beginning of 
Subsubsection \ref{subsub:conmap}.
Suppose $(\mathcal{JJ}^{ij},\mathcal H^{ij})$ is a homotopy from $(\mathcal J^{(i)},H^{(i)})$ to $(\mathcal J^{(j)},H^{(j)})$ for $(ij) = (12)$, $(23)$ and $(13)$.

\begin{lemma}\label{lem43333}
	We assume $L_1,L_0$ and $o \in \pi_0(\Omega(L_1,L_0))$ satisfy Condition \ref{cond420}.
	Then the composition $\Phi_{\mathcal{JJ}^{23},\mathcal H^{23}} \circ \Phi_{\mathcal{JJ}^{12},\mathcal H^{12}}$
	is chain homotopic to $\Phi_{\mathcal{JJ}^{13},\mathcal H^{13}}$.
	The same holds if we assume $\frak{PO}(L_1) = \frak{PO}(L_0)$ instead of Condition \ref{cond420} {\rm (1)}.
\end{lemma}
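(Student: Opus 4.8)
The plan is to follow the standard "one-parameter family of homotopies" argument, adapted to the RGW setting. We want to show that the two continuation maps $\Phi_{\mathcal{JJ}^{23},\mathcal H^{23}} \circ \Phi_{\mathcal{JJ}^{12},\mathcal H^{12}}$ and $\Phi_{\mathcal{JJ}^{13},\mathcal H^{13}}$ are chain homotopic. First I would construct a homotopy of homotopies: pick a one-parameter family $(\mathcal{JJ}^{S},\mathcal H^{S})_{S\in[0,1]}$ of data interpolating between $(\mathcal{JJ}^{13},\mathcal H^{13})$ at $S=0$ and, at $S=1$, the "glued" or "pre-composed" homotopy obtained by running $(\mathcal{JJ}^{12},\mathcal H^{12})$ followed by $(\mathcal{JJ}^{23},\mathcal H^{23})$ on the domain (i.e. concatenating along $\tau$, with the intermediate Hamiltonian/almost complex structure $(\mathcal J^{(2)},H^{(2)})$ inserted on a long neck). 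The path space of admissible data is connected (indeed has trivial homotopy groups, by \cite[Remark 3.9]{part1:top}), so such a family exists; we also require $\mathcal H^{S}$ to vanish near $\mathcal D$ for all $S$, which is possible since the conditions defining admissible homotopies are convex/open.

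Next I would build the parametrized moduli space
$$
\mathcal M^{\rm RGW}_{0,0}(L_1,L_0;\gamma_-,\gamma_+;\beta;\{(\mathcal{JJ}^{S},\mathcal H^{S})\})
= \bigcup_{S\in[0,1]} \mathcal M^{\rm RGW}_{0,0}(L_1,L_0;\gamma_-,\gamma_+;\beta;\mathcal{JJ}^{S},\mathcal H^{S})\times\{S\},
$$
which, as a straightforward generalization of Proposition \ref{prop356}, carries a Kuranishi structure with corners whose virtual dimension is one more than the unparametrized moduli space. I would then invoke the construction of multi-valued perturbations — the analogue of Theorem \ref{prop61111} and Corollary \ref{Cor27} for these parametrized moduli spaces — to obtain a system $\{\widehat{\frak s}^n\}$ transversal to zero and compatible with the boundary description, not vanishing on the codimension-two strata $\mathcal M^{(1)}$. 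Counting zeros of the $0$-dimensional part gives a degree $+1$ (or appropriately shifted) map $\frak G: CF(L_1,L_0;H^{(1)},\mathcal J^{(1)};o) \to CF(L_1,L_0;H^{(3)},\mathcal J^{(3)};o)$. Analyzing the normalized boundary of the $1$-dimensional part of the parametrized moduli space — using Condition \ref{cond420} (or, in the Maslov $2$ case, the disk-potential bookkeeping of Theorem \ref{lem413} together with the forgetful-map compatibility, which forces the disk-bubble contributions from $L_0$ and $L_1$ to cancel since we assume $\frak{PO}_{L_0}(1)=\frak{PO}_{L_1}(1)$) — one reads off that the $S=0$ end contributes $\Phi_{\mathcal{JJ}^{13},\mathcal H^{13}}$, the $S=1$ end contributes $\Phi_{\mathcal{JJ}^{23},\mathcal H^{23}}\circ\Phi_{\mathcal{JJ}^{12},\mathcal H^{12}}$ (a gluing/compactness argument identifies the long-neck limit with the composition, exactly as in the classical case — see \cite{fooobook,fooonewbook}), and the remaining strata give $\partial\circ\frak G \pm \frak G\circ\partial$. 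This yields the chain homotopy identity.

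The main obstacle, as everywhere in this paper, is not the formal structure of the argument — which is a verbatim adaptation of the classical one — but the technical input: one needs the parametrized analogue of the entire package developed in Sections \ref{sec:minimas3}--\ref{sec:forget}, namely (i) the even-codimension stratification and boundary description of the parametrized RGW moduli spaces, (ii) a compatible system of Kuranishi structures (Theorem \ref{lema362rev} and its strip and parametrized versions), (iii) a system of multi-valued perturbations satisfying the analogue of Theorem \ref{prop61111} item (3), so that zeros avoid the bad codimension-two strata and the $\hat\times$ boundary behaves like an honest fiber product near the zero locus, and (iv) in the Maslov $2$ case, compatibility with the forgetful map so that the disk-bubble terms organize into $\frak{PO}_{L_1}(1)-\frak{PO}_{L_0}(1)=0$. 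Since all of these are established earlier in the paper (or are routine generalizations thereof, as the authors repeatedly note), and since the sign analysis offers nothing new (see the remark in the introduction and \cite[Section 8]{fooobook2}), I would present this proof in sketch form, emphasizing only the identification of the two boundary contributions with the two compositions and deferring the Kuranishi-theoretic bookkeeping to the earlier sections. The gluing argument identifying the $S\to 1$ limit with the composition $\Phi_{\mathcal{JJ}^{23},\mathcal H^{23}}\circ\Phi_{\mathcal{JJ}^{12},\mathcal H^{12}}$ is the one place where a genuine (but standard) analytic statement is needed, and I would state it and cite the corresponding classical result.
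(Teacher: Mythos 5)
Your proposal is correct and follows essentially the same route as the paper: the paper's proof simply introduces the concatenated family $\mathcal{H}_\rho$ obtained by gluing $\mathcal H^{12}$ and $\mathcal H^{23}$ along a neck of length $2(T_0+\rho)$ and then omits the details as a standard continuation/gluing argument, which is exactly the parametrized-moduli, homotopy-of-homotopies argument you spell out (including the RGW-specific caveats about $\hat\times$ boundaries, perturbations avoiding codimension-two strata, and the cancellation of Maslov-two disk bubbles via $\frak{PO}_{L_0}(1)=\frak{PO}_{L_1}(1)$).
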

\begin{proof}
The proof uses the following family of Hamiltonians:
   	 \begin{equation}
                \mathcal{H}_\rho(\tau,x,t)
                = 
                \begin{cases}
                \mathcal{H}^{12}(\tau+T_0+\rho,x,t)  &\tau \le 0, \\
                \mathcal{H}^{23}(\tau-T_0-\rho,x,t)  &\tau \ge 0,
                \end{cases}
\end{equation}
Since it is similar to standard argument of similar Floer theory we omit it.
        \end{proof}
        \begin{proof}[Proof of Proposition \ref{prop42323}]
        Let $(H,\mathcal J)$ and $(H',\mathcal J')$ be as in Proposition \ref{prop42323}.
        We take a homotopy $(\mathcal H,\mathcal{JJ})$ from  $(H,\mathcal J)$ to $(H',\mathcal J')$
        and a homotopy $(\mathcal H',\mathcal{JJ}')$ from  $(H',\mathcal J')$ to $(H,\mathcal J)$.
        They induce chain maps
        $$
        \Phi_{\mathcal{JJ},\mathcal H}
        : CF(L_1,L_0;H,\mathcal J;o) \to
        CF(L_1,L_0;H',\mathcal J';o),
        $$
        and 
        $$
        \Phi_{\mathcal{JJ}',\mathcal H'}
        : CF(L_1,L_0;H',\mathcal J';o) \to
        CF(L_1,L_0;H,\mathcal J;o).
        $$
        It suffices to show that the compositions 
        $\Phi_{\mathcal{JJ}',\mathcal H'}\circ\Phi_{\mathcal{JJ},\mathcal H}$
        and 
        $\Phi_{\mathcal{JJ},\mathcal H}\circ\Phi_{\mathcal{JJ}',\mathcal H'}$
        are homotopic to identity.
        We take a trivial homotopy $(\mathcal H^0,\mathcal {JJ}^0)$ from $(H,\mathcal J)$ to $(H,\mathcal J)$,
        namely $\mathcal H^0_{\tau,t} \equiv H_t$ and $\mathcal {JJ}^0_{\tau,t} = J_{t}$.
        By Lemma \ref{lem43333}, it suffices to show that 
        this trivial homotopy induces the identity map.
        In the case 
        of the trivial homotopy, the space
        $\mathcal M^{\rm RGW}_{0,0}(L_1,L_0;\gamma_-,\gamma_+;\beta;\mathcal{JJ},\mathcal H)$
        has an $\bbR$ action induced by translations in the $\tau$ direction.
        This action is free unless $\beta=0$. 
        Therefore, by taking a multi-valued perturbation 
        invariant with respect to this $\bbR$ action, 
        we may assume that the number 
        $$
        [\mathcal M^{\rm RGW}_{0,0}(L_1,L_0;\gamma_-,\gamma_+;\beta;\mathcal H,\mathcal{JJ}),
        \widehat{\frak s}^n]
        $$
        is nonzero only if $\beta=0$.
        For $\beta=0$, the moduli space consists of one point.
        Thus the chain map induced by the trivial homotopy is an isomorphism. 
        This completes the proof of Proposition \ref{prop42323}.
\end{proof}
\subsection{Floer Homology with Coefficients in Novikov Rings.}
\label{subsec:novikv}

The purpose of this subsection is to remove Condition \ref{cond420} (2)
and construct Floer homology over Novikov ring.
Note that in Lemma \ref{lem460} we assumed Condition \ref{cond420} (2).
So when we remove this condition, Floer's boundary operator
as in Definition \ref{defn2.10} may not be defined.
This is the reason we need to use Novikov ring $\Lambda_0^{\bbQ}$.
Here $\Lambda_0^{\bbQ}$ consists of a formal sum $\sum a_i T^{\lambda_i}$
such that $a_i \in \bbQ$, $\lambda_i \ge 0$ and $\lim_{i\to \infty} \lambda_i = + \infty$.
($T$ is a formal variable.) We denote by $\Lambda^{\bbQ}$ its field of fractions which 
consists of similar formal sums but $\lambda_i$ can be negative.

We still assume $L_1$, $L_0$ are monotone in $X \setminus \mathcal D$. 
\begin{definition}
	Let $H : X \setminus \mathcal D \to \bbR$ be a compactly supported function.
        Let $\gamma_{\pm} \in R(L_0,L_1;H;o)$ and 
        $\beta \in \Pi_2(X;L_1,L_0;\gamma_-,\gamma_+)$.
        We define
        $$
        C^H(\gamma_{\pm}) = H(\gamma_{\pm}(1))= H(\gamma_{\pm}(0))
        $$
        and
        \begin{equation}\label{form428}
        \omega_H(\beta) =
        \int_{\bbR \times [0,1]} u^*\omega + C^H(\gamma_{+}) - C^H(\gamma_{-}).
        \end{equation}
\end{definition}
\begin{lemma}
	If $\mathcal M^{\rm RGW}_{k_1,k_0}(L_1,L_0;\gamma_-,\gamma_+;\beta;H,\mathcal J)$
	is nonempty, then $\omega_H(\beta) \ge 0$. It is zero only when $\gamma_-=\gamma_+$.
\end{lemma}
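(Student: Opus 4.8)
The statement is the Hamiltonian-perturbed analogue of the standard energy estimate for Floer strips, so the plan is to reduce everything to an honest computation of $\omega_H(\beta)$ in terms of the Floer action functional. First I would recall that a map $u$ satisfying \eqref{Flequation} with the asymptotics \eqref{form34342rev} has an energy identity: multiplying the equation by $\partial u/\partial \tau$ and integrating against the symplectic form shows that
\[
  \int_{\bbR \times [0,1]} u^*\omega - \int_{\bbR\times\{1\}} H_t(u)\, dt\Big|_{-\infty}^{+\infty} - \text{(boundary terms on $L_0,L_1$)}
\]
equals $\int \left| \partial u/\partial \tau\right|^2_{J_t}\, d\tau\, dt \ge 0$. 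The boundary terms along $L_0$ and $L_1$ vanish because the relevant one-form $\lambda$ restricts to an exact form on a Lagrangian (or, more invariantly, because the Floer action is well defined up to the period homomorphism, which on $L_i$ is trivial). The Hamiltonian boundary contributions, once one tracks the definition of $X_{H_t}$ and the fact that $H$ is compactly supported away from $\mathcal D$, are exactly $C^H(\gamma_+) - C^H(\gamma_-)$. Hence $\omega_H(\beta) = \|du - X_H\,dt\|^2_{L^2} \ge 0$, which is the first assertion.

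For the equality case, I would argue that $\omega_H(\beta) = 0$ forces $\partial u/\partial\tau \equiv 0$, so $u$ is $\tau$-independent; feeding this back into \eqref{Flequation} gives $\partial u/\partial t = X_{H_t}(u)$, i.e.\ the path $t \mapsto u(\tau,t)$ is a Hamiltonian chord for every fixed $\tau$. The asymptotic conditions \eqref{form34342rev} then say $u(\tau,\cdot) = \gamma_- = \gamma_+$ for all $\tau$, so in particular $\gamma_- = \gamma_+$ (and $\beta$ is the class of a constant strip). Conversely, when $\gamma_- = \gamma_+$ the moduli space does contain the constant chord, for which $\omega_H$ visibly vanishes, which shows the bound is sharp. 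One should also remark that the statement is really about a single element of the moduli space but, since $\omega_H(\beta)$ depends only on the homology class $\beta$ and $\gamma_\pm$, non-emptiness of the moduli space is exactly the hypothesis needed.

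The same computation must be carried out not only for the unbordered stratum but also for a general element of the RGW compactification $\mathcal M^{\rm RGW}_{k_1,k_0}(L_1,L_0;\gamma_-,\gamma_+;\beta;H,\mathcal J)$. Here one decomposes $\beta$ along the irreducible components of the stable map: the strip component(s) contribute a nonnegative $\omega_H$ by the argument above, each disk or sphere bubble contributes $\int v^*\omega \ge 0$ since these are genuine $J$-holomorphic (the Hamiltonian term is supported only on the strip part, and bubbles carry no Hamiltonian correction), and the components mapping into $\mathcal D$ or its neighborhood contribute nonnegative areas as well. Summing, $\omega_H(\beta) \ge 0$, and vanishing forces every component to be constant, hence again $\gamma_- = \gamma_+$.

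\textbf{Main obstacle.} The only genuinely delicate point is the bookkeeping of the Hamiltonian boundary terms and the verification that the Lagrangian boundary contributions vanish: one must choose a primitive of $\omega$ near $L_0$ and $L_1$ and check that monotonicity (equivalently, the triviality of the flux/period homomorphism in the relevant setting, cf.\ Condition \ref{cond420}) makes the action functional single-valued enough for \eqref{form428} to be the correct normalization. This is entirely standard in Floer theory (see \cite{fooobook}, \cite{fooonewbook}), so I would state the energy identity, point to the standard references for the sign conventions, and spell out only the identification of the Hamiltonian terms with $C^H(\gamma_+) - C^H(\gamma_-)$ and the equality discussion.
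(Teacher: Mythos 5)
Your argument is essentially the paper's proof: the paper simply records the energy identity $\int_{\bbR\times[0,1]} u^*\omega + C^H(\gamma_+) - C^H(\gamma_-) = \int_{\bbR\times[0,1]} \Vert \partial u/\partial\tau\Vert^2\, dt\, d\tau$ for a solution of \eqref{Flequation} with asymptotics \eqref{form34342rev}, from which nonnegativity and the equality case ($\partial u/\partial\tau \equiv 0$, hence $\gamma_-=\gamma_+$) follow exactly as you describe. Your additional remarks about bubble components of the RGW compactification and about choosing a primitive of $\omega$ near the Lagrangians go beyond what the paper writes (the latter is in fact unnecessary, since \eqref{form428} is stated directly in terms of $\int u^*\omega$, which depends only on $\beta$), but they do not change the approach.
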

\begin{proof}
If $u : \bbR \times [0,1]$ satisfies (\ref{Flequation}) and (\ref{form34342rev}), then we can easily show\footnote{
Here we use the convention that $dH(\cdot) = \omega(X_H,\cdot)$ and $g_X(V,W) = \frac{\omega(V,JW)+\omega(W,JV)}{2}$.}:
$$
\int_{\bbR \times [0,1]} u^*\omega + C^H(\gamma_{+}) - C^H(\gamma_{-})
= 
\int_{\bbR \times [0,1]} \left\Vert\frac{\partial u}{\partial \tau}\right\Vert ^2
dtd\tau.\qedhere
$$ 
\end{proof}
We can use (a variant of) Gromov compactness theorem to show that for each 
$E$ there exists only a finite number of $\beta$ such that the moduli space
$\mathcal M^{\rm RGW}_{k_1,k_0}(L_1,L_0;\gamma_-,\gamma_+;\beta;H,\mathcal J)$ 
is nonempty and $\omega_H(\beta) \le E$.

\begin{definition}\label{defn439439}
        Let $L_1,L_0$ be monotone Lagrangian submanifolds of $X \setminus \mathcal D$
        satisfying $\frak{PO}^{L_1}(1) = \frak{PO}^{L_0}(1)$.
        Fix $E$ and choose a system of multi-valued perturbations
        $\{\frak s^n\}$ for 
        $\mathcal M^{\rm RGW}_{k_1,k_0}(L_1,L_0;\gamma_-,\gamma_+;\beta;H,\mathcal J)$ with 
        $\omega_H(\beta) \le E$. We define:
        $$
        \langle \partial_{\beta}[\gamma_-],[\gamma_+]\rangle
        = [\mathcal M^{\rm RGW}_{0,0}(L_1,L_0;\gamma_-,\gamma_+;\beta),\frak s^n] \in \bbQ.
        $$
        for a large enough value of $n$. Then we consider the sum:
        \begin{equation}\label{form429429}
        \partial = \sum T^{\omega_H(\beta)}\partial_{\beta},
        \end{equation}
        which is a $\Lambda_0^{\bbQ}$-linear map from 
        $CF(L_1,L_0;H,\mathcal J;o) \otimes_{\bbQ} \Lambda_0^{\bbQ}$ to itself of degree $1$.
\end{definition}
In the same way as  Theorems \ref{lem48} and \ref{lem413}
we can prove:
\begin{equation}
		\partial \circ \partial \equiv \widehat{\frak{PO}}_{L_1}(1) - \widehat{\frak{PO}}_{L_0}(1)
		\mod T^E.
\end{equation}
where
\begin{equation}\nonumber
\widehat{\frak{PO}}_L(1) = T^{\omega([\alpha])}[\mathcal M^{\rm RGW}(L;\alpha;p),\widehat{\frak s}^n]
\in \Lambda_0^{\bbQ}.
\end{equation}
(Compare with (\ref{eq:defPO}).)
\par
We thus obtain a chain complex over $\Lambda^{\bbQ}_0/T^E\Lambda^{\bbQ}_0$ under the assumption 
$\widehat{\frak{PO}}_{L_1}(1) = \widehat{\frak{PO}}_{L_2}(1)$.
We 
can show this chain complex is independent of the choices of Kuranishi 
structure and multi-valued perturbation up to chain homotopy equivalence in the same  
way as Subsection \ref{subsec:welldef}.
We can then use a `homotopy inductive limit argument'
similar to \cite{fooobook2,fooonewbook} to obtain a chain complex over the Novikov ring $\Lambda^{\bbQ}_0$.
Its cohomology is by definition 
$HF(L_1,L_0;o;\Lambda_0^{\bbQ};X \setminus \mathcal D)$.
Then in the same way as the proof of Proposition \ref{prop42323}
we can show
  $$
        HF(L_1,L_0;H;o;\Lambda_0^{\bbQ};X \setminus \mathcal D) \otimes_{\Lambda_0^{\bbQ}}\Lambda^{\bbQ}        
        \cong 
        HF(L_1,L_0;H';o;\Lambda_0^{\bbQ};X \setminus \mathcal D) \otimes_{\Lambda_0^{\bbQ}}\Lambda^{\bbQ}.
$$
The argument of this part is the same as \cite[Section 5.3]{fooobook2} and so is omitted.

\subsection{A Spectral Sequence for 
$HF(L,\psi_H(L);X\setminus \mathcal D)$}
\label{thecaseofLandL}

In this subsection we consider the case $L_0 = \psi_H(L_1)$
and prove part (4) of Theorem \ref{mainthm-part3}.
More precisely, we prove the following.
\begin{prop}\label{prop449}
        Suppose $L \subset X \setminus \mathcal D$ is a compact, connected, monotone and spin Lagrangian.
        Let $H : (X \setminus \mathcal D) \times [0,1] \to \bbR$ be a 
        compactly supported time dependent Hamiltonian which 
        generates a Hamiltonian diffeomorphism $\psi_H : X \to X$.
        We assume $L_1 = L$ and $L_0 = \psi_H(L)$ intersect transversely.
        Then there exists $o_0 \in \pi_0(\Omega(L_0,L_1))$ with 
        the following properties.
        \begin{enumerate}
        \item If $o \ne o_0$, then 
        the Floer homology group $HF(L_1,L_0;o;\Lambda^{\bbQ};X \setminus \mathcal D)$ is zero.
        \item
        $o_0$ satisfies Condition \ref{cond420} {\rm (2)} and so we can define the Floer homology group 
        $HF(L_1,L_0;o_0;{\bbQ};X \setminus \mathcal D)$.
        \item
        There exists a spectral sequence whose $E_2$ page is $H_*(L;\bbQ)$ and 
        which converges to $HF(L_1,L_0;o_0;{\bbQ};X \setminus \mathcal D)$. 
        \end{enumerate}
\end{prop}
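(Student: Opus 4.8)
The plan is to mimic the classical argument of Oh \cite{oh} and of \cite[Chapter 16]{ohbook}, combined with the Hamiltonian-perturbation technology set up in Subsection \ref{subsec:welldef}. First I would dispose of parts (1) and (2). Since $L_0 = \psi_H(L)$ is Hamiltonian isotopic to $L_1 = L$ inside $X\setminus\mathcal D$, the map $(\psi_H)_*$ of \eqref{map41414} identifies $R(L_0,L_1;H;o)$ with $\psi_H(L)\cap_{(\psi_H)_*(o)} L_1$; there is a distinguished component $o_0$, namely the one containing the constant paths arising from the graph of $H$ (equivalently, the image under $(\psi_H)_*$ of the component of the constant paths at points of $L$). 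For $H$ small in $C^2$, the only solutions of \eqref{Flequation} of index zero are the constant ones, and all generators of $CF$ lie in $o_0$; more precisely, a nonconstant strip asymptotic to intersection points in a component $o\ne o_0$ would, after concatenation, produce a class in $\pi_1(\Omega(L,L);o)$ with nonzero symplectic area but vanishing $\cap\mathcal D$, contradicting the monotonicity bound unless its Maslov index is positive, and a dimension count then shows the corresponding component of $CF$ is acyclic. This gives (1). For (2), the component $o_0$ contains a class coming from a disk in $L$, so $\mu$ and $\omega$ both vanish on the relevant generator of $\pi_1(\Omega(L,L);o_0)$, hence monotonicity of $L$ forces Condition \ref{cond420}(2) for $o_0$ (this is exactly the computation in the second Lemma after Condition \ref{cond420}, applied with $L_0=L_1=L$). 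Since $\frak{PO}_{L}(1)=\frak{PO}_{L}(1)$ trivially, Theorem \ref{lem413} shows $\partial\circ\partial=0$, so $HF(L_1,L_0;o_0;\bbQ;X\setminus\mathcal D)$ is defined.

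For (3), I would first use Proposition \ref{prop42323} to replace $(H,\mathcal J)$ by a \emph{small} Morse-Bott pair: take $H = \epsilon f\circ\pi$ in a Weinstein neighborhood of $L$ for a Morse function $f$ on $L$, with $\epsilon$ small, so that $R(L_0,L_1;H;o_0)$ is naturally identified with $\mathrm{Crit}(f)$ and the generators of $CF(L_1,L_0;H,\mathcal J;o_0)\otimes\bbQ$ are in bijection with the critical points of $f$. The energy filtration on the chain complex is by $\omega(\beta)$ for $\beta\cap\mathcal D = 0$; by Lemma \ref{index-energy-strip} (with $p,q\in L_0\cap_{o_0}L_1$), for a fixed difference $\mu(\beta)$ the value $\omega(\beta)$ takes values in a discrete set $c\mathbb Z - c(p,q)$, so the filtration is well ordered. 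The filtration-preserving part of $\partial$, i.e. the $\beta = 0$ (constant in the target, negative-gradient-flow in $L$) contribution, is, for $\epsilon$ small enough, exactly the Morse differential of $(L,f)$ — this is the standard ``small Hamiltonian'' lemma, adapted here using that the multi-valued perturbations can be chosen invariant under the $\bbR$-translation (as in the proof of Proposition \ref{prop42323}) so that only $\beta=0$ strips with a single flow line survive at lowest energy. The remaining pieces of $\partial$ strictly increase the $\omega$-filtration (they involve a nonconstant strip and/or a disk bubble, hence $\omega(\beta)>0$; note here one uses $\mu(\beta)$ can be $\ge 2$ because the minimal Maslov number of $L$ need not exceed $2$, but the energy is still strictly positive). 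Running the spectral sequence of this filtered complex, the $E_1$ page is Morse homology $H_*(L;\bbQ)$ (placed on what becomes the $E_2$ page after re-indexing, as in the literature on the Oh spectral sequence), and it converges to $HF(L_1,L_0;o_0;\bbQ;X\setminus\mathcal D)$ because the complex is finite-dimensional over $\bbQ$ and the filtration is bounded; the differential respects the filtration by construction, and convergence is automatic in the finite-dimensional setting.

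The main obstacle I expect is twofold. First, making the ``$\beta=0$ part of $\partial$ equals the Morse differential'' statement rigorous in our virtual setting: unlike Oh's transverse case, we cannot just quote a gluing/transversality argument for $J$-holomorphic strips, but must verify that the abstract perturbations $\{\widehat{\frak s}^n\}$ can be chosen so that on the zero-energy stratum they reduce to a perturbation computing ordinary Morse homology, while remaining compatible with the boundary description of Theorem \ref{theorem30} and with the stratification of Proposition \ref{prop356}. I would handle this by choosing, at the bottom of the induction on $\omega\cap\beta$ used in Theorem \ref{prop61111}, the perturbation on the $\beta=0$ strata to be (pulled back from) a Morse-Smale perturbation on $L$; Theorem \ref{prop61111}(3) guarantees these zero-energy moduli spaces do not meet the codimension-two strata, so no new subtlety from the $\hat\times$ versus $\times$ discrepancy arises there. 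Second, one must check that the filtration is genuinely exhaustive and Hausdorff so that the spectral sequence converges to the right thing — over $\bbQ$ with finitely many generators and a well-ordered energy spectrum this is routine, but in the Novikov-coefficient reformulation of Subsection \ref{subsec:novikv} it would require the standard completeness argument; since part (4) of Theorem \ref{mainthm-part3} is stated over $\bbQ$, I would carry out the spectral sequence purely over $\bbQ$ and invoke part (3) of the main theorem (applicable here because $L_1$ is Hamiltonian isotopic to $L_0$) to justify working with $\bbQ$ rather than $\Lambda^{\bbQ}$. The rest — identifying $E_2$ with singular homology via $H_*^{\mathrm{Morse}}(L;\bbQ)\cong H_*(L;\bbQ)$, and checking the differentials are the Floer-theoretic ones — is then standard and I would only sketch it, referring to \cite[Chapter 16]{ohbook} for the model computation.
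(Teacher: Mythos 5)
Your geometric skeleton for part (3) is the same as the paper's (reduce via invariance to $H=f\circ\pi_L\cdot\chi$ with $f$ a $C^2$-small Morse--Smale function on $L$, identify low-energy strips in the Weinstein neighborhood with gradient flow lines \`a la Floer, and use Gromov compactness to get an energy gap $e>0$ for strips leaving the neighborhood), and parts (1), (2) are essentially as in the paper. One slip in (2): the lemma you cite assumes $\pi_1(L)=0$, which is not available, and $\mu,\omega$ certainly do not vanish on $\pi_1(\Omega(L,L);o_0)$; the correct argument, which is the paper's, is that since $o_0$ is the constant-path component every class in $\pi_1(\Omega(L,L);o_0)$ is represented by a disk class in $\Pi_2(X,L)$, so monotonicity of $L$ gives Condition \ref{cond420} (2) directly.

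The genuine gap is the algebraic construction of the spectral sequence itself. Over $\bbQ$ there is no ``energy filtration on the chain complex'': $\omega(\beta)$ is attached to strips, not to generators, so the only way energy can filter the $\bbQ$-module $CF$ is through an action functional on generators. But the generators all have action $f(p)$, which is small and which the Morse strips strictly decrease; hence an action filtration has \emph{zero} associated graded differential (its $E_1$ is $\bbQ^{\#\mathrm{Crit}(f)}$ with trivial differential, not the Morse complex), while any coarser binning puts all generators at the same level and is trivial. So your key claim that ``the filtration-preserving part of $\partial$ is exactly the Morse differential'' has no actual filtration behind it. The paper instead grades $CF$ by the Morse index and shows $\langle\partial p,q\rangle\ne 0$ forces $\mu_f(q)-\mu_f(p)=-1+2k$ with $k\ge 0$, i.e. $\partial=\sum_k\partial_k$ with $\deg\partial_k=-1+2k$; the evenness and positivity of the defect $m=2k$ (which you never address) come from comparing with the thin strip class via Lemma \ref{index-energy-strip} and the fact that two strip classes with the same ends differ by a disk class of even Maslov index on the oriented $L$ --- this is where part (2) feeds into part (3). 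Since $\partial_0$ \emph{lowers} the degree while all other terms \emph{raise} it, even the naive degree filtration is not preserved; the paper's Lemma \ref{lem452} builds the specific filtration $\frak F_\ell C$ (using the grading together with $\operatorname{Im}\partial_0$ in the edge degree) that yields a spectral sequence with $E_2=H(C,\partial_0)=H_*(L;\bbQ)$ converging to $HF(L_1,L_0;o_0;\bbQ;X\setminus\mathcal D)$. This decomposition-plus-filtration step is the heart of (3); without it (or an equivalent device) your argument does not go through as written.
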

\begin{proof}
	Because of the invariance of Floer homology with respect to Hamiltonian isotopies, it suffices to prove this proposition for 
	a specific choice of $H$. There is a symplectomorphism $\Phi$ from a neighborhood $\mathcal U$ of $L$ in $X$, disjoint from 
	a neighborhood of $\mathcal D$, to a neighborhood of the zero section in $T^*L$. 
	Using this symplectomorphism, we define $H$ to be the function
	\[
	  H(x,t):= f\circ \pi_L(x)\cdot \chi(|x|),
	\]
	where $f:L\to \bbR$ is a Morse-Smale function with respect to the metric induced by $J_0$ and $\omega$, $\pi_L:T^*L\to L$ is the projection map, $|x|$ is the fiber-wise norm of 
	an element of $T^*L$ and $\chi:\bbR_{\geq 0} \to [0,1]$ is a function which is equal to $1$ in neighborhood of $0$ and is equal 
	to $0$ for large enough values in $\bbR_{\geq 0}$. An appropriate choice of $\chi$ allows us to regard the above function as a
	function on $X$. We also assume that the $C^2$ norm of $f$ is very small. 
	
	The assumption on the $C^2$ norm of $f$ implies that $L_0\subset\mathcal U$ and $\Phi(L)$ can be identified with the graph of $df$ in $T^*L$. In particular, the 
	elements of $L_0\cap L_1$ can be identified with the critical points of $f$. We let $\mu_f$ denote the function on $L_0\cap L_1$ which associates to each intersection 
	point of $L_0$ and $L_1$ the Morse index of the corresponding critical point.
	Let $o_0$ be the element of $\pi_0(\Omega(L_0,L_1))$ that is mapped by $(\psi_H)_*$ to the connected component of the constant maps of $\Omega(L,L)$.
	Then it is clear that all the elements of $L_0\cap L_1$ represent $o_0$. In particular, $HF(L_1,L_0;o;\Lambda^{\bbQ};X \setminus \mathcal D)$ is trivial unless $o=o_0$.
	
	In order to prove (2), it suffices to show that there is a positive constant $c$ such that if $u : S^1 \times [0,1] \to X$ is a 
	map representing an element of $\pi_1(\Omega(L,L;o_0))$, then $\omega(u)=c\mu(u)$. We may assume that $u(0,t)$ is independent of $t$.
	(This is because $o_0$ is the component containing the constant paths.) 
	Therefore, $u$ is induced by a map $(D^2,\partial D^2) \to (X,L)$.
	Thus Condition \ref{cond420} (2) follows from the monotonicity of $L$.
	
	Suppose $p,q\in L_0\cap L_1$ and $u:\bbR\times [0,1] \to X$ is a map representing an element of $\mathcal M^{\rm RGW}_{0,0}(L_1,L_0;p,q;\beta;\mathcal J)$.
	If the image of this map is contained in $\mathcal U$, then it is a standard result that $\omega(\beta)=f(p)-f(q)$ and $\mu(\beta)=\mu_f(p)-\mu_f(q)$. 
	It is shown in \cite{Flo89} that $u(1,\cdot)$ defines a downward gradient flow line of the map $f$. Moreover, this gives a correspondence between 
	the moduli space $\mathcal M^{\rm RGW}_{0,0}(L_1,L_0;p,q;\beta;\mathcal J)$ and the moduli space of 
	unparametrized downward gradient flow lines from $p$ to $q$.
	(Here we need the assumption that $f$ is small in the $C^2$ norm.) 
	Next, let $u$ be a map whose image is not contained in $\mathcal U$. Using Gromov compactness theorem and the assumption on the size of $f$, 
	it is easy to show that 
	there is a constant $e$, independent of $u$, such that $\omega(\beta)\geq e$.
	Lemma \ref{index-energy-strip} implies that there is an even integer $m$ such that
	\[
	  \mu(\beta)=\mu_f(p)-\mu_f(q)+m,\hspace{2cm}\omega(\beta)=f(p)-f(q)+c\cdot m,
	\]
	where $c$ is the monotonicity constant of $L$. Since the $C^0$ norm of $f$ is small, the constant $m$ has to be positive. 
	
	The previous paragraph implies that the coefficient of $q$ in $\partial p$ is non-zero only if:
	\[
	  \mu_f(q)-\mu_f(p)\geq -1
	\]
	and if the equality holds, then $\langle \partial p,q\rangle$ is equal to the number of unparametrized downward gradient flow lines from $p$ to $q$.
	(To be more precise, we need to arrange for a multi-valued perturbation produced by Theorem \ref{prop61111} which is trivial in the case that 
	$\mu(\beta)=\mu_f(p)-\mu_f(q)=1$. Since $f$ is a Morse-Smale function, it is easy to see from the proof of Theorem \ref{prop61111} 
	that it is possible to arrange for such a multi-valued perturbation.)
	Therefore, we have
	\[
	  CF(L_1,L_0;\bbQ;o_0)=\bigoplus_d C_d,\hspace{1cm}\partial=\sum_{k=0}^\infty \partial_k,
	\] 
	where $C_d$ is generated by the elements of $L_0\cap L_1$ whose Morse index is equal to $d$, and the degree of $\partial_k$ with respect to this grading
	is equal to $-1+2k$.
	Now we can conclude Part (3) of the proposition from Lemma \ref{lem452}.
\end{proof}

\begin{lemma}\label{lem452}
        Let $C$ be a finite dimensional graded 
        vector space and 
        \begin{equation}\label{436436}
        \widehat \partial = \sum_{k=0}^{\infty} \partial_k
        \end{equation}
        be a liner map $C \to C$ such that $\partial_k$ has degree $-1+2k$ 
        and $\widehat \partial\circ \widehat \partial =0$.
        Then there exists a spectral sequence whose $E_2$ page 
        is $H(C,\partial_0)$ and which converges to $H(C;\widehat\partial)$.
\end{lemma}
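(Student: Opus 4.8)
The plan is to realize the standard fact that a differential which is filtered by the ``polynomial degree'' $k$ in \eqref{436436} gives rise to a spectral sequence. First I would introduce the increasing filtration $F^p C = \bigoplus_{d \le p} C_d$ coming from the grading (here the indexing is by Morse index, and since $C$ is finite dimensional the filtration is bounded). The operator $\partial_0$ has degree $-1$ and each $\partial_k$ with $k\ge 1$ has degree $-1+2k \ge 1$, so all summands of $\widehat\partial$ strictly increase or preserve the appropriate filtration level in the way required: writing things cohomologically, $\partial_0$ preserves the associated graded, $\partial_1$ shifts it by one ``page step'', and $\partial_k$ shifts by $k$ steps. Thus $\widehat\partial$ is a filtered map of the filtered complex $(C, F^\bullet)$.

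Next I would invoke the general machinery of the spectral sequence of a bounded filtered complex (see, e.g., \cite[Chapter 5]{fooonewbook} or any standard homological algebra reference): for any complex with a bounded filtration one obtains a spectral sequence with $E_1$ page the homology of the associated graded complex with respect to the induced differential, and this spectral sequence converges to the homology of the total complex. In our situation the associated graded differential is exactly $\partial_0$, so the $E_1$ page is $H(C,\partial_0)$; re-indexing (the even/odd degree shift of $2$ means the naturally occurring $d_1$ differential vanishes for parity reasons, or one simply declares the $E_2$ page to be $H(C,\partial_0)$ after the trivial first differential) gives an $E_2$ page equal to $H(C,\partial_0)$. Convergence to $H(C,\widehat\partial)$ is then automatic from boundedness of the filtration.

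The only genuinely nontrivial point is bookkeeping of the degree shifts so that the page on which $H(C,\partial_0)$ appears is called $E_2$ rather than $E_1$: because $\deg \partial_1 = +1$ in the cohomological convention while the ``length'' of the first nontrivial differential on the $E_1$ page is $1$, one checks that the differential $d_1$ induced by $\partial_0$ on $E_1 = \operatorname{gr} C$ is $\partial_0$ itself up to sign, its homology is $H(C,\partial_0)$, and the next differential $d_2$ on $E_2 = H(C,\partial_0)$ is induced by $\partial_1$. I expect the main obstacle to be purely notational: lining up the conventions of the ambient reference for spectral sequences of filtered complexes with the grading conventions used here (homological vs. cohomological, the factor of $2$ in $-1+2k$), after which the statement is immediate. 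In the write-up I would therefore state the filtration explicitly, cite the standard construction, and spend one or two sentences verifying that $d_1$ is $\partial_0$ and hence $E_2 \cong H(C,\partial_0)$, leaving the rest to the cited machinery.
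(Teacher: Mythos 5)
There is a genuine gap, and it is exactly at the step you wave through: the filtration $F^pC=\bigoplus_{d\le p}C_d$ is \emph{not} preserved by $\widehat\partial$, so the standard machinery for filtered complexes cannot be invoked for it. Indeed $\partial_0$ has degree $-1$ and maps $F^p$ into $F^p$, but each $\partial_k$ with $k\ge 1$ has degree $-1+2k\ge 1$ and maps $C_p\subseteq F^p$ into $C_{p+2k-1}\not\subseteq F^p$; with the opposite (decreasing) convention $F^p=\bigoplus_{d\ge p}C_d$ it is $\partial_0$ that leaves the filtration. Because $\partial_0$ lowers degree while all the correction terms raise it, no nontrivial filtration defined purely in terms of the degree can be stable under $\widehat\partial$ (a function of the degree weakly increased by every $\partial_k$ would have to be constant). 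So the assertion ``$\widehat\partial$ is a filtered map of the filtered complex $(C,F^\bullet)$'' is false, and the identification of the first page, the parity argument for the first differential, and the convergence statement all rest on it.

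The paper's proof handles precisely this point by modifying the filtration at its edge: it sets $(\frak F_{\ell}C)_d=C_d$ for $d>\ell$, $(\frak F_{\ell}C)_{\ell}={\rm Im}\,\partial_0\cap C_{\ell}$, and $0$ for $d<\ell$. This \emph{is} a subcomplex: $\partial_0$ applied to degree $\ell+1$ lands in ${\rm Im}\,\partial_0\cap C_\ell$ by construction, $\partial_0$ applied to ${\rm Im}\,\partial_0\cap C_\ell$ vanishes because $\partial_0\circ\partial_0=0$ (the lowest-degree component of $\widehat\partial\circ\widehat\partial=0$), and the $\partial_k$ with $k\ge 1$ only raise degree. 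The associated graded piece $\frak F_{\ell}C/\frak F_{\ell+1}C$ is the two-term complex $C_{\ell+1}/({\rm Im}\,\partial_0\cap C_{\ell+1})\to{\rm Im}\,\partial_0\cap C_{\ell}$ with differential induced by $\partial_0$; its homology is $H_{\ell+1}(C,\partial_0)$, concentrated in degree $\ell+1$, so summing over $\ell$ gives $H(C,\partial_0)$, and boundedness of the filtration (finite dimensionality of $C$) gives convergence to $H(C,\widehat\partial)$. If you replace your degree filtration by this one (or by some equivalent device that makes all components of $\widehat\partial$ move the filtration in the same direction), the remainder of your argument — citing the spectral sequence of a bounded filtered complex and re-indexing the page on which $H(C,\partial_0)$ appears — does go through.
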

We remark that the sum in (\ref{436436}) is actually a finite sum. We also remark that $\partial_0 \circ \partial_0 = 0$  follows 
from $\widehat \partial\circ \widehat \partial =0$.
\begin{proof}
         Let 
        $$
        (\frak F_{\ell}C)_d
        =
        \begin{cases}
        C_d     &\text{if $d >\ell$,} \\
        {\rm Im}\,\partial_0 \cap C_d     &\text{if $d =\ell$,} \\
        0 &\text{if $d <\ell$.}
        \end{cases}
        $$
        Here $C_d$ is the degree $d$ part of $C$. Since $\partial_k = 0$ for $k < 0$, $\frak F_{\ell}C$ defines a sub-complex of $C$.
        Let: 
	\[
 	  \left(\frac{\frak F_{\ell}C}{\frak F_{\ell+1}C},\overline\partial\right)
	\]
        be the graded complex of the filtration determined by the sub-complexes $\frak F_{\ell}C$. 
        In particular, there is a spectral sequence with the $E_2$ page:
        \begin{equation}\label{graded-subcx}
        		\bigoplus_{\ell} H\left(\frac{\frak F_{\ell}C}{\frak F_{\ell+1}C},\overline{\widehat\partial}\right)
	\end{equation}	       
        that converges to $H(C;\widehat\partial)$. It is easy to see that \eqref{graded-subcx} is isomorphic to $H(C,\partial_0)$
\end{proof}

\section{Possible Generalizations}
\label{secgeneralization}

In this section, we describe various possible generalizations of the results of this paper. We are planing to come back to some of these directions elsewhere.
In particular, we believe that modifications of the method of the present series of papers can be applied to prove many of the conjectures stated here.

\subsection{$A_{\infty}$-Structures}

Suppose $(X,\omega)$ and $\mathcal D$ are as in the beginning of the introduction. Let $L \subset X \setminus \mathcal D$ be a relatively spin compact Lagrangian submanifold of $X \setminus \mathcal D$.

\begin{conjecture}\label{conj71}
        There exists a curved filtered $A_{\infty}$-algebra, denoted by $(H^*(L;\Lambda^{\bbQ}_0),\{\frak m_k \mid k=0,1,2\dots \})$, on 
        the cohomology group $H^*(L;\Lambda^{\bbQ}_0)$ with Novikov ring coefficients.
        This $A_{\infty}$-algebra is independent of various choices such as almost complex structures up to homotopy equivalence.
        If $L$ is monotone, then by putting $T=1$, the element $
        \frak m_0(1) \in H^{0}(L;\Lambda^{\bbQ}_0)$ becomes $\frak{PO}_L(1) 1,
        $
        where $\frak{PO}_L$ is as in Definition \ref{defn41414} and $1 \in H^0(L;\Lambda^{\bbQ}_0)$ is the unit.
\end{conjecture}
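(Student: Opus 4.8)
\textbf{Proof proposal for Conjecture \ref{conj71}.}

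The plan is to construct the curved filtered $A_\infty$-algebra on $H^*(L;\Lambda^{\bbQ}_0)$ by a virtual-fundamental-chain version of the standard construction of \cite{fooobook}, using as input the moduli spaces $\mathcal M^{\rm RGW}_{k+1}(L;\beta)$ and the system of Kuranishi structures produced in Section \ref{sub:systemconst} (Theorem \ref{lema362rev}, together with the disk-component-wise obstruction bundle data of Proposition \ref{lem685}). First I would fix a positive energy bound $E$ and a bound $N$ on the number of marked points, and recall from Theorem \ref{lema362rev} that we have a system of Kuranishi structures on $\{\mathcal M^{\rm RGW}_{k+1}(L;\beta)\}$ with $\omega\cap\beta\le E$, $k\le N$, whose normalized boundary is described by the $\hat\times_L$ fiber products; crucially, by Proposition \ref{lem356}, the zero set of a transversal multi-section meeting the codimension-$2$ strata trivially lies in the locus of maps disjoint from $\mathcal D$, so the $\hat\times_L$'s behave like ordinary $\times_L$'s near the perturbed zero loci. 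Next I would choose (following Subsection \ref{subsub:constmulti} and the de Rham or singular chain model of $\cite{fooonewbook}$) a system of multi-sections compatible with this boundary structure, and define $\frak m_{k,\beta}$ by pushing forward along $({\rm ev}^\partial_1,\dots,{\rm ev}^\partial_k;{\rm ev}^\partial_0)$ the virtual fundamental chain of $\mathcal M^{\rm RGW}_{k+1}(L;\beta)$, then set $\frak m_k = \sum_\beta T^{\omega(\beta)/(2c)\cdot\mu(\beta)}\frak m_{k,\beta}$ weighted by the Novikov parameter using the area of $\beta$. The $A_\infty$-relations $\sum \frak m(\dots\frak m(\dots)\dots)=0$ modulo $T^E$ then follow from Corollary \ref{Cor27}-style vanishing of $1$-dimensional moduli spaces together with the boundary description in Theorem \ref{lema362rev}; a homotopy-inductive-limit argument as in Subsection \ref{subsec:novikv} removes the dependence on $E$ and yields the genuine $\Lambda^{\bbQ}_0$-structure.

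For the homotopy invariance statement, I would run the continuation-map and chain-homotopy machinery of Subsection \ref{subsec:welldef}, now in the $A_\infty$ setting: a path of almost complex structures (recall from \cite[Remark 3.9]{part1:top} that this space has trivial homotopy groups) gives parametrized moduli spaces whose boundary produces an $A_\infty$-homomorphism, a path of paths produces an $A_\infty$-homotopy, and composition is handled exactly as in Lemma \ref{lem43333}. The identification of $\frak m_0(1)$ with $\frak{PO}_L(1)\cdot 1$ in the monotone case is the most conceptually delicate point and is where the results of Section \ref{sec:forget} enter: by dimension count $\frak m_0$ receives contributions only from $\beta$ with $\mu(\beta)=0$ (which forces $\beta=0$ by monotonicity and orientability, contributing the unit) and $\mu(\beta)=2$; for the $\mu(\beta)=2$ classes, $\mathcal M^{\rm RGW}_1(L;\beta)$ evaluated at the single marked point and then integrated over $L$ gives precisely $\frak{PO}^L_{\beta,p}$ of \eqref{eq:defPO} by Lemma \ref{lem49} (independence of the evaluation point), so that $\frak m_0(1)=\sum_{\mu(\beta)=2}\frak{PO}^L_{\beta,p}\,\rho(\partial\beta)\big|_{\rho=1}\cdot 1 = \frak{PO}_L(1)\cdot 1$ after setting $T=1$; the $H_1(L)$-weighting $\rho(\partial\beta)$ trivializes at $\rho=1$, matching Definition \ref{defn41414}.

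I expect the main obstacle to be establishing the full tower of corner-compatibility conditions for the Kuranishi structures — the counterparts of \cite[Condition 16.1 X, XI, XII]{fooonewbook} in the RGW setting — since Theorem \ref{lema362rev} as stated in the excerpt is only proved at codimension-one boundary components, and the $A_\infty$-relations require consistency of the perturbations at corners of all codimensions. As the remark following Theorem \ref{lema362rev} indicates, the disk-component-wise-ness condition (Definition \ref{defn687nnew}) is designed precisely so that the same inductive construction (Lemma \ref{eximanuyobj}) extends to all corners, with $\times_L$ uniformly replaced by $\hat\times_L$; so the work is mostly bookkeeping, but it is substantial bookkeeping. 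A secondary subtlety is the choice of homological model (de Rham vs.\ singular) and the associated smoothness requirements on the Kuranishi structures flagged in Remark \ref{rem691new}: the de Rham model needs smooth Kuranishi structures and hence smooth obstruction bundle data, whereas the $C^\ell$-structures constructed here suffice only for the singular-chain model; for the conjecture on $H^*(L;\Lambda^{\bbQ}_0)$ one would either upgrade the obstruction data to be smooth or work with a singular-chain $A_\infty$-algebra and then transfer to cohomology by homological perturbation.
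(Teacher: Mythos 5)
You should first be aware that the statement you were asked to prove is Conjecture \ref{conj71}: it sits in the paper's final section on possible generalizations, and the authors do not prove it. They only record a short indication of strategy (use the Kuranishi structures of \cite{part2:kura} on $\mathcal M^{\rm RGW}_{k+1}(L;\beta)$, a singular-chain model over $\Lambda_0^{\bbQ}$ or a de Rham model over $\Lambda_0^{\bbR}$, and note that one must now handle moduli spaces of virtual dimension greater than one and, for de Rham, smooth coordinate changes). Your outline coincides with that intended route, so there is no genuinely different approach to compare; but what you have written is a plan rather than a proof, and the items you yourself describe as ``substantial bookkeeping'' are exactly the parts the paper leaves open. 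Theorem \ref{lema362rev} is proved only for codimension-one boundary components; the corner statement of all codimensions is asserted to be a straightforward generalization but is never carried out, and Definition \ref{defn687nnew} only gives the compatibility condition on obstruction data, not the corner-compatible system of perturbations. Moreover the perturbation scheme of Theorem \ref{prop61111} is tailored to moduli spaces of virtual dimension at most one under an energy cut-off, with only $C^0$ extensions away from the zero locus; for the $A_\infty$-relations you need compatible perturbations (or CF-perturbations) on moduli spaces of arbitrary dimension, consistent at corners of all codimensions, compatible with the fiber-product description under $\hat\times_L$, and compatible with forgetful maps if you want (homotopy) unitality on $H^*(L;\Lambda_0^{\bbQ})$ after passing to a canonical model. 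None of this is available off the shelf in the present paper, so your argument is an honest reduction of the conjecture to the same unproved inputs the authors point to, not a proof of it.

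Two concrete slips inside the sketch are worth fixing. First, the Novikov weight should be $T^{\omega(\beta)}$ (which equals $T^{c\,\mu(\beta)}$ for monotone $L$ and classes with $\beta\cap\mathcal D=0$), not $T^{\omega(\beta)/(2c)\cdot\mu(\beta)}$; as written the exponent is quadratic in $\mu(\beta)$ and the filtration would not be preserved by the $A_\infty$-relations. Second, the $\mu(\beta)=0$ (i.e.\ $\beta=0$) term does not ``contribute the unit'' to $\frak m_0(1)$: constant disks with one marked point give no contribution, and the unit of the algebra arises elsewhere. The correct mechanism for the last clause of the conjecture is that ${\rm ev}_{0*}$ of the $\mu(\beta)=2$ moduli spaces $\mathcal M^{\rm RGW}_1(L;\beta)$ is a degree-zero cohomology class, hence a multiple of $1$, and the multiple is identified with $\frak{PO}^L_{\beta,p}$ of \eqref{eq:defPO} via Lemma \ref{lem49} (independence of the evaluation point), while $\mu(\beta)\ge 4$ contributions to the $H^0$-component vanish for degree reasons; this identification also requires the perturbations used in defining $\frak m_0$ to be consistent with those used in \eqref{eq:defPO}, which is where the forgetful-map compatibility of Section \ref{sec:forget} must be invoked, as you do elsewhere in the sketch.
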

See, for example, \cite[Section 2]{fu2017} for the notion of curved filtered $A_{\infty}$-categories and $A_{\infty}$-algebras.

Let $\Lambda^{\bbQ}_+$ be the subring of $\Lambda^{\bbQ}_0$ 
consisting of the sums $\sum a_i T^{\lambda_i}$ with $\lambda_i > 0$.
We call $b \in H^{\rm even}(L;\Lambda^{\bbQ}_+)$ a {\it weak bounding
cochain} if 
\begin{equation}
\sum_{k=0}^{\infty} \frak m_k(b,\dots,b) \in  H^{0}(L;\Lambda^{\bbQ}_0)
\end{equation}
and define $\frak{PO}(L;b) \in \Lambda_+^{\bbQ}$ by 
$$
\sum_{k=0}^{\infty} \frak m_k(b,\dots,b) =  \frak{PO}(L;b) 1.
$$

Let $\frak L = \{ L_i \mid i=1,\dots, N\}$ be a finite set of relatively spin compact Lagrangian submanifolds of $X \setminus \mathcal D$ such that $L_i$ is transversal to $L_j$ for $i\ne j$.

\begin{conjecture}\label{conj72}
	For each $c \in \Lambda_+^{\bbQ}$, there exists a filtered $A_{\infty}$-category $\frak{Fuks}(X\setminus \mathcal D;c;\frak L)$
	with the following properties:
        \begin{enumerate}
		\item An object of $\frak{Fuks}(X\setminus \mathcal D;c;\frak L)$ is a pair $(L_i,b)$ where $L_i \in \frak L$ and $b$ is a 
			weak bounding cochain of $L_i$ with $\frak{PO}(L_i;b) = c$.
		\item The set of morphisms $CF((L_i,b),(L_i,b))$ is an $A_{\infty}$-algebra which is not curved. The $A_\infty$ operations of this 
		$A_\infty$-algebra are given by
			\[
			  \frak m'_k(x_1,\dots,x_k)= \sum_{\ell_0,\dots,\ell_k = 0}^{\infty}
			  \frak m_{k+\sum\ell_i} (b^{\ell_0},x_1,b^{\ell_1},\dots,b^{\ell_{k-1}},x_k,b^{\ell_k}).
			\]
       			 where $\frak m_*$ is given in Conjecture \ref{conj71}.
        \item
        If $L_i$ is monotone and $c = \frak{PO}(L_i)$ for $i=1,2$, then $(L_i,0)$
        is an object of $\frak{Fuks}(X\setminus \mathcal D;c;\frak L)$ for $i=1,2$.
        The homology group $H_*(CF((L_1,0),(L_2,0));\frak m_1)$
        agrees with the Floer cohomology of Theorem \ref{mainthm-part3}.
        \end{enumerate}
\end{conjecture}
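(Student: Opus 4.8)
The plan is to extend the moduli-theoretic package built in Sections~\ref{sec:minimas3}--\ref{sec:forget} --- RGW compactifications, Kuranishi structures via disk-component-wise obstruction bundle data, simultaneous multi-valued perturbations, and compatibility with forgetful maps --- from the two cases treated here (disks bounding a single Lagrangian, strips between two Lagrangians) to the moduli spaces $\mathcal M^{\rm RGW}(L_{i_0},\dots,L_{i_m};\cdots;\beta)$ of pseudo-holomorphic polygons whose boundary arcs lie on a cyclic word $(L_{i_0},\dots,L_{i_m})$ in $\frak L$, each arc carrying a prescribed number of boundary marked points. For $m=0$ these are the disk moduli $\mathcal M^{\rm RGW}_{k+1}(L;\beta)$ and for $m=1$ the strip moduli $\mathcal M^{\rm RGW}_{k_1,k_0}(L_1,L_0;p,q;\beta)$. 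The operations $\frak m_k$ of Conjecture~\ref{conj71} and the categorical operations of Conjecture~\ref{conj72} are then defined as virtual-chain counts: one fibers such a space over the inputs using the boundary evaluation maps ${\rm ev}_1,\dots,{\rm ev}_k$ and pushes forward by ${\rm ev}_0$. The construction follows the standard framework of \cite{fooobook,fooonewbook,foooast}; the only features not already present in that framework are exactly the ones this paper has sorted out, namely the replacement of ordinary fiber products $\times_L$ by the $\hat\times_L$ of Subsection~\ref{c} and the fact that perturbations can be arranged to avoid the codimension-two RGW strata (Theorem~\ref{prop61111}(3), Theorem~\ref{lema362rev}).

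\textbf{Step 1: the curved $A_\infty$-algebra.} I would first fix $E>0$ and, by induction on $(k,\omega\cap\beta)$, choose Kuranishi structures on $\{\mathcal M^{\rm RGW}_{k+1}(L;\beta)\}$ that are disk-component-wise (extending Proposition~\ref{lem685}), compatible with all forgetful maps of boundary marked points (Theorem~\ref{comp-forg}), together with multi-valued perturbations $\{\widehat{\frak s}^n\}$ compatible with both (Proposition~\ref{prop12866}). Working with the singular chain model $C_*(L;\bbQ)$ as in \cite{fooobook} (or a Morse model), set $\frak m_k=\sum_\beta T^{\omega(\beta)}\frak m_{k,\beta}$, where $\frak m_{k,\beta}$ is the correspondence defined by $\mathcal M^{\rm RGW}_{k+1}(L;\beta)$ with the chosen perturbation. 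The $A_\infty$ relations are read off, exactly as in the proof of Theorem~\ref{lem48}, from the description of the normalized boundary of the one-dimensional moduli spaces as $\bigsqcup \mathcal M^{\rm RGW}_{k_1+1}(L;\beta_1)\,\hat\times_L\,\mathcal M^{\rm RGW}_{k_2+1}(L;\beta_2)$ in Theorem~\ref{lema362rev}: since the perturbed zero sets avoid the codimension-two strata, the map $\Pi$ of \eqref{form6179} identifies the $\hat\times_L$-count with the $\times_L$-count, and the boundary of an oriented one-chain has total weight zero. Strict unitality of the point class and the identity $\frak m_0(1)=\frak{PO}_L(1)\cdot 1$ in the monotone case follow from compatibility with forgetful maps together with the monotonicity dimension count that kills all disk bubbles of Maslov index $\neq 2$ --- precisely the argument of Theorem~\ref{lem413} and Definition~\ref{defn41414}. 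Transporting this chain-level curved $A_\infty$-structure to $H^*(L;\Lambda_0^{\bbQ})$ by the homological perturbation lemma for filtered $A_\infty$-algebras, and proving independence of auxiliary data up to filtered $A_\infty$-homotopy equivalence by the parametrized-moduli continuation argument of Subsection~\ref{subsec:welldef}, completes Conjecture~\ref{conj71}. (For the de Rham realization one would first have to upgrade the $C^\ell$-Kuranishi structures of \cite{part2:kura} to smooth ones; cf.\ Remark~\ref{rem691new}.)

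\textbf{Step 2: bounding cochains and the category $\frak{Fuks}(X\setminus\mathcal D;c;\frak L)$.} Given $c\in\Lambda_+^{\bbQ}$, declare the objects to be pairs $(L_i,b)$ with $b\in H^{\rm even}(L_i;\Lambda_+^{\bbQ})$ a weak bounding cochain and $\frak{PO}(L_i;b)=c$. For the morphism spaces I would run the analogue of Step~1 on the polygon moduli $\mathcal M^{\rm RGW}(L_{i_0},\dots,L_{i_k};\cdots;\beta)$: their Kuranishi structures and perturbations are built by the same induction, now organized over the triple (length of the Lagrangian word, total number of marked points, $\omega\cap\beta$), their normalized boundaries decompose into $\hat\times$ of shorter polygons, and the resulting virtual counts give operations $\frak m_k$ on $\bigoplus_j CF(L_{i_{j-1}},L_{i_j};\bbQ)$. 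The categorical operations $\frak m_k'$ of Conjecture~\ref{conj72}(2) are obtained by inserting $b_{i_0},\dots,b_{i_k}$ at the boundary marked points lying on the respective Lagrangian arcs, $\frak m_k'(x_1,\dots,x_k)=\sum_{\ell_0,\dots,\ell_k}\frak m_{k+\sum\ell_j}(b^{\ell_0},x_1,b^{\ell_1},\dots,x_k,b^{\ell_k})$, which converges in the Novikov topology since $b_{i_j}\in\Lambda_+^{\bbQ}$ and by Gromov compactness. The filtered $A_\infty$-relations for $\frak{Fuks}$ follow from those for $\frak m_k$ together with the weak Maurer--Cartan equations $\sum_\ell\frak m_\ell(b_i,\dots,b_i)=c\cdot 1$: on contracting the two Lagrangian ends of a boundary fiber product the would-be curvature contributions from the two ends are each $c$ times a unit and cancel --- this is the same mechanism by which Theorem~\ref{lem413} computes $\frak m_1'\circ\frak m_1'=\frak{PO}_{L_1}(1)-\frak{PO}_{L_0}(1)$, which vanishes precisely because both potentials equal $c$; the higher identities are the same computation with more inputs. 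Hence the morphism complexes are genuine (uncurved) filtered $A_\infty$-algebras, giving parts (1) and (2). For (3): when $L_i$ is monotone, $b=0$ is a weak bounding cochain with $\frak{PO}(L_i;0)=\frak{PO}_{L_i}(1)$ by Step~1, so $(L_i,0)$ is an object when $c=\frak{PO}(L_i)$, and the deformed differential $\frak m_1$ on $CF((L_1,0),(L_0,0))$ is literally the Floer boundary operator of Definition~\ref{defn2.10} (or Definition~\ref{defn439439}); its homology is therefore $HF(L_1,L_0;X\setminus\mathcal D)$ of Theorem~\ref{mainthm-part3}. Independence of the category from all choices up to filtered $A_\infty$-equivalence is again the continuation argument, carried out for polygon moduli.

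\textbf{Main obstacle.} The hard part is the foundation shared by both steps: producing one coherent system of Kuranishi structures and multi-valued perturbations on the entire tower of RGW polygon moduli spaces that is simultaneously (i) compatible with every boundary and corner stratum --- where, unlike in the classical case, one must use $\hat\times_L$ and control the codimension-two RGW strata, i.e.\ the full corner-compatibility generalization of Theorem~\ref{lema362rev} mentioned in the remark following it; (ii) compatible with every forgetful map of boundary marked points, needed both for unitality and for the Maslov-$2$ cancellations; and (iii) transversal, with zero sets disjoint from the codimension-two strata. Each of these has been carried out here for the disk- and strip-level differentials, but assembling them for all arities and all Lagrangian words at once, by the inevitable triple induction, and uniformly enough that the family $\frak{Fuks}(X\setminus\mathcal D;c;\frak L)$ is well defined, is the substantial technical work. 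A secondary obstacle, flagged in Remark~\ref{rem691new}, is that only $C^\ell$-Kuranishi structures are currently available, so a de Rham version of Conjecture~\ref{conj71} needs the gluing analysis of \cite{part2:kura} strengthened to the smooth category; with the singular-chain model over $\bbQ$ this is avoided at the price of the usual homotopy-colimit bookkeeping of \cite{fooobook,fooobook2}.
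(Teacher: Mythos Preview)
The statement you are addressing is explicitly labeled a \emph{conjecture} in the paper (Conjecture~\ref{conj72}), and the paper does not supply a proof. What the paper offers is a one-paragraph sketch immediately following the conjecture: it points to the RGW-compactified moduli spaces of pseudo-holomorphic polygons as the main ingredient, mentions the singular-chain model over $\Lambda_0^{\bbQ}$ (analogous to \cite[Chapter~7]{fooobook2}) versus the de~Rham model over $\Lambda_0^{\bbR}$ (which would require smooth rather than $C^\ell$ Kuranishi structures), and refers to \cite[Section~3]{fu2017} for the general categorical framework.

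Your proposal is a detailed and faithful elaboration of exactly this sketch. You correctly identify the polygon moduli spaces as the missing piece, organize the induction appropriately, invoke the $\hat\times_L$ formalism and codimension-two avoidance from Sections~\ref{sec:minimas3}--\ref{sub:systemconst}, and flag the same two obstacles the paper implicitly acknowledges: the full corner-compatibility extension of Theorem~\ref{lema362rev} to all arities, and the $C^\ell$-versus-smooth issue of Remark~\ref{rem691new}. Your treatment of the curvature cancellation via matching potentials $\frak{PO}(L_i;b)=c$ is the standard mechanism and is consistent with the paper's handling of the Maslov-two case in Theorem~\ref{lem413}. In short, there is no proof in the paper to compare against; your outline is the natural expansion of the authors' own indicated strategy, and the obstacles you name are precisely why the statement remains a conjecture here.
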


Conjecture \ref{conj71} can be proved using (the system of) Kuranishi structures on the moduli spaces $\mathcal M_{k+1}^{\rm RGW}(L;\beta)$ produced in \cite{part2:kura}. However, in this situation, we need to study moduli spaces whose virtual dimension is higher than one. If $\Lambda^{\bbQ}_0$ is the coefficient ring, then we may use singular homology as a chain model for the cohomology of $L$ analogous to \cite[Chapter 7]{fooobook2}. If $\Lambda^{\bbR}_0$ is the coefficient ring, then we may use de Rham cohomology as a chain model for the cohomology of $L$ analogous to \cite[Chapters 21 and 22]{fooonewbook}. To use de Rham cohomology, we also need smoothness of the coordinate change maps of Kuranishi structures. The moduli spaces of pseudo-holomorphic polygons and their RGW-compactifications would be also the main ingredient to prove Conjecture \ref{conj72}. See, for example, \cite[Section 3]{fu2017}.

\subsection{Normal Crossing Divisor}
\label{subsec;NCdivors}

Let $(X,\omega)$ be a compact symplectic manifold. Let
\begin{equation}\label{formula72}
	\mathcal D = \bigcup_{i=1}^m \mathcal D_i.
\end{equation}
For $I = \{i_1,\dots,i_{\vert I\vert}\} \subseteq \{1,\dots,m\}$, we define
\begin{equation}\label{DefD_I}
\mathcal D_I = \bigcap_{j=1}^{\vert I\vert} \mathcal D_{i_j}.
\end{equation}
Then $\mathcal D$ is a normal crossing divisor if it satisfies the following properties (see \cite{FMZ:NC}):
\begin{enumerate}
	\item[(mc.1)] Each $\mathcal D_i$ is a codimension 2 smooth symplectic submanifold of $(X,\omega)$. 
	\item[(mc.2)]  The intersection (\ref{DefD_I}) is transversal.
	\item[(mc.3)] The restriction of $\omega$ to $\mathcal D_I$ defines a symplectic structure on it.
\end{enumerate}

For a compact and relatively spin Lagrangian submanifold $L$ in $X \setminus \mathcal D$, we can define the monotonicity of $L$ in $X \setminus \mathcal D$ and minimal Maslov number of $L \subset X \setminus \mathcal D$ in the same way as in the case that $\mathcal D$ is a smooth divisor.

We consider the homology classes $\beta \in H_2(X,L;\bbZ)$ such that $\beta \cap [\mathcal D_i] = 0$ for all $i$. Then it is reasonable to expect that there is an RGW-type compactification $\mathcal M^{\rm reg}_{k+1}(L;\beta)$ of the moduli space of pseudo-holomorphic disks $u : (D^2,\partial D^2) \to (X,L)$
of homology class $\beta$. This compactification would be useful to address the following conjecture.

\begin{conjecture}\label{conj73}
	Theorem \ref{mainthm-part3},  Conjectures \ref{conj71} and \ref{conj72} can be generalized to this setup.
\end{conjecture}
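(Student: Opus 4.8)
The strategy to prove Conjecture \ref{conj73} is to mirror, step by step, the three-paper program carried out for a smooth divisor, with all genuinely new work concentrated in the construction of a replacement for the ${\rm RGW}$ compactification and its Kuranishi structure. Throughout one restricts to homology classes $\beta$ with $\beta\cap[\mathcal D_i]=0$ for every $i\in\{1,\dots,m\}$; monotonicity of $L$, of $L_0,L_1$, and of the pair $(L_0,L_1)$ in $X\setminus\mathcal D$, the minimal Maslov number, and the potential $\mathfrak{PO}_L$ of Definition \ref{defn41414} are defined exactly as in the smooth case, since $\mathcal D$ is disjoint from the Lagrangians. Once the moduli spaces $\mathcal M^{\rm RGW}_{k+1}(L;\beta)$ and $\mathcal M^{\rm RGW}_{k_1,k_0}(L_1,L_0;p,q;\beta)$ are constructed together with the description of their boundary, the even-codimension stratification, compatible systems of multi-valued perturbations, and the forgetful-map compatibility, the remainder of the present paper (Sections \ref{sec:minimas3}, \ref{sub:systemconst}, \ref{sec:forget}, \ref{sec:mainthmproof}) applies verbatim; the same holds for the moduli of pseudo-holomorphic polygons needed for Conjectures \ref{conj71} and \ref{conj72}.

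\emph{Step 1: the compactification.} First I would build the normal-crossing analogue of \cite{part1:top}. For $(X,\mathcal D)$ as in \eqref{formula72}, one performs expanded degenerations along the normal bundles of all strata $\mathcal D_I=\bigcap_{i\in I}\mathcal D_i$ simultaneously, so the scaling parameters of a broken configuration no longer form a single copy of $\bbC_*$ per level but a torus whose rank is the codimension of the deepest stratum being approached. Correspondingly, the combinatorial type of a configuration becomes a tree whose interior vertices carry a subset $I\subseteq\{1,\dots,m\}$ (the stratum their component is mapped into), a homology class, and tangency multi-indices $m(e)\in\bbZ_{\ge 0}^{\vert I\vert}$ on the incident edges, while the global level datum of \cite{part1:top} is replaced by a point of the cone complex dual to the stratification of $(X,\mathcal D)$ (equivalently, a tropical disk in the fan of $(X,\mathcal D)$); the SD- and DD-ribbon trees are re-defined by decorating such trees. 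The key output is the dimension formula generalizing \eqref{dimension-M0R}: summing the contributions of sphere components in the $\mathcal D_I$, of the rubber components over the normal bundles, and of the disk and strip components, one checks that a stratum whose combinatorial type has $n$ independent scaling parameters has codimension $2n$ inside the corner of the appropriate codimension. This is the normal-crossing version of Proposition \ref{prop356}; it implies that the only codimension-one boundary strata are the three familiar ones --- strip breaking and disk bubbling off $L_0$ or $L_1$ --- and that the stratum $\mathcal M^{\rm RGW}(\cdots)^{(1)}$ still has codimension two, so that the subtle ``boundaries are fiber products only outside codimension two'' phenomenon of the present paper persists.

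\emph{Step 2: Kuranishi structures, obstruction data, perturbations.} Next I would redo \cite{part2:kura}: the thickened Cauchy--Riemann equation, TSDs, inconsistent maps, and the gluing analysis all make sense once each single gluing parameter is replaced by a vector-valued one and target parallel transport is taken inside the relevant $\mathcal D_I$. The notions of obstruction bundle data (Definition \ref{defn684684}) and of a disk-component-wise system (Definition \ref{defn687nnew}) carry over with ``positive level'' replaced by ``nontrivial scaling data'', and the inductive construction of Section \ref{sub:systemconst} --- quasi-components, the choice maps $\mathscr F$ and $\mathscr F^{\circ}$, Conditions \ref{conds1023}, \ref{conds1025}, \ref{conds1027}, \ref{conds30}, \ref{conds31} --- produces such a system, yielding the maps $\Pi$ of Theorem \ref{lema362rev} and Propositions \ref{lema362}, \ref{lem364}, \ref{lem365} and hence the identification of $\hat\times$, $\hat\times_L$ with the honest (fiber) products outside codimension two. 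For minimal Maslov number $2$ one additionally needs the forgetful-map compatibility of Section \ref{sec:forget}; its proof uses only that forgetting marked points collapses disk components, that no obstruction is placed on components with constant map, and preservation of triviality, none of which sees the codimension of $\mathcal D$. With these ingredients, Theorems \ref{theorem30}, \ref{prop61111}, \ref{lem48}, \ref{lem413}, Corollary \ref{Cor27}, the continuation-map arguments of Subsection \ref{subsec:welldef}, the Novikov-coefficient construction of Subsection \ref{subsec:novikv}, and the spectral sequence of Subsection \ref{thecaseofLandL} go through unchanged, proving the normal-crossing version of Theorem \ref{mainthm-part3}. For Conjectures \ref{conj71} and \ref{conj72} one adds the moduli of pseudo-holomorphic polygons and their compactifications, the higher-codimensional corner-compatibility of Kuranishi structures (the analogue of \cite[Condition 16.1 X, XI, XII]{fooonewbook} with $\times_L$ replaced by $\hat\times_L$), and, for a de Rham model, smoothness of the coordinate changes; the $A_\infty$ relations then follow from the boundary description as in the smooth case.

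\emph{Main obstacle.} The only genuinely hard part is the gluing analysis in Steps 1--2 at a configuration that degenerates simultaneously along several components $\mathcal D_{i_1},\dots,\mathcal D_{i_r}$ meeting along $\mathcal D_I$: the scaling parameters then fill out an $r$-dimensional space, the different orders in which these degenerations can be performed must be glued together compatibly, and one must verify that the resulting Kuranishi charts have the correct corner structure so that the even-codimension stratification is preserved. The combinatorial bookkeeping of the codimension-two fiber-product defects is heavier, since such defects now appear along every deeper stratum. A plausible shortcut is to formulate the moduli problem inside Parker's exploded-manifold framework or the logarithmic/relative Gromov--Witten formalism for normal crossings --- which already organizes the cone-complex data --- and to transport the Kuranishi-theoretic input of \cite{part2:kura} into that setting, reducing Conjecture \ref{conj73} to a compatibility statement rather than a reconstruction from scratch.
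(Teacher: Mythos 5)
The statement you are addressing is a conjecture: the paper does not prove it, and its Subsection \ref{subsec;NCdivors} only sketches the expected strategy (replace the single ${\bf P}^1$-bundle by the $({\bf P}^1)^{\vert I\vert}$-bundles over the strata $\mathcal D_I$, use $m$ level functions and $m$ multiplicity functions, then redo \cite{part1:top}, \cite{part2:kura} and Section \ref{sec:mainthmproof}). Your proposal follows essentially the same route — your cone-complex/tropical bookkeeping is a repackaging of the $m$ level functions, and your Steps 1--2 are exactly the paper's ``generalize the RGW compactification, then the Kuranishi structures, then quote the rest of the paper.'' So there is no divergence of approach; but there is a genuine gap in that what you have written is a roadmap, not a proof. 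The entire mathematical content of the conjecture lies in the items you defer: the actual construction of the normal-crossing RGW compactification and its topology (Hausdorffness, compactness), the proof of the analogue of the dimension formula \eqref{dimension-M0R} and of Proposition \ref{prop356} with tangency multi-indices $m(e)\in\bbZ^{\vert I\vert}$ and with components mapped into deep strata $\mathcal D_I$, and above all the gluing analysis at a configuration degenerating simultaneously along $\mathcal D_{i_1},\dots,\mathcal D_{i_r}$, where the scaling parameters are vector-valued and the charts obtained from different orders of degeneration must be matched with the correct corner structure. Asserting that ``the remainder of the present paper applies verbatim'' presupposes precisely the outputs (Theorem \ref{theorem30}, Proposition \ref{prop356}, Theorem \ref{lema362rev}, the forgetful-map compatibility) whose proofs occupy the technical heart of this series and whose normal-crossing versions are not written down anywhere, including in your proposal.

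Two further concrete omissions. First, the paper's remark following Conjecture \ref{conj73} points out that a general normal crossing divisor need not admit a global decomposition \eqref{formula72}: irreducible components may self-intersect, and then there is monodromy permuting the ${\bf P}^1$-factors, i.e.\ permuting the level/multiplicity functions; your tree decorations by subsets $I\subseteq\{1,\dots,m\}$ and multi-indices implicitly assume this monodromy is trivial, so your argument at best covers the simple normal crossing case and needs an additional equivariance layer otherwise. Second, your suggested shortcut through exploded manifolds or logarithmic Gromov--Witten theory is not a proof step as stated: transporting the Kuranishi-theoretic input of \cite{part2:kura} into either framework, and in particular reproducing the disk-component-wise obstruction bundle data and the $\hat\times$ versus $\times$ comparison outside codimension two in that language, is itself an unsolved compatibility problem of comparable difficulty, not a reduction.
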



For each $I$, there exists a local $\bbC_*^{\vert I\vert}$ action on  a neighborhood of $\mathcal D_I$.
Under the assumption (mc.1)-(mc.3), we can modify the construction of compatible almost complex structure 
$J$ in \cite[Subsection 3.2]{part1:top} so that 
on a neighborhood $U(I)$ of $\mathcal D_I$, the almost complex structure 
is invariant under the local $\bbC_*^{\vert I\vert}$ action.

The projectivization of the normal bundle $\mathcal N_{\mathcal D_{i_j}}(X)$ defines a ${\bf P}^1$-bundle on $\mathcal D_I$ for each $j$. Therefore, we can form a $({\bf P}^1)^{\vert I\vert}$-bundle on $\mathcal D_I$. (The fiber is a direct product of $\vert I\vert$ copies of ${\bf P}^1$.) Using this $({\bf P}^1)^{\vert I\vert}$-bundle for various $I$, instead of the ${\bf P}^1$ bundle ${\bf P}(\mathcal N_{\mathcal D}(X) \oplus \bbC)$, we can generalize the constructions in \cite{part1:top} to define RGW-compactification of $\mathcal M^{\rm reg}_{k+1}(L;\beta)$. The main difference is that here we need to use $m$ level functions, one level function for each of the irreducible components of $\mathcal D$. We also need to use $m$-multiplicity functions. Then one should be able to generalize the construction of \cite{part2:kura} to obtain a system of Kuranishi structures on the RGW-compactifications of $\mathcal M^{\rm reg}_{k+1}(L;\beta)$ and generalize the argument of Section \ref{sec:mainthmproof} to prove an analogue of Theorem \ref{mainthm-part3} in this setup. A corresponding compactification in the context of relative Gromov-Witten theory is developed in \cite{T}. 

\begin{remark}
	In general, a normal crossing divisor $\mathcal D$ may not have a decomposition as in \eqref{formula72} as some irreducible components may have self intersection.
	When we stratify $\mathcal D$ in a similar way, the bundle which corresponds to the above $({\bf P}^1)^{\vert I\vert}$-bundle 
	still exists. However, there may exist a nontrivial monodromy which exchange the 
	factors. In this case, several level functions may have various symmetries.
	Therefore, one needs a more careful argument to handle this case. However, we expect 
	that the same conclusion holds for such normal crossing divisors, too.
\end{remark}

\begin{remark}
	A conjecture similar to Conjecture \ref{conj73} was proposed by M. Gross in a talk in 2016.
\end{remark}

\subsection{Group Actions on $X$}

Let $(X,\omega)$ be a compact symplectic manifold on which a compact Lie group $G$ acts, preserving the symplectic structure. Let $\mathcal D \subset X$ be a submanifold as in the beginning of the introduction. We assume $\mathcal D$ is $G$-invariant and the almost complex structure $J$ is also $G$-invariant.

\begin{conjecture}\label{conj74}
	Let $L_0,L_1$ be compact, monotone and relatively spin Lagrangian submanifolds of $X \setminus \mathcal D$ which are $G$-invariant.
	We assume {\rm (a)} or {\rm (b)} of Theorem \ref{mainthm-part3} holds.
	Then there is a $G$-equivariant Lagrangian Floer cohomology group $HF^G(L_1,L_0;X\setminus \mathcal D)$,
	which is a $H^*(BG;\Lambda^{\bbR})$-module and has the following properties.
	\begin{enumerate}
		\item If $L_1$ and $L_0$ have clean intersections, then there exists a spectral sequence converging to 
			$HF^G(L_1,L_0;X\setminus \mathcal D)$ 
			such that the $E_2$ page of this spectral sequence is the $G$-equivariant cohomology group:
			\[
			  H^G(L_1 \cap L_0;\Theta \otimes \Lambda^{\bbR}).
			\]
			Here $\Theta$ is an appropriate $\bbZ_2$ local system.\footnote{Such local systems appear in the Morse-Bott case of 
			Lagrangian Floer theory. See \cite[Subsection 8.8]{fooobook2}.}
		\item It is invariant under $G$-equivariant Hamiltonian isotopies applied to $L_0$ and $L_1$.
\end{enumerate}
	The same claim holds if $\mathcal D$ is a normal crossing divisor, namely, the case which is discussed in Subsection \ref{subsec;NCdivors}.
	There is also a filtered $A_{\infty}$-category for a finite set of  $G$-invariant, compact and 
	relatively spin Lagrangian submanifolds in $X \setminus \mathcal D$, in a similar way as in Conjectures \ref{conj71} and \ref{conj72}.
\end{conjecture}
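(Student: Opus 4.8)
The plan is to combine the construction of the present paper with a Borel-type construction of equivariant Floer theory in the de Rham model, following the circle of ideas the authors developed for the Atiyah--Floer conjecture. First I would make all auxiliary data $G$-invariant: since $G$ is compact and acts on $X$ preserving $\omega$, $\mathcal D$, $L_0$ and $L_1$, one can average the almost complex structures of \cite[Subsection 3.2]{part1:top}, the obstruction bundle data of Definition \ref{defn684684}, and the systems of Kuranishi structures of Section \ref{sub:systemconst} over $G$ to obtain $G$-equivariant objects; the finiteness inputs (Gromov compactness, the induction on $(k,\beta)$) are unaffected, and $G$-invariance is compatible with disk-component-wise-ness since one can take the finite sets $\frak P(k+1,\beta)$ and the quasi-component choice maps $\mathscr F_{k,\beta}$ to be $G$-invariant. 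In the clean-intersection case one first replaces $L_0$ by $\psi_H(L_0)$ for a $G$-invariant Morse--Bott Hamiltonian $H$ built, as in the proof of Proposition \ref{prop449}, from a $G$-invariant Morse--Bott--Smale function on a $G$-invariant Weinstein neighbourhood of $L_1$, so that $R(L_0,L_1;H;o)$ is a $G$-invariant union of submanifolds. The output of this step is a system of $G$-equivariant Kuranishi structures with $G$-equivariant evaluation maps on the RGW-compactified strip and disk moduli spaces, compatible at the boundary and corners in the sense of Theorem \ref{lema362rev} and Theorem \ref{comp-forg}, together with $G$-invariant multi-sections produced as in Theorem \ref{prop61111} and Proposition \ref{prop12866}.

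Next I would define $HF^G$ via finite-dimensional approximations $B_N \uparrow BG$ with total spaces $E_N$ that are $(N-1)$-connected free $G$-manifolds. Pulling the $G$-equivariant moduli spaces back along $E_N$ (equivalently, forming the Borel mixing spaces $E_N \times_G \mathcal M^{\rm RGW}$) and using $G$-invariant multi-sections one obtains, for each $N$, a cochain complex $CF^G_N(L_1,L_0;o)$ of finitely generated $H^*(B_N;\Lambda^{\bbR}_0)$-modules, whose differential $\partial_N$ is assembled from the zero-dimensional perturbed moduli spaces. The identity $\partial_N\circ\partial_N \equiv \widehat{\frak{PO}}_{L_1}(1)-\widehat{\frak{PO}}_{L_0}(1) \bmod T^E$ is proved exactly as in Theorem \ref{lem413} and Subsection \ref{subsec:novikv}, the boundary analysis being $G$-equivariant verbatim; under hypothesis (a) or (b) of Theorem \ref{mainthm-part3} the two potentials agree (under (a) both vanish, under (b) they are equal by Hamiltonian invariance), so $\partial_N$ is a genuine differential. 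Passing to the inverse limit over $N$ and then, by the homotopy-inductive-limit argument of \cite{fooonewbook,fooobook2}, over the energy cutoff $E$, defines $HF^G(L_1,L_0;X\setminus\mathcal D)$ as a module over $H^*(BG;\Lambda^{\bbR})=\varprojlim_N H^*(B_N;\Lambda^{\bbR})$. Invariance under $G$-equivariant Hamiltonian isotopies is proved as in Subsection \ref{subsec:welldef} using $G$-invariant continuation and homotopy data; the normal-crossing case is obtained by feeding the $G$-invariant $({\bf P}^1)^{|I|}$-bundle almost complex structures of Subsection \ref{subsec;NCdivors} into the same machine; and the $G$-equivariant filtered $A_\infty$-category comes from the $G$-equivariant RGW moduli of pseudo-holomorphic polygons, as in Conjectures \ref{conj71} and \ref{conj72}.

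For the spectral sequence of item (1), I would filter the equivariant Floer complex by the energy $\omega_H(\beta)$ of Lemma \ref{index-energy-strip}, as in the proof of Proposition \ref{prop449}. The associated graded differential is the contribution of the $\beta$ with $\omega_H(\beta)$ minimal, which for $f$ of small $C^2$-norm is supported on maps whose image stays in the Weinstein neighbourhood; by the Floer--Morse correspondence of \cite{Flo89}, the relevant moduli spaces are identified, $G$-equivariantly, with the negative gradient trajectory spaces of $f$ on $L_0\cap L_1$. Hence the first page is the $G$-equivariant Morse complex of $(L_0\cap L_1, f)$ twisted by the $\bbZ_2$-local system $\Theta$ that accounts for the orientation and index bundles in the Morse--Bott situation (see \cite[Subsection 8.8]{fooobook2}), whose cohomology is $H^G(L_0\cap L_1;\Theta\otimes\Lambda^{\bbR})$; the spectral sequence converges to $HF^G$ by finiteness of the filtration length.

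The main obstacle, and the reason this is only a conjecture, is the construction of the $G$-equivariant virtual fundamental chains compatibly with \emph{all} of: the Borel construction when $G$ has non-trivial isotropy on the moduli spaces; the codimension-two subtlety of the RGW boundary (the distinction between $\hat\times$ and $\times$ in Theorem \ref{theorem30}) in the presence of the $G$-action; the forgetful-map compatibility of Section \ref{sec:forget} needed when the minimal Maslov number is $2$; and --- because the target coefficient ring is $H^*(BG;\Lambda^{\bbR})$, i.e.\ one is forced into the de Rham/Cartan model of equivariant cohomology --- the upgrade from the $C^\ell$-Kuranishi structures of this paper to smooth Kuranishi structures with smooth coordinate changes, the point flagged in Remark \ref{rem691new}. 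Reconciling $G$-equivariance with disk-component-wise-ness (Definition \ref{defn687nnew}) and with this smoothness simultaneously is the technical heart of the problem; once it is in place, the remainder is a routine equivariant transcription of Sections \ref{sec:minimas3}--\ref{sec:mainthmproof}.
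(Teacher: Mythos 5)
You have set out to prove a statement that the paper itself does not prove: Conjecture \ref{conj74} appears in Section \ref{secgeneralization} precisely as a conjecture, with no argument beyond the remark that modifications of the methods of the series of papers should apply. So there is no proof in the paper to compare your proposal against, and your text should be read as a strategy outline rather than a proof.

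As an outline it is sensible — averaging the auxiliary data over the compact group, running the RGW/Kuranishi machinery equivariantly, and computing $HF^G$ via a Borel-type construction with finite-dimensional approximations of $BG$ is the natural plan — but the gap is genuine and you name it yourself: none of the technical core is established. Constructing $G$-equivariant virtual fundamental chains in the de Rham/Borel model requires smooth (not merely $C^{\ell}$) Kuranishi structures with smooth coordinate changes, which the paper explicitly does not build (Remark \ref{rem691new}); it must be done compatibly with the disk-component-wise condition of Definition \ref{defn687nnew}, with the $\hat\times$ versus $\times$ codimension-two subtlety of Theorem \ref{theorem30}, and with the forgetful-map compatibility of Section \ref{sec:forget}, and averaging obstruction bundle data over $G$ is not obviously consistent with the inductive choices of quasi-component choice maps in Subsection \ref{subsub:existobst} when $G$-orbits meet several strata. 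Moreover, your item (1) rests on a clean-intersection (Morse--Bott) version of the Floer theory for the pair $(L_0,L_1)$, together with an equivariant Floer--Morse--Bott comparison and the local system $\Theta$; the paper only treats transverse intersections (plus the special case $L_0=\psi_H(L_1)$ reduced to a Morse function in Proposition \ref{prop449}), and it flags the Morse--Bott case as exactly the situation needing the smooth structures that are missing. Until these points are carried out, the proposal does not constitute a proof, which is consistent with the statement remaining a conjecture in the paper.
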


As we mentioned in the introduction and in \cite{DF}, we can use Conjecture \ref{conj74} in the case that $X$ is a symplectic cut of the {\it extended moduli space}\footnote{See \cite{mw}.} to formulate the symplectic side of the Atiyah-Floer conjecture.

\subsection{Non-compact Lagrangians}

In this series of papers, we studied compact Lagrangians in the divisor complement. It is natural to generalize this story to the following situation.
Let $\overline L$ be a submanifold with boundary
$\partial L$ such that
$\partial L = \overline L \cap \mathcal D$.
Put $L = \overline L \setminus \partial L$.
We assume $L$ is a Lagrangian submanifold of $X \setminus \mathcal D$, and $\partial L$ is a
Lagrangian submanifold of $\mathcal D$.
We also assume that $L$ is invariant under the local $\bbR _+$ action
induced by the local $\bbC_*$ action in a neighborhood of $\mathcal D$.
Let $\overline L_i$ ($i=0,1$) be a pair of such Lagrangians.
We assume $\partial L_0$, $\partial L_1$ (resp. $L_0$, $L_1$) intersect
transversally  in $\mathcal D$ (resp. $X \setminus \mathcal D$).
We want to define a Floer homology
$HF(L_0,L_1)$ between such pairs (under appropriate assumptions).

The first observation is that the free abelian group generated by the intersection points $L_0 \cap L_1$ is not
an appropriate underlying chain group for this version of Lagrangian Floer homology, unless we assume a certain positivity condition on the divisor.
To see this, fix $p,q \in L_0 \cap L_1$ and consider a moduli space $\mathcal M(p,q;\beta)$ of pseudo-holomorphic strips with boundary on $L_0,L_1$ and asymptotic to $p,q$ that has (virtual) dimension one. One would expect that studying such moduli spaces can be used to define an appropriate version of Floer's boundary operator, which is a differential. In addition to the boundary configuration appearing in Theorem \ref{theorem30}, there is another type of codimension one boundary stratum. 

This new configuration can be described as the product of three types of moduli spaces 
\begin{equation}\label{dble-brkn}
  \mathcal M(p,x;\beta_1)\times\mathcal M(x,y;\beta_2) \times \mathcal M(y,q;\beta_3),
\end{equation}
where $x,y \in \partial L_0 \cap \partial L_1$,
$\mathcal M(p,x;\beta_1)$ (resp. $\mathcal M(y,q;\beta_3)$) is the moduli space of pseudo-holomorphic strips bounding $L_0$ and $L_1$
and asymptotic to $p$ and $x$ (resp. $y$ and $q$), and $\mathcal M(x,y;\beta_2)$ is the moduli spaces of pseudo-holomorphic strips in $\mathcal D$, bounding $\partial L_0$ and $\partial L_1$ and asymptotic to $x$ and $y$  (see Figure \ref{lastfigure}). We also make the remark that for any element $u$ of $\mathcal M(p,x;\beta_1)$, we can consider the paths $\gamma_\tau:=u(\tau,\cdot)$, and as $\tau$ goes to infinity this gives rise to a path from the ray $T_xL_0/T_x\partial L_0$ to the ray $T_xL_1/T_x\partial L_0$ inside the fiber of the normal bundle of $\mathcal D$ at the point $x$. In fact, this path goes in the {\it clockwise} direction and the set of homotopy classes of all such paths can be parametrized by non-negative integers. This can be regarded as the relative version of the fact that for our choices of almost complex structure, any intersection point of a pseudo-holomorphic with $\mathcal D$ has positive multiplicity. A similar comment applies to the elements of the moduli space $\mathcal M(y,q;\beta_3)$.

Although pseudo-holomorphic curves as in \eqref{dble-brkn} with two boundary nodes appear in the union of the codimension 2 strata of the stable map compactification, by a similar argument as in \cite{part2:kura} in the case of pseudo-holomorphic spheres, we can see that such the configuration in \eqref{dble-brkn} form codimension 1 strata in the natural adaptation of the RGW compactification to the present setup. This suggests that  we need to include the intersection points $\partial L_0 \cap \partial L_1$ in the definition of the chain complex.
More precisely, we expect that the chain complex defining $HF(L_0,L_1)$ has the form of the abelian group
\begin{equation}\label{fl-cx}
\bigoplus_{p \in L_0\cap L_1} \bbQ \oplus
\bigoplus_{x \in \partial L_0\cap \partial L_1} \bbQ[t].
\end{equation}
Here $t$ is a formal parameter, and $\bbQ[t]$ is the polynomial ring with rational coefficients. The powers of $t$ are used to keep track of the winding numbers of pseudo-holomorphic curves around points of $\partial L_0 \cap \partial L_1$, which we know that are always in the clockwise direction by the discussion of the previous paragraph.

\begin{figure}[h]
	\centering
	\includegraphics[scale=0.4]{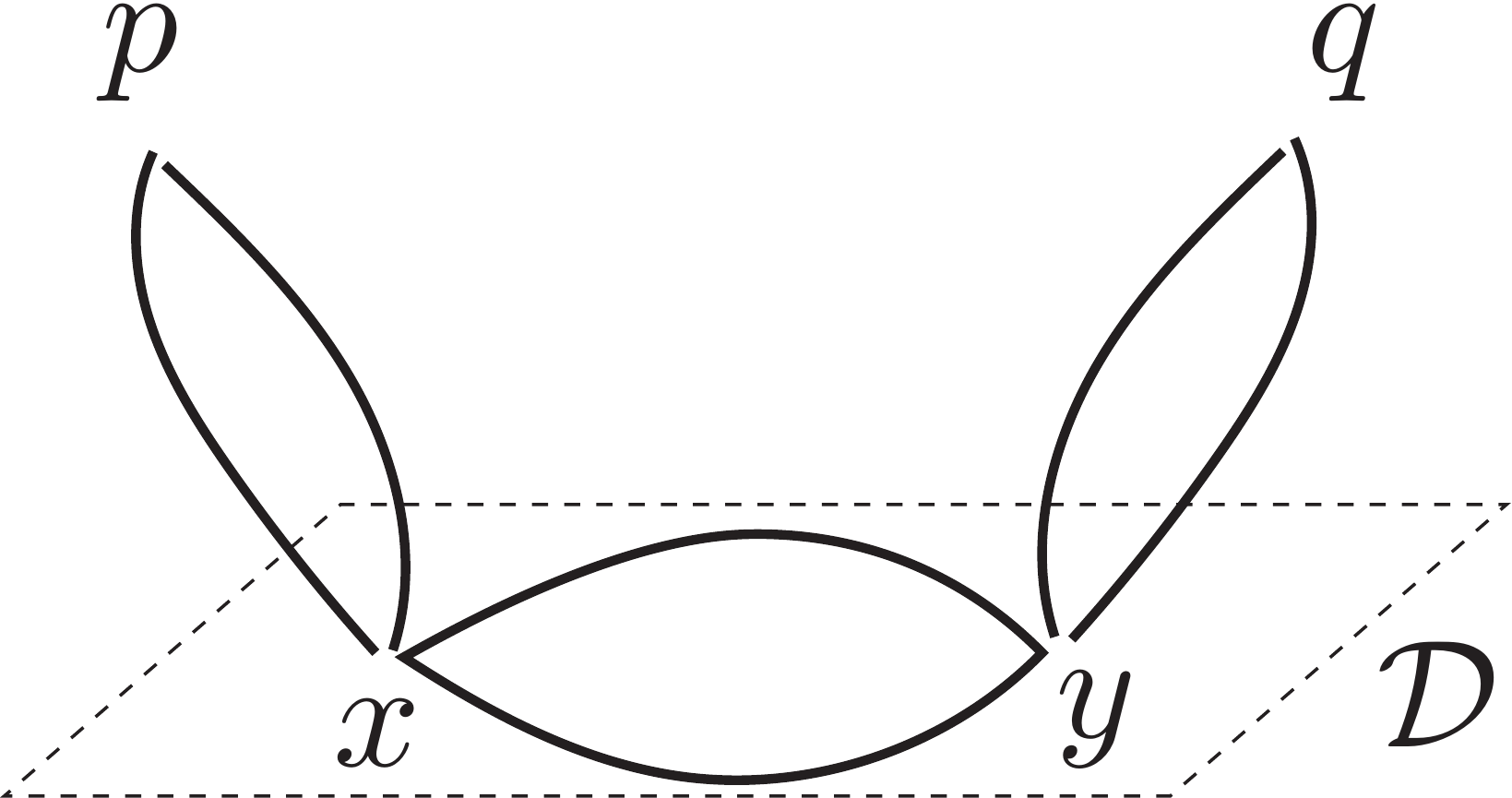}
	\caption{A new type of boundary component}
	\label{lastfigure}
\end{figure}

The structure of the above Floer complex is reminiscent of $S^1$-equivariant complexes, and one should regard each summand $\bbQ[t]$ as the cohomology of the classifying space $BS^1$ of $S^1$. In fact, one can justify the appearance of Floer complexes as in \eqref{fl-cx} using Kronheimer and Mrowka's framework for $S^1$-equivariant Floer homology in \cite{km:monopole} (where they use their setup in the context of monopole Floer homology). Lagrangian Floer homology can be regarded as the Morse homology of the {\it action functional} on the path space $\Omega(L_0,L_1)$ of Lagrangians $L_0$ and $L_1$. The partial  $\bbC_*$-action on a neighborhood $\mathfrak U$ of $\mathcal D$ induces an action of $\bbC_*$ on the subspace of $\Omega(L_0,L_1)$ given by paths from $L_0$ to $L_1$ that are contained in $\mathfrak U$. In particular, paths into $\mathcal D$ are fixed by this action, and we can take the real blow up of the space of all such paths in $\Omega(L_0,L_1)$ following \cite{km:monopole}. After lifting the action functional to this blown up space, the Floer complex of this functional has \eqref{fl-cx} as its underlying chain group. In fact, one expects that there is another flavor of the above complex $HF'(L_0,L_1)$ such that $HF(L_0,L_1)$, $HF'(L_0,L_1)$ and some twisted version of Lagrangian Floer homology of $\partial L_0$ and $\partial L_1$ form an exact triangle similar to the exact triangle that the three flavors $\widehat{HM}$, $\widecheck{HM}$ and $\overline{HM}$ of monopole Floer homology fit into \cite{km:monopole}.

A further generalization of the above direction is obtained by considering non-compact Lagrangians for normal crossing divisors as in Subsection \ref{subsec;NCdivors}. In this setup, we may consider $\overline L \subset X$, which is
a submanifold with boundary and corners, where its codimension $k$ corner consists of the union of $\overline L \cap \mathcal D_I$
with $\vert I\vert =k$.

\bibliography{references}

@incollection {DF,
    AUTHOR = {Daemi, Aliakbar and Fukaya, Kenji},
     TITLE = {Atiyah-{F}loer conjecture: a formulation, a strategy of proof
              and generalizations},
 BOOKTITLE = {Modern geometry: a celebration of the work of {S}imon
              {D}onaldson},
    SERIES = {Proc. Sympos. Pure Math.},
    VOLUME = {99},
     PAGES = {23--57},
 PUBLISHER = {Amer. Math. Soc., Providence, RI},
      YEAR = {2018},
   MRCLASS = {57R58 (53D40)},
  MRNUMBER = {3838878},
MRREVIEWER = {Jelena Kati\'{c}},
}

@article {Flo89,
    AUTHOR = {Floer, Andreas},
     TITLE = {Witten's complex and infinite-dimensional {M}orse theory},
   JOURNAL = {J. Differential Geom.},
  FJOURNAL = {Journal of Differential Geometry},
    VOLUME = {30},
      YEAR = {1989},
    NUMBER = {1},
     PAGES = {207--221},
      ISSN = {0022-040X},
   MRCLASS = {58E05 (58F05)},
  MRNUMBER = {1001276},
MRREVIEWER = {I. Vaisman},
       URL = {http://projecteuclid.org/euclid.jdg/1214443291},
}

@misc{fu2017,  
doi = {10.48550/ARXIV.1706.02131},
  
  url = {https://arxiv.org/abs/1706.02131},
  
  author = {Fukaya, Kenji},
  
  keywords = {Symplectic Geometry (math.SG), Differential Geometry (math.DG), FOS: Mathematics, FOS: Mathematics},
  
  title = {Unobstructed immersed {L}agrangian correspondence and filtered {$A_\infty$} functor},
  
  publisher = {arXiv},
  
  year = {2017},
  
  copyright = {arXiv.org perpetual, non-exclusive license}
}

@book {fooobook,
    AUTHOR = {Fukaya, Kenji and Oh, Yong-Geun and Ohta, Hiroshi and Ono,
              Kaoru},
     TITLE = {{L}agrangian intersection {F}loer theory: anomaly and
              obstruction. {P}art {I}},
    SERIES = {AMS/IP Studies in Advanced Mathematics},
    VOLUME = {46},
 PUBLISHER = {American Mathematical Society, Providence, RI; International
              Press, Somerville, MA},
      YEAR = {2009},
     PAGES = {xii+396},
      ISBN = {978-0-8218-4836-4},
   MRCLASS = {53D40 (53D12 53D37)},
  MRNUMBER = {2553465},
MRREVIEWER = {Michael J. Usher},
       DOI = {10.1090/crmp/049/07},
       URL = {https://doi.org/10.1090/crmp/049/07},
}

@book {fooobook2,
    AUTHOR = {Fukaya, Kenji and Oh, Yong-Geun and Ohta, Hiroshi and Ono,
              Kaoru},
     TITLE = {{L}agrangian intersection {F}loer theory: anomaly and
              obstruction. {P}art {II}},
    SERIES = {AMS/IP Studies in Advanced Mathematics},
    VOLUME = {46},
 PUBLISHER = {American Mathematical Society, Providence, RI; International
              Press, Somerville, MA},
      YEAR = {2009},
     PAGES = {i--xii and 397--805},
      ISBN = {978-0-8218-4837-1},
   MRCLASS = {53D40 (53D12)},
  MRNUMBER = {2548482},
MRREVIEWER = {Michael J. Usher},
       DOI = {10.1090/crmp/049/07},
       URL = {https://doi.org/10.1090/crmp/049/07},
}

@article {foooast,
    AUTHOR = {Fukaya, Kenji and Oh, Yong-Geun and Ohta, Hiroshi and Ono,
              Kaoru},
     TITLE = {{L}agrangian {F}loer theory and mirror symmetry on compact toric
              manifolds},
   JOURNAL = {Ast\'{e}risque},
  FJOURNAL = {Ast\'{e}risque},
    NUMBER = {376},
      YEAR = {2016},
     PAGES = {vi+340},
      ISSN = {0303-1179},
      ISBN = {978-2-85629-825-1},
   MRCLASS = {53D40 (14M25 53D37 53D45)},
  MRNUMBER = {3460884},
MRREVIEWER = {Christopher T. Woodward},
}

@misc{foooexp,
  doi = {10.48550/ARXIV.1603.07026},
  
  url = {https://arxiv.org/abs/1603.07026},
  
  author = {Fukaya, Kenji and Oh, Yong-Geun and Ohta, Hiroshi and Ono, Kaoru},
  
  keywords = {Symplectic Geometry (math.SG), Differential Geometry (math.DG), FOS: Mathematics, FOS: Mathematics, 53D37, 53D40, 53D45},
  
  title = {Exponential decay estimates and smoothness of the moduli space of pseudoholomorphic curves},
  
  publisher = {to appear in Memoirs of the AMS, arXiv},
  
  year = {2016},
  
  copyright = {arXiv.org perpetual, non-exclusive license}
}

@misc{fooo:tech2-1,
  doi = {10.48550/ARXIV.1503.07631},
  
  url = {https://arxiv.org/abs/1503.07631},
  
  author = {Fukaya, Kenji and Oh, Yong-Geun and Ohta, Hiroshi and Ono, Kaoru},
  
  keywords = {Symplectic Geometry (math.SG), Differential Geometry (math.DG), FOS: Mathematics, FOS: Mathematics},
  
  title = {{K}uranishi structure, Pseudo-holomorphic curve, and Virtual fundamental chain: Part 1},
  
  publisher = {arXiv},
  
  year = {2015},
  
  copyright = {arXiv.org perpetual, non-exclusive license}
}

@incollection {fooo:const1,
    AUTHOR = {Fukaya, Kenji and Oh, Yong-Geun and Ohta, Hiroshi and Ono,
              Kaoru},
     TITLE = {Construction of {K}uranishi structures on the moduli spaces of
              pseudo holomorphic disks: {I}},
 BOOKTITLE = {Surveys in differential geometry 2017. {C}elebrating the 50th
              anniversary of the {J}ournal of {D}ifferential {G}eometry},
    SERIES = {Surv. Differ. Geom.},
    VOLUME = {22},
     PAGES = {133--190},
 PUBLISHER = {Int. Press, Somerville, MA},
      YEAR = {2018},
   MRCLASS = {32G07 (53D30)},
  MRNUMBER = {3838117},
}

@misc{fooo:const2,
  doi = {10.48550/ARXIV.1808.06106},
  
  url = {https://arxiv.org/abs/1808.06106},
  
  author = {Fukaya, Kenji and Oh, Yong-Geun and Ohta, Hiroshi and Ono, Kaoru},
  
  keywords = {Symplectic Geometry (math.SG), FOS: Mathematics, FOS: Mathematics},
  
  title = {Construction of Kuranishi structures on the moduli spaces of pseudo holomorphic disks: {II}},
  
  publisher = {arXiv},
  
  year = {2018},
  
  copyright = {arXiv.org perpetual, non-exclusive license}
}

@article {T2,
    AUTHOR = {Tehrani, Mohammad F.},
     TITLE = {Open {G}romov-{W}itten theory on symplectic manifolds and
              symplectic cutting},
   JOURNAL = {Adv. Math.},
  FJOURNAL = {Advances in Mathematics},
    VOLUME = {232},
      YEAR = {2013},
     PAGES = {238--270},
      ISSN = {0001-8708},
   MRCLASS = {53D45},
  MRNUMBER = {2989982},
MRREVIEWER = {Eduardo A. Gonzalez},
       DOI = {10.1016/j.aim.2012.09.015},
       URL = {https://doi.org/10.1016/j.aim.2012.09.015},
}

@incollection {mw,
    AUTHOR = {Manolescu, Ciprian and Woodward, Christopher},
     TITLE = {{F}loer homology on the extended moduli space},
 BOOKTITLE = {Perspectives in analysis, geometry, and topology},
    SERIES = {Progr. Math.},
    VOLUME = {296},
     PAGES = {283--329},
 PUBLISHER = {Birkh\"{a}user/Springer, New York},
      YEAR = {2012},
   MRCLASS = {57R58 (57M27)},
  MRNUMBER = {2884041},
MRREVIEWER = {Sonja Hohloch},
       DOI = {10.1007/978-0-8176-8277-4\_13},
       URL = {https://doi.org/10.1007/978-0-8176-8277-4_13},
}

@article {oh,
    AUTHOR = {Oh, Yong-Geun},
     TITLE = {{F}loer cohomology of {L}agrangian intersections and
              pseudo-holomorphic disks. {I}},
   JOURNAL = {Comm. Pure Appl. Math.},
  FJOURNAL = {Communications on Pure and Applied Mathematics},
    VOLUME = {46},
      YEAR = {1993},
    NUMBER = {7},
     PAGES = {949--993},
      ISSN = {0010-3640},
   MRCLASS = {58E05 (57R99 58F05)},
  MRNUMBER = {1223659},
MRREVIEWER = {Wilhelm Klingenberg},
       DOI = {10.1002/cpa.3160460702},
       URL = {https://doi.org/10.1002/cpa.3160460702},
}

@book {ohbook,
    AUTHOR = {Oh, Yong-Geun},
     TITLE = {Symplectic topology and {F}loer homology. {V}ol. 2},
    SERIES = {New Mathematical Monographs},
    VOLUME = {29},
      NOTE = {Floer homology and its applications},
 PUBLISHER = {Cambridge University Press, Cambridge},
      YEAR = {2015},
     PAGES = {xxiii+446},
      ISBN = {978-1-107-10967-4},
   MRCLASS = {53D40 (53Dxx 57R58)},
  MRNUMBER = {3524783},
MRREVIEWER = {Hansj\"{o}rg Geiges},
}

@article{T,
	doi = {10.2140/gt.2022.26.989},
  
	year = 2022,
	month = {aug},
  
	publisher = {Mathematical Sciences Publishers},
  
	volume = {26},
  
	number = {3},
  
	pages = {989--1075},
  
	author = {Tehrani, Mohammad F. },
  
	title = {Pseudoholomorphic curves relative to a normal crossings symplectic divisor: compactification},
  
	journal = {Geometry {\&} Topology}
}

@article{part1:top,
  doi = {10.48550/ARXIV.1809.03409},
  
  url = {https://arxiv.org/abs/1809.03409},
  
  author = {Daemi, Aliakbar and Fukaya, Kenji},
  
  keywords = {Symplectic Geometry (math.SG), Differential Geometry (math.DG), FOS: Mathematics, FOS: Mathematics, 53D40, 53D37},
  
  title = {Monotone {L}agrangian {F}loer theory in smooth divisor complements: {I}},
  
  publisher = {arXiv},
  
  year = {2018},
  
  copyright = {arXiv.org perpetual, non-exclusive license}
}

@misc{part2:kura,
  doi = {10.48550/ARXIV.1809.03409},
  
  url = {https://arxiv.org/abs/1809.03409},
  
  author = {Daemi, Aliakbar and Fukaya, Kenji},
  
  keywords = {Symplectic Geometry (math.SG), Differential Geometry (math.DG), FOS: Mathematics, FOS: Mathematics, 53D40, 53D37},
  
  title = {Monotone {L}agrangian {F}loer theory in smooth divisor complements: {II}},
  
  publisher = {arXiv},
  
  year = {2018},
  
  copyright = {arXiv.org perpetual, non-exclusive license}
}

@book {fooonewbook,
    AUTHOR = {Fukaya, Kenji and Oh, Yong-Geun and Ohta, Hiroshi and Ono,
              Kaoru},
     TITLE = {{K}uranishi structures and virtual fundamental chains},
    SERIES = {Springer Monographs in Mathematics},
 PUBLISHER = {Springer, Singapore},
      YEAR = {2020},
     PAGES = {xv+638},
      ISBN = {978-981-15-5562-6; 978-981-15-5561-9},
   MRCLASS = {53D45 (32M99 53D37 53D40)},
  MRNUMBER = {4179586},
       DOI = {10.1007/978-981-15-5562-6},
       URL = {https://doi.org/10.1007/978-981-15-5562-6},
}

@article {foooHam,
    AUTHOR = {Fukaya, Kenji and Oh, Yong-Geun and Ohta, Hiroshi and Ono,
              Kaoru},
     TITLE = {Construction of a linear {K}-system in {H}amiltonian {F}loer
              theory},
   JOURNAL = {J. Fixed Point Theory Appl.},
  FJOURNAL = {Journal of Fixed Point Theory and Applications},
    VOLUME = {24},
      YEAR = {2022},
    NUMBER = {2},
     PAGES = {Paper No. 39, 110},
      ISSN = {1661-7738},
   MRCLASS = {53D40 (53D35 57P99 58D27)},
  MRNUMBER = {4409857},
       DOI = {10.1007/s11784-022-00960-x},
       URL = {https://doi.org/10.1007/s11784-022-00960-x},
}

@book {km:monopole,
    AUTHOR = {Kronheimer, Peter and Mrowka, Tomasz},
     TITLE = {Monopoles and three-manifolds},
    SERIES = {New Mathematical Monographs},
    VOLUME = {10},
 PUBLISHER = {Cambridge University Press, Cambridge},
      YEAR = {2007},
     PAGES = {xii+796},
      ISBN = {978-0-521-88022-0},
   MRCLASS = {57R57 (53C27 57N10 57R58)},
  MRNUMBER = {2388043},
MRREVIEWER = {Vicente Mu\~{n}oz},
       DOI = {10.1017/CBO9780511543111},
       URL = {https://doi.org/10.1017/CBO9780511543111},
}

@article {FMZ:NC,
    AUTHOR = {Tehrani, Mohammad F. and McLean, Mark and Zinger, Aleksey},
     TITLE = {Normal crossings singularities for symplectic topology},
   JOURNAL = {Adv. Math.},
  FJOURNAL = {Advances in Mathematics},
    VOLUME = {339},
      YEAR = {2018},
     PAGES = {672--748},
      ISSN = {0001-8708},
   MRCLASS = {53D05 (14B05)},
  MRNUMBER = {3866910},
MRREVIEWER = {R. Wik Atique},
       DOI = {10.1016/j.aim.2018.09.035},
       URL = {https://doi.org/10.1016/j.aim.2018.09.035},
}
\bibliographystyle{alpha.bst}
\end{document}